\setlist{label={$($\roman{enumi}\kern1pt$)$}}
\newtheorem{thm}{Theorem}[section]
\newtheorem{prop}[thm]{Proposition}
\newtheorem{cor}[thm]{Corollary}
\newtheorem*{cor*}{Corollary}
\newtheorem{lema}[thm]{Lemma}
\newtheorem*{lema*}{Lemma}
\numberwithin{equation}{section}
\theoremstyle{definition}
\newtheorem*{Def}{Definition}
\newtheorem{Example}{Example}
\newtheorem*{obs}{Remark}
\newcommand{\PI}[2]{\left\langle \,#1 , #2\, \right\rangle}
\newcommand{\St}{\mathcal{S}}
\newcommand{\HH}{\mathcal{H}}
\newcommand{\KK}{\mathcal{K}}
\newcommand{\M}{\mathcal{M}}
\newcommand{\N}{\mathcal{N}}
\newcommand{\mc}[1]{\mathcal{#1}}
\newcommand{\ol}{\overline}
\newcommand{\clran}{\ol{\mathrm{ran}}\,}
\newcommand{\LL}{{\mc{L}^{+\,2}}}
\newcommand{\LLcr}{{\mc{L}_{cr}^{+\,2}}}
\DeclareMathOperator{\ran}{ran}
\DeclareMathOperator{\Span}{span}
\DeclareMathOperator{\gm}{{\mathrm{\#}}}
\begin{document}

  \title{Products of positive operators}

  \author[Contino]{Maximiliano Contino}
	
  \address{
    Instituto Argentino de Matem\'atica ``Alberto P. Calder\'on'' \\
    CONICET\\
    Saavedra 15, Piso 3\\
    (1083) Buenos Aires, Argentina \\[5pt]
    Facultad de Ingenier\'{\i}a, Universidad de Buenos Aires\\
    Paseo Col\'on 850 \\
    (1063) Buenos Aires, Argentina}
    \email{mcontino@fi.uba.ar}
	
  \author[Dritschel]{Michael A.~Dritschel}
	
  \address{
  School of Mathematics, Statistics \& Physics \\
  Newcastle University\\
  Newcastle upon Tyne \\
  NE1 7RU, UK}
  \email{michael.dritschel@ncl.ac.uk}
	
  \author[Maestripieri]{Alejandra~Maestripieri}
	
  \address{
    Instituto Argentino de Matem\'atica ``Alberto P. Calder\'on'' \\
    CONICET\\
    Saavedra 15, Piso 3\\
    (1083) Buenos Aires, Argentina \\[5pt]
    Facultad de Ingenier\'{\i}a, Universidad de Buenos Aires\\
    Paseo Col\'on 850 \\
    (1063) Buenos Aires, Argentina}
    \email{amaestri@fi.uba.ar}
  
  \author[Marcantognini]{Stefania Marcantognini}
	
  \address{
    Instituto Argentino de Matem\'atica ``Alberto P. Calder\'on'' \\
    CONICET\\
    Saavedra 15, Piso 3\\
    (1083) Buenos Aires, Argentina \\[5pt]
    Universidad Nacional de General Sarmiento -- Instituto de Ciencias
    \\ Juan Mar\'ia Gutierrez 
    \\ (1613) Los Polvorines, Pcia. de
    Buenos Aires, Argentina}
    \email{smarcantognini@ungs.edu.ar}
	
    \keywords{products of positive operators; Schur complements;
      quasi-similarity; quasi-affinity; local spectral theory;
      generalized scalar operators}
    \subjclass{47A05, 47A65}

    \dedicatory{To Henk de Snoo, on his $75^{\mathrm{th}}$ birthday.}

  \date{\today}

  \begin{abstract}
    On finite dimensional spaces, it is apparent that an operator is
    the product of two positive operators if and only if it is similar
    to a positive operator.  Here, the class $\LL$ of bounded
    operators on separable infinite dimensional Hilbert spaces which
    can be written as the product of two bounded positive operators is
    studied.  The structure is much richer, and connects (but is not
    equivalent to) quasi-similarity and quasi-affinity to a positive
    operator.  The spectral properties of operators in $\LL$ are
    developed, and membership in $\LL$ among special classes,
    including algebraic and compact operators, is examined.
  \end{abstract}
    
  \maketitle
	
\section{Introduction}
\label{sec:introduction}

This work aims to shed light on two questions, ``Which bounded Hilbert
space operators are products of two bounded positive operators?'', and
``What properties do such operators share?''  Here, positive means
selfadjoint with non-negative spectrum.  This class is denoted
throughout by $\LL$.  The answer is easily given on finite dimensional
spaces: an operator will be in $\LL$ if and only if it is similar to a
positive operator~\cite{MR974046}, and this in turn is equivalent to
the operator being diagonalizable with positive spectrum.  Answering
the questions on infinite dimensional Hilbert spaces is a much more
delicate matter.  Similarity no longer suffices.

Apostol~\cite{MR402522} studied the question as to which operators are
quasi-similar to normal operators, and his work readily adapts to this
setting, making it possible to construct operators which are
quasi-similar to positive operators.  Another difficulty then arises,
since not every operator which is quasi-similar to a positive operator
will be the product of two \emph{bounded} positive operators.  For
this, something extra is needed.

This is not the end of the story though, since the quasi-similar
operators which are in $\LL$ only form a part of the whole class.  One
can relax the quasi-similarity condition to quasi-affinity.  Here
again, the class of operators which are in $\LL$ and which are
quasi-affine to a positive operator can be characterized.  However,
even this falls short of giving the entire class.  Nevertheless, it
comes close, and in general $T\in\LL$ has the property that it has
both a restriction and extension in $\LL$ which are quasi-affine to a
positive operator.

Despite the fact that similarity to a positive operator fails to
capture the whole of $\LL$, a surprising number of the spectral
properties of operators similar to positive operators do carry over.
It is an elementary observation that the spectrum of an operator in
$\LL$ is contained in $\mathbb R^+$, the non-negative reals.  Also, it
was observed by Wu~\cite{MR974046} that the only quasi-nilpotent
operator in the class is $0$.  It happens that operators which are
similar to positive operators are spectral operators, and so decompose
as the sum of a scalar operator (having a spectral decomposition) and
a quasi-nilpotent operator.  Moreover, in this case the
quasi-nilpotent part is~$0$.  Using local spectral theory, it is
possible to define an invariant linear manifold (so not necessarily
closed) on which an operator is quasi-nilpotent~\cite{MR1004421}.  In
case the operator has the single valued extension property, which
enables the definition of a unique local resolvent, this manifold is
closed.  Since the operators in $\LL$ have thin spectrum, they also
have the single valued extension property.  It then follows that for
any operator in $\LL$, the quasi-nilpotent part is the restriction to
the kernel.  In addition, for non-zero point spectra, there is no
non-trivial (generalized) Jordan structure.  These ideas enable the
study operators in $\LL$ which are either algebraic or compact.  While
the only operators in $\LL$ which are similar to positive operators
are scalar, all are \emph{generalized scalar operators} (having a
$C^\infty$ functional calculus).  Furthermore, the algebraic spectral
subspaces for operators in $\LL$ have the same form as that exhibited
by normal operators.

Elements of $\LL$ with closed range are the ones which behave most
similarly to the finite dimensional case, since they are similar to
positive operators.  In this case it is possible to explicitly
describe the Moore-Penrose inverse of the operator, and to find a
generalized inverse which is also in $\LL$.

A good deal of the paper hinges on a theorem due to
Sebesty\'en~\cite{MR739047}.  Sebesty\'en's theorem states that for
fixed operators $A$ and $T$, the equation $T = AX$ has a positive
solution if and only if $TT^* \leq \lambda AT^*$ for some
$\lambda > 0$.  A proof of a refined version is given in
Section~\ref{sec:preliminaries} using Schur complement techniques (see
also~\cite{MR3090434}), enabling $T\in \LL$ to be written as $AB$,
$A,B \geq 0$, where $\clran A = \clran T$ and $\clran B = \clran T^*$.
Such a pair $(A,B)$ is called \emph{optimal} for $T$.  Optimal pairs
happen to be extremely useful.

Section~\ref{sec:sim-quasi-sim-to-pos} looks at those operators (not
necessarily in $\LL$) which are either quasi-affine or quasi-similar
to positive operators.  Rigged Hilbert spaces are used to show that
for an operator $T$ quasi-affine to a positive operator,
$\ran T \cap \ker T = \{0\}$ and
$\overline{\ran T \dotplus \ker T} = \HH$.  In the quasi-similar case,
since this will hold for both $T$ and $T^*$, one has instead that
$\overline{\clran T \dotplus \ker T} = \HH$.  Work of Hassi,
Sebesty\'en, and de~Snoo~\cite{MR2174236} plays a key role in
describing those operators quasi-affine to a positive operator.

The paper then turns to describing general properties of the class
$\LL$ in Section~\ref{sec:the-set-l+2}.  Central here are optimal
pairs, the properties of which are explored in detail.  Examples are
given which show that operators in $\LL$ which are similar to a
positive operator, quasi-similar to a positive operator, and
quasi-affine to a positive operator form strictly increasingly larger
subclasses when $\dim \HH = \infty$, and that there are operators in
$\LL$ which do not fall into any of these, further hinting at the
complexities of the class.

Similarity to a positive operator completely characterizes $\LL$ on
finite dimensional spaces, and this is examined in
Section~\ref{sec:l+2-and-sim--crl+2}.  The closed range operators are
considered as a special sub-category.  In
Section~\ref{sec:l+2-and-quasi-sim}, attention turns to those
operators in $\LL$ which are either quasi-affine or quasi-similar to a
positive operator, where there are characterizations given which are
analogous to those found for operators similar to a positive operator.
Generally, there is only a weak connection between the spectra of
quasi-similar operators.  However, for an operator in $\LL$,
quasi-affinity to a positive operator preserves the spectrum.  It is
also proved that not every operator which is quasi-similar to a
positive operator is in $\LL$, by proving that any operator in $\LL$
which is quasi-similar to a positive operator has a square root which
is quasi-similar to a positive operator, but not all have square roots
in $\LL$.  In Section~\ref{sec:l+2--general-case}, general operators
in $\LL$ are considered, and the main point is that for any
$T \in \LL$, there exist both restrictions and extensions (on the same
Hilbert space) which are also in $\LL$ and which are quasi-affine to a
positive operator.

A constant refrain throughout is that operators in $\LL$ have many of
the properties of positive operators.
Section~\ref{sec:l+2-spectral-properties} examines this resemblance
with regards to local spectral properties.  This is applied in the
final section to algebraic operators and compact operators in $\LL$.

\section{Preliminaries}
\label{sec:preliminaries}

Throughout, all spaces are complex and separable Hilbert spaces.  The
domain, range, closure of the range, null space or kernel, spectrum and
resolvent of any given operator $A$ are denoted by
$\mathrm{dom}\,(A)$, $\ran A$, $\clran A$, $\ker A$, $\sigma(A)$, and
$\rho(A)$, respectively, and $\sigma(A) \subseteq [0,\infty)$ is
indicated as $\sigma(A) \geq 0$.

The space of everywhere defined bounded linear operators from $\HH$ to
$\KK$ is written as $L(\HH, \KK)$, or $L(\HH)$ when $\HH = \KK$, while
$CR(\HH)$ denotes the subset in $L(\HH)$ of closed range operators.
The identity operator on $\HH$ is written as $1$, or $1_{\HH}$ if it
is necessary to disambiguate.

As usual, the direct sum of two subspaces $\M$ and $\N$ with
$\M \cap \N= \{0\}$ is indicated by $\M \dotplus \N$, and the
orthogonal direct sum by $\M \oplus \N$.  The orthogonal complement of
a space $\mc{M}$ is written $\mc{M}^\perp$.  The symbol $\mc{P}$
denotes the class of all Hilbert space orthogonal projections, while
$P_{\M}$ is the orthogonal projection with range $\M$.

Write $GL(\HH)$ for the group of invertible operators in $L(\HH)$,
$\mc{L}^+ = L(\HH)^+$, the class of positive semidefinite operators,
$GL(\HH)^+ := GL(\HH) \cap \mc{L}^+$ and
$CR(\HH)^+ := CR(\HH) \cap \mc{L}^+$.  The paper focuses on the
operators in
\begin{equation*}
  \LL := \{T \in L(\HH): T = AB\text{ where }A,B\in\mc{L}^+\}.
\end{equation*}
Occasionally, this will be written as $\LL(\HH)$ if it is necessary to
clarify on which space the operators are acting.

Given two operators $S, T \in L(\HH)$, the notation $T \leq S$
signifies that $S-T \in \mc{L}^+$ (the \emph{L\"owner order}).  Given
any $T \in L(\HH)$, $\vert T \vert := (T^*T)^{1/2}$ is the modulus of
$T$ and $T = U\vert T\vert$ is the polar decomposition of $T$, with
$U$ the partial isometry such that $\ker U = \ker T$ and
$\ran U = \clran T$.

For $B \in \mc{L}^+$, the \emph{Schur complement} $B_{/ \St}$ of $B$
to a closed subspace $\St \subseteq \HH$ is the maximal element of
$\{ X \in L(\HH): \ 0\leq X\leq B$ and
$\ran X \subseteq \St^{\perp}\}$.  It always exists.  The
$\St$-\emph{compression} of $B$ is defined as
$B_{\St} := B- B_{/ \St}$.

Let $B \in L(\HH)$ be selfadjoint, $\St \subseteq \HH$ a closed
subspace, relative to $\St \oplus \St^\bot$,
\begin{equation*}
  B = \begin{pmatrix}
    B_{11}  & B_{12} \\
    B_{12}^* & B_{22} \\
  \end{pmatrix}.
\end{equation*}
Suppose that $B \geq 0$.  Write $B^{1/2} = \begin{pmatrix} R_1^* \\
  R_2^* \end{pmatrix}$, where $R_1^*$, $R_2^*$ are the rows of
$B^{1/2}$.  Then for $j= 1,2$, $R_j^*R_j = B_{jj}$, and so by Douglas'
lemma, there are isometries $V_j:\clran B_{jj}^{1/2} \to \clran R_j$
such that $R_j = V_j B_{jj}^{1/2}$.  Then
$B_{12} = R_1^* R_2 = B_{11}^{1/2} F B_{22}^{1/2}$, where
$F = V_1^* V_2 : \clran B_{22}^{1/2} \to \clran B_{11}^{1/2}$ is a
contraction.

On the other hand, if $B_{11},B_{22} \geq 0$ and $B_{12}$ has this
form, then
\begin{equation}
  \label{eq:1}
  \begin{split}
    B & =
    \begin{pmatrix}
      B_{11}^{1/2} & 0 \\
      B_{22}^{1/2} F^* & B_{22}^{1/2} D_F
    \end{pmatrix}
    \begin{pmatrix}
      B_{11}^{1/2} & F B_{22}^{1/2} \\
      0 &  D_F B_{22}^{1/2}
    \end{pmatrix} \\
    & =
    \begin{pmatrix}
      B_{11}^{1/2} \\
      B_{22}^{1/2} F^*
    \end{pmatrix}
    \begin{pmatrix}
      B_{11}^{1/2} & F B_{22}^{1/2}
    \end{pmatrix}
    +
    \begin{pmatrix}
      0 & 0 \\
      0 & B_{22}^{1/2} (1-F^*F) B_{22}^{1/2}
    \end{pmatrix},
  \end{split}
\end{equation}
where $D_F = (1_{\clran B_{22}} - F^*F)^{1/2}$ on $\clran B_{22}$.
Therefore, positivity of $B$ is equivalent to $B_{11},B_{22} \geq 0$
and the existence of such a contraction $F$.  The second term in the
sum in \eqref{eq:1} is the Schur complement $B_{/ \St}$, while the
first term is the $\St$-compression of $B$.  In general, it is not
difficult to verify that whenever $B = C^*C$, where
$C: \St \oplus \St^\bot \to \St$, then $B_{/ \St} = 0$.

The next theorem is a slightly strengthened form of one due to
Sebesty\'en (\cite{MR739047}, see also {\cite[Corollary
  2.4]{MR3090434}}).  It plays a central role in what follows.

\begin{thm}[Sebesty\'en]
  \label{Seb}
  Let $A, T \in L(\HH)$.  The equation $AX = T$ has a positive
  solution if and only if
  \begin{equation*}
    TT^* \leq \lambda A T^*
  \end{equation*}
  for some $\lambda \geq 0$, in which case $X$ can be chosen so that
  $\ker X = \ker T$, $X_{/\clran T} = 0$.  Furthermore, if $A \geq 0$
  with $\clran A = \clran T$, then $P_{\clran(T)}X|_{\clran(T)}$ will
  be injective with dense range.
\end{thm}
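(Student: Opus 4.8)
The plan is to prove the two directions separately, constructing the solution by hand in the sufficiency direction and then reading off the stated refinements from the construction.

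For \emph{necessity}, suppose $X\ge 0$ solves $AX=T$. Then $T^*=XA^*$, so $AT^*=AXA^*$ and $TT^*=AX^2A^*$. Taking $\lambda=\|X\|$ one has $X^2\le\lambda X$ (conjugate $\lambda-X\ge 0$ by $X^{1/2}$), and conjugation by $A$ preserves the L\"owner order, whence $TT^*=AX^2A^*\le\lambda AXA^*=\lambda AT^*$. This half uses nothing beyond positivity of $X$.

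For \emph{sufficiency}, assume $TT^*\le\lambda AT^*$ for some $\lambda\ge 0$ (if $\lambda=0$ then $T=0$ and $X=0$ works, so take $\lambda>0$). First I would record two consequences. Since $0\le TT^*\le\lambda AT^*$, the operator $AT^*$ is positive and self-adjoint; testing on $\ker(AT^*)$ shows $T^*x=0$ whenever $AT^*x=0$, so $\ker(AT^*)=\ker T^*$ and hence $\clran(AT^*)=(\ker T^*)^\perp=\clran T$. This identity is the engine of the argument. Writing $C=(AT^*)^{1/2}$ and applying Douglas' lemma to $TT^*\le\lambda C^2$ produces $G$ with $T=\lambda^{1/2}CG$, $\ker G=\ker T$, and $\ran G\subseteq\clran C=\clran T$. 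I then propose $X:=\lambda\,G^*G\ge 0$. The verifications are short: from $C^2=AT^*=\lambda^{1/2}AG^*C$ one gets $(C-\lambda^{1/2}AG^*)C=0$, so $C=\lambda^{1/2}AG^*$ on $\clran C=\clran T$; since $\ran G\subseteq\clran T$ this gives $AX=\lambda AG^*G=\lambda^{1/2}CG=T$. Moreover $\ker X=\ker G=\ker T$, and writing $X=(\lambda^{1/2}G)^*(\lambda^{1/2}G)$ with $\lambda^{1/2}G$ mapping into $\clran T$, the remark preceding the theorem yields $X_{/\clran T}=0$.

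For the final assertion, assume in addition $A\ge 0$ and $\clran A=\clran T$. I would first reduce the two required properties to a single transversality statement. Because $X\ge 0$, for $\xi\in\clran T$ we have $P_{\clran T}X\xi=0\iff\langle X\xi,\xi\rangle=0\iff X\xi=0$, so $\ker\bigl(P_{\clran T}X|_{\clran T}\bigr)=\clran T\cap\ker X=\clran T\cap\ker T$; and the factorisation $P_{\clran T}XP_{\clran T}=(X^{1/2}P_{\clran T})^*(X^{1/2}P_{\clran T})$ identifies the closure of its range with $\overline{P_{\clran T}\,\clran X}=\overline{P_{\clran T}\,\clran T^*}$ (using $\clran X=(\ker X)^\perp=\clran T^*$), which is dense in $\clran T$ precisely when $\clran T\cap\ker T=\{0\}$. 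Thus both injectivity and dense range come down to the transversality $\clran T\cap\ker T=\{0\}$. The clean part uses the hypotheses directly: if $\xi=A\nu\in\ran A$ lies in $\ker T=\ker X$, then $T^*\nu=XA\nu=X\xi=0$, so $\nu\in\ker T^*=(\clran T)^\perp=\ker A$ and $\xi=A\nu=0$. I expect the main obstacle to be upgrading this from $\ran A$ to its closure $\clran A=\clran T$: a vector of $\clran T\cap\ker T$ need not lie in $\ran A$, and controlling it is exactly the delicate range–kernel transversality that the rigged–Hilbert–space analysis of Section~\ref{sec:sim-quasi-sim-to-pos} is built to handle. This closure step, rather than any of the algebra above, is where I anticipate the real difficulty.
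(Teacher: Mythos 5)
Your proof of the equivalence and of the properties $\ker X=\ker T$, $X_{/\clran T}=0$ is correct and follows essentially the same route as the paper: the paper likewise applies Douglas' lemma to write $T=(TA^*)^{1/2}G$ (absorbing $\lambda^{1/2}$ into $G$), extracts $(TA^*)^{1/2}=GA^*=AG^*$ from $TA^*=(TA^*)^{1/2}GA^*$, sets $X=G^*G$, and obtains $X_{/\clran T}=0$ from the matrix form of $G$ together with \eqref{eq:1}. Your derivation of $\clran(AT^*)=\clran T$ via $\ker(AT^*)=\ker T^*$ and selfadjointness is a harmless variant of the paper's two range inclusions, and your necessity argument is the paper's one-liner $\lambda=\|X\|$ spelled out.

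The gap is in the final clause, and you have located it exactly --- but the closure step you flag as ``the real difficulty'' is not a missing trick: it cannot be carried out, because the clause as stated is false. Your reduction is airtight: since $\ker X=\ker T$ and $X\geq 0$, the operator $P_{\clran T}X|_{\clran T}$ is injective (equivalently, has dense range in $\clran T$) if and only if $\clran T\cap\ker T=\{0\}$, and your computation giving $\ran A\cap\ker T=\{0\}$ is the most that follows from the hypotheses. Indeed, the paper's own Example~\ref{example-1} refutes the upgrade to the closure: take $A\geq 0$ injective with dense non-closed range, $x\in\clran A\setminus\ran A$, $\St=\Span\{x\}^\perp$, and $T=AP_\St$. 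Then $TT^*=AT^*=AP_\St A$, so the hypotheses hold with $\lambda=1$ and $\clran A=\clran T=\HH$; but since $A$ is injective, the \emph{only} solution of $AX=T$ is $X=P_\St$, so $P_{\clran T}X|_{\clran T}=P_\St$ has kernel $\Span\{x\}$ and closed proper range $\St$ --- neither injective nor with dense range. Correspondingly, the paper's proof of this clause contains a non sequitur at its last line: from $(TA)^{1/2}=G_1A=AG_1^*$, with $(TA)^{1/2}$ injective with dense range on $\clran T$, one may conclude $\ker G_1^*=\{0\}$ (so $G_1$ has dense range) and that $G_1$ is injective on $\ran A$, but not $\ker(G_1^*G_1)=\ker G_1=\{0\}$; in the example $G$ is a non-normal partial isometry with $G^*G=P_\St$ and dense range, so the conflation of $G_1$ with $G_1^*$ is fatal. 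The correct statement is precisely your transversality criterion: under the stated hypotheses, $P_{\clran T}X|_{\clran T}$ is a quasi-affinity on $\clran T$ if and only if, in addition, $\clran T\cap\ker T=\{0\}$ (equivalently, $\ol{\ran T^*+\ker T^*}=\HH$), as happens for instance for $T=P_\St A$ in Example~\ref{example-2}. So your proposal proves everything in the theorem that is actually true, and the step you anticipated as delicate is a genuine error in the paper (one that also affects the appeal to this clause in the proof of Theorem~\ref{propopt}), not a gap in your argument alone.
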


\begin{proof}
  If $AX = T$ has a positive solution, then $\lambda = \|X\|$
  suffices.  On the other hand, if for some $\lambda \geq 0$,
  $0 \leq TT^* \leq \lambda A T^*$, then $AT^* \geq 0$ and by Douglas'
  lemma, there exists $G$ with $\|G\| \leq \lambda^{1/2}$ and
  $\clran G \subseteq \clran (TA^*)^{1/2}$ satisfying
  $T = (TA^*)^{1/2}G$.  Clearly then, $\ker T = \ker G$ and
  $\clran T \subseteq \clran (TA^*)^{1/2}$.  Also
  $\clran(TA^*)^{1/2} = \clran(TA^*) \subseteq \clran T$, so equality
  holds.  The equality $TA^* = (TA^*)^{1/2} GA^*$ then implies
  $(TA^*)^{1/2} = GA^* = AG^*$.  Thus $T = AG^*G$, and so
  $X = G^*G \geq 0$ with $\ker X = \ker T$ (equivalently,
  $\clran X = \clran T^*$).  Also,
  $\clran G = \clran (TA^*)^{1/2} = \clran T$.  Decomposing
  $\HH = \clran T \oplus \ker T^*$, the operator $G$ has the form
  $G = \begin{pmatrix} G_1 & G_2 \\ 0 & 0 \end{pmatrix}$, and by
  \eqref{eq:1}, $X_{/\clran T} = 0$.

  Finally, if $A \geq 0$ with $\clran A = \clran T$, then $(TA)^{1/2}
  = GA = G_1A = AG_1^*$.  Hence $\ker(G_1^*G_1) = \{0\}$, and so
  $P_{\clran(T)}X|_{\clran(T)}$ is injective with dense range.
\end{proof}

\section{Similarity and quasi-similarity to a positive operator}
\label{sec:sim-quasi-sim-to-pos}

Recall that two operators $S, T \in L(\HH)$ are \emph{similar} if
there exists $G \in GL(\HH)$ such that $TG = GS$.

Mimicking the spectral theory for normal operators, an operator $T$ is
\emph{spectral} if for all $\omega \subseteq \mathbb C$ Borel, there
are (not necessarily orthogonal), uniformly bounded, countably
additive projections $E(\omega)$ commuting with $T$ such that
$\sigma(T|_{\ran{E(\omega)}}) = \overline{\omega}$.  If in addition,
$T= \int_{\sigma(T)} \lambda dE(\lambda)$, $T$ is termed a
\emph{scalar operator}, in which case it is similar to a normal
operator $A$ and $\sigma(T) = \sigma(A)$.  More generally, any
spectral operator $T$ has a unique decomposition $T = S + N$, where
$S$ is scalar, $N$ is quasi-nilpotent, and $SN = NS$.  See, for
example, \cite{MR63563}.

Various papers, including \cite[Theorem 2]{MR244801}, have considered
operators similar to selfadjoint operators.  See also \cite{MR190774}
for the connection with scalar operators.  The following collects
conditions for an operator to be similar to a positive operator.

\begin{thm}
  \label{thmLL}
  Let $T \in L(\HH)$.  The following statements are equivalent:
  \begin{enumerate}
  \item $TG = GS$ for some $G \in GL(\HH)$ and $S \in \mc{L}^+$;
  \item $TX = XT^*$ with $X \in GL(\HH)^+$ and $\sigma(T) \geq 0$;
  \item $T = AB$, with $A, B \in \mc{L}^+$, where $B$, respectively
    $A$, is invertible;
  \item There exist $W,Z \in GL(\HH)^+$ such that $TW \in \mc{L}^+$,
    respectively $ZT \in \mc{L}^+$;
  \item $T$ is a scalar operator and $\sigma (T) \geq 0$.
  \end{enumerate}
  If any of these hold, then
  \begin{equation*}
    \clran T \dotplus \ker T = \HH.
  \end{equation*}
\end{thm}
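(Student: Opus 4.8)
The plan is to prove the five-way equivalence by a short chain of congruence arguments pivoting on the similarity statement (i), to treat (v) separately through the spectral theory of scalar operators, and finally to read the range--kernel decomposition off (i). I would first settle (i) $\Leftrightarrow$ (ii). Assuming (i), write $T = GSG^{-1}$ with $S \geq 0$ and set $X := GG^*$; a direct computation gives $TX = GSG^* = XT^*$, and $\sigma(T) = \sigma(S) \geq 0$. Conversely, given (ii), the operator $G := X^{1/2} \in GL(\HH)^+$ conjugates $T$ to $S := G^{-1}TG$, which the relation $TX = XT^*$ forces to be selfadjoint; the standing hypothesis $\sigma(T) = \sigma(S) \geq 0$ then makes $S$ positive, so $TG = GS$ is (i). Note that (ii) genuinely needs the spectral hypothesis, since $TX = XT^*$ alone only yields similarity to a \emph{self\-adjoint} operator.

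For (i) $\Leftrightarrow$ (iii) $\Leftrightarrow$ (iv) I would again start from $T = GSG^{-1}$ and exhibit the two factorizations $T = (GSG^*)(GG^*)^{-1}$, in which $B := (GG^*)^{-1} \in GL(\HH)^+$ and $GSG^* = (GS^{1/2})(GS^{1/2})^* \in \mc{L}^+$, and $T = (GG^*)(G^{-*}SG^{-1})$, in which $A := GG^* \in GL(\HH)^+$ and the other factor lies in $\mc{L}^+$; the $A$-invertible case may alternatively be deduced from the $B$-invertible case applied to $T^*$, which is similar to the same $S$. Conversely, a decomposition $T = AB$ with $B$ invertible produces $S := B^{1/2}TB^{-1/2} = B^{1/2}AB^{1/2} \geq 0$ and hence (i) with $G = B^{-1/2}$, the $A$-invertible case being symmetric. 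The equivalence with (iv) is then immediate on setting $W := B^{-1}$ and $Z := A^{-1}$, so that $TW = A \geq 0$ and $ZT = B \geq 0$, and reversing these identities.

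The step I expect to be the main obstacle is (v), since it is the only one that leaves elementary operator algebra and appeals to Dunford's theory of spectral operators. Here one uses the dictionary recalled before the statement: an operator is scalar exactly when it is similar to a normal operator. Thus (i) gives (v), because $S \geq 0$ is normal and conjugation by $G$ carries its spectral resolution to a uniformly bounded (by $\|G\|\,\|G^{-1}\|$) family of commuting projections for $T$, with $\sigma(T) = \sigma(S) \geq 0$. Conversely, (v) presents $T$ as similar to a normal $N$ with $\sigma(N) = \sigma(T) \subseteq [0,\infty)$; a normal operator with real spectrum is selfadjoint, hence here positive, and this recovers (i). The care required is in invoking the scalar/normal correspondence correctly and in confirming the uniform boundedness of the transported projections.

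Finally, assuming (i), I would derive the decomposition. For the selfadjoint $S$ one has the orthogonal splitting $\clran S \oplus \ker S = \HH$. Since $G \in GL(\HH)$ is a linear homeomorphism, $\ran T = G\,\ran S$ gives $\clran T = G\,\clran S$, and injectivity of $G$ gives $\ker T = G\,\ker S$. Therefore $\clran T + \ker T = G(\clran S + \ker S) = G\HH = \HH$, while $\clran T \cap \ker T = G(\clran S \cap \ker S) = \{0\}$, so the sum is direct and $\clran T \dotplus \ker T = \HH$.
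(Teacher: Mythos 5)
Your proposal is correct and follows essentially the same route as the paper: the pivot $X = GG^*$, conjugation by $X^{\pm 1/2}$ (using that a selfadjoint operator with spectrum in $[0,\infty)$ is positive, which is where the hypothesis $\sigma(T)\geq 0$ enters, just as you note), the factorization $S = B^{1/2}AB^{1/2}$ with $G = B^{-1/2}$, the transported resolution of the identity $G^{-1}E^S(\cdot)G$ for the scalar-operator equivalence, and the final decomposition read off from $\clran T = G\,\clran S$ and $\ker T = G\,\ker S$. The only difference is bookkeeping --- you prove (i)~$\Leftrightarrow$~(ii), (i)~$\Leftrightarrow$~(iii)~$\Leftrightarrow$~(iv) star-shaped around (i), whereas the paper runs the cycle (i)~$\Rightarrow$~(ii)~$\Rightarrow$~(iii)~$\Rightarrow$~(iv)~$\Rightarrow$~(i) --- which does not change the substance.
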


\begin{proof}
  $(\mathit{i}\kern0.5pt) \Rightarrow (\mathit{ii}\kern0.5pt)$: If
  $0 \leq S = G^{-1}TG$, $G \in GL(\HH)$, then
  $\sigma(T) = \sigma(S) \geq 0$.  Also, since $G^{-1}TG = G^* T^*
  G^{*\,-1}$, it follows that $(GG^*)^{-1}T(GG^*) = T^*$, or
  equivalently, $T(GG^*) = (GG^*)T^*$.

  $(\mathit{ii}\kern0.5pt) \Rightarrow (\mathit{iii}\kern0.5pt)$: Let
  $T = XT^*X^{-1}$, $X \in GL(\HH)^+$, and assume that
  $\sigma(T) \geq 0$.  Then
  \begin{equation*}
    X^{1/2}T^*X^{-1/2} = X^{-1/2}TX^{1/2} = (X^{-1/2}TX^{1/2})^* \in
    \mc{L}^+,
  \end{equation*}
  and so $A := X^{1/2}(X^{-1/2}TX^{1/2})X^{1/2} = TX \geq 0$.
  Consequently, $T = AB$, where $B = X^{-1} > 0$.  Work instead with
  $T^* = X^{-1\,}TX$ to obtain $T^* = BA$, $B \geq 0$ and $A > 0$.

  $(\mathit{iii}\kern0.5pt) \Rightarrow (\mathit{iv}\kern0.5pt)$:
  Suppose that $T = AB$, with $A, B \in \mc{L}^+$ and $B$ invertible.
  Let $W := B^{-1} \in GL(\HH)^+$.  Then $TW = A \in \mc{L}^+$.  If on
  the other hand $A$ is invertible, $Z = A^{-1}$ yields $ZT \geq 0$.

  $(\mathit{iv}\kern0.5pt) \Rightarrow (\mathit{i}\kern0.5pt)$:
  Suppose $W \in GL(\HH)^+$ and $TW \in \mc{L}^+$.  Then
  \begin{equation*}
    W^{-1/2} (TW) W^{-1/2} = W^{-1/2} T W^{1/2} \geq 0.
  \end{equation*}
  Similarly if $ZT \in \mc{L}^+$.
  
  $(\mathit{v}\kern0.5pt) \Leftrightarrow (\mathit{i}\kern0.5pt)$: If
  $G \in GL(\HH)$ is such that $S = G^{-1}TG \in \mc{L}^+$, then
  $\sigma(T) \geq 0$.  Let $E^S$ be the spectral measure of $S$, so
  that $S = \int_{\sigma(S)} \lambda \ dE^S(\lambda)$.  Then
  $E^T(\cdot) := G^{-1}E^S(\cdot)G$ is a resolution of the identity
  for $T$ and $T = \int_{\sigma(S)} \lambda \ dE^T(\lambda)$.  Thus
  $T$ is scalar.
	
  Conversely, if $T$ is scalar and $\sigma (T) \geq 0$, then $T$ is
  similar to $S$ normal with $\sigma(S) = \sigma (T) \geq 0$, and thus
  $S \in \mc{L}^+$.

  To prove the last statement, assume $(\mathit{i})$.  Since
  $S \geq 0$, $\clran S \oplus \ker S = \HH$.  Also,
  $\clran T = G\clran S$ and $\ker T = G \ker S$.  Since $G$ is
  injective, $\clran T \cap \ker T = \{0\}$, and since $G$ is
  surjective $\clran T + \ker T = \HH$.  Hence
  $\clran T \dotplus \ker T = \HH$.
\end{proof}

If $T\in \LL$ is similar to $S\geq 0$, with $TG = GS$ (where without
loss of generality in $(\mathit{i}\kern0.5pt)$ in the last theorem,
$G$ can be taken to be positive), then as previously noted,
$\Omega = \sigma(T) = \sigma(S) \subset \mathbb R^+$.  Moreover, since
it is possible to define a Borel functional calculus for $S$ on
$\Omega$, the same then holds for $T$ (see Theorem~\ref{thmLL}, where
this is essentially what is implied by $T$ being a scalar operator).
In particular, if $f$ is a Borel function, then $f(T) = Gf(S)G^{-1}$
is well-defined.

If $f(\Omega) \subset \mathbb R^+$, then $f(S) \geq 0$ and
\begin{equation*}
  f(T) = (Gf(S)G)(G^{-2}) \in \LL.
\end{equation*}
A case of particular interest is $f(x) = x^{1/2}$.  Since $T = AB$,
$A = (GSG)$, $B = (G^{-2})$, it follows that $T^{1/2} = A'B$ when
\begin{equation*}
  A'BA' = A, \qquad A' \geq 0.
\end{equation*}
This is an example of a \emph{Ricatti equation}, and more generally,
an operator $T\in \LL$ will have a square root if for some
factorization $T = AB$, $A,B \geq 0$, there exists $A' \geq 0$
satisfying this equality.  This is examined more closely later in the
section.

There is also a close connection with the \emph{geometric mean},
defined for two positive operators $E$ and $F$ with $E$ invertible as
\begin{equation*}
  E\gm F = E^{1/2}(E^{-1/2}FE^{-1/2})^{1/2}E^{1/2},
\end{equation*}
and so satisfies $(E\gm F)E^{-1}(E\gm F) = F$.

\begin{lema}
  \label{lem:sq-root-w-geom-mean}
  If $T$ is similar to a positive operator and $T = AB$ with $B\in
  GL(\HH)^+$, respectively, $A \in GL(\HH)^+$, then
  \begin{equation*}
    T^{1/2} = (B^{-1}\gm A) B,
  \end{equation*}
  respectively, $T^{1/2} = A(A^{-1} \gm B)$.
\end{lema}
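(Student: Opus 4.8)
The plan is to reduce everything to the Riccati equation singled out just before the statement. Recall that for $T = AB$ with $A,B\geq 0$ and $B\in GL(\HH)^+$, the discussion preceding the lemma shows that $T^{1/2} = A'B$ as soon as one produces a positive $A'$ solving $A'BA' = A$. So my first task is simply to exhibit such an $A'$ and to recognize it as $B^{-1}\gm A$. This is immediate from the defining identity of the geometric mean, $(E\gm F)\,E^{-1}(E\gm F) = F$: taking $E = B^{-1}$ (which is positive and invertible) and $F = A$, so that $E^{-1} = B$, gives $(B^{-1}\gm A)\,B\,(B^{-1}\gm A) = A$. Since $B^{-1}\gm A \geq 0$, it is a positive solution of the Riccati equation, whence $T^{1/2} = (B^{-1}\gm A)B$.

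The one point I would take care over is confirming that $A'B$ is genuinely the principal square root furnished by the functional calculus, and not merely some operator whose square is $T$. I would settle this by noting that the positive solution of $XBX = A$ is unique: putting $Y := B^{1/2}XB^{1/2}\geq 0$ turns the equation into $Y^2 = B^{1/2}AB^{1/2}$, so $Y = (B^{1/2}AB^{1/2})^{1/2}$ and hence $X = B^{-1/2}(B^{1/2}AB^{1/2})^{1/2}B^{-1/2} = B^{-1}\gm A$. Because the principal root coming from the discussion above is itself of the form $A'B$ with $A'\geq 0$ solving the Riccati equation, uniqueness forces $A' = B^{-1}\gm A$, and there is no ambiguity. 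I expect this uniqueness remark to be the only genuinely load-bearing step; the rest is the algebraic identity for $\gm$.

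For the dual statement, with $T = AB$ and $A\in GL(\HH)^+$, I would proceed by symmetry. The cleanest route is to apply the first part to $T^* = BA$ (which is again similar to a positive operator, and has invertible positive left-hand-side factor in the needed orientation): this yields $(T^*)^{1/2} = (A^{-1}\gm B)A$, and taking adjoints, using that $A$ and $A^{-1}\gm B$ are selfadjoint and that $(T^{1/2})^* = (T^*)^{1/2}$, gives $T^{1/2} = A(A^{-1}\gm B)$. Alternatively, one can argue directly as above, seeking $B'\geq 0$ with $B'AB' = B$, writing $T^{1/2} = AB'$, and identifying $B' = A^{-1}\gm B$ via the geometric mean identity with $E = A^{-1}$, $F = B$. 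Either way the obstacle is the same, already resolved, namely pinning down the principal branch by uniqueness.
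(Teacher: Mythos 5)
Your proposal is correct and takes essentially the same approach as the paper: both hinge on the Riccati equation $A'BA' = A$ from the discussion preceding the lemma together with the geometric-mean identity, and your uniqueness computation via $Y = B^{1/2}XB^{1/2}$, $Y^2 = B^{1/2}AB^{1/2}$, is exactly the paper's identification $B^{-1}\gm A = GS^{1/2}G$ for $G = B^{-1/2}$ and $S = B^{1/2}AB^{1/2}$. The only differences --- making the uniqueness of the positive solution explicit (the paper leaves it implicit in its appeal to the earlier discussion) and deriving the second formula by taking adjoints in $(T^*)^{1/2} = (A^{-1}\gm B)A$ rather than working with $G^{-1}T = SG^{-1}$ --- are cosmetic.
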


\begin{proof}
  By Theorem~\ref{thmLL}, $B$ can be chosen invertible in $T = AB$,
  and then with $G = B^{-1/2}$ and $S = B^{1/2}AB^{1/2}$, $TG = GS$.
  Setting $E = B^{-1} = G^2$ and $F = A = GSG$, it follows that
  $E\gm F = GS^{1/2}G$, and hence $T^{1/2} = (B^{-1} \gm A) B$ since
  $(B^{-1} \gm A) B (B^{-1} \gm A) = A$.  If instead $A$ is chosen to
  be invertible, then working with $G^{-1}T = SG^{-1}$, one obtains
  $T^{1/2} = A(A^{-1} \gm B)$.
\end{proof}

An operator which is injective with dense range is termed a
\emph{quasi-affinity}.  An operator $T$ is \emph{quasi-affine} to $C$
if there is a quasi-affinity $X$ such that
\begin{equation*}
  TX = XC.
\end{equation*}
The operators $T$ and $C$ are said to be \emph{quasi-similar} if there
exist quasi-affinities $X, Y \in L(\HH)$ such that
\begin{equation*}
  TX = XC \quad\text{ and }\quad YT = CY.
\end{equation*}

A finite or countable system $\{\St_n\}_{ 1\leq n < m}$ of subspaces
of $\HH$ is called \emph{basic} if
$\St_n \dotplus \overline{\bigvee}_{k \neq n} \St_k = \HH$ for every
$n$ ($\overline{\bigvee}$ indicating the closed span), and
$\bigcap_{n \geq 1} (\overline{\bigvee}_{k \geq n} \St_k) = \{0\}$ if
$m = \infty$.  In \cite{MR402522}, Apostol uses basic systems to
characterize those operators which are quasi-similar to normal
operators.  With only minor modification, his proof works to
characterize quasi-similarity to positive operators.

\begin{thm}[Apostol]
  \label{Apostol}
  The operator $T \in L(\HH)$ is quasi-similar to a positive operator
  if and only if there exists a basic system $\{\St_n\}_{n \geq 1}$ of
  invariant subspaces of $T$ such that $T|_{\St_n}$ is similar to a
  positive operator.
\end{thm}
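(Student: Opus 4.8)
The plan is to adapt Apostol's argument for normal operators, since the author explicitly tells us that "with only minor modification, his proof works." Both directions must be established. I will treat the existence of a basic system of invariant subspaces on which $T$ restricts to something similar to a positive operator as the structural data we can extract from (or use to reconstruct) quasi-similarity.

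\textbf{The sufficiency direction.} Suppose we are given a basic system $\{\St_n\}_{n\geq 1}$ of invariant subspaces of $T$ with each $T|_{\St_n}$ similar to a positive operator $S_n \in \mc{L}^+$ via $G_n \in GL(\St_n)$. The goal is to manufacture global quasi-affinities $X$ and $Y$ intertwining $T$ with a single positive operator. The idea is to work on the orthogonal sum $\HH_0 := \bigoplus_n \St_n$ (with respect to a suitable rescaling so that the sum converges), form the block-diagonal positive operator $S := \bigoplus_n \widetilde S_n$, and define a map from $\HH_0$ into $\HH$ that sends the $n$-th summand into $\St_n$. The basic system condition is exactly what guarantees that the natural ``summing'' map $\HH_0 \to \HH$, $(\xi_n) \mapsto \sum_n \xi_n$, can be arranged (after scaling each $\St_n$ by coefficients $c_n \to 0$ fast enough) to be a bounded quasi-affinity: injectivity follows from the condition $\bigcap_n (\overline{\bigvee}_{k\geq n}\St_k) = \{0\}$ together with $\St_n \dotplus \overline{\bigvee}_{k\neq n}\St_k = \HH$, and dense range follows since $\bigvee_n \St_n = \HH$. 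Composing with the block similarities $G_n$ produces the required intertwiner $X$ with $TX = XS$; running the same construction for $T^*$ (whose restrictions to the same kind of basic system are also similar to positive operators, since $(T|_{\St_n})^*$ is similar to $S_n$) yields $Y$ with $YT = SY$. One must check boundedness of the scaled maps and that the rescaling does not destroy positivity of the limiting $S$, which is where the uniform-boundedness of projections implicit in a basic system is used.

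\textbf{The necessity direction.} Conversely, suppose $T$ is quasi-similar to $S \geq 0$, with quasi-affinities $X,Y$ satisfying $TX = XS$ and $YT = SY$. Using the spectral measure $E$ of $S$, partition $\sigma(S) \subseteq [0,\infty)$ into a countable disjoint family of Borel sets $\omega_n$ with $S|_{\ran E(\omega_n)}$ having spectrum bounded away from $0$ (and of finite diameter), so that each $S_n := S|_{\ran E(\omega_n)}$ is a positive operator on which things are well-controlled. Set $\St_n := \overline{X(\ran E(\omega_n))}$. The intertwining $TX = XS$ forces each $\St_n$ to be $T$-invariant, and the restriction $T|_{\St_n}$ is quasi-affine to $S_n$; the separation of spectra $\sigma(S_n)$ is what upgrades this to \emph{similarity} (quasi-affinity plus a spectral-gap/spectral-subspace argument, exactly as in Apostol). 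The remaining task is to verify that $\{\St_n\}$ is a basic system. Density of $\sum_n \St_n$ comes from $X$ having dense range and $\sum_n E(\omega_n) = 1$; the algebraic directness $\St_n \dotplus \overline{\bigvee}_{k\neq n}\St_k = \HH$ and the tail condition $\bigcap_n \overline{\bigvee}_{k\geq n}\St_k = \{0\}$ follow from injectivity of $X$ and $Y$ transported through the orthogonality of the spectral projections $E(\omega_n)$.

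\textbf{The main obstacle.} I expect the delicate point to be promoting quasi-affinity of $T|_{\St_n}$ to $S_n$ into genuine \emph{similarity}. Quasi-affinity alone is far weaker, and in general quasi-similar operators need not be similar; here it works only because each $S_n$ has spectrum confined to a short interval away from $0$ and because $S_n$ is a positive (hence normal) operator, so that Rosenblum/Sylvester-type or Riesz-projection arguments on the separated spectral pieces apply. Making the rescaling in the sufficiency direction simultaneously preserve boundedness, injectivity, dense range, \emph{and} the positivity of the assembled operator $S$ is the other technically fussy point, but the genuine conceptual hurdle is the quasi-affine-to-similar upgrade on each block, which is precisely the ``minor modification'' of Apostol's normal-operator argument to the positive setting.
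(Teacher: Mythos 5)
The paper gives no proof of this theorem at all: it cites Apostol~\cite{MR402522} and asserts that his argument for normal operators carries over with minor changes, so your sketch has to be measured against Apostol's actual argument. Your sufficiency direction is essentially his, and is correct in outline: the weighted summing map $X(\oplus_n x_n)=\sum_n \alpha_n G_n x_n$ from $\bigoplus_n \St_n$ (carrying $S=\bigoplus_n S_n$, automatically bounded since $\Vert S_n\Vert$ equals the spectral radius of $T|_{\St_n}$, at most $\Vert T\Vert$; rescaling never threatens positivity) is bounded for summable $\alpha_n$, injective by the directness condition, and has dense range. But two points need repair. The second intertwiner does \emph{not} come from ``running the same construction for $T^*$'': the $\St_n$ are not $T^*$-invariant, and $(T|_{\St_n})^*$ is a compression of $T^*$, not a restriction. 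Instead one uses the oblique projections $Q_n$ onto $\St_n$ along $\overline{\bigvee}_{k\neq n}\St_k$, which commute with $T$, setting $Yy=\oplus_n \beta_n G_n^{-1}Q_n y$, so that $YT=SY$, with injectivity of $Y$ coming from the tail condition. Also, your appeal to ``uniform boundedness of projections implicit in a basic system'' is wrong: the definition gives each $Q_n$ bounded (closed graph theorem), with no uniform bound; the weights $\beta_n$ are there precisely to absorb the growth of $\Vert Q_n\Vert$. These slips are repairable.

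The genuine gap is in your necessity direction. You slice $\sigma(S)$ by the spectral measure of $S$ into thin pieces bounded away from $0$ and claim that quasi-affinity of $T|_{\St_n}$ to $S_n$ is upgraded to similarity by a spectral-gap/Riesz/Rosenblum argument. That mechanism is false. Quasi-affinity to a positive operator whose spectrum lies in an arbitrarily short interval separated from zero does not imply similarity: take the operator $T$ of Example~\ref{exmpl:qs-to-pos-but-no-A-or-B-w-clsd-rng}, quasi-similar but not similar to a positive $S$ via $TX=XS$, $YT=SY$; then for any $c,\epsilon>0$ the same intertwiners give $\epsilon(T+c)X=X\,\epsilon(S+c)$ with $\sigma(\epsilon(S+c))\subseteq[\epsilon c,\epsilon(c+\Vert S\Vert)]$, as thin and as far from $0$ as you like, yet $\epsilon(T+c)$ is similar to no positive operator (otherwise, since $\sigma(T)\geq 0$ by Lemma~\ref{lemaT1a}, $T$ would be similar to a positive operator). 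Riesz and Rosenblum-type arguments separate disjoint parts of the spectrum of a single operator; they cannot invert a quasi-affinity. Apostol's actual device is entirely different: from $TX=XS$ and $YT=SY$ one gets $B:=YX$ commuting with $S$ (and, $S$ being selfadjoint, so do $B^*$ and $|B|$), so after replacing $Y$ by $V^*Y$, where $B=V|B|$ is the polar decomposition with $V$ a unitary in the commutant of $S$, one may assume $B=|B|\geq 0$, injective. One then slices by the spectral projections $F_n$ of $|B|$ for intervals $[\epsilon_{n+1},\epsilon_n)$ with $\epsilon_n\downarrow 0$: each $H_n=F_n\HH$ reduces $S$, $|B|$ is invertible on $H_n$, and with $\St_n=\overline{XH_n}$ one has $Y\St_n\subseteq H_n$ and $(Y|_{\St_n})(X|_{H_n})=|B|\,|_{H_n}$ bounded below, so $X|_{H_n}:H_n\to\St_n$ is bounded below with dense range, hence invertible, and $T|_{\St_n}$ is \emph{similar} to the positive operator $S|_{H_n}$. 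The similarity comes from invertibility of the intertwiner on each slice -- the slices live in the spectral decomposition of $|YX|$, not of $S$ -- and the same bounded-below estimates are what yield the basic-system properties (boundedness of the oblique projections, and the tail condition via orthogonality of the $H_n$ and injectivity of $Y$). Partitioning $\sigma(S)$ instead discards exactly the ingredient that makes the theorem true.
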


It is sometimes useful to relax the conditions in the definition of
quasi-similarity so that instead, $TX = XC$ and $YT = DY$.  The next
lemma shows that if $C$ and $D$ are positive, this is no more general.

\begin{lema}
  \label{lem:Propqs}
  Let $T \in L(\HH)$ such that $TX = XC$ and $YT = DY$, with $X, Y$
  quasi-affinities and $C, D \in \mc{L}^+$.  Then
  \begin{enumerate}
  \item $C$ is quasi-similar to $D$, and
  \item $T$ is quasi-similar to $C$.
  \end{enumerate}
\end{lema}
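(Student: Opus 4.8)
The plan is to reduce everything to two elementary facts about quasi-affinities: the product of two quasi-affinities is again a quasi-affinity, and the adjoint of a quasi-affinity is a quasi-affinity. Both follow at once from the identities $\ker A^* = (\clran A)^\perp$ and $\overline{\ran A^*} = (\ker A)^\perp$. For the adjoint this is immediate; for a product $YX$, injectivity of $X$ and $Y$ gives $\ker(YX) = \{0\}$, while injectivity of $X^*$ and $Y^*$ gives $\ker(YX)^* = \ker(X^*Y^*) = \{0\}$, so $YX$ is injective with dense range.

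For $(\mathit{i}\kern0.5pt)$ I would first compose the two intertwining relations. Setting $W := YX$ (a quasi-affinity by the above), the hypotheses $TX = XC$ and $YT = DY$ give
\[
  WC = YXC = Y(TX) = (YT)X = DYX = DW,
\]
so $D$ is quasi-affine to $C$ through $W$. The crucial step is to produce the reverse intertwiner, and here is where positivity enters: taking adjoints of $DW = WC$ and using $C = C^*$, $D = D^*$ yields $CW^* = W^*D$, with $W^*$ again a quasi-affinity. Thus $C$ is quasi-affine to $D$ as well, and the two relations $WC = DW$ and $CW^* = W^*D$ are precisely what is required to conclude that $C$ and $D$ are quasi-similar.

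For $(\mathit{ii}\kern0.5pt)$ I already have one half from the hypothesis, namely $TX = XC$ with $X$ a quasi-affinity. For the other half I would chain $W^*$ with $Y$: put $Y' := W^*Y$, a product of quasi-affinities and hence a quasi-affinity, and compute
\[
  Y'T = W^*(YT) = W^*DY = (W^*D)Y = (CW^*)Y = CY',
\]
using the relation $CW^* = W^*D$ established in $(\mathit{i}\kern0.5pt)$. Together with $TX = XC$, this shows that $T$ is quasi-similar to $C$.

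The only genuinely nonroutine point is the passage from the one-sided relation $DW = WC$ to two-sided quasi-similarity; everything hinges on the selfadjointness of $C$ and $D$, which lets a single adjoint convert a quasi-affinity in one direction into one in the opposite direction. The remaining verifications — that the relevant products and adjoints are quasi-affinities, and that the displayed intertwining identities hold — are routine bookkeeping.
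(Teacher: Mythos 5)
Your proof is correct and follows essentially the same route as the paper: the same composite quasi-affinity $W = YX$ with $WC = DW$, the same adjoint trick using selfadjointness of $C$ and $D$ to get $CW^* = W^*D$, and the same second intertwiner $Y' = (YX)^*Y$ for part $(\mathit{ii}\kern0.5pt)$. The only cosmetic difference is that in $(\mathit{ii}\kern0.5pt)$ you invoke the relation $CW^* = W^*D$ already established in $(\mathit{i}\kern0.5pt)$, whereas the paper re-derives it by expanding adjoints along the chain $(YX)^*DY = X^*T^*Y^*Y = CX^*Y^*Y$.
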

	
\begin{proof}
  $(\mathit{i}\kern0.5pt)$: Since $(YX)C = YTX = D(YX)$,
  $C(YX)^* = (YX)^*D$, and the claim follows since $YX$ and $(YX)^*$
  are quasi-affinities.

  $(\mathit{ii}\kern0.5pt)$: Set $Y' := (YX)^*Y$.  Then $Y'$ is a
  quasi-affinity and
  \begin{equation*}
    \begin{split}
      & Y'T = (YX)^*YT = (YX)^*DY = X^*Y^*DY \\
      =& X^*T^*Y^*Y = (TX)^*Y^*Y = CX^*Y^*Y = CY'.
    \end{split}
  \end{equation*}
  By assumption $TX = XC$, so it follows that $T$ is quasi-similar to
  $C$.
\end{proof}

\begin{Def}
  A \emph{rigged Hilbert space} is a triple $(\St, \HH, \St^*)$ with
  $\HH$ a Hilbert space and $\St \subseteq \HH$ a dense subspace such
  that the inclusion $\iota: \St \to \HH$ is continuous.  The space
  $\St^*$ is the dual of $\St$, and $\HH^* = \HH$ is mapped into
  $\St^*$ via the adjoint map $\iota^*$.  The spaces $\St$ and $\St^*$
  are identified as Hilbert spaces, with $\iota^*\iota(\HH) = \HH^*$.
  
  Let $X \in L(\HH)$.  Define an inner product on $\ran X$ by
  \begin{equation*}
    \PI{x}{y}_X := \PI{X^{-1}x}{X^{-1}y}, \qquad x,y \in \ran X.
  \end{equation*}
  Then $\HH_X := (\ran X,\PI{\cdot}{\cdot}_X)$ is a Hilbert space.
\end{Def}

The primary case of interest is when $X$ is a quasi-affinity, in which
case $\HH_X$ can be viewed as a rigged Hilbert space.

\begin{prop}
  \label{PropII}
  Let $T \in L(\HH)$ such that $TX = XC$ with $X$ a quasi-affinity and
  $C \in \mc{L}^+$.  Then $\HH_X$ can be identified with a rigged
  Hilbert space and $\hat{T} := T|_{\ran X} \in L(\HH_X)^{+}$.
  Furthermore, $\ran T \cap \ker T = \{0\}$ and
  \begin{equation*}
    \ol{\ran T \dotplus \ker T} = \HH.
  \end{equation*}
\end{prop}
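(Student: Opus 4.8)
The plan is to handle the four assertions in turn, securing the rigged structure and the positivity of $\hat T$ first, reading off the density statement cheaply, and isolating the triviality of $\ran T\cap\ker T$ as the one substantial point.

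\emph{Rigging and positivity.} Since $X$ is a quasi-affinity, $X^{-1}$ is defined on $\ran X$ and $\|Xh\|_X=\|h\|_{\HH}$, so $X\colon\HH\to\HH_X$ is a surjective isometry, hence unitary; in particular $\HH_X$ is complete. For $x\in\HH_X=\ran X$ the set inclusion $\iota\colon\HH_X\to\HH$ satisfies $\|\iota x\|_{\HH}=\|X(X^{-1}x)\|_{\HH}\le\|X\|\,\|x\|_X$ and has dense range $\ran X$, so it is a continuous dense embedding and $(\HH_X,\HH,\HH_X^{*})$ is a rigged Hilbert space. The intertwining $TX=XC$ gives $T(Xh)=X(Ch)\in\ran X$, so $\hat T:=T|_{\ran X}$ maps $\HH_X$ into itself and $X^{-1}\hat T X=C$; as $X$ is unitary from $\HH$ onto $\HH_X$ and $C\in\mc{L}^+$, this exhibits $\hat T$ as unitarily equivalent to $C$, whence $\hat T\in L(\HH_X)^{+}$ with $\|\hat T\|=\|C\|$.

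\emph{Density.} From $TX=XC$ one gets $X(\ker C)\subseteq\ker T$ and $X(\ran C)=T(\ran X)\subseteq\ran T$. Since $C\in\mc{L}^+$, $\HH=\ker C\oplus\clran C$ with $\ran C$ dense in $\clran C$, so $\ker C+\ran C$ is dense in $\HH$; because $X$ is continuous with dense range, $\ol{X(\ker C+\ran C)}=\ol{\ran X}=\HH$. Therefore $\ol{\ran T+\ker T}\supseteq\ol{X(\ker C)+X(\ran C)}=\HH$, and once the intersection is shown to be trivial the sum is direct, yielding $\ol{\ran T\dotplus\ker T}=\HH$.

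\emph{The crux: $\ran T\cap\ker T=\{0\}$.} I would recast this as the absence of a length-two Jordan chain at $0$, i.e.\ $\ker T^2=\ker T$, since $y\in\ran T\cap\ker T$ with $y=Tw$ and $y\ne0$ is exactly a vector $w\in\ker T^2\setminus\ker T$. On $\ran X$ this is immediate from $\hat T\ge0$: if $Xh\in\ker T^2$ then $XC^2h=0$, so $C^2h=0$, hence $Ch=0$ and $Xh\in\ker T$. The difficulty is that a general $w$ with $T^2w=0$ need not lie in $\ran X$. To reach these, the intended route is to pass to the triple: the positive operator $\hat T$ on $\HH_X$ extends canonically to a bounded positive operator $P$ on the conjugate-dual $\HH_X^{*}$, into which $\HH$ embeds continuously and densely via some injective $e$. \emph{Granting} an intertwining $eT=Pe$, one then has $P^{2}(ew)=e(T^{2}w)=0$, and since $P\ge0$ on the Hilbert space $\HH_X^{*}$ gives $\ker P^2=\ker P$ (because $\PI{P z}{P z}=\PI{P^{2}z}{z}=0$ forces $Pz=0$), it follows that $P(ew)=e(Tw)=0$, so $Tw=0$ by injectivity of $e$.

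\emph{Main obstacle.} The genuine difficulty lies in the last step: the canonical extension of $\hat T$ to $\HH_X^{*}$ restricts on $\HH$ to $T^{*}$, not to $T$, because transposition turns a restriction of $T$ into $T^{*}$; thus the naive triple yields $\ran T^{*}\cap\ker T^{*}=\{0\}$ rather than the stated identity. Overcoming this is exactly where the companion relation $X^{*}T^{*}=CX^{*}$, obtained by taking adjoints in $TX=XC$, becomes indispensable: it supplies the invariance needed to realise $T$ itself as the $\HH$-part of a positive operator on a dual space, and is equivalently encoded in the boundedness estimate $C(X^{*}X)C\le\|T\|^{2}\,X^{*}X$. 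I expect the careful bookkeeping of these dual identifications, not any single inequality, to be the real work.
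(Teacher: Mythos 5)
Your first two parts are correct and essentially coincide with the paper's argument: the estimate $\|\iota x\|\leq \|X\|\,\|x\|_X$ gives the rigging, your observation that $X:\HH\to\HH_X$ is unitary and $\hat T=XCX^{-1}$ is the same computation the paper performs via $\PI{\hat T y}{v}_X=\PI{Cx}{w}$, and your density argument (push the dense manifold $\ker C+\ran C$ forward through the continuous, dense-range map $X$) is a clean rendering of the paper's step $\HH_X=\ran X\subseteq \clran T+\ker T$.

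The crux, however, is not proved. You correctly recast $\ran T\cap\ker T=\{0\}$ as $\ker T^2=\ker T$, and you correctly diagnose that the canonical extension $P$ of $\hat T$ to $\HH_X^*$ satisfies $Pe=eT^*$ rather than $Pe=eT$ --- but at that point the argument stops: the intertwining $eT=Pe$ you ``grant'' does not exist, and the repair you gesture at via $X^*T^*=CX^*$ cannot produce it. That companion relation is a \emph{backward} intertwining for $T^*$ and yields only the adjoint statement, which is in fact trivial: if $z=T^*w$ and $T^*z=0$, then $C^2X^*w=X^*(T^*)^2w=0$, so $CX^*w=0$, so $X^*z=CX^*w=0$ and $z=0$ by injectivity of $X^*$. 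Nothing analogous is available for $T$ itself, since $TX=XC$ only controls $T$ on $\ran X$; and no amount of ``bookkeeping of dual identifications'' will convert one into the other, because quasi-affinity is genuinely asymmetric (in Example~\ref{example-1}, $T$ is quasi-affine to a positive operator while $T^*$ is not). The paper closes the gap not by dualizing but by an approximation argument, which is the idea missing from your proposal: given $0\neq z=Tx\in\ker T$, choose $h_n$ with $XX^*h_n\to x$ (possible since $XX^*$ is a positive quasi-affinity), note $XC^2X^*h_n=T^2XX^*h_n\to 0$, deduce $CX^*h_n\to 0$ using positivity of $C$ and $\clran(CX^*)=\clran C$, and conclude $TXX^*h_n=XCX^*h_n\to 0$, forcing $Tx=0$, a contradiction. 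It is precisely this passage from your immediate observation on $\ran X$ to arbitrary vectors, via approximation through $\ran(XX^*)$, that your sketch lacks.
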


\begin{proof}
  Let $y = XX^{-1}y \in \ran X$.  Then
  $\Vert y \Vert \leq \Vert X \Vert \Vert X^{-1} y \Vert = \Vert X
  \Vert \Vert y \Vert_X$.  Therefore the inclusion map
  $\iota: \HH_X \hookrightarrow \HH$ is continuous.  Thus $\HH_X$ (or
  more properly, the triple $(\HH_X, \HH, \HH_X^*)$) is a rigged
  Hilbert space.  This space is simply denoted as $\HH_X$.  Note that
  for any set $\St \subseteq \ran X$,
  $\ol{\St}^{\HH_X} \subseteq \ol{\St}$.

  Since $TX = XC$, $T(\ran X) \subseteq \ran X$ and $\hat{T}$ is well
  defined.  Also, if $y = Xx$,
  $v=Xw$ for some $x,w \in \HH$,
  \begin{equation*}
    \PI{\hat{T}y}{v}_{X} = \PI{X^{-1}Ty}{X^{-1}v} =
    \PI{X^{-1}TXx}{w} = \PI{X^{-1}XCx}{w} = \PI{Cx}{w}.
  \end{equation*}
  Since $\|y\|_X = \|x\|$ and $\|v\|_X = \|w\|$, taking the supremum
  over $y$ and $v$ with norm $1$ gives that
  $\Vert \hat{T}\Vert_X = \Vert C \Vert$ and so $\hat{T}$ is bounded
  in $\HH_X$.  Taking $v = y$ then yields $\hat{T} \in L(\HH_X)^{+}$.
  By positivity $\HH = \clran C \oplus\, \ker C$, and $\clran T =
  \overline{X\ran C}$, $\ker T = \overline{X\ker C}$, so
  $\clran^{\HH_X}(\hat{T}) \oplus_{\HH_X}\ker \hat{T} = \HH_X = \ran X
  \subseteq \clran T + \ker T$, and thus $\ol{\ran T+\ker T} = \HH$.

  Now suppose that $0 \neq z = Tx \in \ker T$.  There is a sequence
  $\{h_n\}$ in $\HH$ such that $XX^* h_n \to x$, and so
  $T^2 XX^*h_n = XC^2X^*h_n \to 0$.  Let $g_n = X^* h_n$ for all $n$.
  Then $X g_n \to x$ and for all $y\in \HH$,
  \begin{equation*}
    \PI{Cg_n}{CX^*y} = \PI{XC^2 X^* h_n}{y} \to 0.
  \end{equation*}
  Since $\clran(CX) = \clran C$, it follows that $Cg_n \to 0$.  Hence
  $TXX^* h_n = XCX^* h_n = XCg_n \to 0$, which implies that $Tx = 0$,
  a contradiction.
\end{proof}

\begin{cor}
  \label{CorII}
  Let $T \in L(\HH)$ be quasi-similar to a positive operator.  Then
  $\clran T \cap \ker T = \{0\}$ and $\clran T \dotplus \ker T$ is
  dense in $\HH$.
\end{cor}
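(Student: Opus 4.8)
The plan is to derive the corollary from Proposition~\ref{PropII} by invoking it twice, once for $T$ and once for $T^*$, and then dualizing. By definition of quasi-similarity there are quasi-affinities $X,Y$ and a positive operator $C$ with $TX = XC$ and $YT = CY$.

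For the density assertion I would apply Proposition~\ref{PropII} directly to the relation $TX=XC$, obtaining $\ol{\ran T + \ker T} = \HH$. Since $\ran T \subseteq \clran T$, this at once forces $\ol{\clran T + \ker T} = \HH$, so (once the intersection is known to be trivial) $\clran T \dotplus \ker T$ is dense. This half is essentially free.

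The substantive point is upgrading $\ran T \cap \ker T = \{0\}$, which is all Proposition~\ref{PropII} gives for $T$, to the stronger $\clran T \cap \ker T = \{0\}$ involving the \emph{closed} range. Rather than trying to improve the statement for $T$ itself, I would pass to the adjoint. Taking adjoints in $YT = CY$ and using $C = C^*$ yields $T^*Y^* = Y^*C$; as the adjoint of a quasi-affinity is again a quasi-affinity (here $\ker Y^* = (\ran Y)^\perp = \{0\}$ and $\ol{\ran Y^*} = (\ker Y)^\perp = \HH$), Proposition~\ref{PropII} applies to $T^*$ and gives $\ol{\ran T^* + \ker T^*} = \HH$.

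Finally I would dualize by taking orthogonal complements. Using $(\ran T^*)^\perp = \ker T$ and $(\ker T^*)^\perp = \clran T$, together with the fact that the orthogonal complement of a set equals that of its closure,
\[
  \{0\} = \bigl(\ol{\ran T^* + \ker T^*}\bigr)^\perp = (\ran T^*)^\perp \cap (\ker T^*)^\perp = \ker T \cap \clran T,
\]
which is precisely the remaining claim. The one place demanding care---the ``hard part''---is recognizing that the closed-range intersection condition for $T$ is the orthogonal-complement dual of the sum-density condition for $T^*$; this is why the second application of Proposition~\ref{PropII}, to $T^*$, accomplishes what the first application, to $T$, cannot.
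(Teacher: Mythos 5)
Your proof is correct and is essentially the paper's own argument: the paper likewise applies Proposition~\ref{PropII} to both $T$ and $T^*$ (the latter legitimate because $YT=CY$ dualizes to $T^*Y^*=Y^*C$ with $Y^*$ a quasi-affinity) and obtains $\clran T \cap \ker T = \{0\}$ by taking orthogonal complements in $\ol{\ran T^* \dotplus \ker T^*} = \HH$. The paper merely compresses the orthogonal-complement computation that you spell out explicitly.
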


\begin{proof}
  If $T \in L(\HH)$ is quasi-similar to a positive operator $C$, by
  Proposition~\ref{PropII}, $\ol{\ran T \dotplus \ker T} = \HH$ and
  $\ol{\ran T^* \dotplus \ker T^*} = \HH$.  Hence
  $\clran T \cap \ker T = \{0\}$, and so $\clran T \dotplus \ker T$ is
  dense in $\HH$.
\end{proof}

The following is a special case of more general results found
in~\cite[Corollary~2.12]{MR445319} and \cite[Theorem~2]{MR635584}.

\begin{lema}
  \label{lem:T-qa-pos-C-then-sp_T-contains-sp_C}
  If $T \in L(\HH)$ is quasi-affine to $C \in \mc{L}^+$, then
  $\sigma(T) \supseteq \sigma(C)$.
\end{lema}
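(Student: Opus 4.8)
The plan is to establish the equivalent inclusion $\rho(T)\subseteq\rho(C)$. Fix a quasi-affinity $X$ with $TX=XC$; since $C\in\mc{L}^+$, its spectrum is real and it carries a spectral measure $E$ with $\operatorname{supp}E=\sigma(C)$. First I would record the resolvent form of the intertwining relation: from $TX=XC$ one gets $(z-T)X=X(z-C)$, hence
\begin{equation*}
 (z-T)^{-1}X=X(z-C)^{-1}\qquad\text{for all } z\in\rho(T)\cap\rho(C),
\end{equation*}
and, as $\sigma(C)\subseteq\mathbb R$, every non-real $z\in\rho(T)$ lies in this set.

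Now suppose, towards a contradiction, that some $\lambda\in\sigma(C)$ satisfies $\lambda\in\rho(T)$; note that $\lambda$ is real. Choose $\delta>0$ with $D(\lambda,\delta)\subseteq\rho(T)$ and set $I=(\lambda-\delta,\lambda+\delta)$. The operator-valued function $F(z):=(z-T)^{-1}X$ is analytic on the whole disk $D(\lambda,\delta)$, whereas on $D(\lambda,\delta)\setminus\mathbb R\subseteq\rho(C)$ it coincides with $X(z-C)^{-1}$ by the identity above. Thus $X(z-C)^{-1}$ continues analytically across the real interval $I$: its boundary values from the upper and lower half-planes agree along $I$.

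The crux is to combine this with Stone's formula for $E$. For any $[a,b]\subseteq I$ whose endpoints are not atoms of $E$,
\begin{equation*}
 X\,E\big((a,b)\big)=\lim_{\varepsilon\to0^{+}}\frac{1}{2\pi i}\int_a^b\big(F(t-i\varepsilon)-F(t+i\varepsilon)\big)\,dt ,
\end{equation*}
the limit being in the strong operator topology; here I have multiplied Stone's formula on the left by the bounded operator $X$ and used $X(t\mp i\varepsilon-C)^{-1}=F(t\mp i\varepsilon)$. Since $F$ is analytic, hence continuous, on a neighbourhood of the compact set $[a,b]$, the integrand tends to $F(t)-F(t)=0$ uniformly on $[a,b]$, so the right-hand side vanishes and $X\,E((a,b))=0$. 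Letting $[a,b]\uparrow I$ gives $X\,E(I)=0$, and the injectivity of $X$ forces $E(I)=0$. But then $I\cap\sigma(C)=\varnothing$, contradicting $\lambda\in\sigma(C)$. Hence $\rho(T)\subseteq\rho(C)$, that is, $\sigma(C)\subseteq\sigma(T)$.

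I expect the genuine difficulty to be precisely this transfer step, and it is worth flagging why the obvious route fails. One is tempted to take approximate eigenvectors $v_n$ of $C$ at $\lambda$ and push them through $X$; but $(T-\lambda)Xv_n\to0$ only yields $Xv_n\to0$, and since a quasi-affinity need not be bounded below this gives no contradiction, which is exactly why the continuous part of $\sigma(C)$ is delicate. The resolvent/Stone's-formula argument sidesteps this by working with the analytic continuation of $(z-T)^{-1}X$ rather than with individual vectors: analyticity across $I$ annihilates the jump of $X(z-C)^{-1}$, and injectivity of $X$ (equivalently $\overline{\ran X^{*}}=\HH$) is what lets $X$ be cancelled at the very end. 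The only points needing care are that the disk $D(\lambda,\delta)$—and hence $I$—is chosen before any vectors enter, and that Stone's formula is invoked only across intervals whose endpoints are not atoms of $E$; both are routine.
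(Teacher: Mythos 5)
Your proof is correct, and it is worth noting that the paper contains no proof of this lemma at all: it is stated as a special case of the general quasisimilarity results of Fialkow \cite[Corollary~2.12]{MR445319} and Stampfli \cite[Theorem~2]{MR635584}, whose arguments run through local spectral theory and cover much wider classes of operators than positive $C$. Your route is the classical direct one for the selfadjoint case, and every step checks: the intertwining $(z-T)^{-1}X = X(z-C)^{-1}$ on $\rho(T)\cap\rho(C)$ is immediate from $TX=XC$; the function $F(z)=(z-T)^{-1}X$ is norm-analytic on $D(\lambda,\delta)\subseteq\rho(T)$ and agrees with $X(z-C)^{-1}$ off the real axis; your sign convention in Stone's formula is the right one for the resolvent written as $(z-C)^{-1}$; left multiplication by the bounded $X$ preserves the strong limit; norm-continuity of $F$ on a compact neighbourhood of $[a,b]$ strictly inside the disk makes the integrand vanish uniformly, so $XE((a,b))=0$, and injectivity of $X$ gives $E((a,b))=0$; finally, countability of the atoms of $E$ lets you exhaust $I$ by admissible intervals, so $E(I)=0$ contradicts $\lambda\in\operatorname{supp}E=\sigma(C)$. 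Comparing the two: the citation buys generality (Stampfli's and Fialkow's results apply, e.g., to quasi-affinity to hyponormal or dominant operators, which is relevant elsewhere in the paper where Dunford's property $C$ is invoked), while your argument buys self-containedness, uses nothing beyond the spectral theorem, and in fact proves slightly more than stated — only injectivity of the intertwiner $X$ is used, never density of $\ran X$, so the spectral inclusion holds for any injective bounded $X$ with $TX=XC$. Your closing diagnosis of why the naive approximate-eigenvector argument fails (a quasi-affinity need not be bounded below, so $Xv_n\to 0$ is no contradiction) is exactly the right identification of where the difficulty lies and why the continuous spectrum forces an argument at the level of resolvents rather than vectors.
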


If $T \in \LL$, then it will be shown that these spectra are equal
(Proposition~\ref{prop:t-qa-pos-then-spectra-equal}).

\begin{prop}
  \label{PropIa}
  Let $T \in L(\HH)$.  The following statements are equivalent:
  \begin{enumerate}
  \item $T$ is quasi-affine to a positive operator;
  \item $T^* = BA$, with $B$ a closed surjective positive operator and
    $A \in \mc{L}^+$;
  \item There exists a quasi-affinity $X \in \mc{L}^+$ such that
    $TX \in \mc{L}^+$.
  \end{enumerate}
\end{prop}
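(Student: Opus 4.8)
The plan is to close the cycle $(\mathit{i})\Rightarrow(\mathit{iii})\Rightarrow(\mathit{ii})\Rightarrow(\mathit{i})$, concentrating all of the work in the final implication. The first two implications, together with the reverse $(\mathit{ii})\Rightarrow(\mathit{iii})$, are short and formal; the serious analysis, for which I would bring in the rigged Hilbert space of Proposition~\ref{PropII}, lives entirely in $(\mathit{ii})\Rightarrow(\mathit{i})$.

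For $(\mathit{i})\Rightarrow(\mathit{iii})$: if $TX_0=X_0C$ with $X_0$ a quasi-affinity and $C\in\mc{L}^+$, I would take $X:=X_0X_0^{*}$. Since $X_0$ is injective with dense range, $\ker X=\ker X_0^{*}=\{0\}$ and $\clran X=\clran X_0=\HH$, so $X\in\mc{L}^+$ is a quasi-affinity, and $TX=X_0CX_0^{*}\geq 0$. For $(\mathit{iii})\Rightarrow(\mathit{ii})$: writing $A:=TX\geq 0$ and using $X=X^{*}$, one has $XT^{*}=(TX)^{*}=A$, hence $T^{*}=X^{-1}A=BA$ with $B:=X^{-1}$; because $X$ is an injective positive operator with dense range, $B$ is a closed, positive, surjective operator, and $\ran A\subseteq\ran X=\dom B$. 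The reverse $(\mathit{ii})\Rightarrow(\mathit{iii})$ is the same computation run backwards: a closed surjective positive $B$ has $\ker B=(\ran B)^{\perp}=\{0\}$, so $X:=B^{-1}\in\mc{L}^+$ is a bounded quasi-affinity, and $T^{*}=BA$ gives $XT^{*}=A\geq 0$, whence $TX=(XT^{*})^{*}=A\geq 0$.

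The crux is $(\mathit{ii})\Rightarrow(\mathit{i})$, equivalently $(\mathit{iii})\Rightarrow(\mathit{i})$: from $TX=A\geq 0$ with $X\in\mc{L}^+$ a quasi-affinity, I must produce a \emph{bounded} positive $C$ and a quasi-affinity $Y$ with $TY=YC$. The starting point is that $A^{*}=A$ forces $TX=XT^{*}$, so $\ran(TX)=\ran(XT^{*})\subseteq\ran X$; thus $\ran X$ is $T$-invariant (and, incidentally, $T$ is already quasi-affine to $T^{*}$). Passing to the rigged Hilbert space $\HH_{X^{1/2}}$ of Proposition~\ref{PropII}, with $Y=X^{1/2}$, I would show that $\hat T:=T|_{\ran X}$ is a densely defined, nonnegative \emph{symmetric} operator there: a direct computation using $TX=XT^{*}$ gives, for $u=Xh$ and $v=Xg$,
\begin{equation*}
  \PI{\hat T u}{v}_{X^{1/2}}=\PI{Ah}{g}=\PI{u}{\hat T v}_{X^{1/2}},\qquad \PI{\hat T u}{u}_{X^{1/2}}=\PI{Ah}{h}\geq 0 .
\end{equation*}
Transporting the positive selfadjoint (Friedrichs) extension of $\hat T$ back to $\HH$ by the unitary $X^{1/2}\colon\HH\to\HH_{X^{1/2}}$ produces the candidate $C$, with $TY=YC$ on the core $\ran X^{1/2}$. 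The main obstacle is exactly that this candidate is a priori only a \emph{closed} positive operator: $\hat T$ is bounded on $\HH_{X^{1/2}}$ precisely when $TX\leq\lambda X$, and this is not formal. I would remove the obstacle by invoking the spectral inclusion for quasi-affinities (Lemma~\ref{lem:T-qa-pos-C-then-sp_T-contains-sp_C} and the results of Hassi, Sebesty\'en and de~Snoo~\cite{MR2174236}): were the extension unbounded its spectrum would be unbounded and, by spectral inclusion, so would $\sigma(T)$, contradicting $T\in L(\HH)$. Hence $\hat T$ is bounded, $C\in\mc{L}^+$, and $T$ is quasi-affine to $C$.
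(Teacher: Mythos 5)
Your routine implications are correct and essentially coincide with the paper's: $(\mathit{i})\Rightarrow(\mathit{iii})$ via $X:=X_0X_0^*$ is the same computation the paper uses for $(\mathit{i})\Rightarrow(\mathit{ii})$, and your closed-graph argument producing the bounded positive quasi-affinity $B^{-1}$ is exactly the paper's $(\mathit{ii})\Rightarrow(\mathit{iii})$. The setup of your crux is also sound as far as it goes: $T(\ran X)\subseteq\ran X$, the identity $\PI{\hat Tu}{v}_{X^{1/2}}=\PI{Ah}{g}$ making $\hat T$ symmetric and nonnegative on $\ran X$ in $\HH_{X^{1/2}}$, and your parenthetical remark that boundedness of $\hat T$ amounts to $TX\leq\lambda X$ (which is indeed equivalent to the desired conclusion) are all correct. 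The gap is the boundedness step itself, which is the entire analytic content of $(\mathit{iii})\Rightarrow(\mathit{i})$, and your justification of it does not stand up.

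Concretely: Lemma~\ref{lem:T-qa-pos-C-then-sp_T-contains-sp_C}, and the Fialkow and Stampfli results behind it, are stated for a \emph{bounded} $C\in\mc{L}^+$; you invoke the inclusion $\sigma(T)\supseteq\sigma(C)$ for the Friedrichs extension, which at that stage is an a priori unbounded positive selfadjoint operator, so the lemma does not apply and no unbounded analogue is proved or precisely cited. Moreover, the hypotheses any such analogue would need are not in place: you have $TX^{1/2}u=X^{1/2}Cu$ only for $u\in\ran X^{1/2}$, the preimage of the original domain $\ran X$, and Friedrichs guarantees only that $\ran X$ is a \emph{form} core of the extension, not an operator core (it is an operator core exactly when $\hat T$ is essentially selfadjoint), so calling $\ran X^{1/2}$ a core is unjustified, and the relation $X^{1/2}C\subseteq TX^{1/2}$ on all of $\dom C$ --- the minimum needed to run a resolvent-type quasi-affinity argument --- is never established. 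Nor does your contradiction have independent force: from the intertwining one gets $\|X^{1/2}C^nu\|\leq\|T\|^n\|X^{1/2}u\|$, which through a mere quasi-affinity bounds only the point spectrum of $C$ by $\|T\|$, not the approximate point spectrum, so unboundedness of $\sigma(C)$ is not excluded by boundedness of $T$. The repair is the tool you mention only in passing and never deploy, and it is exactly what the paper does: apply \cite[Theorem~5.1]{MR2174236} with $A=X$ and $B=C=T^*$ (legitimate since $XT^*=(TX)^*=TX$) to obtain a \emph{bounded} $S$ with $X^{1/2}T^*=SX^{1/2}$, i.e.\ $TX^{1/2}=X^{1/2}S^*$; positivity of $S$ then follows from $\PI{S^*X^{1/2}x}{X^{1/2}x}=\PI{TXx}{x}\geq 0$ and polarization. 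That single citation, whose nontrivial content is precisely the boundedness you tried to get for free, replaces the whole Friedrichs detour and closes the proof.
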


\begin{proof}
  $(\mathit{i}\kern0.5pt) \Rightarrow (\mathit{ii}\kern0.5pt)$:
  Assume $TG = GS$, $G$ a quasi-affinity, $S \geq 0$.  Then
  $GG^*T^* = GSG^*$ and
  \begin{equation*}
    T^* = (GG^*)^{-1} (GSG^*).
  \end{equation*}
  The operator $GG^*$ is a quasi-affinity, hence $(GG^*)^{-1}$ maps
  $\ran(GG^*)$ onto $\HH$, and it is thus surjective, closed, and so
  selfadjoint.  Since for all $x$,
  $\PI{(GG^*)^{-1}GG^*x}{GG^*x} = \PI{x}{GG^*x} \geq 0$, $(GG^*)^{-1}$
  is positive.

  $(\mathit{ii}\kern0.5pt) \Rightarrow (\mathit{iii}\kern0.5pt)$:
  Assume $(\mathit{ii}\kern0.5pt)$.  Since $B$ is surjective, by the
  closed graph theorem, $B^{-1}: \HH \to \mathrm{dom}\,(B)$ is
  bounded, and since $B$ is positive, $B^{-1}$ is a quasi-affinity,
  and is also positive.  Then $B^{-1}T^* = A \geq 0$, and the claim
  follows with X = $B^{-1}$.

  $(\mathit{iii}\kern0.5pt) \Rightarrow (\mathit{i}\kern0.5pt)$:
  Suppose there exists a quasi-affinity $X \in \mc{L}^+$ such that
  $TX = XT^* \geq 0$.  According to \cite[Theorem 5.1]{MR2174236}, if
  $A$, $B$, and $C$ are bounded operators with $A \geq 0$ and
  $AB = C^*A$, there exists a unique bounded $S$ with
  $\ker A \subseteq \ker S$ such that $A^{1/2}B = S A^{1/2}$ and
  $C^*A^{1/2} = A^{1/2} S$.  Translating to the present context, take
  $A = X$ and $B = C = T^*$.  Then there exists a bounded $S$ so that
  $X^{1/2}T^* = SX^{1/2}$, equivalently, $TX^{1/2} = X^{1/2}S^*$.
  Thus $S^* = X^{-1/2}TX^{1/2}$.

  For all $x \in \HH$ and $y = X^{1/2}x$,
  \begin{equation*}
    \PI{S^*y}{y} = \PI{X^{-1/2}TXx}{X^{1/2}x} = \PI{TXx}{x} \geq 0.
  \end{equation*}
  It follows by polarization that $S$ is selfadjoint, and so
  $S \geq 0$.
\end{proof}

\goodbreak
 
\begin{cor}
  \label{corpIa}
  Let $T \in L(\HH)$.  The following statements are equivalent:
  \begin{enumerate}
  \item $T$ is quasi-similar to a positive operator;
  \item $T = AB$, with $A$ a closed surjective positive operator and
    $B \in \mc{L}^+$, and $T^* = B'A'$, with $B'$ a closed surjective
    positive operator and $A' \in \mc{L}^+$;
  \item There exist quasi-affinities $W, Z \in \mc{L}^+$ such that
    $TW$ and $ZT \in \mc{L}^+$;
  \item There exists a basic system $\{\St_n\}_{n \geq 1}$ of
    invariant subspaces of $T$ such that for all $n$, $T|_{\St_n}$ is
    scalar and $\sigma(T|_{\St_n}) \geq 0$.
  \end{enumerate}
\end{cor}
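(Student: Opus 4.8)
The plan is to pivot everything through the two one-sided (``quasi-affine'') characterizations of Proposition~\ref{PropIa} and the basic-system decomposition of Theorem~\ref{Apostol}, reducing each two-sided (quasi-similarity) statement to a pair of one-sided statements about $T$ and $T^*$. First I would record the reduction lemma that $T$ is quasi-similar to a positive operator if and only if both $T$ and $T^*$ are quasi-affine to positive operators. The forward implication is immediate: if $TX = XC$ and $YT = CY$ with $X,Y$ quasi-affinities and $C \geq 0$, then $TX = XC$ exhibits $T$ as quasi-affine to $C$, while taking adjoints in $YT = CY$ gives $T^*Y^* = Y^*C$, so $T^*$ is quasi-affine to $C$ via $Y^*$. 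For the converse, if $T$ is quasi-affine to $C \geq 0$ and $T^*$ is quasi-affine to $D \geq 0$ (via quasi-affinities that need not match), I would take adjoints in the second relation to obtain $YT = DY$ and then invoke Lemma~\ref{lem:Propqs}$(\mathit{ii})$ to conclude that $T$ is quasi-similar to $C$. This lemma is exactly what lets the two separately obtained positive operators $C$ and $D$ be merged into a genuine quasi-similarity.

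With this reduction in hand, $(\mathit{i}) \Leftrightarrow (\mathit{iii})$ follows by applying Proposition~\ref{PropIa}, condition~$(\mathit{iii})$, to $T$ and to $T^*$. Quasi-affinity of $T$ to a positive operator yields a quasi-affinity $W \in \mc{L}^+$ with $TW \in \mc{L}^+$, while quasi-affinity of $T^*$ yields a quasi-affinity $Z \in \mc{L}^+$ with $T^*Z \in \mc{L}^+$; since $T^*Z$ is selfadjoint it coincides with its adjoint $ZT$, so $ZT \in \mc{L}^+$. These are precisely the operators named in $(\mathit{iii})$, and each step is reversible, so the equivalence is an ``if and only if.''

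For $(\mathit{ii}) \Leftrightarrow (\mathit{iii})$ I would translate between a factorization through a closed surjective positive operator and the bounded positive quasi-affinity given by its inverse, exactly as in the proof of $(\mathit{ii}) \Rightarrow (\mathit{iii})$ of Proposition~\ref{PropIa}. If $T = AB$ with $A$ closed surjective positive, then $A^{-1}$ is a bounded positive quasi-affinity and $A^{-1}T = B \in \mc{L}^+$, giving the operator $Z := A^{-1}$; conversely, if $ZT \in \mc{L}^+$ with $Z$ a positive quasi-affinity, then $T = Z^{-1}(ZT)$ with $Z^{-1}$ closed surjective positive. The parallel passage between the factorization $T^* = B'A'$ and a positive quasi-affinity $W$ with $TW \in \mc{L}^+$ is obtained by taking adjoints in these relations, so the two clauses of $(\mathit{ii})$ match the two operators of $(\mathit{iii})$.

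Finally, $(\mathit{i}) \Leftrightarrow (\mathit{iv})$ is read off from Theorem~\ref{Apostol} together with Theorem~\ref{thmLL}: Apostol's theorem characterizes quasi-similarity to a positive operator by the existence of a basic system of invariant subspaces on which $T$ is similar to a positive operator, and the equivalence $(\mathit{i}) \Leftrightarrow (\mathit{v})$ of Theorem~\ref{thmLL} identifies ``similar to a positive operator'' with ``scalar with nonnegative spectrum.'' Substituting the latter description of each restriction $T|_{\St_n}$ gives $(\mathit{iv})$ verbatim. The main obstacle I anticipate is not any single computation but the bookkeeping around adjoints and unbounded operators: one must verify that the adjoint of a product of a bounded positive operator with a closed surjective positive operator behaves as expected, so that the factorizations of $T$ and $T^*$ in $(\mathit{ii})$ correspond correctly, and that the quasi-affinities produced for $T$ and for $T^*$ in the Proposition~\ref{PropIa} steps can legitimately be fed into Lemma~\ref{lem:Propqs} once the appropriate adjoint has been taken.
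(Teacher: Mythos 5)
Your proposal is correct and takes essentially the same route as the paper: the paper's proof simply cites Proposition~\ref{PropIa} for the equivalence of $(\mathit{i}\kern0.5pt)$--$(\mathit{iii}\kern0.5pt)$ (applied to both $T$ and $T^*$, with Lemma~\ref{lem:Propqs} available to merge the two one-sided relations) and Theorem~\ref{Apostol} (read through Theorem~\ref{thmLL}) for $(\mathit{iv}\kern0.5pt)$. You have merely spelled out the details the paper leaves implicit, and your bookkeeping on adjoints and inverses of the closed surjective positive factors is accurate.
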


\begin{proof}
  The equivalence of $(\mathit{i}\kern0.5pt)$ --
  $(\mathit{iii}\kern0.5pt)$ is a direct consequence of
  Proposition~\ref{PropIa}.  The last item is equivalent to
  $(\mathit{i}\kern0.5pt)$ by Theorem~\ref{Apostol}.
\end{proof}

The last result resembles Theorem~\ref{thmLL}, though under the weaker
condition of quasi-similarity it appears not to be possible to say
much about the spectrum of $T$ without some extra conditions.  See
Section~\ref{sec:l+2-and-quasi-sim}.

Coming back to square roots, suppose that $TG = GS$, where $G \geq 0$
is a quasi-affinity and $S \geq 0$.  Then there exists a densely
defined linear operator $R$ mapping $\ran X$ to itself such that
$RX = XC^{1/2}$.  However it may not be the case that $R$ is bounded.
Circumstances when it is will be addressed further on.

\section{The set $\LL$}
\label{sec:the-set-l+2}

The remainder of the paper is devoted to the study of the set of
products of two positive bounded operators,
\begin{equation*}
  \LL := \{ T \in L(\HH): T = AB \text{ with } A, B \in \mc{L}^+ \}.
\end{equation*}

The subclasses $\mc{P} \! \cdot \! \mc{P}$ and
$\mc{P} \! \cdot \! \mc{L}^+$ were considered in \cite{MR3090434} and
\cite{MR2775769}.

If $T \in \LL$, it is straightforward to check that $T^* \in \LL$ and
$GTG^{-1} \in \LL$ for all $G \in GL(\HH)$.  Then the \emph{similarity
  orbit} of $T$,
$\mathbb{O}_T := \{GTG^{-1}: G \in GL(\HH)\} \subseteq \LL$.  Also, it
can easily be verified that
$\{ T^n: n \in \mathbb{N}\} \subseteq \LL$.

From the basic fact that for two operators $C$ and $D$, $\sigma(CD)
\cup \{0\} = \sigma(DC) \cup \{0\}$, the following is immediate.

\begin{lema}
  \label{lemaT1a}
  Let $T = AB \in \LL$, $A,B \in \mc{L}^+$.  Then $\sigma(T) =
  \sigma(A^{1/2}BA^{1/2}) \geq 0$.
\end{lema}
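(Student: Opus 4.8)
The plan is to realize both $T = AB$ and $S := A^{1/2}BA^{1/2}$ as the two orders of a single product, so that the statement drops out of the cited identity $\sigma(CD)\cup\{0\} = \sigma(DC)\cup\{0\}$. Setting $C = A^{1/2}$ and $D = A^{1/2}B$, I get $CD = A^{1/2}A^{1/2}B = AB = T$ and $DC = A^{1/2}BA^{1/2} = S$, whence
\[
  \sigma(T) \cup \{0\} = \sigma(S) \cup \{0\}.
\]

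Next I would record that $S \geq 0$: for every $x \in \HH$,
\[
  \PI{Sx}{x} = \PI{BA^{1/2}x}{A^{1/2}x} \geq 0,
\]
since $B \in \mc{L}^+$. Hence $\sigma(S) \subseteq [0,\infty)$, and combining with the displayed identity (and $0 \in [0,\infty)$) gives $\sigma(T) \subseteq [0,\infty)$ as well. This already delivers the inequality $\sigma(T) = \sigma(S) \geq 0$ at the coarse level of spectra-together-with-$\{0\}$.

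The one point needing genuine care is upgrading equality-up-to-$\{0\}$ to exact equality, i.e.\ showing that $0 \in \sigma(T)$ precisely when $0 \in \sigma(S)$. For this I would use positivity once more: for $A, B \in \mc{L}^+$, the product $AB$ is invertible if and only if both $A$ and $B$ are. Indeed, if $AB \in GL(\HH)$ then $\ran A \supseteq \ran(AB) = \HH$, so the positive operator $A$ is surjective and thus lies in $GL(\HH)^+$, whence $B = A^{-1}(AB) \in GL(\HH)$; the converse is trivial. An analogous argument applies to $S$: invertibility of $S$ forces $\ran A^{1/2} = \HH$, hence $A^{1/2} \in GL(\HH)^+$ and $A \in GL(\HH)^+$, and then $B = A^{-1/2}SA^{-1/2} \in GL(\HH)$; conversely, invertibility of $A$ and $B$ makes $S = A^{1/2}BA^{1/2}$ invertible. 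Thus $T$ is invertible exactly when $S$ is, so $0$ belongs to one spectrum if and only if it belongs to the other. Deleting (or retaining) $0$ on both sides of the displayed identity accordingly then yields $\sigma(T) = \sigma(S)$.

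I expect the spectral swap and the positivity computation to be entirely routine; the only mild obstacle is the invertibility equivalence, which is what converts the generic ``equal up to $\{0\}$'' conclusion into the exact equality $\sigma(T) = \sigma(A^{1/2}BA^{1/2})$ asserted in the lemma.
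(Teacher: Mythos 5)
Your proposal is correct and follows essentially the same route as the paper: both invoke the identity $\sigma(CD)\cup\{0\}=\sigma(DC)\cup\{0\}$ to get equality up to $\{0\}$, observe positivity of $A^{1/2}BA^{1/2}$ for the inequality $\geq 0$, and then remove the ambiguity at $0$ by noting that invertibility of either $T=AB$ or $A^{1/2}BA^{1/2}$ forces $A$ and $B$ to be invertible, hence forces invertibility of the other. Your write-up merely fills in the details (surjectivity of the positive left factor, then $B=A^{-1}(AB)$) that the paper leaves implicit.
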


\begin{proof}
  As already observed,
  $\sigma(T) \cup \{0\} = \sigma(A^{1/2}BA^{1/2}) \cup \{0\} \geq 0$.
  If $0 \notin \sigma(T)$, then $A$ and $B$ are invertible, so
  $0 \notin \sigma(A^{1/2}BA^{1/2})$.  Likewise,
  $0 \notin \sigma(A^{1/2}BA^{1/2})$ implies $0 \notin \sigma(T)$, and
  so the stated equality holds.
\end{proof}

\begin{Example}
  \label{exmpl:subnormals}
  Lemma,~\ref{lemaT1a} implies that a normal operator in $\LL$ is
  positive.  Suppose now that $T \in \LL$ is subnormal.  Let $N$ be
  the minimal normal extension of $T$.  Then
  $\sigma(N) = \sigma(T) \geq 0$, and so $N$ is positive.  Since $T$
  is the restriction of $N$ to an invariant subspace, it too is then
  positive.

  It will be proved in Proposition~\ref{propCR1} that an operator in
  $\LL$ with closed range is similar to a positive operator.  This
  will imply then that any partial isometry $V$ in $\LL$ is similar to
  an orthogonal projection, and so is itself a projection.  Since $V$
  is a contraction, this means that $\ran V$ is orthogonal to
  $\ker V$, and so $V \geq 0$ is an orthogonal projection.
\end{Example}

\begin{prop}
  \label{lemaT2}
  Let $T \in \LL$.  Then there exist $A, B \in \mc{L}^+$ such that
  $T = AB$, $\clran A = \clran T$ and $\ker B = \ker T$.  For this
  pair, $\ran B \cap \ker A = \ran A \cap \ker B = \{0\}$, and it
  follows then that
  \begin{equation*}
    \ran T \cap \ker T = \{0\}.
  \end{equation*}
\end{prop}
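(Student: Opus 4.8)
The plan is to manufacture the required pair $(A,B)$ by invoking Sebesty\'en's theorem (Theorem~\ref{Seb}) twice --- first to fix the kernel of the right factor, then to fix the range of the left factor --- and afterwards to read off the two intersection identities, and finally $\ran T \cap \ker T = \{0\}$, by short algebraic arguments. Starting from an arbitrary factorization $T = A_0 B_0$ with $A_0, B_0 \in \mc{L}^+$, I would first apply Theorem~\ref{Seb} to the equation $A_0 X = T$. The hypothesis $TT^* \le \lambda A_0 T^*$ holds with $\lambda = \|B_0\|$, since $A_0 T^* = A_0 B_0 A_0$ and $\lambda A_0 T^* - TT^* = A_0 B_0^{1/2}(\lambda - B_0) B_0^{1/2} A_0 \ge 0$. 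The theorem then delivers a positive solution $B := X$ with $A_0 B = T$ and $\ker B = \ker T$.

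Next, working with the factorization $T = A_0 B$ (so that $T^* = B A_0$), I would apply Theorem~\ref{Seb} a second time, now to the equation $B Y = T^*$. The inequality required of this equation, $T^* T \le \mu B T$, holds with $\mu = \|A_0\|$, because $B T = B A_0 B$ and $\mu B T - T^* T = B A_0^{1/2}(\mu - A_0) A_0^{1/2} B \ge 0$. This produces a positive $A := Y$ with $B A = T^*$, equivalently $AB = T$, and with $\ker A = \ker T^*$, that is, $\clran A = \clran T$. Since $B$ is carried over unchanged from the first step, $\ker B = \ker T$ still holds, so $(A,B)$ is the sought-after pair, establishing the first assertion.

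For the intersection identities, note that the pair satisfies $T = AB$, $T^* = BA$, $\ker B = \ker T$, and $\ker A = \ker T^*$. If $z = Ax \in \ran A$ lies in $\ker B$, then $T^* x = B A x = B z = 0$, so $x \in \ker T^* = \ker A$ and $z = Ax = 0$; hence $\ran A \cap \ker B = \{0\}$. Symmetrically, if $z = Bx \in \ran B$ lies in $\ker A$, then $T x = A B x = A z = 0$, so $x \in \ker T = \ker B$ and $z = Bx = 0$, giving $\ran B \cap \ker A = \{0\}$. Finally, since $\ran T = \ran(AB) \subseteq \ran A$ and $\ker T = \ker B$, any $z \in \ran T \cap \ker T$ lies in $\ran A \cap \ker B = \{0\}$, whence $\ran T \cap \ker T = \{0\}$.

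I expect the only real content to lie in the two-fold use of Sebesty\'en's theorem, and in particular in checking that repairing the range of the left factor in the second application does not spoil the kernel condition obtained in the first (which is immediate here, since $B$ is left untouched). The verifications of the two L\"owner inequalities are the sole genuine computations; once the pair is in hand, the intersection arguments and the final statement are entirely routine.
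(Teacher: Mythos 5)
Your proof is correct and follows essentially the same route as the paper: two successive applications of Theorem~\ref{Seb}, first to $A_0X=T$ to fix $\ker B=\ker T$, then to $BY=T^*$ to fix $\ker A=\ker T^*$, followed by the same short algebraic argument for the intersection identities. Your explicit verifications of the L\"owner inequalities are fine but unnecessary, since the ``only if'' direction of Theorem~\ref{Seb} already supplies them (with $\lambda=\|B_0\|$, resp.\ $\mu=\|A_0\|$) whenever a positive solution is known to exist, which is exactly how the paper invokes it.
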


\begin{proof}
  Let $T = A_0B_0 \in \LL$.  Then, by Theorem~\ref{Seb}, there exists
  $B \in \mc{L}^+$ such that $T = A_0B$ and $\ker B = \ker T$.  On the
  other hand, $T^* = BA_0 \in \LL$ and again by Theorem~\ref{Seb},
  there exists $A \in \mc{L}^+$ such that $T^* = BA$ and
  $\ker A = \ker T^*$.  If $x \in \ran B \cap \ker A$ then $x = By$
  for some $y \in \HH$ and $0 = Ax = ABy = Ty$.  Hence
  $y \in \ker T = \ker B$, and so $x = 0$.  The other equality follows
  in a similar way.  It is then immediate from this that
  $\ran T \cap \ker T = \{0\}$.
\end{proof}

\begin{cor}
  \label{cor:range-maps-densely}
  If $T\in \LL$, then $\clran(T|_{\clran T}) = \clran T$.
\end{cor}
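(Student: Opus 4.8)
The plan is to reduce the statement to a single density fact about an optimal factorization, the crucial input being the \emph{furthermore} clause of Sebesty\'en's theorem (Theorem~\ref{Seb}). First observe that $\clran T$ is invariant under $T$, since $T(\clran T)\subseteq \ran T\subseteq \clran T$; thus $T|_{\clran T}$ is a genuine operator on $\mc R := \clran T$, and what must be shown is precisely $\overline{T\mc R} = \mc R$ (the inclusion $\overline{T\mc R}\subseteq \mc R$ being automatic). So everything comes down to proving that $T$ maps $\mc R$ \emph{densely} back into $\mc R$.

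To produce the needed factorization I would use Proposition~\ref{lemaT2} to fix $A\in\mc{L}^+$ with $\clran A = \clran T = \mc R$ and $T = AB'$ for some $B'\in\mc{L}^+$, so that the equation $AX = T$ is solvable. Re-applying Theorem~\ref{Seb} to this equation then furnishes a positive solution $B$ with $T = AB$, and \emph{because} $A\ge 0$ with $\clran A = \clran T$, the furthermore clause guarantees that the compression $\tilde B := P_{\mc R}\,B|_{\mc R}$ is injective with dense range in $\mc R$. Since $A$ is selfadjoint, $\ker A = (\clran A)^\perp = \mc R^\perp$, whence $A = A P_{\mc R}$, and for every $r\in\mc R$ one gets $Tr = ABr = A(P_{\mc R}Br) = A\tilde B r$, i.e.\ $T|_{\mc R} = A\tilde B$ as operators on $\mc R$. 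The range is then computed by
\begin{equation*}
  \overline{T\mc R} = \overline{A(\tilde B\mc R)} = \overline{A\,\overline{\tilde B\mc R}} = \overline{A\mc R} = \clran A = \mc R,
\end{equation*}
where the second equality uses continuity of $A$ (so that $\overline{AS} = \overline{A\overline{S}}$ for any set $S$), the third uses the dense range of $\tilde B$, and the fourth uses $\overline{A\mc R} = \overline{A\HH}$ since $A$ annihilates $\mc R^\perp$.

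The step carrying real weight is the invocation of the furthermore clause of Theorem~\ref{Seb}: it is essential to present the factorization as a solution of $AX = T$ with $\clran A = \clran T$, for only then does the theorem supply the dense range of $\tilde B$. I expect this to be the main point requiring care, because a naive attempt to verify that $\tilde B$ is injective by hand quickly reduces to the condition $\clran T \cap \ker T = \{0\}$, which is \emph{not} available for a general $T\in\LL$ (Proposition~\ref{lemaT2} yields only $\ran T \cap \ker T = \{0\}$). Routing the injectivity through Sebesty\'en's theorem circumvents this gap. The remaining ingredients — the identity $A = AP_{\mc R}$ and the closure rule $\overline{AS} = \overline{A\overline{S}}$ — are routine and complete the argument.
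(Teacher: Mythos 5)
Your reduction to $\overline{T\mc{R}} = \mc{R}$ (with $\mc{R} = \clran T$), the identity $A = AP_{\mc{R}}$, and the factorization $T|_{\mc{R}} = A\tilde{B}$ are all fine, but the step carrying the weight --- that $\tilde{B} = P_{\mc{R}}B|_{\mc{R}}$ has dense range in $\mc{R}$ --- is false for general $T \in \LL$, and your own final paragraph contains the refutation. As you observe, if $0 \neq r \in \clran T \cap \ker T$, then $A\tilde{B}r = ABr = Tr = 0$, so $\tilde{B}r \in \ker A \cap \mc{R} = \mc{R}^\perp \cap \mc{R} = \{0\}$; and since $\tilde{B}$ is a \emph{positive} operator on $\mc{R}$, injectivity and dense range are equivalent, so density of $\tilde{B}\mc{R}$ in $\mc{R}$ also forces $\clran T \cap \ker T = \{0\}$. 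But Example~\ref{example-1} exhibits $T = AP_{\St} \in \LL$ with $\clran T \cap \ker T = \Span\{x\} \neq \{0\}$, and no cleverer choice of factorization rescues the step: there $\clran T = \HH$, so any admissible first factor $A'$ with $\clran A' = \clran T$ is injective, every solution of $A'X = T$ then has $\ker X = \ker T \neq \{0\}$, and $P_{\clran T}X|_{\clran T} = X$ is neither injective nor of dense range. In other words, the ``furthermore'' clause of Theorem~\ref{Seb}, read in the generality you need, is contradicted by the paper's own Example~\ref{example-1} (whose data satisfy the clause's hypotheses $A \geq 0$, $\clran A = \clran T$), so citing it does not circumvent the obstruction you identified --- it runs straight into it. Note that the corollary's conclusion is still true in that example, but by a different mechanism: $\overline{T\mc{R}} = \overline{A\St} = \HH$, i.e.\ $A$ maps the \emph{non-dense} closed subspace $\overline{\tilde{B}\mc{R}} = \St$ onto a dense set; your chain of equalities reaches the right answer through a false intermediate equality.

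The repair is to dualize, which is what the paper's proof does: $\clran(T|_{\mc{R}}) = \clran(TP_{\mc{R}}) = (\ker(P_{\mc{R}}T^*))^\perp$, and $P_{\mc{R}}T^*x = 0$ means $T^*x \in \mc{R}^\perp = \ker T^*$, i.e.\ $T^*x \in \ran T^* \cap \ker T^*$. Since $T^* \in \LL$, Proposition~\ref{lemaT2} applied to $T^*$ gives $\ran T^* \cap \ker T^* = \{0\}$, hence $\ker(P_{\mc{R}}T^*) = \ker T^*$ and $\clran(TP_{\mc{R}}) = (\ker T^*)^\perp = \clran T$. So your remark that the ``naive'' verification reduces to $\clran T \cap \ker T = \{0\}$ is accurate only for the primal route you took; the dual route needs only $\ran T^* \cap \ker T^* = \{0\}$ (ranges, not their closures), and that \emph{is} available for every $T \in \LL$.
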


\begin{proof}
  If $T \in \LL$, then $T = AB$ with $\clran T = \clran A$ by
  Proposition~\ref{lemaT2}.  Therefore,
  $\clran(T|_{\clran T}) = \clran(T P_{\clran T}) = (\ker (P_{\clran
    T} T^*))^\bot$.  But
  $\ker (P_{\clran T} T^*) = (\ran T^* \cap \ker T^*) + \ker T^* =
  \ker T^*$, again by Proposition~\ref{lemaT2}.
\end{proof}

\begin{Def}
  For $A, B \in \mc{L}^+$, the pair $(A,B)$ is called \emph{optimal}
  for $T = AB$ if $\clran T = \clran A$ and $\ker B = \ker T$.
\end{Def}

According to Proposition~\ref{lemaT2}, whenever $T \in \LL$, it can be
written as a product involving an optimal pair.  Clearly, the pair
$(A,B)$ is optimal for $T$ if and only if the pair $(B,A)$ is optimal
for $T^*$.

\begin{Example}
  \label{exmpl:oblique-proj-opt-pair}
  Any oblique projection $Q$ is in $\LL$.  For suppose that
  $\M = \ran Q$.  Then $QP_\M = P_\M = P_\M Q^*$ and $P_\M Q = Q$.
  Therefore $Q = P_\M (Q^*Q)$.  Obviously, $(P_\M,Q^*Q)$ is an optimal
  pair for $Q$.
\end{Example}

If $T \in \mc{P}^2$ then $\clran T \cap \ker T = \{0\}$ (see
\cite[Theorem 3.2]{MR2775769}).  This no longer need be the case if
$T \in \LL$, as the following example shows.

\begin{Example}{\cite[Lemma 3.1]{MR3090434}}
  \label{example-1}
  Let $A \in \mc{L}^+$ with non-closed dense range and
  $x \in \clran A\setminus \ran A$.  Define $\St = \Span\{x\}^{\perp}$
  and $T = AP_{\St} \in \LL$.  Then $\ker T = \Span\{x\}$,
  $\clran T^* = \St$,
  $\ker T^* = \{y : Ay \in \ker P_{\St}\} = \{0\}$, and
  $\clran T = \HH$.  Hence $\clran T \cap \ker T = \Span\{x\}$,
  $\clran T^* \cap \ker T^* = \{0\}$, and
  $\ol{\clran T^* \dotplus \ker T^*} = \St$.  By Proposition 3.4,
  $T^*$ is not quasi-affine to any positive operator,
  though $T$ is.

  If instead $T = (A \oplus B)(B\oplus A)$ on $\HH \oplus \HH$, then
  for neither $T$ nor $T^*$ is the closure of the range intersected
  with the kernel nontrivial, nor the sum of the range with the kernel
  dense in $\HH \oplus \HH$.  As a consequence of Proposition 3.4,
  neither is quasi-affine to a positive operator.  Clearly, in these
  examples $T$ is not quasi-similar to a positive operator.
\end{Example}

Operators $T\in \LL$ with a factorization $T = AB$ where one of $A$ or
$B$ has closed range have special properties (see, for example,
Theorem~\ref{prop:T-in-L+2-gen-scalar} and
Corollary~\ref{cor:L+2-w-A-or-B-closed-range}).

\begin{prop}
  \label{prop:T-in-L+2-w-A-or-B-closed-range}
  Let $T \in \LL$.  If $T$ is similar to a positive operator, then
  there exists an optimal pair with $\ran A$, respectively $\ran B$,
  closed.
\end{prop}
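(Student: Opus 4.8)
The plan is to start from the hypothesis that $T\in\LL$ is similar to a positive operator, and combine this with the structural results already established—principally Theorem~\ref{thmLL} and Proposition~\ref{lemaT2}—to upgrade an optimal pair to one with the desired closed-range property. By Theorem~\ref{thmLL}, similarity to a positive operator gives $TG=GS$ with $G\in GL(\HH)^+$ and $S\in\mc{L}^+$, and moreover $\clran T \dotplus \ker T = \HH$. The key observation is that when $T\in\LL$, this last decomposition is a \emph{topological} direct sum of the closed range $\clran T$ and the kernel $\ker T$; hence $\ran T$ is in fact closed. This is the engine that will make the ranges of the factors close up.

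First I would extract an optimal pair $(A,B)$ for $T$ from Proposition~\ref{lemaT2}, so that $\clran A = \clran T$ and $\ker B = \ker T$. The goal is to produce such a pair in which, say, $\ran A$ is closed (the statement for $\ran B$ then follows by passing to $T^*$, which is also in $\LL$ and similar to a positive operator, since the pair $(A,B)$ is optimal for $T$ exactly when $(B,A)$ is optimal for $T^*$). To arrange $\ran A$ closed, I would use the explicit factorization coming from similarity: writing $G\in GL(\HH)^+$ with $S=G^{-1}TG\geq 0$, one has $T = (GSG)(G^{-2})$ as in the discussion following Theorem~\ref{thmLL}, with $A_1 := GSG\geq 0$ and $B_1 := G^{-2}\in GL(\HH)^+$. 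Here $B_1$ is invertible, so $\ran B_1=\HH$ is closed, and $\clran A_1 = G\,\clran S = \clran(GSG^{-1}G^2)=\clran(TG^2)=\clran T$, using invertibility of $G$. Thus $(A_1,B_1)$ is already an optimal pair with $\ran B_1$ closed; working with $T^*$ in place of $T$ yields the companion statement with $\ran A$ closed.

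The main step to verify carefully is that the resulting pair is genuinely \emph{optimal} in the sense of the stated definition, namely that $\clran A_1=\clran T$ \emph{and} $\ker B_1=\ker T$. Since $B_1\in GL(\HH)^+$ we have $\ker B_1=\{0\}$, so this forces $\ker T=\{0\}$—which need not hold for a general $T\in\LL$ similar to a positive operator. This is the obstacle I expect to be central: if $S$ is not injective, then neither is $T$, and the invertible factor cannot sit on the side matching $\ker T$. The resolution is to compress to the closed subspace on which $T$ acts injectively. Concretely, I would decompose $\HH = \clran T \dotplus \ker T$ (a topological direct sum by the closed-range remark above), restrict the analysis to $\clran T$ where $S$ becomes injective with closed range, build the optimal factorization there with an invertible factor, and then reassemble, absorbing $\ker T=\ker B$ into the factor $B$ via the block structure relative to $\clran T\oplus\ker T$ so that $B$ has the correct kernel while $A$ retains closed range. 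This reassembly, together with the Schur-complement identity \eqref{eq:1} guaranteeing $A_{/\clran T}=0$, is where the bookkeeping lies, but it is routine once the injective closed-range core has been handled.
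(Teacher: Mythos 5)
Your proposal contains a genuine gap, and it sits at the foundation of the plan. From Theorem~\ref{thmLL} you get $\clran T \dotplus \ker T = \HH$, and while this algebraic direct sum of two closed subspaces is indeed automatically topological, it does \emph{not} follow that $\ran T$ is closed --- that would require $\ran T = \clran T$, which is a different assertion. Any $A \in \mc{L}^+$ with non-closed range is a counterexample: it lies in $\LL$ and is trivially similar to a positive operator (itself), yet its range is not closed (this is exactly Example~\ref{example-3} of the paper; compare also Proposition~\ref{propCR1}, where closedness of the range of $T\in\LL$ is shown equivalent to $\ran T \dotplus \ker T = \HH$ with the range itself, not its closure). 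Since your ``engine'' fails, the ``injective closed-range core'' you invoke does not exist in general: the restriction of $S$ to $\clran S$ is injective but need not have closed range. Thus the reassembly step, which you defer as routine bookkeeping, is precisely where the entire content of the proposition lies, and the sketch gives no way to carry it out. (There is also a smaller confusion in the final paragraph: the positive factors are block-structured relative to the \emph{orthogonal} decomposition $\clran T \oplus \ker T^*$, not relative to $\clran T \dotplus \ker T$, which is in general non-orthogonal, so the proposed block bookkeeping is set up in the wrong coordinates.)

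The missing idea is a single, short one, and it is what the paper's proof does: insert the orthogonal projection $P = P_{\clran C}$ into the similarity factorization. Writing $T = GCG^{-1}$ with $C \geq 0$ and using $PC = CP = C$, one has $T = (GPG^*)(G^{*\,-1}CG^{-1}) = (GCG^*)(G^{*\,-1}PG^{-1})$. The factor $GPG^*$ has closed range $G\,\clran C = \clran T$, while $\ker(G^{*\,-1}CG^{-1}) = G\ker C = \ker T$, so the first pair is optimal with $\ran A$ closed; symmetrically, $G^{*\,-1}PG^{-1}$ has closed range and kernel $G\ker C = \ker T$, so the second pair is optimal with $\ran B$ closed. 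Your factorization $T = (GSG)(G^{-2})$ is the special case in which $P$ is replaced by $1$, and --- as you yourself correctly observed --- that pair is optimal only when $\ker T = \{0\}$. Replacing $1$ by $P$ is exactly the repair: it keeps the conjugated factor's range closed while restoring the correct kernel, requires no closed-range hypothesis on $T$ whatsoever, and delivers both halves of the statement in one stroke, without passing to $T^*$.
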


\begin{proof}
  Suppose that $T \in \LL$ is similar to a positive operator, so
  $T = GCG^{-1}$, $C \geq 0$.  Let $P$ be the projection onto the
  closure of the range of $C$.  Then $GPG^*$ and $G^{*\,-1}PG^{-1}$
  have closed range, and $T = (GPG^*)(G^{*\,-1}CG^{-1}) =
  (GCG^*)(G^{*\,-1}PG^{-1})$.  It is readily seen that
  $(GPG^*,G^{*\,-1}CG^{-1})$ and $(GCG^*,G^{*\,-1}PG^{-1})$ are
  optimal pairs.
\end{proof}

It is natural to wonder at this point if the class of operators in
$\LL$ which are quasi-similar to a positive operator is strictly
larger than the class of those which are similar to a positive
operator.

\begin{Example}
  \label{exmpl:qs-to-pos-but-no-A-or-B-w-clsd-rng}
  As noted, if $T \in \mc{P}^2$ then $\clran T \cap \ker T = \{0\}$.
  Furthermore, $\clran T \dotplus \ker T = \HH$ if and only if
  $\ran T$ is closed.  An operator $T\in \mc{P}^2$ without closed
  range is constructed as follows.  Assuming $\dim \HH = \infty$,
  there exist two closed subspaces $\mc{M}$ and $\mc{N}$ such that
  $\mc{M} \cap \mc{N} = \{0\}$ and $\mc{M} \dotplus \mc{N}$ is dense
  in, but not equal to $\HH$.  Take $T = AB$, $A$ and $B$ be
  orthogonal projections onto $\mc{M}$ and $\mc{N}^\bot$,
  respectively.  Then $\clran T = \mc{M}$ and $\ker T = \mc{N}$, so
  $\clran T \dotplus \ker T$ is dense in, but not equal to $\HH$.

  Let $W = A + P_{\mc{N}}$ and $Z = B + P_{\mc{M}^\bot}$.  Clearly,
  both are positive.  Also,
  $\ker W = \mc{M}^\bot \cap \mc{N}^\bot = \{0\}$, and similarly,
  $\ker Z = \{0\}$, so both are quasi-affinities.  Since $TW = ABA$
  and $ZT = BAB$ are both positive, it follows from
  Corollary~\ref{corpIa} that $T$ is quasi-similar to a positive
  operator.  By Theorem~\ref{thmLL}, $T$ cannot be similar to a
  positive operator, since $\clran T \dotplus \ker T \neq \HH$.
  
  The above example can also be used to construct $T\in \LL$ which
  again is quasi-similar, but not similar to a positive operator, but
  now with $\ker T = \ker T^* = \{0\}$.  Let $\HH = \KK \oplus \KK$,
  where $\dim \KK = \infty$.  Define $\mc{M} := \KK \oplus \{0\}$, and
  choose $\mc{N}$ as above.  Notice that $\dim \mc{N} = \dim \mc{M}$,
  so there is a unitary $V$ on $\HH$ mapping $\mc{N}$ to $\mc{M}$ and
  $\mc{N}^\bot$ to $\mc{M}^\bot$.

  Let $A_1 = P_{\mc{M}}$, $B_1 = P_{\mc{N}^\bot}$, $A_2 = P_{\mc{N}}$,
  $B_2 = P_{\mc{M}^\bot}$.  So $A_2 = V^*A_1V$ and $B_2 = V^*B_1V$.
  Set $A = \tfrac{1}{\sqrt{2}}(A_1 + A_2)$,
  $B = \tfrac{1}{\sqrt{2}}(B_1 + B_2)$.  These are both positive and
  injective, but since neither $\mc{M} \dotplus \mc{N}$ nor
  $\mc{N}^\bot \dotplus \mc{M}^\bot$ equals $\HH$, the ranges of $A$
  and $B$ are not closed.

  Let $W = \tfrac{1}{\sqrt{2}} \begin{pmatrix} 1 \\ V \end{pmatrix}$,
  and set
  \begin{equation*}
    T = AB = A_1B_1 + A_2B_2 = W^* (A_1B_1 \otimes 1_2) W,
  \end{equation*}
  where $A_1B_1 \otimes 1_2$ is the $2\times 2$ diagonal operator
  matrix with diagonal entries $A_1B_1$.  The operator $W$ is an
  isometry, and $T$ is injective with dense range.

  Suppose that $T$ is similar to a positive operator, $T = GCG^{-1}$.
  The operators
  \begin{equation*}
    W' := \frac{1}{\sqrt{2}}
    \begin{pmatrix}
      -V^* \\ 1
    \end{pmatrix},
    \qquad
    W'' := \frac{1}{\sqrt{2}}
    \begin{pmatrix}
      V^* \\ 1
    \end{pmatrix}    
  \end{equation*}
  are also isometric and $U = \begin{pmatrix} W & W' \end{pmatrix}$ is
  unitary.  Furthermore,
  \begin{equation*}
    \begin{split}
      & {W'}^*(A_1B_1 \otimes 1_2)W'
      = {W''}^*
      \begin{pmatrix}
        -1 & 0 \\ 0 & 1
      \end{pmatrix}
      (A_1B_1 \otimes 1_2)
      \begin{pmatrix}
        -1 & 0 \\ 0 & 1
      \end{pmatrix}
      W'' \\
      = & {W''}^*
      (A_1B_1 \otimes 1_2)
      W''
      = VW^*(A_1B_1 \otimes 1_2)WV^*
      = (VG)C(VG)^{-1}.
    \end{split}
  \end{equation*}
  Hence
  \begin{equation*}
    A_1B_1 \otimes 1_2 = U
    \begin{pmatrix}
      G & 0 \\ 0 & VG
    \end{pmatrix}
    (C \otimes 1_2)
    {\begin{pmatrix}
        G & 0 \\ 0 & VG
      \end{pmatrix}}^{-1}
    U^{-1};
  \end{equation*}
  that is, $A_1B_1 \otimes 1_2$ is similar to a positive operator.
  But by the same reasoning employed in showing that $A_1B_1$ is
  quasi-similar, but not similar to a positive operator, the same
  holds for $A_1B_1 \otimes 1_2$, giving a contradiction.  Hence, $T$
  is also quasi-similar, but not similar to a positive operator.

  It is noteworthy that if the operator $T$ just constructed were to
  have a factorization $T = AB$, where one of $A$ or $B$ has closed
  range, then by Theorem~\ref{thmLL}, $T$ would be similar to a
  positive operator.  Hence there can be no such factorization for
  this $T$.
\end{Example}

The following characterization of the elements of $\LL$ is immediate
from Theorem~\ref{Seb}.

\begin{thm}
  \label{thm:T-in-LL=pos-soln}
  Let $T \in L(\HH)$.  Then $T \in \LL$ if and only if the inequality
  $TT^* \leq X T^*$ admits a positive solution.
\end{thm}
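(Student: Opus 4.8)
The plan is to read off both implications directly from Sebestyén's theorem (Theorem~\ref{Seb}), the observation being that membership in $\LL$ is precisely a solvability statement for an equation of the form $AY = T$ with $A$ positive and solution $Y$ positive. First I would record the reformulation: $T \in \LL$ exactly when there exists some $A \in \mc{L}^+$ for which the equation $AY = T$ admits a positive solution $Y$. Indeed, if $T = AB$ with $A, B \in \mc{L}^+$ then $Y = B$ works, and conversely a positive solution $Y$ of $AY = T$ exhibits $T = AY$ as a product of two positive operators. With this in hand, Theorem~\ref{Seb} converts the solvability of $AY = T$ (for fixed $A$) into the inequality $TT^* \leq \lambda A T^*$ for some $\lambda \geq 0$.

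For the forward direction I would start from $T = AB$, apply Theorem~\ref{Seb} to obtain $TT^* \leq \lambda A T^*$ for some $\lambda \geq 0$ (one may take $\lambda = \|B\|$), and then absorb the scalar by setting $X := \lambda A$. Since $\lambda \geq 0$ and $A \in \mc{L}^+$, the operator $X$ is positive and satisfies $TT^* \leq X T^*$, so the inequality has a positive solution. For the converse, given a positive $X$ with $TT^* \leq X T^*$, I would take $A := X$ and $\lambda := 1$, so that the hypothesis of Theorem~\ref{Seb} is met (note $X T^* \geq TT^* \geq 0$ automatically). This yields a positive $Y$ with $XY = T$, and then $T = XY$ displays $T$ as a product of two positive operators, whence $T \in \LL$.

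The hard part will be essentially nonexistent, since all the substance is carried by Theorem~\ref{Seb}; the only point requiring attention — and the reason the statement is not completely tautological — is the bookkeeping of the nonnegative scalar $\lambda$ appearing in Sebestyén's inequality. In the forward implication it must be absorbed into the positive operator $X$, while in the converse it is harmlessly set equal to $1$. Once this translation is made, the two directions are immediate, and the edge case $\lambda = 0$ (forcing $TT^* = 0$, hence $T = 0 \in \LL$) presents no difficulty.
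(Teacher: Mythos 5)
Your proposal is correct and matches the paper's argument: both directions reduce the statement to Theorem~\ref{Seb}, with the converse taking $\lambda = 1$ and the forward direction absorbing the scalar into $X = \|B\|A$ (the paper merely inlines the one-line computation $TT^* = AB^2A \leq \|B\|ABA = \|B\|AT^*$ rather than citing the necessity half of Theorem~\ref{Seb}, which is the same estimate). Nothing is missing.
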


\begin{proof}
  If $T \in \LL$ then there exist $A,B \in \mc{L}^+$ such that
  $T = AB$.  Since $B^2 \leq \Vert B \Vert B$ then
  $TT^* = AB^2A \leq \Vert B \Vert ABA = \Vert B \Vert AT^*$.
  Therefore, $\Vert B \Vert A$ is a positive solution of
  $TT^* \leq X T^*$.  Conversely, if $A \in \mc{L}^+$ satisfies
  $TT^* \leq AT^*$ then, by Theorem~\ref{Seb}, the equation $T = AX$
  admits a positive solution.  Therefore $T \in \LL$.
\end{proof}

\begin{cor}
  \label{cor:T=AB-iff-A=pos-soln}
  Let $T \in \LL$ and $A \in \mc{L}^+$.  Then $T$ can be factored as
  $T = AB$, with $B \in \mc{L}^+$ if and only if $\lambda A$ is a
  solution of $TT^* \leq X T^*$, for some $\lambda\geq 0$.
\end{cor}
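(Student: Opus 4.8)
The plan is to read off both implications directly from Sebesty\'en's theorem (Theorem~\ref{Seb}), with the fixed positive operator $A$ serving as the coefficient in the equation $AX = T$; this parallels the proof of Theorem~\ref{thm:T-in-LL=pos-soln}, the only difference being that $A$ is now prescribed rather than merely asserted to exist.

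For the forward direction, I would assume $T = AB$ with $B \in \mc{L}^+$ and reproduce the estimate used in Theorem~\ref{thm:T-in-LL=pos-soln}. Since $B \leq \|B\|\,1$, conjugation by $B^{1/2}$ yields $B^2 \leq \|B\|\,B$, whence
\begin{equation*}
  TT^* = AB^2A \leq \|B\|\,ABA = \|B\|\,AT^*,
\end{equation*}
using that $T^* = BA$ as $A$ and $B$ are selfadjoint. Taking $\lambda = \|B\|$ then shows that $\lambda A$ solves $TT^* \leq XT^*$.

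For the converse, I would assume that $\lambda A$ solves $TT^* \leq XT^*$ for some $\lambda \geq 0$, that is, $TT^* \leq \lambda A T^*$. This is precisely the inequality appearing as the hypothesis of Theorem~\ref{Seb} applied to the equation $AX = T$, so that theorem furnishes a positive solution $B \in \mc{L}^+$ with $AB = T$, giving the desired factorization with $B \in \mc{L}^+$.

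Since each direction is a one-line consequence of a previously established result, there is no genuine obstacle here; the entire content is already contained in Sebesty\'en's theorem. The only point worth flagging is conceptual rather than technical: the corollary specializes Theorem~\ref{thm:T-in-LL=pos-soln} by fixing the positive solution $X$ (up to a scalar) to be the given $A$, and then reinterprets the factor $B$ as the positive solution produced by Theorem~\ref{Seb}.
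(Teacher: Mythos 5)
Your proof is correct and follows exactly the route the paper intends: the corollary is stated without proof as an immediate consequence of Theorem~\ref{Seb}, and your two directions reproduce, with $A$ fixed, precisely the argument the paper gives for Theorem~\ref{thm:T-in-LL=pos-soln} (the estimate $TT^* = AB^2A \leq \|B\|\,AT^*$ one way, and Sebesty\'en's theorem applied to $AX = T$ the other). Nothing further is needed.
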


\begin{cor}
  \label{cor:TTstar=ATstar}
  The operator $T\in \mc{L}^+ \! \cdot \! \mc{P}$ if and only if
  $TT^* = X T^*$ admits a positive solution.  Moreover,
  $T \in \mc{P}^2$ if and only if $TT^* = X T^*$ admits a solution in
  $\mc{P}$.
\end{cor}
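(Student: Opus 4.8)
The plan is to reduce both equivalences to a single factorization fact about the \emph{equality} $TT^* = XT^*$, and then read off each statement by specializing the operator $X$ that solves it. In this sense the result is the ``equality refinement'' of Theorem~\ref{thm:T-in-LL=pos-soln}, but the argument I have in mind is direct and does not need Sebesty\'en's theorem.

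The engine of the proof is the observation that $TT^* = XT^*$ is equivalent to $(T-X)T^* = 0$, i.e.\ to $\ran T^* \subseteq \ker(T-X)$. Since $\ker(T-X)$ is closed, this says precisely that $T$ and $X$ agree on $\clran T^*$. Combining this with $T|_{\ker T} = 0$ and the orthogonal decomposition $\HH = \clran T^* \oplus \ker T$, I would show that $T = X P_{\clran T^*}$: writing $x = u+v$ with $u \in \clran T^*$ and $v \in \ker T$, one gets $Tx = Tu = Xu = X P_{\clran T^*}x$. This single identity is the main point, and the one step I would treat most carefully (passing from $\ran T^*$ to its closure, and then checking the two summands), even though it is short. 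The key structural feature is that this factorization holds for \emph{any} $X$ solving the equation, so membership of $T$ in the relevant subclass is governed entirely by which class $X$ itself belongs to, while the second factor $P_{\clran T^*}$ is automatically an orthogonal projection.

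Granting the identity, the first equivalence is immediate. If $TT^* = XT^*$ has a solution $X = A \in \mc{L}^+$, then $T = A P_{\clran T^*}$ exhibits $T$ as positive times projection, so $T \in \mc{L}^+ \! \cdot \! \mc{P}$. Conversely, if $T = AQ$ with $A \in \mc{L}^+$ and $Q \in \mc{P}$, then $T^* = QA$ and, using $Q^2 = Q$, a one-line computation gives $TT^* = AQQA = AQA = A(QA) = AT^*$, so $X = A$ is a positive solution.

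The ``Moreover'' part then falls out by taking $X$ to be a projection. If $TT^* = XT^*$ has a solution $X = Q \in \mc{P}$, the same factorization yields $T = Q P_{\clran T^*}$, a product of two orthogonal projections, hence $T \in \mc{P}^2$. Conversely, if $T = Q_1Q_2$ with $Q_1,Q_2 \in \mc{P}$, then $T^* = Q_2Q_1$ and $TT^* = Q_1Q_2Q_2Q_1 = Q_1Q_2Q_1 = Q_1T^*$, so $X = Q_1 \in \mc{P}$ solves the equation. The only thing to bear in mind throughout is that no extra work is needed to place $P_{\clran T^*}$ in $\mc{P}$, which is exactly what makes the two parts of the corollary share the same proof.
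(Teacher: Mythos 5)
Your proof is correct, and the key direction is argued by a genuinely different (and more elementary) mechanism than the paper's. Where you observe that $TT^* = XT^*$ means $(T-X)T^* = 0$, hence $\clran T^* \subseteq \ker(T-X)$, and then use $\HH = \clran T^* \oplus \ker T$ to conclude $T = XP_{\clran T^*}$, the paper instead runs the equation through the polar decomposition $T^* = U\vert T^*\vert$: from $\vert T^*\vert^2 = AT^* = AU\vert T^*\vert$ it cancels $\vert T^*\vert$ on its dense range to get $\vert T^*\vert = AU = U^*A$, whence $T^* = UU^*A$ with $UU^*$ an orthogonal projection. The two arguments land on literally the same factorization, since $UU^* = P_{\clran T^*}$, but yours reaches it without invoking the modulus or the partial isometry at all; it also makes transparent, as you note, that the second factor is $P_{\clran T^*}$ for \emph{every} solution $X$, so the positive case and the projection case are handled by one identity (the paper leaves the ``Moreover'' part implicit, though its argument covers it equally, since $A$ there may be taken in $\mc{P}$). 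What the paper's route buys is the side identity $\vert T^*\vert = AU = U^*A$, in the same Douglas-lemma spirit as the surrounding Sebesty\'en-type arguments; what yours buys is brevity and uniformity. The converse computations ($T = AQ \Rightarrow TT^* = AQA = AT^*$, and likewise for $Q_1Q_2$) are identical in both. Your one ``careful'' step is indeed sound: $\ker(T-X)$ is closed, so agreement on $\ran T^*$ passes to $\clran T^*$, and the decomposition $x = u + v$ with $v \in \ker T = (\ran T^*)^\perp$ gives $Tx = Tu = Xu = XP_{\clran T^*}x$.
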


\begin{proof}
  If $T = AP$, $A\geq 0$ and $P$ an orthogonal projection, then
  $TT^* = AP^2A = APA = AT^*$.  Conversely, if $TT^* = X T^*$ admits a
  positive solution $X = A \geq 0$, then $|T^*|^2 = AU|T^*|$, where
  $U$ is a partial isometry from $\clran T$ onto $\clran T^*$.  Thus
  $|T^*| = AU = U^*A$, and so $T^* = UU^*A$, where $UU^*$ is an
  orthogonal projection.
\end{proof}

The next result will be particularly useful for describing spectral
properties of elements of $\LL$ in
Section~\ref{sec:l+2-spectral-properties}.  It was proved for
invariant subspaces in the finite dimensional case in~\cite{MR974046}.
Recall that a subspace $\M$ is \emph{invariant} for an operator $T$ if
$T\M \subseteq \M$.

\begin{prop}
  \label{prop:L+2-restricted-to-inv-subsp-in-L+2}
  Let $T \in \LL$ and suppose $\M$ is invariant for $T$.  Then
  $TP_\M \in \LL$.
\end{prop}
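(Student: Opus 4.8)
The plan is to reduce the statement to Sebestyén's theorem (Theorem~\ref{Seb}) by producing an explicit positive factorization of $S := TP_\M$. Writing $P := P_\M$, the invariance $T\M \subseteq \M$ is equivalent to $TP = PTP$, and taking adjoints gives $PT^* = PT^*P$; hence $S^* = PT^*$ and $S^*S = PT^*TP$. The idea is to find a positive operator $A''$ for which $S^*S \le A''S$ holds \emph{with $A''S$ self-adjoint and positive}, so that Theorem~\ref{Seb}, applied to the equation $A''Z = S^*$, yields some $Z \in \mc{L}^+$ with $S^* = A''Z$, whence $S = ZA''$ exhibits $TP_\M$ as a product of two positive operators.

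To construct $A''$, I would fix any factorization $T = AB$ with $A,B \in \mc{L}^+$ (for instance an optimal pair furnished by Proposition~\ref{lemaT2}). Exactly as in the proof of Theorem~\ref{thm:T-in-LL=pos-soln}, but now applied to $T^* = BA$, one has $T^*T = BA^2B \le \|A\|\,BAB$, since $A^2 \le \|A\|\,A$. Compressing this inequality to $\M$ preserves the L\"owner order, so $PT^*TP \le \|A\|\,PBABP$. I then set $A'' := \|A\|\,PBP \in \mc{L}^+$ and invoke invariance in the form $ABP = PABP$ (that is, $TP = PTP$) to rewrite $\|A\|\,PBABP = (\|A\|\,PBP)(ABP) = A''S$. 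This identifies the right-hand side as $A''S$, which is self-adjoint and positive, being the compression $P(\|A\|\,BAB)P$ of a positive operator, and so establishes $S^*S \le A''S$.

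Finally, reading this as $S^*(S^*)^* \le \lambda\,A''(S^*)^*$ with $\lambda = 1$ and $(S^*)^* = S$, Theorem~\ref{Seb} provides $Z \in \mc{L}^+$ solving $A''Z = S^*$; taking adjoints gives $S = ZA'' = Z(\|A\|\,P_\M B P_\M)$, a product of two positive operators, so $TP_\M \in \LL$. The one genuinely delicate point, and the main obstacle, is that $A$ and $B$ need not commute with $P_\M$, so the naive compressions $P_\M A P_\M$ and $P_\M B P_\M$ do not multiply back to $TP_\M$. The invariance relation $TP_\M = P_\M T P_\M$ is precisely what repairs this: it converts the compressed inequality into the \emph{exact} form $S^*S \le A''S$ with a cleanly self-adjoint right-hand side, which is what makes Theorem~\ref{Seb} directly applicable.
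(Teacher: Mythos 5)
Your proof is correct and follows essentially the same route as the paper: the paper also starts from $T^*T \le \|A\|\,BT$, compresses to $\M$, and uses invariance to rewrite $P_\M BTP_\M = P_\M B P_\M T P_\M$, then concludes via Theorem~\ref{thm:T-in-LL=pos-soln} (which is itself just a direct application of Sebesty\'en's theorem, as in your argument). Your explicit check that $A''S = \|A\|\,P_\M(BAB)P_\M$ is self-adjoint and positive is a nice point of care, since the L\"owner inequality in Theorem~\ref{Seb} presupposes it, but the substance of the two proofs is identical.
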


\begin{proof}
  Write $T = AB$, $A,B \in \mc{L}^+$.  Then $T^*T \leq \lambda BT$ for
  $\lambda = \|A\|$.  Assume that $\M$ is invariant.  Then
  \begin{equation*}
    P_\M T^* T P_\M \leq \lambda P_\M B T P_\M = \lambda P_\M B P_\M T
    P_\M.
  \end{equation*}
  Since $\lambda P_\M B P_\M \geq 0$, by
  Theorem~\ref{thm:T-in-LL=pos-soln}, $TP_\M \in \LL$.
\end{proof}

From the proof of Theorem~\ref{thm:T-in-LL=pos-soln}, $T P_\M$ above
has the form $C (P_\M BP_\M)$ for some $C \in \mc{L}^+$.

In fact, it is not difficult to see that since $T^* \in \LL$, the
above proposition is true more generally for \emph{semi-invariant}
subspaces; that is, subspaces of the form $\M = \M_1 \ominus \M_2$,
where $\M_1$ and $\M_2$ are invariant for $T$.

\begin{Def}
  Given $T \in \LL$ and $A \geq 0$ with $\clran A = \clran T$, define
  \begin{equation*}
    \mc{B}_T^A = \{X \geq 0: T = AX\}.
  \end{equation*}
\end{Def}

Note that even if $\clran A = \clran T$ and $\ran A \supseteq \ran T$,
the set $\mc{B}_T^A$ may be empty.  As just seen in
Corollary~\ref{cor:T=AB-iff-A=pos-soln}, $A$ must also satisfy
$TT^* \leq \lambda AT^*$ for some $\lambda > 0$.

\begin{thm}
  \label{propopt}
  Let $T \in \LL$ and $A \geq 0$ be such that
  $\mc{B}_T^A \neq \emptyset$.  Then $\mc{B}_T^A$ has a minimum $B_0$.
  The pair $(A,B_0)$ is optimal and the set $\mc{B}_T^A$ is the cone
  \begin{equation*}
    \mc{B}_T^A = \{B_0 + Z: Z \in \mc{L}^+ \text{ and }\clran Z
    \subseteq \ker T^*\}.
  \end{equation*}
  Moreover, for every $B \in \mc{B}_T^A$, $B_{\clran T} = B_0$, and the
  pair $(A,B)$ is optimal if and only if
  $\ran Z \subseteq \clran T^* \cap \ker T^*$.
\end{thm}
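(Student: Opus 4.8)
The plan is to manufacture $B_0$ from Sebesty\'en's theorem and then control every other solution by analyzing the block structure relative to $\HH = \clran T \oplus \ker T^*$. Since $\mc{B}_T^A \neq \emptyset$, fix $B_1 \in \mc{B}_T^A$. Exactly as in the proof of Theorem~\ref{thm:T-in-LL=pos-soln}, $TT^* = AB_1^2A \leq \|B_1\|\,AT^*$, so $A$ satisfies the hypothesis of Theorem~\ref{Seb}. I would let $B_0$ be the positive solution of $AX = T$ furnished by that theorem, so that $\ker B_0 = \ker T$ and $(B_0)_{/\clran T} = 0$. As $\clran A = \clran T$ by the standing assumption on $\mc{B}_T^A$, the pair $(A,B_0)$ is already optimal. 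What remains is to prove that $B_0$ is the minimum and to describe the rest of $\mc{B}_T^A$.

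The next step is to pass to the orthogonal decomposition $\HH = \clran T \oplus \ker T^*$. Since $A \geq 0$ with $\clran A = \clran T$, one has $\ker A = \ker T^*$, so $A = \left(\begin{smallmatrix} A_{11} & 0 \\ 0 & 0 \end{smallmatrix}\right)$ with $A_{11}$ injective and of dense range on $\clran T$. For arbitrary $B = \left(\begin{smallmatrix} B_{11} & B_{12} \\ B_{12}^* & B_{22} \end{smallmatrix}\right) \in \mc{B}_T^A$, the identity $AB = T$ amounts to $A_{11}B_{11}$ and $A_{11}B_{12}$ being the (fixed) top-row blocks of $T$, the bottom row vanishing automatically. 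Injectivity of $A_{11}$ then forces $B_{11}$ and $B_{12}$ to be uniquely determined by $T$; in particular, these two blocks are the \emph{same} for every $B \in \mc{B}_T^A$.

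The crux is to show $B_{\clran T} = B_0$ for every $B \in \mc{B}_T^A$. Because $\clran B_{/\clran T} \subseteq (\clran T)^\perp = \ker A$, we get $A B_{/\clran T} = 0$, hence $A B_{\clran T} = AB = T$ and $B_{\clran T} \in \mc{B}_T^A$. By the factorization~\eqref{eq:1}, $B_{\clran T} = R^*R$ with $R = \left(\begin{smallmatrix} B_{11}^{1/2} & G \end{smallmatrix}\right)$ mapping into $\clran T$, where $G$ is the unique operator (Douglas' lemma) with $\clran G \subseteq \clran B_{11}^{1/2}$ and $B_{11}^{1/2}G = B_{12}$, its $(2,2)$-block being $G^*G$. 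Thus $B_{\clran T}$ depends only on the $B$-independent blocks $B_{11},B_{12}$, and its $\clran T$-Schur complement vanishes by the remark that $C^*C$ has zero Schur complement when $C$ maps into $\St$. Since any solution with vanishing $\clran T$-Schur complement is completely determined by $B_{11},B_{12}$ (its $(2,2)$-block must equal $G^*G$) and $B_0$ is such a solution, we conclude $B_{\clran T} = B_0$, which is the ``Moreover'' assertion. Writing $Z := B - B_0 = B_{/\clran T}$ then gives $Z \geq 0$ with $\clran Z \subseteq \ker T^*$ and $B = B_0 + Z \geq B_0$, so $B_0 = \min \mc{B}_T^A$; conversely, any such $Z$ satisfies $AZ = 0$, whence $A(B_0+Z) = T$. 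This yields the cone description.

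For the optimality criterion I would use that, for positive operators, $\ker(B_0 + Z) = \ker B_0 \cap \ker Z = \ker T \cap \ker Z$. Hence $(A,B)$ is optimal, i.e.\ $\ker B = \ker T$, exactly when $\ker T \subseteq \ker Z = (\clran Z)^\perp$, that is, $\clran Z \subseteq \clran T^*$. Combined with the constraint $\clran Z \subseteq \ker T^*$ already in force, and since $\clran T^* \cap \ker T^*$ is closed, this is equivalent to $\ran Z \subseteq \clran T^* \cap \ker T^*$. The main obstacle is the third paragraph: showing that the compression $B_{\clran T}$ is simultaneously a member of $\mc{B}_T^A$ and rigidly pinned down by the $B$-independent top-row blocks, which is precisely what forces it to coincide with Sebesty\'en's $B_0$ and drives the entire argument.
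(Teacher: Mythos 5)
Your proof is correct, and while it follows the paper's overall skeleton---producing $B_0$ via Sebesty\'en's theorem (Theorem~\ref{Seb}) and working relative to $\HH = \clran T \oplus \ker T^*$---the crucial identification $B_{\clran T} = B_0$ is carried out by a genuinely different mechanism. The paper writes an arbitrary $B \in \mc{B}_T^A$ as an LU-type factorization $B = F^*F$ with $F$ upper triangular, compares it with the factorization $B_0 = G^*G$ coming from Theorem~\ref{Seb}, and matches factors: first $F_1^*F_1 = G_1^2$ from $TA = ABA = AB_0A$ and $\clran A = \clran T$, then $F_2 = G_2$ via a density/continuity argument using $\clran(G_1A) = \clran T$ (this is where the final clause of Theorem~\ref{Seb} is invoked, so that $G_1$ may be taken positive with dense range). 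You avoid factor-matching altogether: since $A = A_{11} \oplus 0$ with $A_{11}$ injective on $\clran T$, the block equations $A_{11}B_{11} = T_1$ and $A_{11}B_{12} = T_2$ pin down the top-row blocks of \emph{every} element of $\mc{B}_T^A$ simultaneously, and the uniqueness part of Douglas' lemma, applied to $B_{11}^{1/2}G = B_{12}$ with $\clran G \subseteq \clran B_{11}^{1/2}$, shows via \eqref{eq:1} that any positive operator with those top-row blocks and vanishing $\clran T$-Schur complement must have $(2,2)$ entry $G^*G$; since both $B_{\clran T}$ and $B_0$ are such operators, they coincide. This is more elementary---no normalization of factors, no density argument, no appeal to the injectivity clause of Theorem~\ref{Seb}---and it isolates the structural fact doing the work: zero-Schur-complement positive completions are determined by their first row. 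The cone description and the minimality of $B_0$ then fall out identically, and your optimality criterion via $\ker(B_0+Z) = \ker B_0 \cap \ker Z$ is an equivalent (kernel-side) substitute for the paper's range-side argument that $\clran B = \clran T^*$ forces $\clran Z \subseteq \clran T^*$.
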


\begin{proof}
  Let $B \in \mc{B}_T^A$ and $B_0 = G^*G$ the solution of $T = AX$
  constructed in the proof of Theorem~\ref{Seb}.  With respect to the
  decomposition $\HH = \clran T \oplus \ker T^*$, $B$ has an
  LU-decomposition,
  \begin{equation*}
    B = F^*F =
    \begin{pmatrix}
      F_1^* & 0 \\ F_2^* & F_3^*
    \end{pmatrix}
    \begin{pmatrix}
      F_1 & F_2 \\ 0 & F_3
    \end{pmatrix}.
  \end{equation*}
  Also by Theorem~\ref{Seb},
  \begin{equation*}
    B_0 = \begin{pmatrix} G_1^* & 0 \\ G_2^* & 0 \end{pmatrix}
    \begin{pmatrix} G_1 & G_2 \\ 0 & 0 \end{pmatrix}.
  \end{equation*}
  Since the theorem also gives in this circumstance that $G_1^*G_1$ is
  a quasi-affinity, there is no loss in generality in taking
  $G_1 \geq 0$ with dense range.

  Now
  \begin{equation*}
    TA = ABA = AF_1^*F_1 A = AB_0A = AG_1^2A,
  \end{equation*}
  and since $\clran A = \clran T$, $F_1^*F_1 = G_1^2$.  Without loss
  of generality, take $F_1 = G_1$ (adjusting $F_2$ and $F_3$ as
  necessary).  So
  \begin{equation*}
    T = AG^*G = A
    \begin{pmatrix}G_1^2 & G_1G_2 \end{pmatrix}
    = AF^*F =
    A \begin{pmatrix}G_1^2 & G_1F_2 \end{pmatrix}.
  \end{equation*}
  Therefore,
  \begin{equation*}
    G_2^*G_1A = F_2^*G_1A.
  \end{equation*}
  Since both $G_1$ and $A$ are positive with dense ranges in
  $\clran T$, $\clran (G_1A) = \clran T$.  Hence by continuity,
  $F_2 = G_2$.  Therefore
  $F = \begin{pmatrix} G_1 & G_2 \\ 0 & F_3 \end{pmatrix}$ and
  \begin{equation*}
    Z := B_{/\clran T} =
    \begin{pmatrix}
      0 & 0 \\ 0 & F_3^*F_3
    \end{pmatrix} \geq 0,
  \end{equation*}
  giving $B = B_0 + Z$, $Z \geq 0$, $\clran Z \subseteq \ker T^*$, and
  $B_{\clran T} = B_0$.

  Finally, suppose that the pair $(A,B)$ is optimal.  So
  $\clran B = \clran T^*$, where $B = B_0 +Z$, and since
  $\clran B_0 = \clran T^*$, it must be that
  $\clran Z \subseteq \clran T^*$.  Hence,
  $\clran Z \subseteq \clran T^* \cap \ker T^*$.  On the other hand,
  if $B = B_0 + Z$, $Z \geq 0$, and
  $\clran Z \subseteq \clran T^* \cap \ker T^*$, then
  $\clran B = \clran T^*$, and so $(A,B)$ is optimal.
\end{proof}

Theorem~\ref{propopt} states that if $T \in \LL$ admits an optimal
pair $(A,B)$, then there is an optimal pair $(A,B_0)$ where $B_0$ has
minimal norm among the operators in the set $\mc{B}_T^A$.
Furthermore, $B_0$ is the minimal positive completion of the operator
matrix
$\begin{pmatrix} B_{11} & B_{12} \\ B_{12}^* & * \end{pmatrix}$.
However, $(A,B_0)$ need not be the unique optimal pair for $T$ with
$A$ as the first factor.

\begin{Example}
  \label{example-2}
  Consider $T = P_\St A$, where $A \geq 0$ and $\St$ are defined as in
  Example~\ref{example-1}.  Then by Theorem~\ref{propopt}, for any
  $\lambda > 0$, $(P_\St, A+\lambda (1-P_\St))$ is an optimal pair for
  $T$.
\end{Example}

The next result gives a condition for the optimal pair $(A,B)$ to be
unique when one of the terms is fixed.

\begin{cor}
  \label{coruniqueBTA}
  Let $T \in \LL$ and $A$ such that $\mc{B}_T^A \neq \emptyset$ with
  minimal element $B_0$.  Then $(A,B_0)$ is the unique optimal pair
  for $T$ with $A$ as the first factor if and only if
  $\ol{\ran T \dotplus \ker T} = \HH$.  Additionally, for fixed $A$,
  $(A,B_0)$ and $(B_0,A)$ are unique optimal pairs for $T$ and $T^*$,
  respectively, if and only if $\ol{\clran T \dotplus \ker T} = \HH$.
\end{cor}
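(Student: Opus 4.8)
The plan is to read everything off the complete description of $\mc{B}_T^A$ supplied by Theorem~\ref{propopt}. That theorem gives $\mc{B}_T^A = \{B_0 + Z : Z \in \mc{L}^+,\ \clran Z \subseteq \ker T^*\}$, together with the criterion that the pair $(A, B_0 + Z)$ is optimal precisely when $\ran Z \subseteq \clran T^* \cap \ker T^*$. Hence $(A,B_0)$ is the unique optimal pair with $A$ as first factor if and only if the only $Z \in \mc{L}^+$ with $\ran Z \subseteq \clran T^* \cap \ker T^*$ is $Z = 0$.

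First I would dispose of this last condition. For a closed subspace $\M$, a nonzero positive operator $Z$ with $\ran Z \subseteq \M$ exists if and only if $\M \neq \{0\}$: if $\M \neq \{0\}$ the rank-one orthogonal projection onto a unit vector of $\M$ does the job, while if $\M = \{0\}$ then $\ran Z = \{0\}$ forces $Z = 0$. Applying this with the closed subspace $\M = \clran T^* \cap \ker T^*$, uniqueness of $(A,B_0)$ is equivalent to $\clran T^* \cap \ker T^* = \{0\}$. Next I would translate this into a density statement using the standard identities $(\ran T)^\perp = \ker T^*$ and $(\ker T)^\perp = \clran T^*$, which give
\[
  (\ran T + \ker T)^\perp = (\ran T)^\perp \cap (\ker T)^\perp = \ker T^* \cap \clran T^*.
\]
Since Proposition~\ref{lemaT2} guarantees $\ran T \cap \ker T = \{0\}$, the sum $\ran T \dotplus \ker T$ is genuine and is dense exactly when its orthogonal complement is trivial; that is, $\ol{\ran T \dotplus \ker T} = \HH$ if and only if $\clran T^* \cap \ker T^* = \{0\}$. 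This proves the first equivalence.

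For the second statement I would apply the first part to $T^*$. Since $(A,B)$ is optimal for $T$ if and only if $(B,A)$ is optimal for $T^*$, the pair $(B_0,A)$ is an optimal pair for $T^*$, and the first part applied to $T^*$ shows it is the unique one with $B_0$ as first factor if and only if $\ol{\ran T^* \dotplus \ker T^*} = \HH$, i.e.\ if and only if $\clran T \cap \ker T = \{0\}$. Therefore $(A,B_0)$ and $(B_0,A)$ are simultaneously the unique optimal pairs for $T$ and $T^*$ precisely when $\clran T^* \cap \ker T^* = \{0\}$ and $\clran T \cap \ker T = \{0\}$. The second condition is exactly what makes $\clran T \dotplus \ker T$ a legitimate direct sum, while the first, via the same complement identity now read as $(\clran T + \ker T)^\perp = \ker T^* \cap \clran T^*$, says that this sum is dense; together they are equivalent to $\ol{\clran T \dotplus \ker T} = \HH$.

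The one place demanding care is the bookkeeping between $T$ and $T^*$ in the second part, together with the observation that the symbol $\dotplus$ already encodes the disjointness $\clran T \cap \ker T = \{0\}$; matching the two separate intersection conditions against the single displayed density equality is where one must be attentive. Everything else is a direct consequence of Theorem~\ref{propopt} and the elementary fact about ranges of positive operators.
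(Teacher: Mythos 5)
Your proof is correct and follows essentially the same route as the paper's: both read the uniqueness question directly off the parametrization $\mc{B}_T^A = \{B_0 + Z\}$ in Theorem~\ref{propopt}, translate $\clran T^* \cap \ker T^* = \{0\}$ into the density statement via orthogonal complements, and handle the second assertion by running the same argument for $T^*$. If anything, your version is more careful than the paper's terse proof on exactly the point you flag --- the paper's phrase ``unique optimal pair $(B,A)$ for $T^*$ with $A$ fixed'' read literally would collapse into the first equivalence, whereas your reading (uniqueness for $T^*$ with $B_0$ as first factor, yielding the extra condition $\clran T \cap \ker T = \{0\}$) is the one that makes the stated criterion $\ol{\clran T \dotplus \ker T} = \HH$ correct.
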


\begin{proof}
  This follows directly from Theorem~\ref{propopt}, since there can be
  more than one optimal pair $(A,B)$ for fixed $A$ if and only if
  $\clran T^* \cap \ker T^* \neq \{0\}$.  The condition $\ol{\clran T
    \dotplus \ker T} = \HH$ implies that $\clran T^* \cap \ker T^* =
  \{0\}$, and by a similar argument as at the end of the proof of
  Theorem~\ref{propopt}, this condition is necessary and sufficient
  for there to be a unique optimal pair $(B,A)$ for $T^*$ with $A$
  fixed.
\end{proof}

There is a dilation theory for elements of $\LL$ which mimics that of
contractions on Hilbert spaces.

\begin{prop}
  \label{prop:dilations-for-L+2}
  Let $T \in \LL$.  Then there is a Hilbert space
  $\HH' \supseteq \HH$, and an operator
  $T' \in \mc{L}^+ \cdot \mc{P}$ on $\HH'$ such that $T$ is the
  restriction of $T'$ to an invariant subspace,
  $\clran T' = \clran T$, and $\ker T' \supseteq \ker T$.  There is
  also a Hilbert space $\HH'' \supseteq \HH'$ and $T'' \in \mc{P}^2$
  on $\HH''$, such that $\HH'$ is invariant for ${T''}^*$, $\HH$ is
  semi-invariant for $T''$, and $T$ is the compression of $cT''$ for
  some $c > 0$.
\end{prop}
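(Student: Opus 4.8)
The proposition asks for two successive dilations of $T \in \LL$: first to an operator $T' \in \mc{L}^+ \cdot \mc{P}$ (that is, a positive operator times a projection), and then to $T'' \in \mc{P}^2$ (a product of two projections). The plan is to exploit the optimal-pair structure from Proposition~\ref{lemaT2} together with the characterization of $\mc{L}^+ \cdot \mc{P}$ and $\mc{P}^2$ via the equation $TT^* = XT^*$ in Corollary~\ref{cor:TTstar=ATstar}.

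Let me look at the structure. Write $T = AB$ with $(A,B)$ optimal, so $\clran A = \clran T$ and $\ker B = \ker T$. For the first dilation, the idea is to replace the positive factor $B$ by a projection on a larger space, i.e. to dilate $B \geq 0$ (with $0 \le B \le \|B\|$, so after scaling $0 \le B \le 1$) to an orthogonal projection $P$ on $\HH' = \HH \oplus \KK$ via the classical Halmos-type dilation $P = \begin{pmatrix} B & (B - B^2)^{1/2} \\ (B-B^2)^{1/2} & 1 - B \end{pmatrix}$, and correspondingly extend $A$ by $A \oplus 0$. Then $T' := (A \oplus 0) P$ compresses to $AB = T$ on $\HH$, is manifestly in $\mc{L}^+ \cdot \mc{P}$, and one checks $\HH$ is invariant for $T'$ (since the first block row of $T'$ restricted to $\HH \oplus 0$ lands back in $\HH \oplus 0$). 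Verifying $\clran T' = \clran T$ and $\ker T' \supseteq \ker T$ is then a direct block computation.

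**The second dilation and the main obstacle.** For $T'' \in \mc{P}^2$, the natural move is to dilate the remaining positive factor $A$ (the one surviving after the first step) to a projection as well. Since $\mc{P}^2$ consists of products of two projections, I would take $T'' = P_1 P_2$ where $P_2$ is the projection already produced and $P_1$ dilates $A$ (again after rescaling $A$ to lie below $1$, which forces the factor $c > 0$ in the statement: we can only hope for $T$ to be the compression of $cT''$ because the projection dilation requires the positive operator to be a contraction). The spaces nest as $\HH \subseteq \HH' \subseteq \HH''$. The conditions to verify are that $\HH'$ is invariant for ${T''}^*$ and that $\HH$ is semi-invariant for $T''$; the semi-invariance should follow from $\HH = \HH' \ominus (\ker$-type complement$)$ with both pieces invariant for the appropriate operators, exactly matching the definition of semi-invariant (a subspace of the form $\M_1 \ominus \M_2$ with $\M_1, \M_2$ invariant) recorded just after Proposition~\ref{prop:L+2-restricted-to-inv-subsp-in-L+2}.

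**Where the difficulty lies.** The genuinely delicate step is arranging the \emph{compatibility} of the two dilations so that the compression of $cT'' = c P_1 P_2$ back down through $\HH'' \to \HH' \to \HH$ returns exactly $T = AB$, while simultaneously controlling $\clran$ and $\ker$. Dilating $A$ and $B$ independently to projections is easy; making the two dilation spaces and the associated invariance/semi-invariance conditions cohere — so that $P_{\HH} P_1 P_2 |_{\HH} = c^{-1} AB$ with the intermediate space $\HH'$ playing its required invariant/semi-invariant role — is the crux, and is where the scaling constant $c$ and the careful placement of the off-diagonal defect operators $(A - A^2)^{1/2}$, $(B - B^2)^{1/2}$ must be tracked. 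I expect the cleanest route is to build $T''$ in one stroke as a $\mc{P}^2$ operator on $\HH''$ whose compression to $\HH'$ is $T'$ and whose further compression to $\HH$ is $T$, verifying the nesting $\ker T \subseteq \ker T' \subseteq \ker T''$ and the range equalities blockwise, rather than treating the two dilations as wholly separate constructions.
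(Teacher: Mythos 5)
Your proposal is correct and is essentially the paper's own proof: the first dilation is verbatim the paper's construction (the Halmos-type projection dilation of the contraction $B$, noting $(B-B^2)^{1/2} = B^{1/2}(1-B)^{1/2}$, with $A$ extended by zeros), and your $T'' = P_1 P_2$, with $P_2$ the padded projection from step one and $P_1$ dilating the rescaled positive factor, is exactly what the paper obtains by applying the same method to $c{T'}^*$. The ``compatibility crux'' you flag dissolves once you take adjoints as the paper does: ${T'}^* = \tilde{B}\tilde{A}$ has its positive factor on the right, so part one applies verbatim, and the remaining checks are routine block computations --- $P_2$ vanishes on $\HH''\ominus\HH'$, which gives $\HH'$ invariant for ${T''}^* = P_2P_1$, the compression of $T''$ to $\HH$ equal to $cT$, and the semi-invariance of $\HH$ via the invariant pair $\M_1 = \HH \oplus (\HH''\ominus\HH')$, $\M_2 = \HH''\ominus\HH'$ for $T''$.
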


\begin{proof}
  Suppose that $T = AB$, where $(A,B)$ is optimal and by scaling if
  necessary, that $\|B\| \leq 1$.  On $\HH' = \HH \oplus \HH$, the
  operator
  \begin{equation*}
    \tilde{B} :=
    \begin{pmatrix}
      B & B^{1/2}(1-B)^{1/2} \\ (1-B)^{1/2}B^{1/2} & 1-B
    \end{pmatrix}
    =
    \begin{pmatrix}
      B^{1/2} \\ (1-B)^{1/2}
    \end{pmatrix}
    \begin{pmatrix}
      B^{1/2} & (1-B)^{1/2}
    \end{pmatrix}
  \end{equation*}
  is seen to be a projection since the column operator is an isometry.
  Extend $A$ to $\tilde{A}$ by padding with $0$s.  Then
  $T' := \tilde{A}\tilde{B} \in \mc{L}^+ \cdot \mc{P}$ and
  \begin{equation*}
    T' =
    \begin{pmatrix}
      AB & AB^{1/2}(1-B)^{1/2} \\
      0 & 0
    \end{pmatrix}.    
  \end{equation*}
  Clearly, $\HH$ is invariant for $T'$ and $T = P_\HH T' |_\HH$.
  Also, $\clran T \subseteq \clran T' \subseteq \clran A = \clran T$,
  so equality holds throughout.  It is obvious that if $f\in \HH$ is
  in $\ker T$, it is in $\ker T'$.

  The operator $T''$ is constructed by applying the same method to
  $c{T'}^*$, where $c$ is chosen so that $\|c{T'}^*\| \leq 1$.
\end{proof}

Let $T \in \LL$.  Using the L\"owner order, define a partial order on
the set of optimal pairs for $T$ by
\begin{equation*}
  (A_{\alpha}, B_{\alpha}) \prec (A_{\beta}, B_{\beta})
\end{equation*}
if $A_{\alpha} \leq A_{\beta}$ and $B_{\alpha} \leq B_{\beta}$.

\begin{Def} Let $T\in\LL$.  An optimal pair for $T$,
  $(A_{min},B_{min})$, is said to be \emph{minimal} if for an optimal
  pair $(A,B)$, $(A,B) \prec (A_{min}, B_{min})$ implies that
  $(A,B) = (A_{min}, B_{min})$.
\end{Def}

\begin{prop}
  \label{prop:min-min-opt-pairs}
  Let $T \in \LL$.  For every optimal pair $(A,B)$ for $T$, there
  exists a minimal optimal pair $(A_{min},B_{min}) \prec (A,B)$.
\end{prop}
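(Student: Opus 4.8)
The plan is to use a Zorn's lemma argument on the partially ordered set of optimal pairs below $(A,B)$. First I would fix the optimal pair $(A,B)$ and consider the set
\begin{equation*}
  \mc{O} = \{(A',B'): (A',B') \text{ optimal for } T \text{ and } (A',B') \prec (A,B)\},
\end{equation*}
ordered by the relation $\prec$. This is nonempty since it contains $(A,B)$ itself. A minimal optimal pair $(A_{min},B_{min}) \prec (A,B)$ is exactly a minimal element of $\mc{O}$, so by Zorn's lemma it suffices to show that every chain (totally ordered subset) in $\mc{O}$ has a lower bound in $\mc{O}$. Given a chain $\{(A_\alpha,B_\alpha)\}$, the families $\{A_\alpha\}$ and $\{B_\alpha\}$ are decreasing nets in $\mc{L}^+$ that are bounded below by $0$, so by the standard monotone convergence result for bounded monotone nets of selfadjoint operators, they converge in the strong operator topology to operators $A_{min}, B_{min} \in \mc{L}^+$ with $A_{min} \leq A_\alpha$ and $B_{min} \leq B_\alpha$ for all $\alpha$.

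The heart of the argument is then to verify that the candidate lower bound $(A_{min},B_{min})$ actually lies in $\mc{O}$; that is, that it is again an \emph{optimal} pair for $T$. There are three things to check. First, that $T = A_{min}B_{min}$: since $A_\alpha \to A_{min}$ and $B_\alpha \to B_{min}$ strongly and the nets are uniformly bounded (all dominated by $A$ and $B$ respectively), the products converge strongly, $A_\alpha B_\alpha \to A_{min}B_{min}$, and each $A_\alpha B_\alpha = T$, forcing $A_{min}B_{min} = T$. Second, that $\clran A_{min} = \clran T$: the inclusion $\clran A_{min} \subseteq \clran A = \clran T$ is automatic from $A_{min} \leq A$ (via Douglas' lemma), while the reverse inclusion $\ran T \subseteq \ran A_{min}$ follows from $T = A_{min}B_{min}$, and taking closures gives $\clran T \subseteq \clran A_{min}$. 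Third, that $\ker B_{min} = \ker T$: again $\ker B_{min} \subseteq \ker T$ holds because $T = A_{min}B_{min}$, and the reverse inclusion $\ker T \subseteq \ker B_{min}$ must be extracted from the fact that every $B_\alpha$ satisfies $\ker B_\alpha = \ker T$.

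I expect this last point — preserving $\ker B_{min} = \ker T$ in the strong limit — to be the main obstacle, since kernels can \emph{grow} under strong limits of decreasing positive operators and optimality is not a closed condition a priori. The way to handle it is to exploit the structure from Theorem~\ref{propopt}: for the fixed first factor, the minimal element $B_0$ of $\mc{B}_T^A$ satisfies $B_\alpha \geq (B_\alpha)_{\clran T}$, and the compression to $\clran T$ is rigidly determined. More precisely, I would show that the $\clran T$-compression of $B_{min}$ equals that common value, so that $P_{\clran T^*} B_{min}|_{\clran T^*}$ remains injective, which is exactly what forces $\ker B_{min} = \ker T$ rather than something larger. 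Once optimality of $(A_{min}, B_{min})$ is confirmed, it is by construction a lower bound for the chain lying in $\mc{O}$, Zorn's lemma applies, and the resulting minimal element is the desired minimal optimal pair.
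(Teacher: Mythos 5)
Your overall strategy is exactly the paper's: Zorn's lemma applied to the poset of optimal pairs, strong convergence of the decreasing nets $(A_\alpha)$, $(B_\alpha)$ to limits $A_{min},B_{min}\in\mc{L}^+$, joint strong continuity of multiplication on bounded sets to get $T=A_{min}B_{min}$, and then verification that the limit pair is optimal. Your first two verifications (the product identity and $\clran A_{min}=\clran T$) are correct and match the paper.

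Where you go astray is in the diagnosis of the kernel condition. You flag $\ker T\subseteq\ker B_{min}$ as ``the main obstacle,'' worrying that kernels grow under strong limits of decreasing positive operators --- but growth of the kernel is precisely the direction you want here, and it is immediate from domination rather than from any limit argument: if $0\le C\le D$ then $\ker D\subseteq\ker C$, since $x\in\ker D$ gives $0\le\PI{Cx}{x}\le\PI{Dx}{x}=0$, hence $C^{1/2}x=0$ and $Cx=0$. Applying this with $C=B_{min}\le B_\alpha=D$ and $\ker B_\alpha=\ker T$ yields $\ker T\subseteq\ker B_{min}$ in one line; the opposite inclusion you already correctly observed follows from $T=A_{min}B_{min}$. (The same pattern gives $\ker A_{min}=\ker T^*$, i.e.\ $\clran A_{min}=\clran T$, with no need for Douglas' lemma.) This is exactly the paper's argument. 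By contrast, your proposed detour through Theorem~\ref{propopt} is both unnecessary and shaky: the rigidity of the compression $B_{\clran T}=B_0$ there is relative to a \emph{fixed} first factor $A$, whereas along your chain the first factors $A_\alpha$ vary; and even applying the theorem at the limit with first factor $A_{min}$, the decomposition $B_{min}=B_0+Z$ with $\clran Z\subseteq\ker T^*$ only recovers $\ker B_{min}\subseteq\ker T$ --- the inclusion that was already free --- not the one you declared missing. With the domination argument substituted for that detour, your proof is complete and coincides with the paper's.
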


\begin{proof}
  Suppose that with respect to the partial order $\prec$,
  $(A_{\lambda}, B_{\lambda})_{\lambda \in \Lambda}$ is a chain in the
  collection of optimal pairs for $T$.  Then the decreasing nets of
  positive operators $(A_\lambda)_{\lambda}$, $(B_\lambda)_{\lambda}$
  converge strongly to some $A, B \in \mc{L}^+$, respectively, and
  $T = AB$.  Since $\ker B_\lambda = \ker T$,
  $\ker T \subseteq \ker B$, and since $T = AB$, equality holds.
  Likewise, $\ker T^* = \ker A$.  Hence $(A,B)$ is optimal.  Thus
  every chain has a lower bound, and so minimal optimal pairs exist by
  Zorn's lemma.
\end{proof}

\begin{obs}
  For any minimal optimal pair $(A,B)$, $A = A_{\clran T^*}$ and $B =
  B_{\clran T}$.  So $A = F^*F$, $B = G^*G$, where
  \begin{equation*}
    F =
    \begin{pmatrix}
      F_1 & F_2 \\ 0 & 0
    \end{pmatrix}
    \quad\text{and} \quad
    G =
    \begin{pmatrix}
      G_1 & G_2 \\ 0 & 0
    \end{pmatrix}
  \end{equation*}
  on $\clran T^* \oplus \ker T$ and $\clran T \oplus \ker T^*$,
  respectively.

  Minimal optimal pairs need not be unique.  As a simple example, let
  $R > 1$ on $\HH$, and $T = R \oplus R^{-1}$ on $\HH \oplus \HH$.
  Then for $A = R \oplus 1$, $B = 1 \oplus R^{-1}$, both $(A,B)$ and
  $(B,A)$ are minimal optimal pairs for $T$.
\end{obs}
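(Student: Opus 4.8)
The statement splits into two independent parts: the structural identities $A = A_{\clran T^*}$ and $B = B_{\clran T}$ (with the resulting factorisations $A = F^*F$, $B = G^*G$ of the prescribed block shape), and the non-uniqueness example. My plan is to settle the identities by a ``the compression is already optimal'' argument pitted against minimality, and to settle the example by a spectral decoupling argument. The two halves are largely independent.

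For the identities I would set $\St := \clran T^*$, so that $\St^\perp = \ker T$, and consider $A' := A_{\St} = A - A_{/\St} \geq 0$. The crux is the computation $A_{/\St}\,B = 0$. Indeed, optimality of $(A,B)$ forces $\ker B = \ker T = \St^\perp$ and hence $\clran B = \St$, so relative to $\St \oplus \St^\perp$ the operator $B$ has block form $\begin{pmatrix} B_1 & 0 \\ 0 & 0 \end{pmatrix}$; meanwhile $\ran A_{/\St} \subseteq \St^\perp$ confines $A_{/\St}$ to the $\begin{pmatrix} 0 & 0 \\ 0 & * \end{pmatrix}$ corner, so the product vanishes. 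Therefore $T = A'B$. Since $A' = A_{\St} \le A$ gives $\clran A' \subseteq \clran A = \clran T$, while $T = A'B$ gives $\clran T \subseteq \clran A'$, we get $\clran A' = \clran T$; as $\ker B$ is unchanged, $(A',B)$ is again optimal. Because $(A',B) \prec (A,B)$, minimality yields $A = A' = A_{\clran T^*}$. Running the identical argument on the optimal (and, by the symmetry of $\prec$, again minimal) pair $(B,A)$ for $T^*$ gives $B = B_{\clran T}$. The stated forms of $F$ and $G$ are then immediate from the discussion following~\eqref{eq:1}: $A_{/\clran T^*} = 0$ is equivalent to $A = F^*F$ with $F\colon \clran T^* \oplus \ker T \to \clran T^*$, i.e.\ $F$ has vanishing second block row, and symmetrically for $G$.

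For the example I would first note that, $R$ being invertible, $T = R \oplus R^{-1}$ is invertible and self-adjoint, so $\ker T = \ker T^* = \{0\}$ and $\clran T = \clran T^* = \HH \oplus \HH$; hence both $(A,B)$ and $(B,A)$ are optimal. To prove minimality of $(A,B) = (R \oplus 1, 1 \oplus R^{-1})$, suppose $(A',B')$ is optimal with $(A',B') \prec (A,B)$. Then $A'B' = T$, and since $T$ is self-adjoint and $A',B'$ positive, $B'A' = (A'B')^* = T$, so $A'$ and $B'$ commute and each commutes with $T$. The key point is that $\sigma(R) \subseteq (1,\infty)$ and $\sigma(R^{-1}) \subseteq (0,1)$ are disjoint, so the spectral projection of $T$ onto $\HH \oplus \{0\}$ is a function of $T$; commuting with $T$ therefore makes $A'$ and $B'$ block diagonal, $A' = A_1' \oplus A_2'$ and $B' = B_1' \oplus B_2'$. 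On the first block $A_1' B_1' = R$ with $A_1' \le R$, $B_1' \le 1$, and $A_1', B_1'$ commuting and invertible; then $A_1' = R (B_1')^{-1} \ge R$ (using $(B_1')^{-1} \ge 1$ and that $R$ commutes with $B_1'$), which together with $A_1' \le R$ forces $A_1' = R$, $B_1' = 1$. The second block is handled symmetrically, giving $A' = A$ and $B' = B$. Interchanging the roles of the two factors shows $(B,A)$ is minimal as well, and the pairs are distinct because $R \ne 1$.

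The routine parts are the block identity $A_{/\St}B = 0$ and the range sandwich giving $\clran A' = \clran T$. The main obstacle is the minimality claim in the example: optimality of the two pairs is cheap, but genuine minimality requires the spectral disjointness of $R$ and $R^{-1}$ to force the block-diagonal structure, after which the elementary ``$A_1' \le R$ and $A_1' \ge R$'' comparison closes it. I would therefore be careful to fix the meaning of ``$R > 1$'' (namely $R - 1 \in GL(\HH)^+$, so that $\sigma(R) \subseteq (1,\infty)$ is bounded away from $1$ and $\sigma(R^{-1}) \subseteq (0,1)$) so that this spectral gap is available.
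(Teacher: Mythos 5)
Your proof is correct, and it is worth separating its two halves. For the identities $A = A_{\clran T^*}$, $B = B_{\clran T}$, the paper leaves the remark unproved because it is meant as an immediate consequence of Theorem~\ref{propopt}: that theorem already shows that $\mc{B}_T^A$ has a minimum $B_0$, that $(A,B_0)$ is optimal, and that $B_{\clran T} = B_0$ for \emph{every} $B \in \mc{B}_T^A$; so for a minimal pair, $(A,B_0) \prec (A,B)$ forces $B = B_0 = B_{\clran T}$, and $A = A_{\clran T^*}$ follows by applying the same to the minimal optimal pair $(B,A)$ for $T^*$. Your direct computation --- $A_{/\St}B = 0$ because $\clran B = \St$ while $\ran A_{/\St} \subseteq \St^{\perp}$, so the compression pair $(A_{\St},B)$ is again optimal and dominated --- is a clean, self-contained re-derivation of exactly this compression fact, and your reading of the block forms of $F$ and $G$ off \eqref{eq:1} is what the paper intends; so this half is the same circle of ideas, rebuilt rather than cited. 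For the example, the paper asserts minimality with no verification, so your argument is genuinely added content, and it is correct under your reading $R - 1 \in GL(\HH)^+$ (one small omission: you should justify that $A'$ and $B'$ are invertible, which follows since $T$ is invertible, $T = A'B'$ makes the positive operator $A'$ surjective hence invertible, and then $B' = A'^{-1}T$). However, the spectral gap you are careful to secure is avoidable: since $T$ is selfadjoint, $B'A' = (A'B')^* = T$, so $A'$ and $B'$ commute with $T$ and hence with $T^{1/2}$; anti-monotonicity of the inverse gives $B'^{-1} \geq B^{-1} = 1 \oplus R$, and since $B^{-1}$ also commutes with $T^{1/2} = R^{1/2} \oplus R^{-1/2}$, one gets $A' = TB'^{-1} = T^{1/2}B'^{-1}T^{1/2} \geq T^{1/2}B^{-1}T^{1/2} = TB^{-1} = A$, which together with $A' \leq A$ yields $A' = A$ and then $B' = A^{-1}T = B$; the pair $(B,A)$ is handled symmetrically. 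This dispenses with the Riesz projection and the block-diagonalization entirely, so the example already works under the weaker hypothesis $R \geq 1$ with $R \neq 1$, matching the paper's bare ``$R > 1$'' without needing your strengthened interpretation.
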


Lemma~\ref{lemaT1a} already hints that operators in $\LL$ share
certain properties with positive operators, many more of which will be
explored in the next section.  It is reasonable to wonder if an
operator in $\LL$ has a square root in $\LL$.  Partial results in this
direction are given next.  First, recall the following result of
Pedersen and Takesaki~\cite{MR306958} (slightly rephrased).

\begin{prop}[Pedersen-Takesaki]
  \label{prop:pedersen-takesaki}
  Let $H,K \in \mc{L}^+$, and write $\KK$ for $\clran H$.  A necessary
  and sufficient condition for the existence of $X \in \mc{L}^+$ such
  that $P_{\KK} K P_{\KK} = X H X$ is that $(H^{1/2} K H^{1/2})^{1/2}
  \leq a H$ for some $a \geq 0$.
\end{prop}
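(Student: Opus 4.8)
The plan is to reduce the equation $P_{\KK}KP_{\KK}=XHX$ to the simpler equation
$H^{1/2}XH^{1/2}=(H^{1/2}KH^{1/2})^{1/2}$ and then to solve the latter by a single
application of Douglas' lemma. Write $C:=(H^{1/2}KH^{1/2})^{1/2}\geq 0$. The starting
observation is that, since $\clran H^{1/2}=\clran H=\KK$, one has
$P_{\KK}H^{1/2}=H^{1/2}=H^{1/2}P_{\KK}$. Consequently, for any $X\geq 0$ satisfying
$XHX=P_{\KK}KP_{\KK}$,
\begin{equation*}
  (H^{1/2}XH^{1/2})^2 = H^{1/2}(XHX)H^{1/2}
  = H^{1/2}P_{\KK}KP_{\KK}H^{1/2} = H^{1/2}KH^{1/2} = C^2 .
\end{equation*}
As $H^{1/2}XH^{1/2}\geq 0$ and $C\geq 0$, uniqueness of positive square roots forces
$H^{1/2}XH^{1/2}=C$. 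This identity is the pivot connecting both directions of the proof.

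\emph{Necessity} is then immediate. From $H^{1/2}XH^{1/2}=C$ and $X\geq 0$, for every
$u\in\HH$ one has
$\PI{Cu}{u}=\PI{XH^{1/2}u}{H^{1/2}u}\leq \|X\|\,\PI{Hu}{u}$, so that $C\leq aH$ with
$a=\|X\|$. For \emph{sufficiency}, assume $C\leq aH$ for some $a\geq 0$. Since
$C^{1/2}(C^{1/2})^*=C\leq aH=a\,H^{1/2}(H^{1/2})^*$, Douglas' lemma yields a bounded $R$
with $C^{1/2}=H^{1/2}R$ and $\clran R\subseteq\clran H^{1/2}=\KK$. Setting $X:=RR^*\geq 0$,
\begin{equation*}
  H^{1/2}XH^{1/2}=(H^{1/2}R)(H^{1/2}R)^* = C^{1/2}(C^{1/2})^* = C .
\end{equation*}

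It remains to upgrade this to the \emph{exact} equality $XHX=P_{\KK}KP_{\KK}$ rather than
an identity merely compressed to $\KK$, and this is the step I expect to be the only real
obstacle. Squaring $H^{1/2}XH^{1/2}=C$ gives $H^{1/2}(XHX-K)H^{1/2}=0$, which says
$\PI{(XHX-K)h}{h'}=0$ for $h,h'$ ranging over the dense subset $\ran H^{1/2}$ of $\KK$, so
$P_{\KK}(XHX-K)P_{\KK}=0$. On the other hand, $\ran X=\ran(RR^*)\subseteq\clran R\subseteq\KK$
and $\ran H\subseteq\KK$, so both the range and (by selfadjointness) the closed range of
$XHX$ lie in $\KK$, whence $P_{\KK}(XHX)P_{\KK}=XHX$. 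Combining the two equalities gives
$XHX=P_{\KK}KP_{\KK}$. The point requiring care is exactly this collapse of the two
compressions onto the same operator, and it is the range control $\clran R\subseteq\KK$
furnished by Douglas' lemma—hence $\ran X\subseteq\KK$—that makes it work; everything else
is uniqueness of positive square roots together with a single majorization.
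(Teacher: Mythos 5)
Your proof is correct. Note that the paper does not actually prove this proposition: it is recalled from Pedersen--Takesaki \cite{MR306958} ``slightly rephrased,'' and the only indication of a construction is the remark following the statement, which asserts that a solution can be read off as the $(2,2)$ entry of the $\mc{S}_1$-compression of the positive $2\times 2$ operator matrix with entries $aH$, $(H^{1/2}KH^{1/2})^{1/4}$, and $1$ --- i.e., the result is folded into the Schur-complement machinery of Section~\ref{sec:preliminaries}, where positivity of that matrix is equivalent (via the factorization \eqref{eq:1}) to the existence of a contraction $F$ with $(H^{1/2}KH^{1/2})^{1/4} = (aH)^{1/2}F$, which is, up to the scalar $a^{1/2}$, exactly your Douglas solution $R$. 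So the two approaches are consistent, but yours is a genuinely self-contained proof where the paper defers to the literature. Your argument has two virtues worth noting. First, the pivot identity $H^{1/2}XH^{1/2} = C$, forced by uniqueness of positive square roots, in fact shows that \emph{every} positive solution $X$ satisfies this equation, so all solutions agree after compression by $H^{1/2}$ --- a uniqueness statement stronger than what the proposition asks for, and it makes necessity a one-line estimate. Second, and more importantly, you correctly isolated the step that the paper's rephrasing (compressing $K$ by $P_{\KK}$ so as to accommodate non-injective $H$, where the original statement assumes nonsingularity) actually requires: the identity $H^{1/2}(XHX-K)H^{1/2}=0$ only yields $P_{\KK}(XHX-K)P_{\KK}=0$ by density of $\ran H^{1/2}$ in $\KK$, and it is the range localization $\ran R \subseteq \clran H^{1/2} = \KK$ furnished by the canonical Douglas solution --- hence $\ran X = \ran(RR^*) \subseteq \KK$ and $P_{\KK}(XHX)P_{\KK} = XHX$ --- that converts the compressed identity into the exact equation $XHX = P_{\KK}KP_{\KK}$. (In the invertible case your $X = RR^*$ is the familiar $H^{-1/2}(H^{1/2}KH^{1/2})^{1/2}H^{-1/2}$, which confirms the construction is the natural one.) No gaps.
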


Though they do not show it, under the conditions of the proposition,
with respect to the decomposition $\HH = \mc{S}_1 \oplus \mc{S}_2$,
$\mc{S}_1 = \mc{S}_2 = \KK$, $X$ can be chosen as the $(2,2)$ entry of
the $\mc{S}_1$-compression of
\begin{equation*}
  \begin{pmatrix}
    aH & (H^{1/2} K H^{1/2})^{1/4} \\ (H^{1/2} K H^{1/2})^{1/4} & 1
  \end{pmatrix}
  \geq 0.
\end{equation*}

\begin{prop}
  \label{prop:qa-pos-and-sq-roots}
  Let $T \in \LL$ and suppose that $T$ is quasi-affine to a positive
  operator, $TX = XC$.  If $C^{1/2} \leq a X^*X$ for some $a \geq 0$,
  then $T$ has a square root in $\LL$.  Otherwise, if
  $\clran T = \clran T^*$ and $T$ has a factorization satisfying the
  conditions of Proposition~\ref{prop:pedersen-takesaki}, then $T$
  admits a square root in $\LL$.
\end{prop}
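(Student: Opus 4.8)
The plan is to handle the two cases of the statement separately, since each rests on a different positivity criterion. In the first case, the hypothesis $TX = XC$ with $X$ a quasi-affinity and $C \geq 0$ means that $T$ is similar-in-spirit to $C$ via the rigged Hilbert space structure of Proposition~\ref{PropII}. The natural square root candidate is the operator $R$ satisfying $RX = XC^{1/2}$, which was already introduced in the discussion following Corollary~\ref{corpIa}; there it was noted that such an $R$ is densely defined but need not be bounded. The first order of business is therefore to show that the extra hypothesis $C^{1/2} \leq a X^*X$ is exactly what forces $R$ to extend to a bounded operator. Concretely, for $y = Xx \in \ran X$ one computes $\|Ry\|^2 = \|XC^{1/2}x\|^2$, and the goal is to bound this by a multiple of $\|y\|^2 = \|Xx\|^2$; writing things through $X^*X$ and using $C^{1/2}\leq aX^*X$ should yield $\|Ry\| \leq a^{1/2}\|y\|$ on the dense set $\ran X$, so $R$ extends boundedly to all of $\HH$. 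Once $R$ is bounded, $R^2X = RXC^{1/2} = XC^{1/2}C^{1/2} = XC = TX$ gives $R^2 = T$ on the dense range of $X$, hence $R^2 = T$ everywhere by continuity. It then remains to check $R \in \LL$: since $R$ is itself quasi-affine to the positive operator $C^{1/2}$ with the same intertwiner $X$, one expects to invoke the earlier characterizations (Proposition~\ref{PropIa} and Theorem~\ref{thm:T-in-LL=pos-soln}) to produce a factorization $R = A'B'$ with $A',B' \geq 0$.

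In the second case the route goes through the Pedersen--Takesaki result (Proposition~\ref{prop:pedersen-takesaki}) and the Riccati-type equation that characterizes square roots of elements of $\LL$. Recall from the start of Section~\ref{sec:sim-quasi-sim-to-pos} that $T = AB$ has a square root $T^{1/2} = A'B$ precisely when there is $A' \geq 0$ with $A'BA' = A$. The plan is to take an optimal factorization $T = AB$ and, using $\clran T = \clran T^*$ (so that the relevant ranges and compressions line up on a single subspace $\KK = \clran T$), to recast the existence of $A'$ solving $A'BA' = A$ as an instance of the equation $P_\KK K P_\KK = XHX$ appearing in Proposition~\ref{prop:pedersen-takesaki}. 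Here one would set $H = B$ (so $\KK = \clran B = \clran T^* = \clran T$) and $K = A$, and read off that the Pedersen--Takesaki criterion $(B^{1/2}AB^{1/2})^{1/2} \leq aB$ is exactly ``the conditions of Proposition~\ref{prop:pedersen-takesaki}.'' The cited proposition then supplies $X = A' \geq 0$ with $P_\KK A P_\KK = A'BA'$; since $\clran A = \clran T = \KK$ one has $P_\KK A P_\KK = A$, so $A'BA' = A$ as required, and $T^{1/2} = A'B \in \LL$.

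I expect the main obstacle to be the bookkeeping in the second case: verifying that, under $\clran T = \clran T^*$, the compression $P_\KK K P_\KK$ in the Pedersen--Takesaki statement genuinely reduces to $A$ (equivalently, that the off-diagonal and lower corners relative to $\KK \oplus \KK^\perp$ do not obstruct the equation $A'BA'=A$), and that the resulting $A' = X$ can be arranged to have $\clran A' = \clran T$ so that $(A',B)$ is again a bona fide (optimal) factorization giving $T^{1/2}\in\LL$. The first case is more transparent; its only delicate point is the boundedness estimate for $R$, and I would carry that out carefully, taking adjoints and using the identity $\PI{X^*X\,x}{x} = \|Xx\|^2$ to convert the operator inequality $C^{1/2} \leq a X^*X$ into the desired norm bound on the dense domain $\ran X$ before extending by continuity.
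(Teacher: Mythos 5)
Your second case is essentially the paper's own argument: with an optimal pair $T = AB$ and the hypothesis $\clran T = \clran T^*$, taking $H = B$ and $K = A$ in Proposition~\ref{prop:pedersen-takesaki} gives $\KK = \clran B = \clran T^* = \clran T = \clran A$, so $P_{\KK} A P_{\KK} = A$, and the proposition supplies $X \in \mc{L}^+$ with $A = XBX$, whence $T = AB = (XB)^2$ with $XB \in \LL$. Your worry about arranging $(A',B)$ to be optimal is moot: $A'B$ is a product of two positive operators regardless, which is all that membership in $\LL$ requires.

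The genuine gap is in your first case, at the step ``$R \in \LL$.'' Quasi-affinity of $R$ to the positive operator $C^{1/2}$ does \emph{not} place $R$ in $\LL$ --- the paper itself shows (Proposition~\ref{prop:qs-pos-not-nec-in+L+2}) that there are operators quasi-similar, a fortiori quasi-affine, to a positive operator which are not in $\LL$. Proposition~\ref{PropIa} only characterizes quasi-affinity to a positive operator; Theorem~\ref{thm:T-in-LL=pos-soln} requires exhibiting a positive solution of $RR^* \leq X'R^*$, which your construction does not produce; and Theorem~\ref{PropI}$(\mathit{vi}\kern0.5pt)$ would need the extra range condition $\ran R \subseteq \ran(XX^*)$, which you have not verified. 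The paper closes all of this in one stroke: from $C^{1/2} \leq aX^*X$, Douglas' lemma gives $C^{1/4} = X^*F$, hence $C^{1/2} = X^*GX$ with $G = FF^* \geq 0$; then $TX = XC = (XX^*)G(XX^*)GX$, and density of $\ran X$ yields $T = \left((XX^*)G\right)^2$, a square of a product of two positive operators, so the square root is manifestly in $\LL$. (This operator is exactly your $R$, since $(XX^*)GX = XC^{1/2}$.) Adopting this factorization also repairs a secondary flaw in your boundedness estimate: $C^{1/2} \leq aX^*X$ cannot be squared, since $t \mapsto t^2$ is not operator monotone, so your claimed bound $\|Ry\| \leq a^{1/2}\|y\|$ does not follow directly; the correct route inserts $C^{1/4}$, e.g.\ $\|XC^{1/2}x\| \leq \|X\|\,\|C^{1/4}\|\,\|C^{1/4}x\| \leq a^{1/2}\|X\|\,\|C^{1/4}\|\,\|Xx\|$, giving boundedness with a different constant.
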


\begin{proof}
  If $C^{1/2} \leq a X^*X$, by Douglas' lemma, $C^{1/4} = X^*F$.
  Hence $C^{1/2} = X^*GX$, where $G \geq 0$, and so
  $TX = (XX^*)G(XX^*)GX$.  Since $\ran X$ is dense, $T = ((XX^*)G)^2$.

  Now suppose instead that $\clran T = \clran T^*$ and $T$ has a
  factorization satisfying the conditions of
  Proposition~\ref{prop:pedersen-takesaki}.  Choose $H = B$ and
  $K = A$ in Proposition~\ref{prop:pedersen-takesaki}.  Then $A =
  XBX$ for some $X \in \mc{L}^+$, and so $T = (XB)^2$.
\end{proof}

It was already noted in Theorem~\ref{thmLL} that if $T$ is similar to
a positive operator, it has a square root in $\LL$.  The next example
shows that this fails more generally.  The explanation requires a
lemma showing that an injective positive operator has a unique square
root in $\LL$ -- namely the positive square root.

\begin{lema}
  \label{lem:sq-root-of-pos-op-unique-in-L+2}
  If $A \in \mc{L}^+$ is injective, $R^2 = A^2$, and $RX = XA$ for
  some quasi-affinity $X$, then $R = A$.  Consequently, if $R\in \LL$
  satisfies $R^2 = A^2$, then $R = A$.
\end{lema}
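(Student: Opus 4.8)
The statement has two parts, and the plan is to prove the first by a short commutation argument and then to reduce the second to it.

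For the first part I would start from $RX = XA$ and iterate: $R^2X = R(RX) = R(XA) = (RX)A = XA^2$. Since $R^2 = A^2$ this reads $A^2X = XA^2$, so $X$ commutes with the positive operator $A^2$; because $A = (A^2)^{1/2}$ is a continuous function of $A^2$, the functional calculus shows $X$ commutes with $A$ as well. Substituting $XA = AX$ back into $RX = XA$ gives $(R-A)X = 0$. As $X$ is a quasi-affinity its range is dense, and continuity of $R-A$ forces $R = A$.

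For the ``consequently'' suppose $R \in \LL$ with $R^2 = A^2$; then $\sigma(R) \subseteq [0,\infty)$ by Lemma~\ref{lemaT1a}. Since $R$ commutes with $R^2 = A^2$, the same functional-calculus remark gives that $R$ commutes with $A$. One checks at once that $X := R + A$ satisfies $RX = XA$ (the cross terms cancel precisely because $R^2 = A^2$), so the first part finishes the proof as soon as $R + A$ is known to be a quasi-affinity. Rather than verify this separately I would prove $R = A$ directly, which of course makes $R + A = 2A$ a quasi-affinity. The idea is to pass to where $A$ is invertible: let $E$ be the spectral measure of $A$ and, for $\epsilon > 0$, set $\M = E([\epsilon,\infty))\HH$. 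As $R$ commutes with $A$ it commutes with every spectral projection of $A$, so $\M$ reduces $R$; write $A_\epsilon = A|_{\M}$ and $R_\epsilon = R|_{\M}$. Then $A_\epsilon \geq \epsilon$ is invertible, $R_\epsilon^2 = A_\epsilon^2$, $R_\epsilon$ commutes with $A_\epsilon$, and $\sigma(R_\epsilon) \subseteq \sigma(R) \subseteq [0,\infty)$ because $\M$ reduces $R$. Hence $B_\epsilon := A_\epsilon^{-1}R_\epsilon$ is a bounded involution ($B_\epsilon^2 = A_\epsilon^{-2}R_\epsilon^2 = 1$) commuting with $A_\epsilon$. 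The idempotent $Q = \tfrac12(1 - B_\epsilon)$ cuts out an $R_\epsilon$-invariant subspace $\ran Q$, with invariant complement $\ker Q$, on which $R_\epsilon = -A_\epsilon$; were $\ran Q \neq \{0\}$ one would have $\emptyset \neq \sigma(R_\epsilon|_{\ran Q}) = \sigma(-A_\epsilon|_{\ran Q}) \subseteq [-\|A\|,-\epsilon]$, contradicting $\sigma(R_\epsilon|_{\ran Q}) \subseteq \sigma(R_\epsilon) \subseteq [0,\infty)$. Thus $Q = 0$, $B_\epsilon = 1$, and $R_\epsilon = A_\epsilon$, i.e. $RE([\epsilon,\infty)) = AE([\epsilon,\infty))$. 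Letting $\epsilon \downarrow 0$ and using $E(\{0\}) = 0$ (as $A$ is injective), so that $E([\epsilon,\infty)) \to 1$ strongly, yields $R = A$.

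The main obstacle is exactly this passage in the second part: because $A$ is only injective and not invertible, the clean involution trick $A^{-1}R$ is unavailable on all of $\HH$, and one must localize to the spectral subspaces where $A$ is bounded below. Positivity of the spectrum of $R$, inherited by each reduced $R_\epsilon$, is what eliminates the $-1$-eigenspace of the involution and pins down $R_\epsilon = A_\epsilon$, while injectivity of $A$ is what assembles the local identities into the global equality $R = A$ as $\epsilon \downarrow 0$.
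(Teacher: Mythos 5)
Your proof of the first assertion coincides with the paper's own: from $RX = XA$ you get $A^2X = R^2X = XA^2$, the functional calculus for $A = (A^2)^{1/2}$ gives $AX = XA$, and density of $\ran X$ yields $R = A$. For the ``consequently'' your argument is correct but takes a genuinely different route. The paper stays with the factorization: since $A^2 = R^2 = R^{*\,2}$ is injective, it writes $R = XY$ with $X,Y \in \mc{L}^+$ injective, notes $A^2X^{1/2} = X^{1/2}(X^{1/2}YX^{1/2})^2$, and invokes Douglas' lemma \cite[Lemma~4.1]{MR250106} (positive operators intertwined by a quasi-affinity are unitarily equivalent) to get $A = U(X^{1/2}YX^{1/2})U^*$; then $X^{1/2}U^*$ is a quasi-affinity with $R(X^{1/2}U^*) = (X^{1/2}U^*)A$, and the first part finishes. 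You avoid both the factorization and Douglas' lemma, using $R \in \LL$ only through $\sigma(R) \geq 0$ (Lemma~\ref{lemaT1a}): $R$ commutes with $R^2 = A^2$, hence with $A$ and with its spectral projections, and on each $E([\epsilon,\infty))\HH$ the involution $B_\epsilon = A_\epsilon^{-1}R_\epsilon$ has its $(-1)$-part eliminated by spectral positivity, after which $\epsilon \downarrow 0$ together with $E(\{0\}) = \ker A = \{0\}$ assembles $R = A$. The steps all check: $\ran Q$ is closed because $Q$ is a bounded idempotent; it is invariant for the selfadjoint $A_\epsilon$ and hence automatically reducing, which justifies $\sigma(A_\epsilon|_{\ran Q}) \subseteq [\epsilon,\|A\|]$; and $\sigma(R_\epsilon|_{\ran Q}) \subseteq \sigma(R_\epsilon)$ holds because $Q$ commutes with $R_\epsilon$ and splits $\M$ into complementary invariant subspaces. (Your aside that $X = R+A$ intertwines $R$ with $A$ is also correct, and needs no commutativity, but as you say its quasi-affinity is not available a priori, so bypassing it was the right call.) As for what each approach buys: the paper's proof is shorter modulo Douglas' lemma and recycles the first assertion by exhibiting an explicit intertwining quasi-affinity, whereas yours is self-contained and establishes the formally stronger fact that \emph{any} bounded $R$ with $\sigma(R) \subseteq [0,\infty)$ and $R^2 = A^2$ equals $A$ --- the commutation with $A$ being automatic --- so that membership in $\LL$ enters only through the spectral inclusion of Lemma~\ref{lemaT1a}.
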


\begin{proof}
  Suppose that $A \in \mc{L}^+$ is injective, $R^2 = A^2$, and
  $RX = XA$, where $X$ is a quasi-affinity.  Then
  $A^2X = R^2X = XA^2$, and so $AX = XA$ since $A$ is in the
  commutative $C^*$-algebra generated by $A^2$.  Thus $RX = AX$, and
  therefore $R = A$.

  Write $R = XY$, $X,Y \in \mc{L}^+$ and injective.  Then
  $(XY)^2 = R^2 = A^2 = R^{*\,2} = (YX)^2$.  Hence
  $A^2 X^{1/2} = X^{1/2}(X^{1/2} Y X^{1/2})^2$, and by
  \cite[Lemma~4.1]{MR250106}, there is a unitary $U$ such that
  $A^2 = U(X^{1/2} Y X^{1/2})^2 U^*$, and so
  $A = U(X^{1/2} Y X^{1/2}) U^*$.  Thus
  \begin{equation*}
    R(X^{1/2} U^*) = (XY)(X^{1/2} U^*) = (X^{1/2} U^*) A.
  \end{equation*}
  Since $X^{1/2} U^*$ is a quasi-affinity, $R = A$.
\end{proof}

\begin{Example}
  \label{ex:T-in-L+2-but-no-sq-rt-in-L+2}
  Following ideas from \cite{MR2775769}, let $T \in \mc{P}^2$ be the
  product of $A$ and $B$, non-trivial projections, given as follows.
  With respect to the decomposition
  $\HH = \ran B \oplus (\ran B)^\bot$,
  \begin{equation*}
    T = AB =
    \begin{pmatrix}
      S & S^{1/2}(1-S)^{1/2} \\ (1-S)^{1/2}S^{1/2} & 1-S
    \end{pmatrix}
    \begin{pmatrix}
      1 & 0 \\ 0 & 0
    \end{pmatrix}
    =
    \begin{pmatrix}
      S & 0 \\ (1-S)^{1/2}S^{1/2} & 0
    \end{pmatrix},
  \end{equation*}
  where $S \geq 0$ is injective but not invertible on $\ran B$ with
  $\|S\| < 1$, so that $(1-S)^{1/2}$ is invertible.  Then
  $\ker T = \ker B$, and $\clran T = \ran G$,
  $G = \begin{pmatrix} S^{1/2} & 0 \\ (1-S)^{1/2} & 0 \end{pmatrix}$,
  and so equals $\ran A$ since $A = GG^*$.  This has zero
  intersection with $\ker T$ and
  $\clran T \dotplus \ker T = \ran S \oplus \ker B$, which is dense in
  $\HH$.  As an aside, in Corollary~\ref{propqs2} it will be shown
  that this condition implies that $T$ is quasi-similar to a positive
  operator.

  Suppose that $T = (XY)^2$ for some $X, Y \in \mc{L}^+$; that is, $T$
  has a square root in $\LL$.  Without loss of generality, $(X,Y)$ can
  be chosen to be an optimal pair for $R:= XY$.  Clearly,
  $\ker R \subseteq \ker T$, and since $\ran R \cap \ker R = \{0\}$,
  the reverse containment also holds.  Hence $\clran Y = \ran B$.  A
  similar calculation gives $\clran T = \ran A$.

  Now
  \begin{equation*}
    R^2 = {\left(
        \begin{pmatrix}
          X_{11} & X_{12} \\ X_{12}^* & X_{22}
        \end{pmatrix}
        \begin{pmatrix}
          Y_{11} & 0 \\ 0 & 0
        \end{pmatrix}
      \right)}^2
    =
    \begin{pmatrix}
      (X_{11}Y_{11})^2 & 0 \\ (X_{12}^* Y_{11})(X_{11}Y_{11}) & 0
    \end{pmatrix}.
  \end{equation*}
  Thus $(X_{11}Y_{11})^2 = S$, and so by
  Lemma~\ref{lem:sq-root-of-pos-op-unique-in-L+2},
  $X_{11}Y_{11} = S^{1/2}$.  Since
  $(1-S)^{1/2} = X_{12}^* Y_{11} = Y_{11} X_{12}$ is invertible and
  $Y_{11}$ is injective, $Y_{11}$ is invertible by the open mapping
  theorem, and hence the same is true for $X_{12}$.  The operator $S$
  is not invertible, so $X_{11}$ is not invertible.  However, since
  $X \geq 0$, $X_{12} = X_{11}^{1/2}G$, and so $\ran X_{11}^{1/2}$ is
  closed.  This implies that $X_{11}^{1/2}$ is invertible, giving a
  contradiction.
\end{Example}

\section{Spectral properties of $\LL$}
\label{sec:l+2-spectral-properties}

Recall by Theorem~\ref{thmLL}, any operator which is similar to a
positive operator (and so in $\LL$) is necessarily scalar (that is, it
is spectral and has no quasi-nilpotent part).  It will be shown
further that finite rank operators in $\LL$ are completely
characterized by the property that the spectrum is positive and the
operator is diagonalizable (Corollary~\ref{corCR}).  It has already
been noted that operators in $\LL$ need not be quasi-affine to a
positive operator, much less similar to one, and as a result they are
in general not spectral.  Despite this, the spectral properties of
operators in $\LL$ are found to reflect what is observed in these
special cases.

The spectrum $\sigma(T)$ of an operator $T$ can be divided into two,
potentially overlapping parts; the \emph{compression spectrum}
$\sigma_c(T)$, points $\lambda$ of which have the property that
$T - \lambda 1$ is not surjective, and the approximate point spectrum
$\sigma_a(T)$, in which $T-\lambda 1$ is not bounded below.  The
subset of $\sigma_a(T)$ of points $\lambda$ for which $T-\lambda 1$ is
not injective constitute the point spectrum $\sigma_p(T)$.  Standard
results in operator theory are that $\lambda \in \sigma_p(T)$ is
equivalent to $\overline{\lambda} \in \sigma_c(T^*)$, and that the
topological boundary of the spectrum is contained in $\sigma_a(T)$.
In the case of $T \in \LL$, where the spectrum lacks interior, this
means that $\sigma(T) = \sigma_a(T)$.

The parts of the spectrum already mentioned are for the most part
enough when studying normal operators on Hilbert spaces.  Outside of
this class, it helps to refine these by looking at \emph{local
  spectral properties}.  This is ordinarily developed for (potentially
unbounded) Banach space operators, though here bounded Hilbert space
operators are solely considered.

Let $T\in L(\HH)$.  If a point $\mu$ is in $\rho(T)$, the resolvent of
$T$, $T - \mu 1$ is invertible.  Equivalently, for all $x \in \HH$ and
$\lambda \in U$, an open neighborhood of $\mu$,
$f(\lambda) = (T- \lambda 1)^{-1} x$ is an analytic function from $U$
into $\HH$ and satisfies $(T-\lambda 1)f(\lambda) = x$.  Even if
$\mu \notin \rho(T)$, it may happen that for some $x\in \HH$ and
neighborhood $U$ of $\mu$, there is an analytic $f:U \to \HH$ such
that $(T -\lambda 1)f(\lambda) = x$.  In this case,
$\mu \in \rho_T(x)$, the \emph{local resolvent of $T$ at $x$}.  For
fixed $x$, the complement in $\mathbb C$ of $\rho_T(x)$ is called the
local spectrum of $T$ at $x$, and is denoted by $\sigma_T(x)$.

An operator $T$ is said to have the \emph{single valued extension
  property} (abbreviated \emph{SVEP}) if whenever
$U\subseteq \mathbb C$ is open and $f:U \to \HH$ is an analytic
function satisfying $(T - \lambda 1) f(\lambda) = 0$ for all
$\lambda \in U$, then $f = 0$.  The point of SVEP is that if $T$ has
this property, any solution $f$ to $(T-\lambda 1)f(\lambda) = x$ in a
neighborhood of a point $\mu$ is unique.  Operators like those in
$\LL$ with thin spectrum have SVEP.

For $F \subseteq \mathbb C$ closed, an \emph{(analytic) local spectral
  subspace for $T \in L(\HH)$} is defined as
\begin{equation*}
  \HH_T(F) := \{x\in \HH : \sigma_T(x) \subseteq F\}.
\end{equation*}
This is a (not necessarily closed) linear manifold.  Properties
include that $\HH_T(F) = \HH_T(\sigma (T) \cap F)$, and if $T$ has
SVEP, $\HH_T(\emptyset) = \{0\}$.  Hence for operators in $\LL$, it
will suffice to consider $\HH_T(F)$ for closed subsets of
$\sigma (T)$.  It is also the case that for $\lambda \notin F$,
$(T-\lambda 1)\HH_T(F) = \HH_T(F)$, $\HH_T(F)$ is invariant for all
operators commuting with $T$ (in other words, it is
\emph{hyperinvariant}).  Also, for all $n\in \mathbb N$ and
$\lambda \in \mathbb C$,
$\ker (T - \lambda 1)^n \subseteq \HH_T(\{\lambda\})$, and more
generally, if for $x\in \HH$ and $\lambda \in F$,
$(T - \lambda 1)x \in \HH_T(F)$, then $x\in \HH_T(F)$.  See
\cite[Proposition~1.2.16]{MR1747914}.  By
\cite[Proposition~1.3]{MR1004421}, when $T$ has SVEP,
$\HH_T(\{\lambda\}) = \{x : \lim_n \|(T - \lambda 1)^n x\|^{1/n} =
0\}$.

The following is a special case of a result due to Putnam, and Pt\'ak
and Vrbov\'a (see \cite[Theorem~1.5.7]{MR1747914}).  The proof in this
case is elementary and is included for completeness.  The more general
result is discussed below.

\begin{lema}
  \label{lem:X-of-T-of-lambda-for-normal-op}
  Let $T \in L(\HH)$ be normal and $\lambda \in \mathbb C$.  Then
  $\HH_T(\{\lambda\}) = \ker(T-\lambda 1)$.
\end{lema}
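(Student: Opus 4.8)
The plan is to combine the local-spectral characterization recalled just before the lemma with the spectral theorem for normal operators, so I first need to know that $T$ has SVEP. Since $T$ is normal on a separable space, I would argue directly: if $f:U\to\HH$ is analytic with $(T-\mu 1)f(\mu)=0$ for all $\mu\in U$, then $f(\mu)\in\ker(T-\mu 1)$, so every $\mu$ with $f(\mu)\neq 0$ is an eigenvalue of $T$. Eigenvectors of a normal operator for distinct eigenvalues are orthogonal, and $\HH$ is separable, so $\sigma_p(T)$ is countable; but the zero set of a nonzero analytic function is discrete, hence $\{\mu:f(\mu)\neq 0\}$ would be uncountable. This forces $f\equiv 0$, so $T$ has SVEP and
\[
  \HH_T(\{\lambda\}) = \{x\in\HH : \lim_n \|(T-\lambda 1)^n x\|^{1/n} = 0\}.
\]

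The inclusion $\ker(T-\lambda 1)\subseteq\HH_T(\{\lambda\})$ is immediate: it is the case $n=1$ of the general containment $\ker(T-\lambda 1)^n\subseteq\HH_T(\{\lambda\})$ noted in the excerpt, and it does not even require normality.

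For the reverse inclusion I would bring in the spectral theorem. Writing $E$ for the spectral measure of $T$ and $\mu_x(\cdot):=\PI{E(\cdot)x}{x}$ for the finite positive scalar measure attached to a given $x\in\HH$, normality and the multiplicative functional calculus give $\|(T-\lambda 1)^n x\|^2=\int |z-\lambda|^{2n}\,d\mu_x(z)$. The key analytic step is the standard fact that, for a finite measure, the $L^p$ norms converge to the $L^\infty$ norm as $p\to\infty$; applied with exponent $p=2n$ this yields $\lim_n\bigl(\int|z-\lambda|^{2n}\,d\mu_x\bigr)^{1/2n}=\operatorname{ess\,sup}_{\mu_x}|z-\lambda|$. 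Thus for $x\in\HH_T(\{\lambda\})$ this essential supremum vanishes, which forces $\mu_x$ to be concentrated at $\lambda$, i.e. $x\in\ran E(\{\lambda\})$. Since $E(\{\lambda\})$ is the orthogonal projection onto $\ker(T-\lambda 1)$, we get $x\in\ker(T-\lambda 1)$, completing the argument.

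The only non-formal point is the $L^p\to L^\infty$ limit together with the identification $\ran E(\{\lambda\})=\ker(T-\lambda 1)$; both are routine consequences of the spectral theorem, so I expect no genuine obstacle, merely care with the finite measure $\mu_x$ and with the bookkeeping relating $\|(T-\lambda 1)^n x\|^{1/n}$ to $\bigl(\int|z-\lambda|^{2n}\,d\mu_x\bigr)^{1/2n}$. A fully equivalent route would be to replace $T$ by the positive operator $P:=(T-\lambda 1)^*(T-\lambda 1)$, using $\|(T-\lambda 1)^n x\|^2=\PI{P^n x}{x}$ and the spectral measure of $P$ on $[0,\|P\|]$, and then read off $\PI{(T-\lambda 1)x}{\,\cdot\,}$ from concentration of that measure at $0$.
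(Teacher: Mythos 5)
Your proof is correct, but it takes a genuinely different route from the paper's for the hard inclusion. The paper deliberately avoids the measure-theoretic spectral theorem (it bills this as an ``elementary'' proof, citing the general Putnam/Pt\'ak--Vrbov\'a result separately): after invoking Mbekhta's characterization $\HH_T(\{\lambda\}) = \{x : \lim_n \|(T-\lambda 1)^n x\|^{1/n} = 0\}$, it passes to $\mc{E} = \overline{\HH_T(\{\lambda\})}$, checks via $\|(T-\lambda 1)y\| = \|(T^*-\overline{\lambda} 1)y\|$ that $\mc{E}$ reduces $T$, and then uses a duality estimate $|\PI{(T_0-\lambda 1_{\mc E})^n x}{y}| < \epsilon^n$ together with density of $\HH_T(\{\lambda\})$ in $\mc{E}$ and the norm-equals-spectral-radius property of normal operators to conclude that the normal restriction $T_0 - \lambda 1_{\mc E}$ is quasinilpotent, hence zero. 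You instead work vectorwise with the spectral measure, writing $\|(T-\lambda 1)^n x\|^{1/n} = \bigl(\int |z-\lambda|^{2n}\,d\mu_x\bigr)^{1/2n}$ and letting the $L^p \to L^\infty$ limit force $\mu_x$ to concentrate at $\lambda$; this is essentially the standard proof of the full Pt\'ak--Vrbov\'a theorem specialized to a singleton, so it buys immediate generalizability to arbitrary closed $F$ (recovering $\HH_T(F) = \ran E_T(F)$) at the cost of the measure-theoretic spectral theorem, whereas the paper's argument is tailored to the singleton case and stays within $C^*$-algebraic basics. A further difference: you prove SVEP from scratch (countability of $\sigma_p$ for a normal operator on a separable space versus the uncountable nonvanishing set of a nonzero analytic function), while the paper simply notes that $T$ is spectral and hence has SVEP; your argument is more self-contained, and your bookkeeping with $\mu_x$, the identification $\ran E(\{\lambda\}) = \ker(T-\lambda 1)$, and the use of SVEP exactly where Mbekhta's equality requires it are all sound.
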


\begin{proof}
  Recall that for a normal operator $T$, the norm equals the spectral
  radius:
  \begin{equation*}
    \|T\| = \lim_{n\to\infty} \|T^n\|^{1/n}.
  \end{equation*}
  Also, $T$ is spectral so has SVEP.  Let
  $\HH_T(\{\lambda\}) = \{x: \lim_{n\to\infty} \|(T - \lambda 1)^n
  x\|^{1/n} \to 0\}$, and $\mc{E} = \overline{\HH_T(\{\lambda\})}$.
  Then $(T-\lambda 1)\mc{E} \subseteq \mc{E}$, so
  $T\mc{E} \subseteq \mc{E}$.  Since for all $y$,
  $\|(T-\lambda 1)y)\| = \|(T^*-\lambda 1)y)\|$,
  $T^*\mc{E} \subseteq \mc{E}$.  Thus $\mc{E}$ reduces $T$, and
  $T_0:= P_{\mc{E}} T |_{\mc{E}}$ is normal.

  Let $x \in \HH_T(\{\lambda\})$, $\|x\| = 1$.  Then for all
  $\epsilon > 0$, for sufficiently large $n$,
  $\|(T_0-\lambda 1_{\mc{E}})^n x \| < \epsilon^n$.  So if
  $y \in \mc{E}$ with $\|y\| = 1$,
  \begin{equation*}
    \epsilon^n > \PI{(T_0-\lambda 1_{\mc{E}})^n x}{y} =
    \PI{x}{(T_0-\lambda 1_{\mc{E}})^{*\,n} y}.
  \end{equation*}
  Since $\HH_T(\{\lambda\})$ is dense in $\mc{E}$ and $\epsilon$ is
  arbitrary, $\|(T_0-\lambda 1_{\mc{E}})^{*\,n} y\|^{1/n} \to 0$ for
  all $y \in \mc{E}$.  Thus
  $\sigma(T_0-\lambda 1_{\mc{E}}) = \sigma((T_0-\lambda 1_{\mc{E}})^*)
  = \{0\}$.  Hence by normality, $T_0-\lambda 1_{\mc{E}} = 0$, and so
  $\HH_T(\{\lambda\}) = \ker (T - \lambda 1)$.
\end{proof}

\begin{prop}
  \label{prop:X-sub-T-of-lambda-closed}
  For $T\in \LL$ and $\lambda \in \mathbb C$,
  $\HH_T(\{\lambda\}) = \ker(T - \lambda 1)$.
\end{prop}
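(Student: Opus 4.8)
The plan is to establish the nontrivial inclusion $\HH_T(\{\lambda\}) \subseteq \ker(T - \lambda 1)$; the reverse, $\ker(T-\lambda 1)\subseteq \HH_T(\{\lambda\})$, is among the standing properties of local spectral subspaces recalled above. Since $\sigma(T) \geq 0$ and, by SVEP, $\HH_T(\{\lambda\}) = \HH_T(\sigma(T) \cap \{\lambda\})$ with $\HH_T(\emptyset) = \{0\}$, one may assume $\lambda \in \sigma(T) \subseteq [0,\infty)$; otherwise both sides are $\{0\}$.

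Fix $0 \neq x \in \HH_T(\{\lambda\})$ and pass to the cyclic invariant subspace $\M := \overline{\Span}\{T^n x : n \geq 0\}$. Since $\M$ is invariant for $T \in \LL$, the same reasoning as in Proposition~\ref{prop:L+2-restricted-to-inv-subsp-in-L+2} (applied to $T^*$ and compressed to $\M$) shows that $T_0 := T|_{\M} \in \LL(\M)$; in particular $T_0$ has thin spectrum, hence SVEP. Because the orbit $\{(T - \lambda 1)^n x\}_n$ lies in $\M$, the description $\HH_{T_0}(\{\lambda\}) = \{z : \lim_n \|(T_0 - \lambda 1)^n z\|^{1/n} = 0\}$ (valid for the SVEP operator $T_0$) gives $x \in \HH_{T_0}(\{\lambda\})$, so $\sigma_{T_0}(x) \subseteq \{\lambda\}$.

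The key step is to upgrade this to $\sigma(T_0) \subseteq \{\lambda\}$. For the compression spectrum this is elementary: if $\mu \neq \lambda$ then, gluing local resolvents by SVEP, there is an analytic $f$ near $\mu$ with $(T_0 - \mu 1) f(\mu) = x$; for any $w \in \ker(T_0^* - \overline{\mu}1)$ one then has $\PI{x}{w} = \PI{f(\mu)}{(T_0^* - \overline{\mu}1)w} = 0$, and since $\{p(T_0)x\}$ is dense, $w = 0$, so $\sigma_c(T_0) \subseteq \{\lambda\}$. Controlling the approximate point spectrum is the real obstacle, as this perp argument does not reach $\sigma_a(T_0) \setminus \sigma_c(T_0)$. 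Here I would invoke the local spectral machinery for a cyclic vector of an SVEP operator, namely that $\sigma(T_0) = \sigma_{T_0}(x)$ when $x$ is cyclic (see \cite{MR1747914}), which yields $\sigma(T_0) \subseteq \{\lambda\}$ at once. This is the natural place to draw on the Putnam--Pt\'ak--Vrbov\'a circle of ideas already referenced before Lemma~\ref{lem:X-of-T-of-lambda-for-normal-op}, and I expect it to be the crux of the whole argument.

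A short sublemma then closes everything: if $S \in \LL$ has $\sigma(S) \subseteq \{\lambda\}$, then $S = \lambda 1$. Indeed, writing $S = AB$ with $A,B \geq 0$, Lemma~\ref{lemaT1a} gives $\sigma(A^{1/2}BA^{1/2}) = \sigma(S)$. If $\lambda = 0$ the positive operator $A^{1/2}BA^{1/2}$ has spectrum $\{0\}$, hence is $0$, forcing $S = 0$; if $\lambda > 0$ then $S$ is invertible, so by Theorem~\ref{thmLL} it is similar to a positive operator with spectrum $\{\lambda\}$, necessarily $\lambda 1$, whence $S = \lambda 1$. Applying this to $T_0$ gives $T_0 = \lambda 1_{\M}$, so $Tx = \lambda x$; as $x \in \HH_T(\{\lambda\})$ was arbitrary, $\HH_T(\{\lambda\}) \subseteq \ker(T - \lambda 1)$, and the two sides coincide. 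Apart from the spectral-inclusion step, everything reduces to the restriction result and this elementary sublemma.
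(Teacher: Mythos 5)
You have a genuine gap at exactly the step you flagged as the crux. The assertion that $\sigma(T_0) = \sigma_{T_0}(x)$ whenever $x$ is cyclic for an operator $T_0$ with SVEP is not a theorem in \cite{MR1747914}, and it is false in general. SVEP buys only what you already extracted by hand: $(T_0 - \mu 1)$ has dense range for $\mu \notin \sigma_{T_0}(x)$, because the dense orbit $\{p(T_0)x\}$ lies in the (generally non-closed) manifold $\HH_{T_0}(\sigma_{T_0}(x))$, on which $T_0 - \mu 1$ acts surjectively. A concrete counterexample: let $S$ be the injective unilateral weighted shift with weights $w_k \in \{\varepsilon, 1\}$, $0 < \varepsilon < 1$, arranged in alternating blocks of lengths $1,1,2,2,3,3,\dots$. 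Then $S$ has SVEP (empty point spectrum), $e_0$ is cyclic, and $\sigma(S) = \{\zeta : |\zeta| \leq 1\}$ since arbitrarily long windows of unit weights occur; yet the explicit local resolvent with coordinates $f_n(\zeta) = -(w_0 \cdots w_{n-1})\,\zeta^{-(n+1)}$ converges, locally uniformly and analytically, for $|\zeta| > \limsup_n (w_0\cdots w_{n-1})^{1/n} = \varepsilon^{1/2}$, so $\sigma_S(e_0) \subseteq \{\zeta : |\zeta| \leq \varepsilon^{1/2}\} \subsetneq \sigma(S)$. Worse, in your setting $\sigma(T_0) \subseteq \mathbb{R}^+$ has empty interior, so $\sigma(T_0) = \sigma_a(T_0)$, meaning the approximate point spectrum your compression-spectrum argument cannot reach is essentially the entire spectrum.

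The gap is repairable, but only by adding closedness of the local spectral subspace, i.e., Dunford's property $(C)$, which SVEP alone does not give: since $T_0 = T|_\M \in \LL(\M)$ (your restriction step is correct -- the Sebesty\'en inequality $T^*T \leq \lambda BT$ compresses to $\M$), Theorem~\ref{prop:T-in-L+2-gen-scalar} (proved independently of the present proposition, so no circularity) makes $T_0$ generalized scalar, hence decomposable with property $(C)$; then $\HH_{T_0}(\{\lambda\})$ is closed, contains the orbit $\{p(T_0)x\}$ because $\sigma_{T_0}(p(T_0)x) \subseteq \sigma_{T_0}(x)$, hence equals $\M$, and $\sigma(T_0) = \sigma_{su}(T_0) = \bigcup_y \sigma_{T_0}(y) \subseteq \{\lambda\}$ follows from SVEP. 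Your closing sublemma ($S \in \LL$ with $\sigma(S) \subseteq \{\lambda\}$ forces $S = \lambda 1$, via Lemma~\ref{lemaT1a} and Theorem~\ref{thmLL}) is correct. Note that the paper's own proof avoids all of this machinery: taking an optimal pair $T = AB$ and $C = B^{1/2}AB^{1/2} \geq 0$, the intertwining $B^{1/2}(T - \lambda 1)^n = (C-\lambda 1)^n B^{1/2}$ together with the normal case (Lemma~\ref{lem:X-of-T-of-lambda-for-normal-op}) yields $(T-\lambda 1)x \in \ker B = \ker T$, after which Proposition~\ref{lemaT2} handles $\lambda = 0$ and an elementary induction giving $(T-\lambda 1)^n x = (-1)^{n-1}\lambda^{n-1}(T-\lambda 1)x$ handles $\lambda > 0$ -- no cyclic subspaces, no decomposability, no property $(C)$.
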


\begin{proof}
  By definition,
  $\HH_T(\{\lambda\}) = \{x: \sigma_T(x) = \{\lambda\}\} = \{x:
  \sigma_{T-\lambda 1}(x) = \{0\}\} \supseteq \ker(T - \lambda 1)$.  If
  $\lambda\in \rho(T)$, then $T-\lambda 1$ is invertible, and so for
  all $x\neq 0$, $\rho_T(x) \supseteq \rho(T)$, or equivalently,
  $\sigma_T(x) \subseteq \sigma(T)$.  In particular then, if
  $\lambda \in \rho(T)$,
  $\HH_T(\{\lambda\}) = \{0\} = \ker(T - \lambda 1)$.

  So suppose that $\lambda \geq 0$ is in $\sigma (T)$.  Write $T = AB$
  for some optimal pair $(A,B)$, and set $C = B^{1/2}AB^{1/2}$.  Then
  $B^{1/2}(T-\lambda 1) = (C-\lambda 1)B^{1/2}$, and by induction,
  $B^{1/2}(T-\lambda 1)^n = (C-\lambda 1)^n B^{1/2}$ for
  $n \in \mathbb N$.  Let
  $x \in \HH_T(\{\lambda\}) = \{y : \lim_n \|(T - \lambda 1)^n
  y\|^{1/n} = 0\}$.  Then
  \begin{equation*}
    \|B^{1/2}(T-\lambda 1)^n x\|^{1/n} \leq \|B^{1/2}\|^{1/n}
    \|(T-\lambda 1)^n x\|^{1/n} \to 1 \cdot 0 = 0,
  \end{equation*}
  and so
  \begin{equation*}
    \|(C-\lambda 1)^n B^{1/2}x\|^{1/n} \to 0.
  \end{equation*}
  Thus $B^{1/2} x \in \HH_C(\{\lambda\})$.  By the previous lemma
  $\HH_C(\{\lambda\}) = \ker(C-\lambda 1)$, hence
  \begin{equation*}
    B^{1/2}(T-\lambda 1)x = (C-\lambda 1) B^{1/2} x = 0.
  \end{equation*}
  
  If $\lambda = 0$, then either $x\in \ker T$ or
  $Tx \in \ker B = \ker T$.  But by Proposition~\ref{lemaT2},
  $\ran T \cap \ker T = \{0\}$, and so this also implies that
  $x\in \ker T$.  If $\lambda > 0$, then similar reasoning gives
  either $(T-\lambda 1) x = 0$ or
  $(T-\lambda 1) x \in \ker B = \ker T$.  Suppose the latter.  Since
  $(T-\lambda 1) Tx = T(T-\lambda 1)x = 0$, it follows that
  $(T-\lambda 1)^2 x = -\lambda (T-\lambda 1)x$, and in general, by
  induction for all $n$,
  \begin{equation*}
    (T-\lambda 1)^n x = (-1)^{n-1} \lambda^{n-1} (T-\lambda 1)x.
  \end{equation*}
  Hence
  \begin{equation*}
    \lambda^{(n-1)/n}\|(T-\lambda 1)x\|^{1/n} = \|\lambda^{n-1}
    (T-\lambda 1)x\|^{1/n} = \|(T - \lambda 1)^n x\|^{1/n}.
  \end{equation*}
  Since as $n \to \infty$, the right hand term goes to $0$,
  $\lambda^{(n-1)/n} \to \lambda > 0$, and
  $\|(T-\lambda 1)x\|^{1/n} \to 1$ if $\|(T-\lambda 1)x\| > 0$, the
  conclusion is that $x\in \ker (T-\lambda 1)$.
\end{proof}

A simplified version of the above argument can be used to show the
following.

\begin{prop}
  \label{prop:T-qa-normal-and-H_T-lambda}
  If $T\in L(\HH)$ is quasi-affine to a normal operator, then
  \begin{equation*}
    \HH_T(\{\lambda\}) = \ker(T-\lambda 1).
  \end{equation*}
\end{prop}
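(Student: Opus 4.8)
The plan is to mirror the proof of Proposition~\ref{prop:X-sub-T-of-lambda-closed} almost verbatim, with two substitutions: the factorization-induced intertwiner $B^{1/2}$ is replaced by one coming from the quasi-affinity, and Lemma~\ref{lem:X-of-T-of-lambda-for-normal-op} is invoked for the full normal operator rather than for its positive special case. As always the inclusion $\ker(T-\lambda 1)\subseteq\HH_T(\{\lambda\})$ holds, and for $\lambda\in\rho(T)$ both sides are $\{0\}$; so it suffices to fix $\lambda\in\sigma(T)$ and establish the reverse inclusion $\HH_T(\{\lambda\})\subseteq\ker(T-\lambda 1)$.

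First I would record the single valued extension property. Writing $TX=XN$ with $X$ a quasi-affinity and $N$ normal, and taking adjoints, one gets $X^*T^*=N^*X^*$, so $X^*(T^*-\bar\lambda 1)=(N^*-\bar\lambda 1)X^*$ and, by induction, $X^*(T^*-\bar\lambda 1)^n=(N^*-\bar\lambda 1)^nX^*$ for every $n$. Since $X$ has dense range, $X^*$ is injective, and since $N^*$ is normal it has SVEP, so this intertwining transfers SVEP across $X^*$; the corresponding transfer of SVEP from $N$ through $X$ handles the other side. With SVEP secured one may use the growth description $\HH_S(\{\mu\})=\{x:\lim_n\|(S-\mu 1)^nx\|^{1/n}=0\}$ for the relevant operator $S$.

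The engine of the argument is then the push-forward of the growth condition through an \emph{injective} intertwiner onto a normal operator. For an element $x$ in the local spectral subspace, submultiplicativity gives $\|(N^*-\bar\lambda 1)^nX^*x\|^{1/n}\le\|X^*\|^{1/n}\|(T^*-\bar\lambda 1)^nx\|^{1/n}$, whose right-hand side tends to $0$; hence $X^*x\in\HH_{N^*}(\{\bar\lambda\})$, which by Lemma~\ref{lem:X-of-T-of-lambda-for-normal-op} equals $\ker(N^*-\bar\lambda 1)$. Therefore $X^*(T^*-\bar\lambda 1)x=(N^*-\bar\lambda 1)X^*x=0$, and injectivity of $X^*$ forces $(T^*-\bar\lambda 1)x=0$. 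This is precisely where the argument is \emph{simpler} than that of Proposition~\ref{prop:X-sub-T-of-lambda-closed}: there the transfer map $B^{1/2}$ had kernel $\ker T$, which necessitated the separate treatment of $\lambda=0$ and $\lambda>0$ via $\ran T\cap\ker T=\{0\}$ (Proposition~\ref{lemaT2}), whereas here the transfer map is injective and that casework vanishes.

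The step I expect to be the main obstacle is bookkeeping the \emph{direction} of the quasi-affinity. The given $X$ intertwines $N$ \emph{into} $T$, which is the wrong orientation for pushing a growth condition \emph{out} of the $T$-side; the naturally injective transfer map is the adjoint $X^*$, and it couples cleanly with $T^*$ and $N^*$ rather than with $T$ and $N$. To land the conclusion on $T$ itself I would exploit that $N^*$ is again normal, so the entire mechanism above applies symmetrically, and reduce to a positive quasi-affinity by writing $X=U|X|$ with $U$ unitary (valid since $X$ is a quasi-affinity), replacing $T$ by the unitarily equivalent $U^*TU$ intertwined to $N$ by the positive operator $|X|$; the delicate point to verify carefully is that, after these reductions, the injective push-forward indeed delivers the stated equality $\HH_T(\{\lambda\})=\ker(T-\lambda 1)$ and not merely its analogue for the adjoint.
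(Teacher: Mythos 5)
Your engine---pushing the growth condition through an injective intertwiner onto a normal operator and invoking Lemma~\ref{lem:X-of-T-of-lambda-for-normal-op}---is exactly the ``simplified version'' of the proof of Proposition~\ref{prop:X-sub-T-of-lambda-closed} that the paper intends, and you are right that injectivity of the transfer map eliminates the casework at $\ker T$. But there is a genuine gap, and you have located it yourself without closing it: everything you actually establish lives on the adjoint side. From $TX=XN$ you get $X^*T^*=N^*X^*$ with $X^*$ injective, and your push-forward yields $\HH_{T^*}(\{\overline{\lambda}\})=\ker(T^*-\overline{\lambda} 1)$---the statement for $T^*$, not for $T$. The proposed repair does nothing to reverse the orientation: writing $X=U|X|$ with $U$ unitary merely replaces $T$ by $T'=U^*TU$ satisfying $T'|X|=|X|N$, the same orientation as before; self-adjointness of $|X|$ then gives $|X|\,T'^{\,*}=N^*|X|$, which is again the adjoint-side intertwining. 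Your SVEP paragraph has the same defect: $X^*T^*=N^*X^*$ transfers SVEP to $T^*$, but $TX=XN$ with $X$ injective transfers SVEP from $T$ \emph{to} $N$, not conversely, so SVEP for $T$ itself is never established.

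The gap is not cosmetic, because the class defined by $TX=XN$ is not adjoint-invariant and the conclusion genuinely fails to pass from $T^*$ to $T$. The argument that succeeds---and the one the paper's template actually uses, since the proof of Proposition~\ref{prop:X-sub-T-of-lambda-closed} employs $B^{1/2}T=CB^{1/2}$, an intertwiner \emph{out of} $T$---starts from an injective $Y$ with $YT=NY$: then SVEP for $T$ transfers from $N$, and for $x\in\HH_T(\{\lambda\})$ one gets $Yx\in\HH_N(\{\lambda\})=\ker(N-\lambda 1)$, whence $Y(T-\lambda 1)x=0$ and so $(T-\lambda 1)x=0$, with no adjoint detour. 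With the literal reading $TX=XN$ no such $Y$ is available, and the stated equality can in fact fail: the unilateral shift $S$ on $H^2$ is a quasi-affine transform of the bilateral shift $U$ on $L^2$ (take $Y=M_w$ with $w\in L^\infty$, $w\neq 0$ a.e.\ and $\log w\notin L^1$; then $YS=UY$ and $Y$ is a quasi-affinity), so $T=S^*$ satisfies $TX=XU^*$ with $X=Y^*$ a quasi-affinity and $U^*$ unitary; yet $f(\mu)=-\mu^{-1}e_1-\mu^{-2}e_0$ is analytic on $\mathbb{C}\setminus\{0\}$ and solves $(S^*-\mu 1)f(\mu)=e_1$, so $e_1\in\HH_{S^*}(\{0\})\setminus\ker S^*$. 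So no amount of care at your self-declared ``delicate point'' can finish the proof from the hypothesis in the orientation you used; the proposition must be run with the intertwining $YT=NY$, where the one-line push-forward on the $T$-side closes it immediately.
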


There are further ways in which operators in $\LL$ resemble positive
operators.  To explain this requires the introduction of some
additional ideas from local spectral theory, details for which can be
found in~\cite{MR1747914} and~\cite{MR0394282}.

Recall that a scalar operator is one which is similar to a normal
operator, and so has a Borel functional calculus.  By
Theorem~\ref{thmLL}, if $T \in \LL$ and is similar to a positive
operator, then it is scalar, and so also has a Borel functional
calculus.  An operator $T$ is termed a \emph{generalized scalar
  operator} if it has a $C^\infty$ functional calculus; that is, there
is a continuous homomorphism $\Phi: C^\infty(\mathbb C) \to L(\HH)$
with $\Phi(1) = 1$ and $\Phi(z) = T$.  An operator which is the
restriction of a generalized scalar operator to an invariant subspace
is said to be \emph{subscalar}.  Obviously, the classes of generalized
scalar and subscalar operators include that of scalar operators.

\begin{thm}
  \label{prop:T-in-L+2-gen-scalar}
  Let $T \in \LL$.  Then $T$ is a generalized scalar operator and $T$
  has a $C^2([0,\|T\|])$ functional calculus.
\end{thm}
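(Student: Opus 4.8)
The plan is to construct an explicit $C^2$ functional calculus by means of an almost-analytic (Dynkin--Helffer--Sj\"ostrand) extension; its feasibility rests on a single resolvent estimate. Fix a factorization $T = AB$ with $A,B \in \mc{L}^+$ and set $D := A^{1/2}BA^{1/2} \geq 0$. A direct computation gives the intertwining $TA^{1/2} = A^{1/2}D$, so for $z \notin [0,\infty)$ (which lies in $\rho(T)\cap\rho(D)$ by Lemma~\ref{lemaT1a} together with $D\geq0$) one has $(T-z)^{-1}A^{1/2} = A^{1/2}(D-z)^{-1}$. Writing $(T-z)^{-1} = \tfrac1z\big((T-z)^{-1}AB - 1\big)$ and substituting yields
\begin{equation*}
  (T-z)^{-1} = \frac1z\Big(A^{1/2}(D-z)^{-1}A^{1/2}B - 1\Big),
\end{equation*}
whence, using $\mathrm{dist}(z,\sigma(D)) \geq |\mathrm{Im}\,z|$,
\begin{equation*}
  \|(T-z)^{-1}\| \leq \frac{1}{|z|}\Big(\frac{\|A\|\,\|B\|}{|\mathrm{Im}\,z|} + 1\Big).
\end{equation*}
The essential feature is that the resolvent singularity is governed by $\tfrac{1}{|z|\,|\mathrm{Im}\,z|}$.

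Given $f \in C^2([0,\|T\|])$, I would first extend $f$ to a compactly supported $C^2$ function on $\mathbb R$ and form the almost-analytic extension $\tilde f(x+iy) := \big(f(x) + iy\,f'(x)\big)\chi(y)$, where $\chi$ is a cutoff equal to $1$ near $y=0$. A short computation gives $\ol\partial\tilde f(x+iy) = \tfrac{i}{2}\,y\,f''(x)\chi(y) + (\text{terms in }\chi')$, so $|\ol\partial\tilde f| \leq \tfrac12\,|\mathrm{Im}\,z|\,\|f''\|_\infty$ near the real axis, the $\chi'$-terms being supported where the resolvent is bounded. Define
\begin{equation*}
  f(T) := -\frac{1}{\pi}\int_{\mathbb C} \ol\partial\tilde f(z)\,(T-z)^{-1}\,dA(z).
\end{equation*}
The integrand is bounded in norm by a constant multiple of $|\mathrm{Im}\,z|\cdot\big(\tfrac{1}{|z|\,|\mathrm{Im}\,z|}+\tfrac1{|z|}\big) \leq \tfrac{C}{|z|}$, which is area-integrable over a bounded neighbourhood of $[0,\|T\|]$ because $\int_{|z|\leq\delta}|z|^{-1}\,dA(z) < \infty$ in the plane. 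Hence $f(T)$ is well defined with $\|f(T)\| \leq M\|f\|_{C^2}$. It is precisely the gain of one factor $|\mathrm{Im}\,z|$ coming from the second derivative of $f$ that cancels the resolvent growth, so two derivatives are exactly what is needed.

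It then remains to verify that $\Phi\colon f \mapsto f(T)$ is a continuous unital algebra homomorphism with $\Phi(\mathrm{id}) = T$. Independence of the chosen extension and cutoff, and multiplicativity, follow in the standard way from the resolvent identity $(T-z)^{-1}(T-w)^{-1} = (z-w)^{-1}\big((T-z)^{-1}-(T-w)^{-1}\big)$ together with Fubini; that $\Phi$ agrees with the Riesz--Dunford calculus on functions holomorphic near $\sigma(T)$ (in particular $\Phi(1)=1$ and $\Phi(\mathrm{id})=T$) is seen by deforming $\tilde f$ to a genuinely holomorphic extension near the spectrum and contracting the integral to a contour. This produces the asserted $C^2([0,\|T\|])$ functional calculus. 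Finally, composing with the continuous restriction homomorphisms $C^\infty(\mathbb C) \to C^\infty(\mathbb R) \to C^2([0,\|T\|])$, which carry the coordinate function to $\mathrm{id}$, yields a continuous homomorphism $C^\infty(\mathbb C)\to L(\HH)$ with $1\mapsto 1$ and $z\mapsto T$, so $T$ is generalized scalar. The main obstacle is the analytic core of the second paragraph: justifying convergence of the integral at the point $0\in\sigma(T)$, where both $\ol\partial\tilde f$ and the resolvent degenerate, and confirming that the $|\mathrm{Im}\,z|$-gain afforded by $C^2$-smoothness is exactly enough against the $\tfrac{1}{|z|\,|\mathrm{Im}\,z|}$ blow-up.
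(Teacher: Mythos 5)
Your proposal is correct, but it takes a genuinely different route from the paper's. The paper argues in two stages: first, if one factor of $T=AB$ has closed range, then $T$ is a triangular perturbation of an operator similar to a positive one (Theorem~\ref{thmLL}), and the $2\times 2$ triangular resolvent formula yields $\|(T-\lambda 1)^{-1}\| \leq \kappa'(1+|\mathrm{Im}\,\lambda|^{-2})$; second, for general $T\in\LL$ it passes to the dilation $T'\in\mc{L}^+\cdot\mc{P}$ of Proposition~\ref{prop:dilations-for-L+2} (whose projection factor has closed range), reads the same quadratic bound off the triangular form of $T'$, and concludes by invoking \cite[Theorem~1.5.19]{MR1747914} as a black box, extracting the $C^2$ statement from that theorem's proof. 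You bypass the case analysis and the dilation entirely: the intertwining $TA^{1/2}=A^{1/2}D$ with $D=A^{1/2}BA^{1/2}\geq 0$ (whence $\sigma(T)=\sigma(D)\geq 0$ by Lemma~\ref{lemaT1a}), combined with the identity $(T-z)^{-1}=z^{-1}\bigl((T-z)^{-1}T-1\bigr)$, gives $\|(T-z)^{-1}\|\leq |z|^{-1}\bigl(\|A\|\|B\|\,|\mathrm{Im}\,z|^{-1}+1\bigr)$ for an \emph{arbitrary} factorization, with no need for optimal pairs; since $|z|\geq|\mathrm{Im}\,z|$, this recovers the paper's quadratic estimate, but it is genuinely sharper near $0$, and that sharpness does real work in your second paragraph: with only the bound $|\mathrm{Im}\,z|^{-2}$, the almost-analytic integrand at $C^2$ regularity is of size $|\mathrm{Im}\,z|^{-1}$ and the area integral is log-divergent, whereas your $|z|^{-1}|\mathrm{Im}\,z|^{-1}$ bound makes it $O(|z|^{-1})$, which is absolutely area-integrable, so the Dynkin--Helffer--Sj\"ostrand construction delivers the $C^2([0,\|T\|])$ calculus directly rather than by citation. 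The steps you defer to ``the standard way'' do go through at this regularity, though note that independence of the extension rests on a Stokes/contour-shrinking argument rather than the resolvent identity: if $g$ is the difference of two $C^2$ extensions vanishing on $[0,\|T\|]$, then on the contour at distance $\epsilon$ from $[0,\|T\|]$ one has $|\tilde g(z)|=O(\epsilon^2\omega(\epsilon))$ ($\omega$ the modulus of continuity of $g''$), while your bound gives $\|(T-z)^{-1}\|=O(\epsilon^{-2})$ there because $\mathrm{dist}(z,\sigma(D))\geq\epsilon$, so the boundary integral vanishes in the limit. In sum: the paper's route is shorter given the cited machinery and exercises its dilation theory; yours is self-contained, produces an explicit and sharper resolvent estimate, and makes the precise role of two derivatives transparent.
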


\begin{proof}
  To begin with, claim that if either $A$ or $B$ has closed range,
  then $T$ is generalized scalar.  So suppose $T = AB$,
  $A,B \in \mc{L}^+$, where $\ran A$ is closed (the case where
  $\ran B$ is closed can be handled identically by working with
  $T^*$).  Decompose $\HH = \ran A \oplus (\ran A)^\bot$, and write
  \begin{equation*}
    T =
    \begin{pmatrix}
      T_1 & T_2 \\ 0 & 0
    \end{pmatrix}
  \end{equation*}
  with respect to this decomposition.  By the assumption that $\ran A$
  is closed, $T_1$ is similar to a positive operator, and so is scalar
  by Theorem~\ref{thmLL}.
  Hence there is a constant $\kappa \geq 1$ such that for
  $\lambda \in \mathbb C \backslash \mathbb R$,
  \begin{equation*}
    \|(T_1 - \lambda 1)^{-1}\| \leq \kappa (1 +
    |\mathrm{Im}\,\lambda|^{-1}).
  \end{equation*}
  Since
  \begin{equation*}
    (T-\lambda 1)^{-1} =
        - \lambda \begin{pmatrix}
          (T_1 - \lambda 1)^{-1} & \tfrac{1}{\lambda}(T_1 - \lambda
          1)^{-1} \\
          0 & -\tfrac{1}{\lambda}
        \end{pmatrix},
  \end{equation*}
  it follows that for sufficiently large $\kappa'$,
  \begin{equation*}
    \begin{split}
      \|(T-\lambda 1)^{-1}\|
      & \leq
      \kappa (1 + |\mathrm{Im}\,\lambda|^{-1})
      \left(1 + |\mathrm{Im}\,\lambda|^{-1}
        \left\| \begin{pmatrix}
            0 &  T_2 \\
            0 &  0
          \end{pmatrix} \right\|\right) \\
      & \leq \kappa' |1 + |\mathrm{Im}\,\lambda|^{-2}|.
    \end{split}
  \end{equation*}
  From \cite[Theorem~1.5.19]{MR1747914}, $T$ is a generalized scalar
  operator.

  For the general case, let $T \in \LL$ and let
  $T' \in \mc{L}^+ \cdot \mc{P}$ on $\HH'$ be the dilation of $T$ from
  Proposition~\ref{prop:dilations-for-L+2}.  So $\HH$ is an invariant
  subspace for $T'$ and $T$ is the restriction of $T'$ to $\HH$.
  Hence $T$ is subscalar.

  Subscalar operators need not be generalized scalar.  However, in
  this case any $\lambda \in \mathbb C \backslash \mathbb R$ is in the
  resolvents of both $T$ and $T'$.  So writing $T' =
  \begin{pmatrix} T & T_2 \\ 0 & 0 \end{pmatrix}$ with respect to the
  decomposition $\HH' = \HH \oplus \HH^\bot$, for
  $\lambda \in \mathbb C \backslash \mathbb R$,
  \begin{equation*}
    (T' - \lambda 1_{\HH'})^{-1} =
    \begin{pmatrix}
      (T - \lambda 1_{\HH})^{-1} & \tfrac{1}{\lambda }(T - \lambda
      1_{\HH})^{-1}T_2 \\
      0 & -\tfrac{1}{\lambda}
    \end{pmatrix}.
  \end{equation*}
  Consequently, for all $\lambda \in \mathbb C \backslash \mathbb R$,
  there is a $\kappa' > 0$ such that
  \begin{equation*}
    \|(T - \lambda 1_{\HH})^{-1}\| \leq \|(T' - \lambda
    1_{\HH'})^{-1}\| \leq \kappa' (1 + |\mathrm{Im}\,\lambda|^{-2})
  \end{equation*}
  by the first part of the proof.  Thus
  \cite[Theorem~1.5.19]{MR1747914} gives that $T$ is a generalized
  scalar operator.  The fact that $T$ has a $C^2([0,\|T\|])$
  functional calculus follows from the proof of that theorem.
\end{proof}

Let $F \subset \mathbb C$.  For an operator $T$, the \emph{algebraic
  spectral subspace $\mc{E}_T(F)$} is the largest linear manifold such
that $(T-\lambda 1) \mc{E}_T(F) = \mc{E}_T(F)$ for all
$\lambda \notin F$.  Moreover, for every positive integer $p$,
\begin{equation*}
  \HH_T(F) \subseteq \mc{E}_T(F) \subseteq \bigcap_{\lambda \notin F}
  \ran (T- \lambda 1)^p.
\end{equation*}
When $T$ is normal and $E_T(F)$ is the spectral projection for the set
$F$, it turns out that
$\HH_T(F) = \mc{E}_T(F) = \bigcap_{\lambda \notin F} \ran (T- \lambda
1) = \ran E_T(F)$~\cite[Theorem~1.5.7]{MR1747914}.  The next result
states that the operators in $\LL$ behave in this respect like normal
operators.

\begin{prop}
  \label{prop:alg-spec-subsp-for-L+2}
  Let $T \in \LL$.  Then for closed $F \subset \mathbb C$,
  \begin{equation*}
    \HH_T(F) = \mc{E}_T(F) = \bigcap_{\lambda \notin F} \ran (T- \lambda
    1).
  \end{equation*}
\end{prop}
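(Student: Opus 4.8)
The plan is to prove the three sets coincide by establishing the two chains of inclusions. We always have the general containments
\[
  \HH_T(F) \subseteq \mc{E}_T(F) \subseteq \bigcap_{\lambda \notin F}
  \ran (T - \lambda 1),
\]
recalled just before the statement (with $p=1$). So it suffices to prove the reverse inclusion $\bigcap_{\lambda \notin F} \ran(T-\lambda 1) \subseteq \HH_T(F)$, which will force equality throughout. Since $T \in \LL$ has SVEP (its spectrum is thin) and $\HH_T(F) = \HH_T(\sigma(T)\cap F)$, I may assume $F \subseteq \sigma(T) \subseteq [0,\infty)$ is closed, hence compact.

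First I would reduce to a local statement at each point of $\sigma(T)\setminus F$. The idea is to show that membership in $\bigcap_{\lambda\notin F}\ran(T-\lambda 1)$ controls the local spectrum. Concretely, fix $x$ in the big intersection; I want to show $\sigma_T(x)\subseteq F$, i.e.\ every $\mu \notin F$ lies in $\rho_T(x)$. For $\mu \in \rho(T)$ this is automatic. So the work is at points $\mu \in \sigma(T)\setminus F$. Here I would exploit Theorem~\ref{prop:T-in-L+2-gen-scalar}: since $T$ is generalized scalar, it has property $(\beta)$ and, more importantly, Dunford's property $(C)$, which guarantees that every $\HH_T(F)$ is \emph{closed} and that $\mc{E}_T(F) = \HH_T(F)$ for all closed $F$. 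This immediately collapses the first two sets; the remaining task is to identify the algebraic spectral subspace with the range intersection.

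For the final inclusion I would argue pointwise using the structure of $\LL$. The key tool is the intertwining $B^{1/2}(T - \lambda 1) = (C - \lambda 1)B^{1/2}$ with $C = B^{1/2}AB^{1/2} \geq 0$ for an optimal pair $(A,B)$, exactly as in the proof of Proposition~\ref{prop:X-sub-T-of-lambda-closed}. Transporting $x$ via $B^{1/2}$ into the positive (hence normal) operator $C$, I can use that for a normal operator the analogous three-set identity holds by \cite[Theorem~1.5.7]{MR1747914}: $\HH_C(F) = \mc{E}_C(F) = \bigcap_{\lambda\notin F}\ran(C-\lambda 1) = \ran E_C(F)$. Thus if $x \in \bigcap_{\lambda\notin F}\ran(T-\lambda 1)$, then $B^{1/2}x \in \bigcap_{\lambda\notin F}\ran(C-\lambda 1) = \HH_C(F)$, which gives $\sigma_C(B^{1/2}x) \subseteq F$, and then the intertwining lets me pull the local spectral estimate back to conclude $\sigma_T(x) \subseteq F$, i.e.\ $x \in \HH_T(F)$.

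I expect the main obstacle to be the pullback step: transporting local spectral information from $C$ back to $T$ through the operator $B^{1/2}$, which need not be injective or have dense range in general. The point spectrum argument in Proposition~\ref{prop:X-sub-T-of-lambda-closed} handled a single eigenvalue and dealt with the possibility $(T-\lambda 1)x \in \ker B = \ker T$ by a separate induction exploiting $\ran T \cap \ker T = \{0\}$ (Proposition~\ref{lemaT2}); here I must control a whole closed set $F$ simultaneously and ensure that the kernel of $B^{1/2}$ does not obstruct the inclusion $\sigma_T(x)\subseteq F$. I anticipate handling the potential contribution of $\ker T$ at $\lambda = 0$ separately (noting $\ker T \subseteq \HH_T(\{0\})$ and whether $0 \in F$), and otherwise leveraging that $B^{1/2}$ restricted to $\clran T^*$ is a quasi-affinity onto $\clran C$, so that no local spectral mass is lost off the kernel.
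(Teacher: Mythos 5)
Your plan founders at exactly the step you flag as the ``main obstacle,'' and nothing in the sketch supplies a mechanism to overcome it: the intertwining $B^{1/2}(T-\lambda 1)=(C-\lambda 1)B^{1/2}$ transfers local resolvent functions only in the direction $T\to C$ (if $(T-\lambda 1)f(\lambda)=x$ analytically, then $(C-\lambda 1)B^{1/2}f(\lambda)=B^{1/2}x$), so it yields $\sigma_C(B^{1/2}x)\subseteq\sigma_T(x)$ and nothing more. From $B^{1/2}x\in\HH_C(F)$, i.e.\ $\sigma_C(B^{1/2}x)\subseteq F$, one cannot conclude $\sigma_T(x)\subseteq F$: that would require lifting an analytic solution of $(C-\lambda 1)g(\lambda)=B^{1/2}x$ back through $B^{1/2}$, which may have a kernel and non-closed range, and even a quasi-affinity transfers local spectra only in the forward direction. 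The reason the analogous transport works in Proposition~\ref{prop:X-sub-T-of-lambda-closed} is special to singletons: there $\HH_C(\{\lambda\})=\ker(C-\lambda 1)$ converts the conclusion into the exact algebraic identity $B^{1/2}(T-\lambda 1)x=0$, whence $(T-\lambda 1)x\in\ker B=\ker T$, and an ad hoc argument using $\ran T\cap\ker T=\{0\}$ finishes. For a general closed $F$, $\HH_C(F)=\ran E_C(F)$ is the range of a spectral projection, no such identity is available, and your proposed pullback has no justification.

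A secondary error: Dunford's property $(C)$ does \emph{not} imply $\mc{E}_T(F)=\HH_T(F)$. A quasi-nilpotent backward weighted shift has property $(C)$ trivially (its only local spectral subspaces are $\{0\}$ and $\HH$), yet $\mc{E}_T(\emptyset)$ contains the dense span of the basis vectors while $\HH_T(\emptyset)=\{0\}$ by SVEP. The correct tool -- and the one the paper actually uses -- is \cite[Theorem~1.5.4]{MR1747914} for generalized scalar operators (available here by Theorem~\ref{prop:T-in-L+2-gen-scalar}), which gives in one stroke $\HH_T(F)=\mc{E}_T(F)=\bigcap_{\lambda\notin F}\ran(T-\lambda 1)^p$ for some fixed integer $p$. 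The only remaining work in the paper is to lower the exponent from $p$ to $1$, and this is where the structure of $\LL$ enters: applying Proposition~\ref{prop:X-sub-T-of-lambda-closed} to $T^*\in\LL$ gives $\ker(T^*-\overline{\lambda}1)^p=\ker(T^*-\overline{\lambda}1)$ for all $p$, from which the paper concludes $\ran(T-\lambda 1)^p=\ran(T-\lambda 1)$. Your proposal never engages with the exponent $p$ at all, and instead attempts to prove the $p=1$ inclusion directly via the normal-operator identity of \cite[Theorem~1.5.7]{MR1747914} plus a pullback -- which, as above, goes the wrong way through the intertwiner. The missing idea is the power-reduction argument; without it (or a genuinely new replacement for the pullback), the proof does not close.
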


\begin{proof}
  By Theorem~\ref{prop:T-in-L+2-gen-scalar}, $T\in \LL$ is a
  generalized operator, so by \cite[Theorem~1.5.4]{MR1747914}, there
  exists an integer $p$ such that for any closed set $F$,
  $\HH_T(F) = \mc{E}_T(F) = \bigcap_{\lambda \notin F} \ran (T-
  \lambda 1)^p$.  Fix $\lambda \notin F$.  Since $T^* \in \LL$, by
  Proposition~\ref{prop:X-sub-T-of-lambda-closed},
  $\ker(T^* - \overline{\lambda} 1)^p = \ker(T - \lambda 1)^{*\,p} =
  \ker(T^*-\overline{\lambda} 1)$ for all $p\in \mathbb N$, and so
  $\ran (T - \lambda 1)^p = \ran (T - \lambda 1)$ for all $p$.
\end{proof}

\section{$\LL$ and similarity; the set $\LLcr$}
\label{sec:l+2-and-sim--crl+2}

In Proposition~\ref{lemaT2}, it was proved that if $T \in \LL$ then
$\ran T \cap \ker T = \{0\}$.  It is always the case then that
\begin{equation*}
  \HH = \ol{\ran T \dotplus \ker T} \oplus (\ker T^* \cap \clran T^*).
\end{equation*}
This section considers the case where $\ran T \dotplus \ker T$ is
dense in $\HH$.  In Section~\ref{sec:l+2--general-case}, the general
case will be taken up.

Recall from Proposition~\ref{thmLL}, $T\in \LL$ and $T$ admits a
factorization $T = AB$ where $A,B \in \mc{L}^+$ and either $A$ or $B$
is invertible is equivalent to $T$ being similar to a positive
operator.

\begin{prop}
  \label{optimalinv}
  Let $T \in \LL$ and $A \in \mc{L}^+$ such that
  $\clran A = \clran T$.  Then the following are equivalent:
  \begin{enumerate}
  \item There exists $B \in GL(\HH)^+$ such that $T = AB$;
  \item There exists $B \in \mc{L}^+$ such that $(A,B)$ is
    optimal for $T$ and $\ran B \dotplus \ker A = \HH$;
  \item $\mc{B}^A_T \neq \emptyset$ and $\ran T = \ran A$.
  \end{enumerate}
  As a result, for this choice of $A$, there is a unique optimal pair
  $(A,B_0)$ and $B_0$ has closed range.
\end{prop}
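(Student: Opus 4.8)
The plan is to establish the cycle $(\mathit{i}) \Rightarrow (\mathit{iii}) \Rightarrow (\mathit{ii}) \Rightarrow (\mathit{i})$ and then read the final assertion off the work done en route. The two short implications come first. For $(\mathit{i}) \Rightarrow (\mathit{iii})$: if $T = AB$ with $B \in GL(\HH)^+$, then $\ran T = A(\ran B) = A\HH = \ran A$ because $B$ is surjective, and $B \in \mc{B}_T^A$ shows $\mc{B}_T^A \neq \emptyset$. For $(\mathit{iii}) \Rightarrow (\mathit{ii})$: let $B_0$ be the minimum of $\mc{B}_T^A$ supplied by Theorem~\ref{propopt}, so that $(A,B_0)$ is optimal. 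Optimality already gives $\ran B_0 \cap \ker A = \{0\}$ (if $B_0y \in \ker A$ then $AB_0y = Ty = 0$, so $y \in \ker T = \ker B_0$), while $\ran T = \ran A$ forces $A(\ran B_0) = A\HH$, hence $\ran B_0 + \ker A = \HH$; together these yield the direct sum of $(\mathit{ii})$ with $B = B_0$.

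The substance is $(\mathit{ii}) \Rightarrow (\mathit{i})$, and the step I expect to be the main obstacle is showing that the optimal $B$ in $(\mathit{ii})$ has closed range. Here I would set $\mc{R} := \clran A = \clran T$, so that $\ker A = \mc{R}^\perp$, and analyze the bounded operator $P_{\mc R}B : \HH \to \mc{R}$. Directness of $\ran B \dotplus \ker A$ gives $\ker(P_{\mc R}B) = \ker B$ (if $P_{\mc R}Bx = 0$ then $Bx \in \ran B \cap \ker A = \{0\}$), while $\ran B + \ker A = \HH$ gives $P_{\mc R}(\ran B) = P_{\mc R}\HH = \mc{R}$. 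Thus $P_{\mc R}B$ maps $(\ker B)^\perp$ bijectively onto $\mc{R}$, so by the inverse mapping theorem it is bounded below there; then $\|Bx\| \geq \|P_{\mc R}Bx\| \geq c\|x\|$ for $x \perp \ker B$, and $B$ has closed range.

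With closed range in hand, $\ran B = \clran B = \clran T^*$ (optimality), so the hypothesis $\ran B \dotplus \ker A = \HH$ reads $\clran T^* \dotplus \ker T^* = \HH$, a topological direct sum of closed subspaces. Applying the adjoint of the associated bounded idempotent gives $\clran T \dotplus \ker T = \HH$ as well, whence $\clran T \cap \ker T = (\clran T^* + \ker T^*)^\perp = \{0\}$. I would then produce the invertible factor by setting $B' := B + P_{\ker T^*}$. Since $\ran P_{\ker T^*} = \ker T^* = \ker A$, one has $AB' = AB = T$; moreover $\ker B' = \ker B \cap \clran T = \ker T \cap \clran T = \{0\}$. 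Finally, because $B$ is positive and invertible on $\clran T^* = (\ker T)^\perp$, one has $B \geq \beta P_{\clran T^*}$ for some $\beta > 0$, so $B' \geq \min(\beta,1)(P_{\clran T^*} + P_{\ker T^*})$; this sum of orthogonal projections is invertible, its kernel $\clran T \cap \ker T$ being trivial and its range closed (as $\clran T^* + \ker T^* = \HH$). Hence $B'$ is bounded below, so $B' \in GL(\HH)^+$, giving $(\mathit{i})$.

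For the closing assertion, the computation of the second paragraph applied to $B_0$ shows that $B_0$ has closed range, and the identity $\clran T^* \cap \ker T^* = \ran B_0 \cap \ker A = \{0\}$ is precisely the condition which, by Corollary~\ref{coruniqueBTA}, makes $(A,B_0)$ the unique optimal pair for $T$ with $A$ as first factor.
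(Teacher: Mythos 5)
Your proof is correct, and its skeleton is the paper's: a cycle of implications (yours runs $(\mathit{i})\Rightarrow(\mathit{iii})\Rightarrow(\mathit{ii})\Rightarrow(\mathit{i})$, the paper's $(\mathit{i})\Rightarrow(\mathit{ii})\Rightarrow(\mathit{iii})\Rightarrow(\mathit{i})$, with the same preimage computation $A^{-1}\ran(AB) = \ran B \dotplus \ker A$), the invertible factor obtained by adding $P_{\ker A}$ to the optimal $B$, and the closing assertion read off from Corollary~\ref{coruniqueBTA}. Where you genuinely differ is in the tools: the paper's proof leans on two operator-range theorems of Fillmore and Williams \cite{MR293441} --- closedness of $\ran B$ comes from the theorem that a complemented operator range is closed, and invertibility of $B + P_{\ker A}$ from the identity $\ran(S+T)^{1/2} = \ran S^{1/2} + \ran T^{1/2}$, which gives $\ran\bigl(B+P_{\ker A}\bigr)^{1/2} = \ran B + \ker A = \HH$ at a stroke. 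Your open-mapping argument on $P_{\mc{R}}B|_{(\ker B)^\perp}$ is in effect a self-contained proof of the first of these in the case at hand, which is a nice trade: more work, no citation. The one place you are too quick is the parenthetical claim that $Q := P_{\clran T^*} + P_{\ker T^*}$ has closed range ``as $\clran T^* + \ker T^* = \HH$'': that the algebraic spanning condition forces closedness of $\ran Q$ is exactly the kind of operator-range fact that needs an argument (it is the square-root identity above in disguise). It is patched in one line with the bounded idempotent $E$ you already introduced (range $\clran T^*$, kernel $\ker T^*$): for any $x$, $\|x\|^2 = \PI{P_{\clran T^*}x}{Ex} + \PI{P_{\ker T^*}x}{(1-E)x} \leq \kappa\bigl(\|P_{\clran T^*}x\| + \|P_{\ker T^*}x\|\bigr)\|x\|$ with $\kappa = \max(\|E\|,\|1-E\|)$, whence $\PI{Qx}{x} \geq (2\kappa^2)^{-1}\|x\|^2$ and $Q \in GL(\HH)^+$ directly, bypassing the closed-range claim altogether. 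With that line inserted everything checks, including the final assertion: since $(\mathit{iii})$ yields $(\mathit{ii})$ with $B = B_0$, your closed-range computation does apply to $B_0$, and $\clran T^* \cap \ker T^* = \{0\}$ is precisely the condition $\ol{\ran T \dotplus \ker T} = \HH$ required by Corollary~\ref{coruniqueBTA}.
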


\begin{proof}
  $(\mathit{i}\kern0.5pt) \Rightarrow (\mathit{ii}\kern0.5pt)$:
  Suppose that $T = AB$ with $B \in GL(\HH)^+$ then $\ran T = \ran A =
  \ran (AB')$ for any optimal pair $(A,B')$.  Then $\HH = A^{-1}
  \ran(AB') = \ran B' \dotplus \ker A$, where the sum is direct by
  Proposition~\ref{lemaT2}.

  $(\mathit{ii}\kern0.5pt) \Rightarrow (\mathit{iii}\kern0.5pt)$:
  Suppose that there exists $B \in \mc{L}^+$ such that $(A,B)$ is
  optimal for $T$ and $\HH = \ran B \dotplus \ker A$.  Applying $A$ to
  both sides gives $\ran A = \ran(AB) = \ran T$.

  $(\mathit{iii}\kern0.5pt) \Rightarrow (\mathit{i}\kern0.5pt)$: Let
  $(A,B')$ be an optimal pair for $T$.  Such a pair exists by
  Proposition~\ref{lemaT2}.  Since $\ran T = \ran A$, by the same
  calculation as above, $\HH = \ran B' \dotplus \ker A$.  Then by
  \cite[Theorem~2.3]{MR293441}, which states that if an operator range
  is complemented, then it is closed, $\ran B'$ is closed.

  Now define the positive operator $B = B' + P_{\ker A}$.  By
  \cite[Theorem~2.2]{MR293441},
  $\ran B^{1/2} = \ran B' + \ran P_{\ker A} = \HH$, and so $B$ is
  invertible.

  The last statement follows from Corollary~\ref{coruniqueBTA}.
\end{proof}

Theorem~\ref{thmLL} indicates a number of ways of finding operators
which are similar to positive operators.  In addition, it combines
with the last result to give yet another.

\begin{cor}
  \label{coroptimalclosed}
  Let $T \in L(\HH)$.  Then $T$ is similar to a positive operator if
  and only if $T \in \LL$, $\clran T \dotplus \ker T = \HH$ and there
  exists and optimal pair $(A,B)$ such that either $A$ or $B$ has
  closed range.
\end{cor}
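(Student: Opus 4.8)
The plan is to prove the two implications separately, and to split the harder (backward) implication according to whether $A$ or $B$ is the closed-range factor.

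For the forward implication, suppose $T$ is similar to a positive operator. Then Theorem~\ref{thmLL} immediately yields all three asserted conditions: the characterization there gives a factorization $T=AB$ with $A,B\in\mc{L}^+$, so $T\in\LL$; the final clause of that theorem gives $\clran T\dotplus\ker T=\HH$; and Proposition~\ref{prop:T-in-L+2-w-A-or-B-closed-range} produces an optimal pair one of whose factors has closed range. So this direction requires no real work beyond quoting those results.

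For the backward implication, assume $T\in\LL$, $\clran T\dotplus\ker T=\HH$, and that $(A,B)$ is an optimal pair with $A$ having closed range (the case of $B$ closed is handled afterward). First I would exploit the decomposition hypothesis: since $\clran T$ and $\ker T$ are closed with $\clran T\dotplus\ker T=\HH$, there is a bounded idempotent onto $\clran T$ along $\ker T$, and the bounded invertible map $x\oplus y\mapsto x+y$ conjugates $T$ to $T_1\oplus 0$ on $\clran T\oplus\ker T$, where $T_1:=T|_{\clran T}$. Thus $T$ is similar to $T_1\oplus 0$, and it suffices to show that $T_1$ is similar to a positive operator on the Hilbert space $\clran T$. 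Here the optimal pair enters: because $\clran A=\clran T$ forces $\ker A=\ker T^*=(\clran T)^\perp$, the subspace $\clran T$ reduces $A$, and a short computation gives $T_1=A_1B_1$ with $A_1:=A|_{\clran T}$ and $B_1:=P_{\clran T}B|_{\clran T}$ positive operators on $\clran T$. Since $A$ has closed range and $\clran A=\clran T$, the operator $A_1$ is invertible on $\clran T$, so $T_1$ is a product of two positive operators one of which is invertible; Theorem~\ref{thmLL} then gives that $T_1$, and hence $T$, is similar to a positive operator.

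The case where instead $B$ has closed range I would reduce to the previous one by passing to $T^*$: since $(B,A)$ is then an optimal pair for $T^*$ whose first factor has closed range, and since $T$ is similar to a positive operator precisely when $T^*$ is, it is enough to verify the hypotheses for $T^*$ — the only nontrivial point being $\clran T^*\dotplus\ker T^*=\HH$. I expect this to be the main obstacle, since the decomposition hypothesis is not manifestly self-dual. To establish it I would work in the orthogonal decomposition $\HH=\clran T^*\oplus\ker T$, writing $B=\operatorname{diag}(B_1,0)$ with $B_1$ invertible and positive on $\clran T^*$ and $A=\bigl(\begin{smallmatrix}A_{11}&A_{12}\\A_{12}^*&A_{22}\end{smallmatrix}\bigr)$. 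Optimality gives $\clran T^*\cap\ker T^*=\ran B\cap\ker A=\{0\}$ by Proposition~\ref{lemaT2}, while the hypothesis $\clran T\dotplus\ker T=\HH$ together with $\clran T=\clran A=\overline{\ran A}$ forces $\overline{\ran A}$ to be the graph of a bounded operator $\Psi$ over $\clran T^*$; reading off the resulting block relations $A_{12}^*=\Psi A_{11}$ and $A_{22}=\Psi A_{12}$ then shows $\clran T^*+\ker T^*=\HH$, which completes the verification and hence the proof.
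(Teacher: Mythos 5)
Your argument is correct, but your backward implication takes a genuinely different route from the paper's. The paper obtains the corollary by combining Theorem~\ref{thmLL} with Proposition~\ref{optimalinv}: the hypothesis $\clran T \dotplus \ker T = \HH$ (or, after dualizing, $\clran T^* \dotplus \ker T^* = \HH$) is exactly the statement $\ran A \dotplus \ker B = \HH$, respectively $\ran B \dotplus \ker A = \HH$, for the given optimal pair, which is condition $(\mathit{ii})$ of Proposition~\ref{optimalinv} applied to $T^*$ or $T$; that proposition then upgrades the closed-range factor to an invertible positive one, and Theorem~\ref{thmLL}$(\mathit{iii})$ finishes. You instead decompose the space: the bounded idempotent onto $\clran T$ along $\ker T$ conjugates $T$ to $T_1 \oplus 0$, optimality ($\ker A = \ker T^* = (\clran T)^\perp$) makes $\clran T$ reduce $A$, and closedness of $\ran A = \clran T$ makes $A_1 = A|_{\clran T}$ invertible, so Theorem~\ref{thmLL}$(\mathit{iii})$ applies to $T_1 = A_1B_1$; all of these steps check out. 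Your route is more self-contained --- it bypasses Proposition~\ref{optimalinv} and the Fillmore--Williams operator-range results used in its proof, and it exhibits the similarity explicitly --- while the paper's is shorter given the machinery already in place, and Proposition~\ref{optimalinv} additionally records uniqueness of the optimal pair and closedness of $\ran B_0$.

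Two remarks on your $B$-closed case. First, what you call the main obstacle, $\clran T^* \dotplus \ker T^* = \HH$, is a one-line duality rather than an obstacle: the hypothesis makes the oblique projection $Q$ with $\ran Q = \clran T$ and $\ker Q = \ker T$ bounded, and $Q^*$ is then a bounded idempotent with $\ran Q^* = (\ker T)^\perp = \clran T^*$ and $\ker Q^* = (\clran T)^\perp = \ker T^*$, which is the dual decomposition. Second, your graph argument, as sketched, does not quite close: the block relations $A_{12}^* = \Psi A_{11}$ and $A_{22} = \Psi A_{12}$ merely restate $\ran A \subseteq \{u \oplus \Psi u : u \in \clran T^*\}$, which you already had, and they do not by themselves yield the complementarity (one would need solvability of $A_{11}u_1 = -A_{12}z_2$, which is not available since $A_{11}$ need not be surjective). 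The step that does finish that route is the computation $\ker T^* = (\clran T)^\perp = \{-\Psi^* v \oplus v : v \in \ker T\}$, the orthogonal complement of the graph of $\Psi$, which is visibly a direct complement of $\clran T^* \oplus \{0\}$. Since the needed fact is standard and true, this is a presentational soft spot rather than a genuine gap, but as written that sentence should be repaired or replaced by the adjoint-projection argument.
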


It is not true in general that if $T$ is similar to a positive
operator, then every optimal pair for $T$ is such that one of its
factors has closed range.

\begin{Example}
  \label{example-3}
  Let $A \in \mc{L}^+$ be such that $\ran A$ is not closed.  Then
  clearly $A$ is similar to a positive operator and
  $(A^{1/2}, A^{1/2})$ is an optimal pair for $A$.  But, since
  $\ran A \subsetneq \ran A^{1/2}$, then none of the factors of this
  optimal pair has closed range.  However, since $A = AP_{\clran A}$,
  the optimal pair $(A,P_{\clran A})$ is as in
  Corollary~\ref{coroptimalclosed}.
\end{Example}

The situation when the range of $T \in \LL$ is closed happens to be
special as well.  Write
\begin{equation*}
  \LLcr := \{ T \in \LL: T \mbox{ has closed range} \}.
\end{equation*}

\begin{prop}
  \label{propCR1}
  Let $T \in \LL$.  Then the following are equivalent:
  \begin{enumerate}
  \item $T \in CR(\HH)$;
  \item $\ran T \dotplus \ker T = \HH$;
  \item For any optimal pair $(A,B)$, $A,B \in CR(\HH)$ and $\ran A
    \dotplus \ker B$ is closed.
  \end{enumerate}
  In this case, $T$ is similar to a positive operator.
\end{prop}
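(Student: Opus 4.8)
The plan is to fix, once and for all, an optimal pair $(A,B)$ for $T$ (which exists by Proposition~\ref{lemaT2}) and to exploit the identities it carries: $\clran A = \clran T$ and $\ker B = \ker T$, hence $\ker A = \ker T^*$ and $\clran B = \clran T^*$, together with $\ran A \cap \ker B = \{0\}$ and the fact, recorded after the definition of optimality, that $(B,A)$ is then optimal for $T^*$. I would prove everything by closing the two cycles $(\mathit{i})\Rightarrow(\mathit{ii})\Rightarrow(\mathit{i})$ and $(\mathit{i})\Rightarrow(\mathit{iii})\Rightarrow(\mathit{i})$, and then read off the final assertion as a by-product of the first implication.

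For $(\mathit{i})\Rightarrow(\mathit{ii})$ the key observation is the chain $\ran T = \ran(AB) \subseteq \ran A \subseteq \clran A = \clran T = \ran T$, whose last equality uses that $T$ has closed range; hence $\ran A = \ran T$. Since $B\in\mc{B}_T^A$, this places us in condition $(\mathit{iii})$ of Proposition~\ref{optimalinv}, which yields $B' \in GL(\HH)^+$ with $T = AB'$. By Theorem~\ref{thmLL}, condition $(\mathit{iii})$, the operator $T$ is then similar to a positive operator, and the concluding line of that theorem gives $\clran T \dotplus \ker T = \HH$; as $\clran T = \ran T$, this is exactly $(\mathit{ii})$, and the same computation simultaneously yields the final assertion of the proposition. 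The converse $(\mathit{ii})\Rightarrow(\mathit{i})$ is immediate: if $\ran T \dotplus \ker T = \HH$, then $\ran T$ is an operator range admitting the closed complement $\ker T$, so it is closed by \cite[Theorem~2.3]{MR293441}.

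For $(\mathit{i})\Rightarrow(\mathit{iii})$, the chain above already gives $\ran A = \ran T$ closed for \emph{any} optimal pair, and applying it to $T^* = BA$ with its optimal pair $(B,A)$ (using that $T^*$ is also closed range) gives $\ran B = \ran T^*$ closed; thus $A,B\in CR(\HH)$, and $\ran A \dotplus \ker B = \clran T \dotplus \ker T = \HH$ by the $(\mathit{ii})$ just obtained, so it is closed. The delicate direction is $(\mathit{iii})\Rightarrow(\mathit{i})$, where I would instead show $T^*$ has closed range. Since $B$ has closed range, $\ran T^* = \ran(BA) = B(\ran A) = B(\clran T)$; as the positive closed-range operator $B$ restricts to a topological isomorphism of $\clran B = \clran T^*$ onto itself and annihilates $\ker B = (\clran T^*)^\perp$, one has $\ran T^* = B\big(P_{\clran T^*}(\clran T)\big)$, which is closed exactly when $P_{\clran T^*}(\clran T)$ is. Invoking the classical equivalence that, for closed subspaces $M,N$, the image $P_M(N)$ is closed iff $N + M^\perp$ is closed, this reduces to the closedness of $\clran T + (\clran T^*)^\perp = \clran T + \ker T = \ran A \dotplus \ker B$, which is granted by $(\mathit{iii})$. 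Hence $T^*$, and therefore $T$, has closed range.

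The main obstacle is precisely this last step: converting the abstract closedness of the sum $\ran A \dotplus \ker B$ into closedness of $\ran T$. The mechanism is to recognize $\ran T^*$ as the image, under the (closed-range, hence open-onto-its-range) operator $B$, of the projection of one closed subspace onto another, and then to apply the projection-range characterization above; it is the optimal-pair identities that make those two closed subspaces come out to be exactly $\clran T$ and $\clran T^*$, so that the hypothesis of $(\mathit{iii})$ supplies exactly the sum whose closedness is needed.
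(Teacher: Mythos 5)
Your proof is correct, but it departs from the paper's at the two nontrivial steps. For $(\mathit{i})\Rightarrow(\mathit{ii})$ the paper argues directly: since $T^*\in CR(\HH)$ forces $\ran T^* = \ran B$, it computes $\HH = B^{-1}\ran T^* = \ran A \dotplus \ker B = \ran T \dotplus \ker T$ in one stroke; you instead route through Proposition~\ref{optimalinv} (via $\ran A = \ran T$ and $B \in \mc{B}_T^A$) and Theorem~\ref{thmLL}, which is a detour but buys you the factorization $T = AB'$ with $B' \in GL(\HH)^+$ and hence the final similarity assertion simultaneously, where the paper appeals separately to Corollary~\ref{coroptimalclosed}; there is no circularity, since Proposition~\ref{optimalinv} precedes this result and does not depend on it. The more substantial difference is $(\mathit{iii})\Rightarrow(\mathit{i})$: the paper outsources this to Izumino \cite[Corollary~2.5]{MR651705}, whereas you reprove the needed instance of that closed-range product criterion from scratch, writing $\ran T^* = B(\clran T) = B\left(P_{\clran T^*}(\clran T)\right)$, noting that the closed-range positive $B$ restricts to a topological isomorphism of $\clran T^* = \clran B$ onto itself, and then reducing to the classical equivalence that for closed subspaces $M,N$ the manifold $P_M(N)$ is closed iff $N + M^\perp$ is closed --- which under the optimal-pair identities $\ran A = \clran A = \clran T$ and $\ker B = \ker T$ (with directness of the sum from Proposition~\ref{lemaT2}) is exactly the hypothesis of $(\mathit{iii})$. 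Your cycle structure $(\mathit{i})\Rightarrow(\mathit{ii})\Rightarrow(\mathit{i})$ and $(\mathit{i})\Rightarrow(\mathit{iii})\Rightarrow(\mathit{i})$ covers all the equivalences, and the use of $(\mathit{ii})$ inside $(\mathit{i})\Rightarrow(\mathit{iii})$ is legitimate since both are derived under hypothesis $(\mathit{i})$; so nothing is missing. In short, your version is self-contained modulo one standard Hilbert-space fact and makes the mechanism behind the Bouldin--Izumino criterion transparent, at the cost of length, while the paper's is shorter by citation.
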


\begin{proof}
  Let $T \in \LL$ and suppose that $T \in CR(\HH)$.  Then
  $T^* \in CR(\HH)$.  Let $(A,B)$ be an optimal pair.  Then
  $\ran A \supseteq \ran T = \clran T = \clran A$, and similarly,
  $\ran B = \clran B$.  Thus
  $A,B \in CR(\HH)$ and $\HH = B^{-1} \ran T^* = \ran A \dotplus \ker B
  = \ran T \dotplus \ker T$.  Conversely, if $\ran T \dotplus \ker T =
  \HH$, by \cite[Theorem~2.3]{MR293441}, $\ran T$ is closed.

  Finally, suppose that for an optimal pair $(A,B)$, $A,B \in CR(\HH)$
  and $\ran A \dotplus \ker B$ is closed.  By
  \cite[Corollary~2.5]{MR651705}, $\ran T = \ran(AB)$ is closed.  On
  the other hand, if $\ran T \dotplus \ker T = \HH$, then arguing as
  above, $\ran A \dotplus \ker B = \HH$, and so is closed.  Hence all
  of the items are equivalent.

  The statement that $T$ is similar to a positive operator follows
  from Corollary~\ref{coroptimalclosed}.
\end{proof}

Proposition~\ref{propCR1} and Theorem~\ref{thmLL} together imply the
following.

\begin{cor}
  \label{corCR2}
  Let $T \in L(\HH)$.  The following are equivalent:
  \begin{enumerate}
  \item $T \in \LLcr$;
  \item $T = ST^*S^{-1}$ with $S \in GL(\HH)^+$ and
    $\sigma(T) \subseteq \{0\} \cup [c,\infty)$ for $c > 0$;
  \item There exists $G \in GL(\HH)$ such that
    $GTG^{-1} \in CR(\HH)^+$;
  \item $T$ is a scalar operator and
    $\sigma(T) \subseteq \{0\} \cup [c,\infty)$ for $c > 0$.
  \end{enumerate}
\end{cor}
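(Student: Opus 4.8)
The plan is to reduce the entire statement to the single assertion that $T$ is \emph{similar to a positive operator with closed range}, and then read off the various reformulations from Theorem~\ref{thmLL}. The only ingredient not already supplied by the cited results is the spectral description of closed range for a positive operator: for $C \in \mc{L}^+$, the range $\ran C$ is closed if and only if $0$ is not an accumulation point of $\sigma(C)$, i.e. $\sigma(C) \subseteq \{0\} \cup [c,\infty)$ for some $c > 0$. This holds because $\ran C$ is closed exactly when $C$ restricted to $(\ker C)^\perp$ is bounded below, which is equivalent to the spectrum having a gap immediately to the right of $0$. I would also use two elementary facts: that a bounded invertible $G$ carries closed ranges to closed ranges (since $\ran(G^{-1}CG) = G^{-1}\ran C$ and $G^{-1}$ is a homeomorphism), and that similarity leaves the spectrum unchanged.

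First I would establish the equivalence of $(\mathit{i})$ and $(\mathit{iii})$. If $T \in \LLcr$, then $T \in \LL$ has closed range, so Proposition~\ref{propCR1} gives that $T$ is similar to a positive operator; that is, $GTG^{-1} = C \geq 0$ for some $G \in GL(\HH)$. Since similarity preserves closed range, $C \in CR(\HH)^+$, which is $(\mathit{iii})$. Conversely, if $GTG^{-1} = C \in CR(\HH)^+$, then $C = C^{1/2}C^{1/2} \in \LL$, and since $\LL$ is invariant under similarity (as noted at the start of Section~\ref{sec:the-set-l+2}), $T \in \LL$; closed range again transfers back along $G^{-1}$, giving $T \in \LLcr$.

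Next I would feed this into the spectral dictionary. Since $\sigma(T) = \sigma(C)$ and the closed-range criterion above translates ``$\ran C$ closed'' into $\sigma(C) = \sigma(T) \subseteq \{0\} \cup [c,\infty)$, the condition ``$T$ similar to a positive operator with closed range'' is precisely ``$T$ similar to a positive operator together with the spectral gap''. Now Theorem~\ref{thmLL} identifies ``$T$ similar to a positive operator'' with its item $(\mathit{ii})$, namely $T = ST^*S^{-1}$ for some $S \in GL(\HH)^+$ with $\sigma(T) \geq 0$, and with its item $(\mathit{v})$, namely $T$ scalar with $\sigma(T) \geq 0$. Adjoining the spectral gap $\sigma(T) \subseteq \{0\} \cup [c,\infty)$ (which in particular forces $\sigma(T) \geq 0$) to each of these yields exactly $(\mathit{ii})$ and $(\mathit{iv})$ of the corollary, closing the cycle $(\mathit{i}) \Leftrightarrow (\mathit{iii}) \Leftrightarrow (\mathit{ii}) \Leftrightarrow (\mathit{iv})$.

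The main obstacle is really just the verification of the closed-range-versus-spectral-gap equivalence for positive operators, since everything else is bookkeeping with Proposition~\ref{propCR1} and Theorem~\ref{thmLL}. A secondary point requiring attention is that Theorem~\ref{thmLL}$(\mathit{ii})$ already guarantees that the intertwining operator $S$ may be taken positive and invertible, so no separate argument is needed to upgrade a general similarity to one implemented by a positive $S$.
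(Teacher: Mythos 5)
Your proposal is correct and follows essentially the same route as the paper, which states the corollary as an immediate consequence of Proposition~\ref{propCR1} and Theorem~\ref{thmLL} and, in the remark following it, sketches exactly the bookkeeping you carry out (similarity preserves spectrum and closed range, and a positive operator has closed range precisely when $\sigma(C) \subseteq \{0\} \cup [c,\infty)$ for some $c>0$). The only difference is that you spell out the closed-range-versus-spectral-gap criterion that the paper leaves as ``not difficult to check,'' which is a welcome but not substantively different addition.
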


If $T \in \LLcr$, then by Proposition~\ref{propCR1}, $T$ is similar to
a positive operator $C$.  From this it is not difficult to check that
$\sigma(T) = \sigma(C)$, $C$ also has closed range, and consequently
the spectrum of both operators have the form indicated in the
corollary.

\begin{cor}
  \begin{equation*} \LLcr = \bigcup_{W \in CR(\HH)^+} \mathbb{O}_W.
  \end{equation*}
\end{cor}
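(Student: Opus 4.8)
The plan is to establish the two set inclusions separately, each being a direct consequence of Proposition~\ref{propCR1} together with the elementary observation that an invertible operator $G \in GL(\HH)$ is a homeomorphism, so it carries closed subspaces to closed subspaces; in particular similarity by such a $G$ preserves the closed-range property. I expect the whole argument to be short, since the substantive work (that a closed-range element of $\LL$ is similar to a \emph{positive} operator) has already been done.

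First I would prove $\bigcup_{W \in CR(\HH)^+} \mathbb{O}_W \subseteq \LLcr$. Fix $W \in CR(\HH)^+$ and $G \in GL(\HH)$. Since $W \geq 0$, one has $W = W^{1/2}W^{1/2} \in \mc{P}^2 \subseteq \LL$, and, as noted at the start of Section~\ref{sec:the-set-l+2}, $\LL$ is invariant under the similarity orbit, so $GWG^{-1} \in \mathbb{O}_W \subseteq \LL$. Because $G^{-1}$ is surjective, $\ran(GWG^{-1}) = G(\ran W)$, and as $G$ is a homeomorphism this image of the closed subspace $\ran W$ is again closed. Hence $GWG^{-1} \in CR(\HH)$, i.e.\ $GWG^{-1} \in \LLcr$.

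For the reverse inclusion, take $T \in \LLcr$. By Proposition~\ref{propCR1}, $T$ is similar to a positive operator, say $T = GCG^{-1}$ with $C \in \mc{L}^+$ and $G \in GL(\HH)$. Writing $C = G^{-1}TG$ and using that $G$ is surjective gives $\ran C = G^{-1}(\ran T)$, which is closed since $G^{-1}$ is a homeomorphism and $\ran T$ is closed. Thus $C \in CR(\HH)^+$ and $T = GCG^{-1} \in \mathbb{O}_C \subseteq \bigcup_{W \in CR(\HH)^+} \mathbb{O}_W$, completing the argument.

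The only point that requires any care, and the one I would flag as the crux, is that the positive operator to which $T$ is similar may be chosen with closed range; but this is immediate from the same homeomorphism argument, and it is in any case recorded in the remark following Corollary~\ref{corCR2}. Everything else is bookkeeping, so I anticipate no genuine obstacle.
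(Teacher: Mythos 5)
Your proof is correct and takes essentially the same route as the paper, which leaves the corollary without explicit proof because it is precisely item $(\mathit{iii}\kern0.5pt)$ of Corollary~\ref{corCR2}: Proposition~\ref{propCR1} gives similarity of $T\in\LLcr$ to a positive operator, the remark after Corollary~\ref{corCR2} records that this positive operator again has closed range, and the reverse inclusion is the similarity-orbit invariance of $\LL$ combined with your (correct) observation that $G\in GL(\HH)$ carries closed subspaces to closed subspaces. One small notational slip: $W = W^{1/2}W^{1/2}$ exhibits $W$ as an element of $\LL$, not of $\mc{P}^2$, which in this paper denotes products of two \emph{orthogonal projections} (and $W^{1/2}$ is generally not a projection); since $W = W\cdot 1 \in \LL$ trivially, nothing in the argument is affected.
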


\begin{cor}
  \label{corCR}
  Suppose that $T \in L(\HH)$ is finite rank.  Then $T \in \LL$ if and
  only if $T$ is diagonalizable and $\sigma(T) \geq 0$.
\end{cor}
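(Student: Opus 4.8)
The plan is to prove the two implications separately, exploiting the fact that a finite rank operator automatically has closed range so that the machinery developed for $\LLcr$ applies. The spectrum condition is the easy half: for any $T\in\LL$ one has $\sigma(T)\geq 0$ immediately from Lemma~\ref{lemaT1a}. The substantive point is diagonalizability, and the key observation is that if $T\in\LL$ is finite rank, then $\ran T$ is finite dimensional, hence closed, so $T\in\LLcr$; Proposition~\ref{propCR1} (equivalently Corollary~\ref{corCR2}) then yields that $T$ is similar to a positive operator, and I would reduce the diagonalizability of $T$ to that of a finite rank positive operator.

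For the forward implication, suppose $T\in\LL$ is finite rank. As just noted $T\in\LLcr$, so by Proposition~\ref{propCR1} there exist $G\in GL(\HH)$ and $C\in\mc{L}^+$ with $T=GCG^{-1}$. Since rank is a similarity invariant, $C$ is finite rank, and by the spectral theorem a finite rank positive operator is diagonalizable: writing $C=\sum_{i=1}^{r}s_i\,\PI{\cdot}{e_i}e_i$ with $\{e_i\}$ orthonormal and $s_i>0$, one obtains the eigenspace decomposition $\HH=\ker C \dotplus \Span\{e_1,\dots,e_r\}$, on each summand of which $C$ acts as a scalar. Applying $G$ transports this to $\HH=\ker T \dotplus \Span\{Ge_1,\dots,Ge_r\}$, where each $Ge_i$ is an eigenvector of $T$ for the eigenvalue $s_i$ and $\ker T=G(\ker C)$. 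Thus $T$ is diagonalizable, and $\sigma(T)=\sigma(C)\geq 0$.

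Conversely, if $T$ is diagonalizable with $\sigma(T)\geq 0$, then $T$ is similar to the diagonal operator $D$ whose diagonal entries are the eigenvalues of $T$. These being nonnegative, $D\in\mc{L}^+$, so $T$ is similar to a positive operator, and by Theorem~\ref{thmLL} (the implication $(\mathit{i}\kern0.5pt)\Rightarrow(\mathit{iii}\kern0.5pt)$) it follows that $T\in\LL$.

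The main obstacle is essentially bookkeeping rather than genuine difficulty: pinning down what \emph{diagonalizable} should mean for a finite rank operator on an infinite dimensional space (namely that $\HH$ is the algebraic direct sum of $\ker T$ and the finitely many nonzero eigenspaces, on each of which $T$ acts as a scalar), and verifying that similarity carries the orthogonal eigenbasis decomposition of $C$ over to the oblique decomposition for $T$. Both are routine once the reduction to $\LLcr$ and the appeal to Proposition~\ref{propCR1} are in place, so the only point requiring care is making sure the finite rank hypothesis is invoked precisely to guarantee the closed range needed for that reduction.
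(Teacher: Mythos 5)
Your proof is correct and follows essentially the same route the paper intends: finite rank gives closed range, so Proposition~\ref{propCR1} (or Corollary~\ref{corCR2}) yields similarity to a finite rank positive operator, which is diagonalizable; the converse is the implication $(\mathit{i}\kern0.5pt)\Rightarrow(\mathit{iii}\kern0.5pt)$ of Theorem~\ref{thmLL}, exactly as in the paper's subsequent remark giving $T=(VV^*)(V^{*\,-1}CV^{-1})$. Your closing care about the meaning of diagonalizability and the boundedness of the transporting similarity (automatic here, since the nonzero eigenspaces span a finite dimensional subspace, making the oblique decomposition topological) is precisely the right bookkeeping.
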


\begin{obs}
  \label{obs:formula-for-AB-in-fd-case}
  If $T$ on $\HH$ with $dim(\HH) < \infty$ is diagonalizable with
  positive spectrum, it is in principle straightforward to write $T$
  as a product of two positive operators.  Let $C$ be the diagonal
  matrix of eigenvalues of $T$, $V$ the matrix with columns consisting
  of the eigenvectors of $T$, arranged in the same order as the
  diagonal entries of $C$.  The matrix $V$ is invertible, and
  $TV = VC$.  Therefore, $T = (VV^*)(V^{*\,-1} C V^{-1})$.
\end{obs}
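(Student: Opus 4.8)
The plan is to verify directly the factorization asserted in the remark, treating it as an elementary computation rather than a deep result. Since $T$ is diagonalizable, there is an invertible $V \in L(\HH)$ whose columns are eigenvectors of $T$, together with the diagonal matrix $C$ of the corresponding eigenvalues, such that $TV = VC$, equivalently $T = VCV^{-1}$. Because $\sigma(T) \geq 0$, the diagonal entries of $C$ are non-negative reals, so $C$ is selfadjoint with non-negative spectrum and hence $C \in \mc{L}^+$.

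First I would insert the identity $V^*V^{*\,-1} = 1$ between $V$ and $C$ to rewrite
\begin{equation*}
  T = VCV^{-1} = (VV^*)\,(V^{*\,-1}CV^{-1}),
\end{equation*}
which is precisely the claimed factorization. It then remains only to check that both factors lie in $\mc{L}^+$.

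The factor $A := VV^*$ is selfadjoint, and for every $x \in \HH$ one has $\PI{VV^*x}{x} = \|V^*x\|^2 \geq 0$; since $V$ is invertible, $A$ is in fact positive definite, so certainly $A \in \mc{L}^+$. The factor $B := V^{*\,-1}CV^{-1} = (V^{-1})^*\,C\,(V^{-1})$ is a $*$-congruence of $C$. It is selfadjoint because $C$ is real diagonal, and for every $x \in \HH$, $\PI{Bx}{x} = \PI{CV^{-1}x}{V^{-1}x} \geq 0$ by positivity of $C$. Hence $B \in \mc{L}^+$ as well, and the displayed equality exhibits $T$ as a product of two positive operators.

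I expect no genuine obstacle here: the entire content lies in observing that diagonalizability together with non-negative spectrum supplies a similarity $T = VCV^{-1}$ with $C \geq 0$, after which positivity of $VV^*$ and of the congruence $V^{*\,-1}CV^{-1}$ is immediate. The one point worth emphasizing is that the hypothesis $\sigma(T) \geq 0$ is exactly what is needed to place $C$ in $\mc{L}^+$; without it the second factor would fail to be positive, and the construction would break down. This is also why the remark is stated as being true only ``in principle'' in the purely finite-dimensional setting, where diagonalizability is available.
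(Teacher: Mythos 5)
Your verification is correct and follows exactly the route the paper intends: the remark's own content is the identity $T = VCV^{-1} = (VV^*)(V^{*\,-1}CV^{-1})$, and you have merely made explicit the positivity checks (that $VV^*$ is positive definite since $V$ is invertible, and that $V^{*\,-1}CV^{-1}$ is a $*$-congruence of $C \in \mc{L}^+$) which the paper leaves implicit. The only slight quibble is interpretive, not mathematical: the paper's phrase ``in principle straightforward'' refers to the practical matter of computing the eigenvector matrix $V$, not to any failure mode of the positivity argument.
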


\begin{Example}
  \label{example-4}
  The situation for the product of three or more positive operators is
  more complicated.  In particular, such products need not be
  diagonalizable.  As a simple example,
  \begin{equation*}
    \begin{pmatrix}
      1 & 0 \\ 0 & 0
    \end{pmatrix}
    \begin{pmatrix}
      1 & 1 \\ 1 & 1
    \end{pmatrix}
    \begin{pmatrix}
      0 & 0 \\ 0 & 1
    \end{pmatrix}
    =
    \begin{pmatrix}
      0 & 1 \\ 0 & 0
    \end{pmatrix}.
  \end{equation*}
  Hence the class $\mc{L}^{+3}$ of products of three positive
  operators strictly contains $\LL$.  

  Maganja showed in \cite{MR3008882} that every bounded operator on a
  Hilbert space is the sum of at most three operators which are
  similar to positive operators (and so by Theorem~\ref{thmLL}, three
  operators in $\LL$).  On finite dimensional spaces, Wu proved that
  if $\det T \geq 0$ (which includes those $T$ with non-negative
  spectrum), $T$ is the product of at most $5$ positive
  matrices~\cite{MR974046}, and in~\cite{MR3652833}, an algorithm is
  given for determining the number of matrices between $1$ and $5$
  needed.  In the setting of separable Hilbert spaces, Wu also showed
  that any operator which is the norm limit of a sequence of
  invertible operators is the product of at most $18$ positive
  operators~\cite{MR990101}.  For invertible operators, this was
  improved by Phillips to at most~$7$~\cite{MR1335103}.
\end{Example}

It is also possible to give an explicit formula for the Moore-Penrose
inverse $T^\dagger$ of an operator $T \in \LLcr$.  In this case, if
$Q := P_{\ran T^* /\!/ \ker T^*}$ is the oblique projection onto
$\ran T^*$ along $\ker T^*$, $Q$ is bounded.  Recall that an operator
$T'$ is called a $(1,2)$-inverse of $T$ if $TT'T = T$ and
$T'TT' = T'$.  Generally, there will be infinitely many
$(1,2)$-inverses for an operator $T$.  The Moore-Penrose inverse is
the $(1,2)$-inverse for which $TT^\dagger$ is the orthogonal
projection onto $\ran T$ and $T^\dagger T$ is the orthogonal
projection onto $\ran T^*$.

\begin{prop}
  \label{prop:MP-inverse}
  Let $T \in \LLcr$ with optimal pair $(A,B)$.  Then
  \begin{equation*}
    T^\dagger = B^\dagger Q A^\dagger.
  \end{equation*}
  Furthermore, $T' := Q^* T^\dagger Q^*$ is a $(1,2)$-inverse of $T$
  in $\LLcr$.
\end{prop}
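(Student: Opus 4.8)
The plan is to handle the two claims in turn: first confirm the closed form $T^\dagger = B^\dagger Q A^\dagger$ by checking the four Moore--Penrose equations, then feed that formula into the analysis of $T'$. Throughout I would use the structure supplied by Proposition~\ref{propCR1}: since $T\in\LLcr$ has optimal pair $(A,B)$, both factors have closed range, with $\ran A=\ran T=:\M$, $\ker A=\ker T^*=\M^\perp$, $\ran B=\ran T^*=:\N$, $\ker B=\ker T=\N^\perp$, and $\HH=\M\dotplus\N^\perp=\N\dotplus\M^\perp$. In particular $Q$ is the bounded idempotent onto $\N$ along $\M^\perp$ and $Q^*$ the one onto $\M$ along $\N^\perp$. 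The elementary facts I would invoke repeatedly are $A^\dagger A=P_\M$, $BB^\dagger=P_\N$, that $Q$ fixes $\ran B=\N$ and annihilates $\M^\perp=\ker Q$, and that $Q^*$ fixes $\ran T=\M$ and annihilates $\ker T=\N^\perp$.

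Setting $X:=B^\dagger Q A^\dagger$, I would verify the defining relations in the order $(XT)^*=XT$, $(TX)^*=TX$, $TXT=T$, $XTX=X$. The first is clean: $XT=B^\dagger Q(A^\dagger A)B=B^\dagger(QP_\M)B=B^\dagger QB=B^\dagger B=P_\N$, using $QP_\M=Q$ (as $\ker Q=\M^\perp$) and $QB=B$ (as $\ran B=\N=\ran Q$). For the second, $TX=A(BB^\dagger)QA^\dagger=AP_\N QA^\dagger=AQA^\dagger$ (using $BB^\dagger=P_\N$ and $P_\N Q=Q$), and one shows $AQA^\dagger=P_\M$. This identity is the one genuinely computational point: on $\M^\perp=\ker A^\dagger$ it vanishes, while for $z\in\M$ one decomposes $A^\dagger z\in\M$ as $n+m^\perp$ with $n\in\N$, $m^\perp\in\M^\perp$ (possible since $\HH=\N\dotplus\M^\perp$), whence $QA^\dagger z=n$ and $AQA^\dagger z=A(A^\dagger z)=z$. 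The remaining two relations are then automatic, since $TXT=P_\M T=T$ and $XTX=P_\N X=X$ (as $\ran X\subseteq\ran B^\dagger=\N$). Hence $X=T^\dagger$.

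For $T'=Q^*T^\dagger Q^*$ I would first record the two identities $Q^*T=T$ and $TQ^*=T$, immediate from $\ran Q^*=\ran T$ and $\ker Q^*=\ker T$. With these, the two $(1,2)$-inverse equations collapse onto the Moore--Penrose relations for $T^\dagger$:
\begin{equation*}
  TT'T=(TQ^*)T^\dagger(Q^*T)=TT^\dagger T=T,\qquad
  T'TT'=Q^*T^\dagger(Q^*TQ^*)T^\dagger Q^*=Q^*T^\dagger TT^\dagger Q^*=T',
\end{equation*}
the second using $Q^*TQ^*=TQ^*=T$. Thus $T'$ is a $(1,2)$-inverse of $T$.

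The step I expect to be the main obstacle is not these verifications but showing $T'\in\LLcr$, that is, exhibiting a genuine factorization into two positive operators. The key observation is that, because $Q$ is idempotent,
\begin{equation*}
  T'=Q^*B^\dagger Q A^\dagger Q^*=(Q^*B^\dagger Q)(Q A^\dagger Q^*),
\end{equation*}
and each factor is positive, since $\PI{B^\dagger Q x}{Q x}\ge 0$ and $\PI{A^\dagger Q^* x}{Q^* x}\ge 0$ for all $x$. Hence $T'\in\LL$. Closed range then comes for free from the $(1,2)$-inverse property: $T'=T'TT'$ gives $\ran T'=\ran(T'T)$, and $T'T$ is idempotent (as $(T'T)^2=T'(TT'T)=T'T$), so $\ran T'$ is the range of a bounded idempotent and is therefore closed. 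Consequently $T'\in\LLcr$, completing the proof.
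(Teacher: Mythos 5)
Your proof is correct and follows essentially the same route as the paper: a direct verification of the Moore--Penrose conditions for $B^\dagger Q A^\dagger$ using the identities $A^\dagger A = P_{\ran T^*{}^{\,\perp\perp}}$-type projection facts ($A^\dagger A = P_{\ran A}$, $BB^\dagger = P_{\ran B}$, $QB=B$, $QP_{\ran T}=Q$), followed by the identical factorization $T' = (Q^*B^\dagger Q)(QA^\dagger Q^*)$ for membership in $\LLcr$. The only cosmetic differences are that you identify $TX = AQA^\dagger = P_{\ran T}$ and $XT = P_{\ran T^*}$ explicitly where the paper shows $TWT=T$ and then determines the range and kernel of the idempotent $TW$ to conclude orthogonality, and that you get closedness of $\ran T'$ from the abstract fact that $\ran T' = \ran(T'T)$ is the range of a bounded idempotent, whereas the paper computes $\ran T' = \ran T$ directly.
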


\begin{proof}
  Let $T \in \LLcr$.  The fact that $A$ and $B$ have closed range
  follows from Proposition~\ref{propCR1}.  Hence $A^\dagger$ and
  $B^\dagger$ are bounded positive operators.  Also, $\ran T^*$ is
  closed.  For $Q = P_{\ran T^* /\!/ \ker T^*}$,
  $P_{\ran T^*} Q P_{\ran T} = Q P_{\ran T} = Q$.  For
  $W = B^\dagger Q A^\dagger$,
  \begin{equation*}
    TWT = AB(B^\dagger Q A^\dagger)AB = AP_{\ran T^*} Q P_{\ran T}B =
    AQB = T.
  \end{equation*}
  Therefore $TW$ is a projection.  Furthermore,
  \begin{equation*}
    TW = ABB^\dagger Q A^\dagger = AP_{\ran T^*} Q A^\dagger = A Q
    A^\dagger.
  \end{equation*}
  Also, $\ran (TW) = \ran T$ and $\ker (TW) = \ker T^*$ since
  \begin{equation*}
    \begin{split}
      \ran T &= \ran (TWT) \subseteq \ran (TW) \subseteq \ran T,
      \quad \text{and} \\
      \ker W & \subseteq \ker (TW) \subseteq \ker (WTW) = \ker W =
      \ker T^*.
    \end{split}
  \end{equation*}
  The last equality holds since if $x\in \ker W$,
  $Q A^\dagger x \in \ker T \cap \ran Q = \ker T \cap \ran T^* =
  \{0\}$, so
  $A^\dagger x \in \ker Q \cap \ran A^\dagger = \ker T^* \cap \ran T =
  \{0\}$.  Thus $x\in \ker A^\dagger = \ker T^*$.  Hence $\ran (TW)$
  and $\ker (TW)$ are orthogonal, and so $TW \in \mc{P}$.

  Similar calculations show that $WTW = W$, hence $WT$ is a
  projection, and by identical reasoning, it is an orthogonal
  projection.  Thus, $T^\dagger = B^\dagger Q A^\dagger$, as claimed.

  Since $Q^*T = TQ^* = T$, it is easy to see that for
  $T' = Q^* T^\dagger Q^*$, $TT'T = T$ and $T'TT' = T'$.  Also,
  \begin{equation*}
    \ran T' = Q^* T^\dagger \ran T = Q^* T^\dagger \HH = Q^* \ran T^*
    = Q^* \HH = \ran T.
  \end{equation*}
  Finally,
  \begin{equation*}
    T' = (Q^* B^\dagger Q)(Q A^\dagger Q^*) \in \LLcr.\qedhere
  \end{equation*}
\end{proof}

\begin{obs}
  \label{obs:MP-inv-for-P2}
  If $T\in \mc{P}^2$ with closed range, the formula $T^\dagger =
  P_{\ran T^* /\!/ \ker T^*}$ from \cite{MR2775769} is recovered.
\end{obs}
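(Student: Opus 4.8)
The plan is to recover the formula from Proposition~\ref{prop:MP-inverse}. Since the Moore--Penrose inverse depends only on $T$, it is enough to produce one optimal pair for $T$ made of \emph{orthogonal} projections: for such a pair $(A,B)$ one has $A^\dagger = A$ and $B^\dagger = B$, so the general formula $T^\dagger = B^\dagger Q A^\dagger$ immediately becomes $T^\dagger = B\,Q\,A$. I would take $A = P_{\ran T}$ and $B = P_{\ran T^*}$. Because $T \in \mc{P}^2$ with closed range lies in $\LLcr$, both $\ran T$ and $\ran T^*$ are closed, so the optimality conditions $\clran A = \clran T$ and $\ker B = (\ran T^*)^\perp = \ker T$ hold at once.

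The real content is the factorization $T = P_{\ran T}P_{\ran T^*}$, which makes $(A,B)$ a genuine factorization of $T$. Writing $T = P_\M P_\N$ with orthogonal projections $P_\M,P_\N$, I would first record that $\clran T = \clran(P_\M P_\N)\subseteq \M$ and $\clran T^* = \clran(P_\N P_\M)\subseteq \N$. Using $P_\N v = v$ for $v \in \ran T^*$, a short computation gives $T^*Tv = P_\N P_\M P_\N v = P_\N P_\M v = T^*v$, so $Tv - v \in \ker T^*$; as $Q$ projects onto $\ran T^*$ along $\ker T^*$, this says $QTv = v$ on $\ran T^*$. Combined with $QT|_{\ker T}=0$ and the orthogonal splitting $\HH = \ran T^* \oplus \ker T$, I get $QT = P_{\ran T^*}$. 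A symmetric argument, using that $T$ maps $\ran T^*$ bijectively onto $\ran T$, yields $TQ = P_{\ran T}$; then idempotency of $Q$ gives $P_{\ran T}P_{\ran T^*} = (TQ)(QT) = TQT = P_{\ran T}T = T$, as needed.

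With the optimal pair of projections established, the formula reads $T^\dagger = P_{\ran T^*}\,Q\,P_{\ran T}$, and this collapses exactly as in the proof of Proposition~\ref{prop:MP-inverse}: $\ran Q = \ran T^*$ gives $P_{\ran T^*}Q = Q$, while $\ker Q = \ker T^* = (\ran T)^\perp$ gives $Q P_{\ran T} = Q$. Hence $T^\dagger = Q = P_{\ran T^*/\!/\ker T^*}$, which is the formula of \cite{MR2775769}.

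I expect the factorization $T = P_{\ran T}P_{\ran T^*}$ to be the only delicate point, since the two projections exhibiting $T \in \mc{P}^2$ are in general \emph{not} optimal --- their ranges $\M, \N$ strictly contain $\ran T, \ran T^*$. The containments $\clran T \subseteq \M$ and $\clran T^* \subseteq \N$ are what let me trim the given projections down to $P_{\ran T}, P_{\ran T^*}$ without altering the product. As an alternative that sidesteps Proposition~\ref{prop:MP-inverse} entirely, the identities $TQ = P_{\ran T}$ and $QT = P_{\ran T^*}$ derived above are precisely the two self-adjointness Penrose conditions for $Q$, and together with $TQT = T$ and $QTQ = Q$ they verify directly that $Q = T^\dagger$.
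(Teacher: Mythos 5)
Your argument is correct and is essentially the route the paper intends: the remark is just Proposition~\ref{prop:MP-inverse} specialized to the optimal pair $(P_{\ran T},P_{\ran T^*})$ of orthogonal projections, each its own Moore--Penrose inverse, so that $T^\dagger = P_{\ran T^*}\,Q\,P_{\ran T} = Q$. The only difference is that you prove the factorization $T = P_{\ran T}P_{\ran T^*}$ from scratch in the closed-range case (correctly), where the paper implicitly leans on the representation of elements of $\mc{P}^2$ from \cite{MR2775769}; your closing direct check of the four Penrose conditions for $Q$ is a valid bonus that bypasses Proposition~\ref{prop:MP-inverse} altogether.
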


\section{$\LL$, quasi-affinity and quasi-similarity}
\label{sec:l+2-and-quasi-sim}

In Proposition~\ref{PropIa} it was seen that the statement that $T$
being quasi-affine to a positive operator is equivalent to, among
other things, being able to write $T^* = BA$ where $B$ and $A$ are
positive, but where $B$ may be unbounded.  The situation for
quasi-similarity is no better (Corollary~\ref{corpIa}).  Conditions
equivalent to $T = AB$ where $A$ and $B$ are bounded and positive
require something extra, and this will then imply $\sigma(T) \geq 0$
by Lemma~\ref{lemaT1a}.

\begin{thm}
  \label{PropI}
  For $T \in L(\HH)$, the following are equivalent:
  \begin{enumerate}
  \item $T \in \LL$ and is quasi-affine to a positive operator;
  \item $T \in \LL$ and $\ol{\ran T \dotplus \ker T} = \HH$;
  \item There exists a quasi-affinity $X \in \mc{L}^+$ such that
    $\ran T \subseteq \ran X$ and $TX \geq 0$;
  \item $T = AB$, $A,B \in \mc{L}^+$ and $A$ injective;
  \item $\sigma(T) \cap (-\infty,0) = \emptyset$, and there exists a
    quasi-affinity $X\in \mc{L}^+$ such that $TX = XT^*$ and
    $\ran T \subseteq \ran X$;
  \item There exists $C \in \mc{L}^+$ and a quasi-affinity
    $G \in L(\HH)$ such that $TG = GC$ and
    $\ran T \subseteq \ran(GG^*)$.
  \end{enumerate}
\end{thm}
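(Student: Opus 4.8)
The plan is to prove the six statements equivalent via a single cycle $(i)\Rightarrow(ii)\Rightarrow(iii)\Rightarrow(iv)\Rightarrow(v)\Rightarrow(vi)\Rightarrow(i)$, using throughout that every $T\in\LL$ admits an \emph{optimal} pair $(A_0,B_0)$ by Proposition~\ref{lemaT2}. First I would rephrase the density hypothesis in $(ii)$: since $(\ran T+\ker T)^{\perp}=\ker T^{*}\cap\clran T^{*}$, statement $(ii)$ says exactly that $T\in\LL$ and $\clran T^{*}\cap\ker T^{*}=\{0\}$. The implication $(i)\Rightarrow(ii)$ is then immediate from Proposition~\ref{PropII}, which already yields $\ol{\ran T\dotplus\ker T}=\HH$ whenever $T$ is quasi-affine to a positive operator.

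The heart of the argument, and the step I expect to be the main obstacle, is $(ii)\Rightarrow(iii)$. Starting from an optimal pair $(A_0,B_0)$, so that $\clran A_0=\clran T$ (hence $\ker A_0=\ker T^{*}$) and $\ker B_0=\ker T$ (hence $\clran B_0=\clran T^{*}$), I would set $X:=A_0+P_{\ker T}$. Two cancellations drive everything. First, $\ran B_0\subseteq\clran B_0=\clran T^{*}=(\ker T)^{\perp}$, so $P_{\ker T}B_0=0$ and therefore $XB_0=A_0B_0=T$; in particular $\ran T\subseteq\ran X$. Second, $TP_{\ker T}=0$, so $TX=TA_0=A_0B_0A_0\geq0$. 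Finally $X$ is injective: since $A_0,P_{\ker T}\geq0$, one has $\ker X=\ker A_0\cap\ker P_{\ker T}=\ker T^{*}\cap\clran T^{*}=\{0\}$, which is precisely the reformulated hypothesis $(ii)$. Thus $X$ is a positive quasi-affinity with $\ran T\subseteq\ran X$ and $TX\geq0$, giving $(iii)$. The whole subtlety is to perturb $A_0$ by the projection onto $\ker T$ (rather than $\ker T^{*}$): this leaves both the factorization $T=XB_0$ and the positivity of $TX$ intact while repairing injectivity exactly under $(ii)$.

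From here the remaining implications are comparatively routine. For $(iii)\Rightarrow(iv)$ I would invoke Douglas' lemma: $\ran T\subseteq\ran X$ gives $T=XC$ with $C$ bounded, and because $X$ is positive, injective, and has dense range, comparing $TX=XCX$ with its adjoint forces $C=C^{*}$, while $TX\geq0$ forces $C\geq0$; so $A=X$, $B=C$ witness $(iv)$. For $(iv)\Rightarrow(v)$, with $T=AB$ and $A$ injective the choice $X=A$ gives $TX=ABA=XT^{*}\geq0$ and $\ran T\subseteq\ran A$, and $\sigma(T)\geq0$ follows from Lemma~\ref{lemaT1a}. For $(v)\Rightarrow(vi)$ I would again write $T=XC$ via Douglas, deduce $C=C^{*}$ from $TX=XT^{*}$, and then use the spectral hypothesis: since $\sigma(X^{1/2}CX^{1/2})\setminus\{0\}\subseteq\sigma(XC)\cup\{0\}=\sigma(T)\cup\{0\}$ and $X^{1/2}CX^{1/2}$ is self-adjoint with spectrum avoiding $(-\infty,0)$, it is positive, whence $C\geq0$; then $G:=X^{1/2}$ is a quasi-affinity with $TG=GC'$ for $C':=X^{1/2}CX^{1/2}\geq0$ and $GG^{*}=X\supseteq\ran T$. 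Finally, for $(vi)\Rightarrow(i)$ the intertwining $TG=GC$ with $G$ a quasi-affinity is exactly quasi-affinity to the positive operator $C$, and setting $X:=GG^{*}$ (a positive quasi-affinity with $TX=GCG^{*}\geq0$ and $\ran T\subseteq\ran X$) and re-running the $(iii)\Rightarrow(iv)$ mechanism produces $T=XC''$ with $C''\geq0$, so $T\in\LL$. The one point I would verify carefully is the identity $\sigma(X^{1/2}CX^{1/2})\cup\{0\}=\sigma(T)\cup\{0\}$ and the conclusion that a self-adjoint operator with spectrum in $[0,\infty)$ is positive; this is where the spectral condition in $(v)$ is genuinely used, since self-adjointness of $TX$ alone does not give positivity.
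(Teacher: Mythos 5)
Your proof is correct, and it runs along the same cycle $(i)\Rightarrow(ii)\Rightarrow\cdots\Rightarrow(vi)\Rightarrow(i)$ as the paper's, with the same key constructions: the perturbation $X=A_0+P_{\ker T}$ of an optimal pair in $(ii)\Rightarrow(iii)$ is exactly the paper's $X=A+P_{\ker B}$ (these coincide since $\ker B=\ker T$ for an optimal pair), and the Douglas-lemma mechanism you use in $(iii)\Rightarrow(iv)$ and $(vi)\Rightarrow(i)$, together with the choice $X=A$ and Lemma~\ref{lemaT1a} in $(iv)\Rightarrow(v)$, matches the paper step for step. The one genuine divergence is in $(v)\Rightarrow(vi)$: the paper, after obtaining $T=XP$ with $P$ selfadjoint, invokes \cite[Corollary~4.2]{MR2174236} to conclude $TX\geq 0$ from the hypothesis $\sigma(T)\cap(-\infty,0)=\emptyset$, whereas you prove positivity inline via the identity $\sigma(X^{1/2}CX^{1/2})\cup\{0\}=\sigma(CX)\cup\{0\}=\sigma(T)\cup\{0\}$, so that the selfadjoint operator $X^{1/2}CX^{1/2}$ has real spectrum avoiding $(-\infty,0)$, hence is positive, and $C\geq 0$ follows by density of $\ran X^{1/2}$. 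Your version is correct (the identity $\sigma(CD)\cup\{0\}=\sigma(DC)\cup\{0\}$ is the same elementary fact the paper already uses for Lemma~\ref{lemaT1a}) and has the virtue of being self-contained, effectively supplying an ad hoc proof of the special case of the Hassi--Sebesty\'en--de~Snoo result actually needed; the paper's citation buys brevity and situates the step within a more general theory of nonnegativity of operator products. You are also right to flag this as the point where the spectral hypothesis in $(v)$ is genuinely consumed --- selfadjointness of $TX$ alone would not suffice.
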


\begin{proof}
  $(\mathit{i}\kern0.5pt) \Rightarrow (\mathit{ii}\kern0.5pt)$: This
  follows from Proposition~\ref{PropII}.

  $(\mathit{ii}\kern0.5pt) \Rightarrow (\mathit{iii}\kern0.5pt)$: Let
  $T = AB$, where $(A,B)$ is optimal.  Define
  $X := A+P_{\ker B} \in \mc{L}^+$.  Then
  $\ker X = \ker T^* \cap \clran T^* = (\ol{\ran T \dotplus \ker
    T})^\bot = \{0\}$, and so $X$ is a quasi-affinity.  Consequently,
  $TX = ABA \geq 0$ and $XB = AB = T$.  Hence
  $\ran T = \ran (XB) \subseteq \ran X$.

  $(\mathit{iii}\kern0.5pt) \Rightarrow (\mathit{iv}\kern0.5pt)$: Since
  $TX \geq 0$, $X$ is a quasi-affinity, and $\ran T \subseteq \ran X$,
  it follows from Douglas' lemma that $T = XP$ and $TX = XT^* = XPX
  \geq 0$, where $P \geq 0$.  So $T = XP \in \LL$, and by
  Lemma~\ref{lemaT1a}, $\sigma(T) \geq 0$.

  $(\mathit{iv}\kern0.5pt) \Rightarrow (\mathit{v}\kern0.5pt)$: If
  $T = AB$, $A,B \in \mc{L}^+$ and $A$ injective, then $X = A$ is a
  quasi-affinity and $TX \geq 0$.  By Lemma~\ref{lemaT1a},
  $\sigma(T) \geq 0$.

  $(\mathit{v}\kern0.5pt) \Rightarrow (\mathit{vi}\kern0.5pt)$: By
  Douglas' lemma, $T = XP$, and since $TX = XPX$ is selfadjoint and
  $X$ is a quasi-affinity, $P$ is selfadjoint.  By the assumption
  $\sigma(T) \cap (-\infty,0) = \emptyset$, it follows from
  \cite[Corollary~4.2]{MR2174236} that $TX \geq 0$, and hence that
  $P \geq 0$.  Thus $T\in \LL$.  Set $G = X^{1/2}$, which is also a
  quasi-affinity, and define $C = GPG \geq 0$.  Then $TG = GC$.  The
  last condition in $(\mathit{vi}\kern0.5pt)$ then follows since
  $T = XP$.
	
  $(\mathit{vi}\kern0.5pt) \Rightarrow (\mathit{i}\kern0.5pt)$: Since
  $T GG^* = GCG^* \geq 0$, and since $\ran T \subseteq \ran (GG^*)$,
  by Douglas' lemma $T = GG^*P$.  Moreover, $TG = G(G^*PG)$, and since
  $G$ is a quasi-affinity, $G^*PG = C$.  Hence $P \geq 0$ and so
  $T\in \LL$.
\end{proof}

\begin{obs}
  \label{obs:not-all-quasi-affs-work}
  A simple example shows that even if $T\in \LL$ and there is a
  quasi-affinity $X\in \mc{L}^+$ such that $TX = XT^* \geq 0$, it need
  not be true that $\ran T \subseteq \ran X$.  For example, take
  $T = 1$ on an infinite dimensional Hilbert space, and
  $X\in \mc{L}^+$, but without closed range.  Also, $T = C = 1$ and
  $G$ any quasi-affinity without closed range together satisfy
  $TG = GC$, but obviously, $\ran T$ is not contained in
  $\ran (GG^*)$.
\end{obs}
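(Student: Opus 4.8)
The plan is to verify the two explicit examples proposed in the remark, both of which exploit the same idea: take $T$ to be the identity, whose range is all of $\HH$, and pair it with a quasi-affinity whose range is a \emph{proper} (but dense) subspace of $\HH$. Since $1 = 1\cdot 1$ is a product of two positive operators, $T = 1 \in \LL$ trivially, and $T = T^* = 1$ is positive; thus all the structural hypotheses are automatic and the only content is to produce a suitable second factor and check the relations, observing that the range inclusion fails.

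For the first example, I would fix an orthonormal basis $\{e_n\}_{n \geq 1}$ of $\HH$ and let $X \in \mc{L}^+$ be the diagonal operator determined by $X e_n = \tfrac{1}{n} e_n$ (any injective positive operator with dense, non-closed range serves equally well). Then $X$ is injective with dense range, hence a quasi-affinity, and because its eigenvalues accumulate at $0$ its range is not closed, so $\ran X \subsetneq \clran X = \HH$. With $T = 1$ one has $TX = X = XT^* \geq 0$, so the hypotheses on $X$ appearing in items $(\mathit{iii}\kern0.5pt)$ and $(\mathit{v}\kern0.5pt)$ of Theorem~\ref{PropI} are all met except the range condition; indeed $\ran T = \HH \not\subseteq \ran X$.

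For the second example, I would take $T = C = 1$ and let $G \in L(\HH)$ be any quasi-affinity without closed range (for instance the same diagonal operator). Then $TG = G = GC$, so the relation $TG = GC$ of item $(\mathit{vi}\kern0.5pt)$ holds with $C \geq 0$ and $G$ a quasi-affinity. It then remains only to note that $GG^*$ again fails to have closed range: since $\clran(GG^*) = \clran G = \HH$ while $\ran(GG^*) \subseteq \ran G \subsetneq \HH$, one obtains $\ran(GG^*) \subsetneq \HH$, and hence $\ran T = \HH \not\subseteq \ran(GG^*)$.

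There is essentially no obstacle here; the single point requiring a moment's care is to ensure the chosen operator is genuinely a quasi-affinity (injective with dense range) \emph{while} having non-closed range, so that its range is a proper dense subspace of $\HH$. This is precisely what forces $\ran T = \HH$ to escape the inclusion while leaving every other hypothesis of Theorem~\ref{PropI} intact, thereby confirming that the extra range condition $\ran T \subseteq \ran X$ (respectively $\ran T \subseteq \ran(GG^*)$) cannot be omitted.
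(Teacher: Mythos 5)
Your proposal is correct and takes essentially the same route as the paper's remark: both choose $T = 1$ together with a positive quasi-affinity whose range is non-closed (hence a proper dense subspace), so that $TX = X = XT^* \geq 0$ while $\ran T = \HH \not\subseteq \ran X$, and likewise $TG = GC$ with $\ran(GG^*) \subseteq \ran G \subsetneq \HH$. Your diagonal operator $Xe_n = \tfrac{1}{n}e_n$ is simply a concrete instantiation of the paper's generic choice, and your observation that non-closed range forces $\ran G \neq \HH$ (via the open mapping theorem) is exactly the point the paper leaves implicit.
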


In~\cite[Corollary~3]{MR635584}, Stampfli showed that quasi-similar
operators with Dunford's property $C$ have equal spectra.  Since by
Theorem~\ref{prop:T-in-L+2-gen-scalar}, any $T \in \LL$ has property
$C$, and positive operators, being scalar, also have this property, it
follows that if $T\in \LL$ is quasi-similar to a positive operator
$C$, then $\sigma(T) = \sigma(C)$.  As the next result shows, this
continues to be true with the weaker assumption of quasi-affinity, and
as a bonus, the proof does not use any of the material from
Section~\ref{sec:l+2-spectral-properties}.

\begin{prop}
  \label{prop:t-qa-pos-then-spectra-equal}
  If $T\in \LL$ is quasi-affine to $C \in \mc{L}^+$, then
  $\sigma(T) = \sigma(C)$.
\end{prop}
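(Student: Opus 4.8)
The goal is to show that for $T \in \LL$ quasi-affine to $C \in \mc{L}^+$, one has $\sigma(T) = \sigma(C)$. One inclusion is free: Lemma~\ref{lem:T-qa-pos-C-then-sp_T-contains-sp_C} already gives $\sigma(T) \supseteq \sigma(C)$, since $T$ is quasi-affine to $C$. So the entire content of the proposition is the reverse inclusion $\sigma(T) \subseteq \sigma(C)$, equivalently $\rho(C) \subseteq \rho(T)$. The plan is to fix $\lambda \in \rho(C)$ and show that $T - \lambda 1$ is invertible, using the extra leverage that $T \in \LL$ provides beyond mere quasi-affinity.

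\textbf{The approach.} Since both spectra live in $[0,\infty)$ (by Lemma~\ref{lemaT1a} for $T$, and positivity for $C$), I only need to handle real $\lambda \geq 0$ in $\rho(C)$; points off the positive axis are automatically in $\rho(T)$. The natural device is the quasi-affinity intertwining $TX = XC$ together with a factorization $T = AB$ coming from an optimal pair (Proposition~\ref{lemaT2}), and the similarity $\sigma(T) = \sigma(A^{1/2}BA^{1/2})$ from Lemma~\ref{lemaT1a}. The key observation is that $T - \lambda 1$ fails to be invertible only if $\lambda \in \sigma_a(T)$, because $\sigma(T) = \sigma_a(T)$ for operators in $\LL$ (the spectrum has empty interior, as noted in Section~\ref{sec:l+2-spectral-properties}). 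So it suffices to rule out $\lambda \in \sigma_a(T)$ for $\lambda \in \rho(C)$, i.e.\ to produce a lower bound $\|(T-\lambda 1)x\| \geq c\|x\|$.

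\textbf{Key steps.} First I would reduce, via $\sigma(T) = \sigma_a(T)$, to showing that no approximate eigenvalue of $T$ can lie in $\rho(C)$. Suppose $\lambda \in \rho(C)$ and that there is a sequence $x_n$ with $\|x_n\| = 1$ and $(T - \lambda 1)x_n \to 0$. Applying the intertwiner through the relation $B^{1/2}(T - \lambda 1) = (C' - \lambda 1)B^{1/2}$ with $C' = B^{1/2}AB^{1/2}$ (this is exactly the conjugacy exploited in the proof of Proposition~\ref{prop:X-sub-T-of-lambda-closed}, and $\sigma(C') = \sigma(T)$), one transfers approximate eigenvectors of $T$ to those of $C'$. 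The heart of the matter is then to connect $C'$ back to $C$: since $T$ is quasi-affine to $C$ and $C'$ is similar to $T$, both sit in $\LL$ with the same spectrum, and one argues that $\lambda \in \sigma_a(C')$ would force $\lambda \in \sigma(C)$ through the quasi-affinity $X$, contradicting $\lambda \in \rho(C)$. Concretely, from $TX = XC$ and $X$ injective with dense range, an approximate null sequence for $T - \lambda 1$ pulls back through $X$ to show $C - \lambda 1$ is not bounded below either, placing $\lambda \in \sigma_a(C) \subseteq \sigma(C)$.

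\textbf{The main obstacle.} The delicate point is the transfer of the lower bound across the quasi-affinity $X$: because $X$ is only injective with dense range, not invertible, one cannot simply conjugate and must instead argue with sequences and use density of $\ran X$ to approximate, then control the resulting error terms. The cleanest route is likely to work with the compression spectrum. Since $\lambda \in \sigma(T) = \sigma_a(T)$ and, by the generalized-scalar structure from Theorem~\ref{prop:T-in-L+2-gen-scalar}, $T$ has no nontrivial Jordan blocks for nonzero eigenvalues, boundary points of $\sigma(T)$ are genuine approximate eigenvalues; I would then show $\lambda$ must be an eigenvalue of $C$ (using $\HH_T(\{\lambda\}) = \ker(T-\lambda 1)$ from Proposition~\ref{prop:X-sub-T-of-lambda-closed} and pushing the eigenvector through $X$), or alternatively invoke that $T$ has Dunford's property $(C)$, so that quasi-affinity preserves local spectral subspaces and hence spectra. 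I expect the property-$(C)$ argument, combined with Lemma~\ref{lem:T-qa-pos-C-then-sp_T-contains-sp_C} for the easy inclusion, to give the shortest complete proof.
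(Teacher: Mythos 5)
Your reduction to the approximate point spectrum is fine ($\sigma(T)\subseteq\mathbb R^+$, so $\sigma(T)=\partial\sigma(T)\subseteq\sigma_a(T)$), and the easy inclusion via Lemma~\ref{lem:T-qa-pos-C-then-sp_T-contains-sp_C} is correct. But the heart of your argument --- transferring $\lambda\in\sigma_a(T)$ across the intertwining $TX=XC$ to conclude $\lambda\in\sigma(C)$ --- is exactly the step that fails, and none of your proposed repairs closes it. If $(T-\lambda 1)x_n\to 0$ with $\|x_n\|=1$, you would need $y_n$ with $Xy_n\approx x_n$ and $(C-\lambda 1)y_n\to 0$ after normalization; since $X^{-1}$ is unbounded, the $\|y_n\|$ are uncontrolled, and $X(C-\lambda 1)y_n=(T-\lambda 1)Xy_n\to 0$ gives no smallness of $(C-\lambda 1)y_n$ itself. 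Your eigenvector variant fares no better: points of $\sigma_a(T)$ need not be eigenvalues at all (take $T=C$ with purely continuous spectrum), so Proposition~\ref{prop:X-sub-T-of-lambda-closed}, which only identifies $\HH_T(\{\lambda\})$ with $\ker(T-\lambda 1)$, gives you nothing to push through $X$ --- that kernel is typically trivial while $\lambda\in\sigma(T)$. Finally, the Dunford property $(C)$ route is circular: Stampfli's theorem, cited in the paper immediately before this proposition, requires \emph{quasi-similarity} (two intertwinings); with a single relation $TX=XC$, property $(C)$ yields only the inclusion $\sigma(C)\subseteq\sigma(T)$ you already have. The paper explicitly frames this proposition as a strengthening of the Stampfli/property-$(C)$ result to mere quasi-affinity, with a proof avoiding Section~\ref{sec:l+2-spectral-properties} altogether, so ``quasi-affinity preserves spectra via property $(C)$'' assumes the conclusion.

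For comparison, the paper's mechanism is quite different and supplies the missing leverage in two moves. For the quasi-similar case it avoids sequences entirely: writing $T=AB$ with $A$ injective (Theorem~\ref{PropI}), $T$ is quasi-affine to $C_A=A^{1/2}BA^{1/2}$ via $A^{1/2}$, whence $C$ is quasi-affine to $C_A$; two positive operators related by a quasi-affinity are \emph{unitarily equivalent} by Douglas' lemma \cite[Lemma~4.1]{MR250106}, and $\sigma(T)=\sigma(C_A)$ by Lemma~\ref{lemaT1a}. For the merely quasi-affine case, it compresses to $\N=\clran T^*$: the compression $\tilde T$ satisfies the density conditions of Theorem~\ref{PropI} on both sides (using Corollary~\ref{cor:range-maps-densely}), hence is genuinely quasi-similar to a positive operator, and Lemma~\ref{lem:Propqs} upgrades $\tilde T\tilde X=\tilde X\tilde C$ to quasi-similarity with $\tilde C$, reducing to the first case; a bookkeeping argument with the point $0$ then recovers $\sigma(T)=\sigma(C)$. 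Without some such device for manufacturing the second intertwining (or the unitary equivalence of the positive models), your outline cannot be completed as stated.
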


\begin{proof}
  Suppose to begin with that $T$ is quasi-similar to $C$.  Write,
  using Theorem~\ref{PropI}, $T = AB$, $A,B \in \mc{L}^+$ and $A$
  injective.  Then $T$ is quasi-affine to $C_A := A^{1/2}BA^{1/2}$
  (with quasi-affinity $A^{1/2}$).  Applying
  Lemma~\ref{lem:T-qa-pos-C-then-sp_T-contains-sp_C}, $C$ is
  quasi-affine to $C_A$.  From \cite[Lemma~4.1]{MR250106}, $C$ and
  $C_A$ are unitarily equivalent, and so have equal spectra.  As
  noted in Lemma~\ref{lemaT1a}, $T$ and $C_A$ also have equal spectra,
  so the result follows in this case.

  Now suppose that $T$ is just quasi-affine to $C$.  If
  $\N = \clran T^*$, $\N$ is invariant for $T^*$, and
  $T^*P_\mc{N} \in \LL$ by
  Proposition~\ref{prop:L+2-restricted-to-inv-subsp-in-L+2}.  As
  observed in Lemma~\ref{lemaT1a},
  $\sigma(T) \subseteq {\mathbb R}^+$, so $\sigma(T^*) = \sigma(T)$,
  and consequently
  \begin{equation*}
    \sigma(T) = \sigma(T^*) = \sigma(T^*P_\mc{N}) = \sigma(P_\mc{N}
    T).
  \end{equation*}
  The middle equality follows by the same argument as in the proof of
  Lemma~\ref{lemaT1a}.

  Define $\tilde{T}: \mc{N} \to \mc{N}$ as the compression of $T$ to
  $\mc{N}$.  If $0 \notin\sigma(\tilde{T})$, so that $\tilde{T}$ is
  invertible, then $\ran T^* = \mc{N}$.  By Proposition~\ref{propCR1},
  $T$ is similar to a positive operator, and by
  Lemma~\ref{lem:Propqs}, $T$ is quasi-similar to $C$, and this has
  already been dealt with.

  If $0\in \sigma(\tilde{T})$, then
  $\sigma(P_\mc{N} T) = \sigma(\tilde{T})$.  Suppose that $TX = XC$,
  $X$ a quasi-affinity.  Then
  $\mc{R} := \overline{X^* \mc{N}} = \clran C$.  Since
  $X^*P_\N = P_{\mc{R}} X^* P_\N$, for
  $\tilde{C} = P_\mc{R} C |_\mc{R}$ and
  $\tilde{X} = P_\mc{N} X |_\mc{R}$, $\tilde{X}$ is a quasi-affinity
  and $\tilde{T}\tilde{X} = \tilde{X}\tilde{C}$.  Note that
  $\sigma(\tilde{C}) \cup\{0\} = \sigma(C) \cup \{0\}$, and if $0\in
  \sigma(C)$, then $0\in \sigma(\tilde{C})$.  By
  Theorem~\ref{PropI},
  $\overline{\ran\tilde{T} \dotplus \ker\tilde{T}} = \mc{N}$.  By
  definition,
  $\overline{\ran\tilde{T^*} \dotplus \ker\tilde{T^*}} =
  \overline{\ran\tilde{T^*} \dotplus \{0\}} = \mc{N}$.  Applying
  Theorem~\ref{PropI} to $\tilde{T}$ and ${\tilde{T}}^*$, $\tilde{T}$
  is quasi-similar to some positive operator, $C'$.  It then follows
  from Lemma~\ref{lem:Propqs} that $\tilde{T}$ is quasi-similar to
  $\tilde{C}$.  Hence $\sigma(\tilde{T}) = \sigma(\tilde{C})$.
  Finally,
  \begin{equation*}
    \sigma(\tilde{T}) = \sigma(T) \supseteq \sigma(C) =
    \sigma(\tilde{C}),
  \end{equation*}
  where the containment is by
  Lemma~\ref{lem:T-qa-pos-C-then-sp_T-contains-sp_C}, and the second
  equality follows since $0\in \sigma(\tilde{C})$.  Consequently,
  equality holds throughout.
\end{proof}

The next is a corollary of Theorem~\ref{PropI}.

\begin{cor}
  \label{propqs2}
  For $T \in L(\HH)$, the following are equivalent:
  \begin{enumerate}
  \item $T \in \LL$ and $T$ is quasi-similar to a positive operator;
  \item $T \in \LL$ and $\ol{\clran T\dotplus \ker T} = \HH$;
  \item There exist quasi-affinities $X, Y \in \mc{L}^+$ such that
    $\ran T \subseteq \ran X$, $\ran T^* \subseteq \ran Y$, $TX \geq
    0$, and $TY \geq 0$;
  \item $\sigma(T) \cap (-\infty,0) = \emptyset$, and there exists
    quasi-affinities $X,Y \in \mc{L}^+$ such that $TX = XT^*$,
    $YT = T^*Y$, and either $\ran T \subseteq \ran X$ or
    $\ran T^* \subseteq \ran Y$;
  \item There exists $C \in \mc{L}^+$ and quasi-affinities
    $G,F \in L(\HH)$ such that $TG = GC$, $FT = CF$, and either
    $\ran T \subseteq \ran (GG^*)$ or
    $\ran T^* \subseteq \ran (F^*F)$.
  \end{enumerate}
\end{cor}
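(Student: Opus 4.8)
The plan is to read this as the two-sided analog of Theorem~\ref{PropI}, exploiting the elementary fact that $T$ is quasi-similar to a positive operator $C$ if and only if \emph{both} $T$ and $T^*$ are quasi-affine to a positive operator: if $TX = XC$ and $YT = CY$ with $C \geq 0$, then taking adjoints in the second identity gives $T^*Y^* = Y^*C$, so $T^*$ is quasi-affine to $C$; conversely, if $T$ is quasi-affine to some $C \geq 0$ and $T^*$ is quasi-affine to some $D \geq 0$, then Lemma~\ref{lem:Propqs} yields that $T$ is quasi-similar to a positive operator. Thus each clause of the corollary should follow by applying the correspondingly numbered clause of Theorem~\ref{PropI} to $T$ and to $T^*$ (recalling $T \in \LL \iff T^* \in \LL$), once the geometric hypotheses are matched up.

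The anchor is $(\mathit{i}) \Leftrightarrow (\mathit{ii})$, and the first step I would isolate is the purely geometric claim that, for $T \in \LL$,
\begin{equation*}
  \ol{\clran T \dotplus \ker T} = \HH \iff
  \ol{\ran T \dotplus \ker T} = \HH \text{ and } \ol{\ran T^* \dotplus \ker T^*} = \HH.
\end{equation*}
This I would deduce from the orthogonal decomposition $\HH = \ol{\ran T \dotplus \ker T} \oplus (\ker T^* \cap \clran T^*)$ recorded at the start of Section~\ref{sec:l+2-and-sim--crl+2} (and its analog for $T^*$): directness of $\clran T \dotplus \ker T$ is exactly $\ker T \cap \clran T = \{0\}$, equivalent to $\ol{\ran T^* \dotplus \ker T^*} = \HH$, while $\ol{\clran T + \ker T} = \ol{\ran T + \ker T}$ makes the density condition equivalent to $\ol{\ran T \dotplus \ker T} = \HH$. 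Granting this, $(\mathit{i}) \Leftrightarrow (\mathit{ii})$ follows by applying Theorem~\ref{PropI}\,$(\mathit{i}) \Leftrightarrow (\mathit{ii})$ to $T$ and to $T^*$ and combining via the equivalence above and Lemma~\ref{lem:Propqs}. Clause $(\mathit{iii})$, which carries both range containments, then drops out by applying Theorem~\ref{PropI}\,$(\mathit{i}) \Leftrightarrow (\mathit{iii})$ to $T$ and to $T^*$.

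The delicate clauses are $(\mathit{iv})$ and $(\mathit{v})$, where only \emph{one} of the two range containments is assumed. For $(\mathit{i}) \Rightarrow (\mathit{iv})$ there is no difficulty: applying Theorem~\ref{PropI}\,$(\mathit{i}) \Rightarrow (\mathit{v})$ to $T$ produces $X$ with $TX = XT^*$, $\ran T \subseteq \ran X$, and $\sigma(T) \cap (-\infty,0) = \emptyset$, and applying it to $T^*$ produces $Y$ with $YT = T^*Y$ and $\ran T^* \subseteq \ran Y$, so in fact both range conditions hold. The content is in $(\mathit{iv}) \Rightarrow (\mathit{i})$. Here I would argue that a single range containment suffices by a propagation trick: say $\ran T \subseteq \ran X$ (the other case is symmetric under $T \leftrightarrow T^*$). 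Theorem~\ref{PropI}\,$(\mathit{v}) \Rightarrow (\mathit{i})$ applied to $T$ gives $T \in \LL$ and a quasi-affinity $Z$ with $TZ = ZC$, $C \geq 0$; then the \emph{other} intertwiner propagates this across via
\begin{equation*}
  T^*(YZ) = (T^*Y)Z = (YT)Z = Y(TZ) = (YZ)C,
\end{equation*}
and since $YZ$ is again a quasi-affinity, $T^*$ is quasi-affine to the \emph{same} $C$. Hence both $T$ and $T^*$ are quasi-affine to $C$, so $T$ is quasi-similar to $C$.

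The same mechanism, with $G = X^{1/2}$ and $C = GPG$ as in the proof of Theorem~\ref{PropI}\,$(\mathit{v}) \Leftrightarrow (\mathit{vi})$, handles $(\mathit{v})$. I expect this propagation step to be the main obstacle: one must verify that dispensing with the second range hypothesis costs nothing because membership in $\LL$ and quasi-affinity of $T$ to $C$, once secured on one side, transport to $T^*$ through the remaining intertwiner together with the transitivity of quasi-affinity. The geometric claim of the second paragraph is the other point that requires care, since it is what lets the one-sided density conditions coming out of Theorem~\ref{PropI} be repackaged as the single closed-range condition in $(\mathit{ii})$.
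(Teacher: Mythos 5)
Your proposal is correct and follows essentially the same route as the paper: the paper likewise anchors the argument at $(\mathit{i}) \Leftrightarrow (\mathit{ii})$ via the orthogonal-complement identities relating $\ol{\clran T \dotplus \ker T} = \HH$ to the two one-sided density conditions $\ol{\ran T \dotplus \ker T} = \HH$ and $\ol{\ran T^* \dotplus \ker T^*} = \HH$, and then obtains all remaining clauses by applying Theorem~\ref{PropI} to both $T$ and $T^*$, recombining the two one-sided quasi-affinities into quasi-similarity via Lemma~\ref{lem:Propqs}. Your explicit propagation identity $T^*(YZ) = (YZ)C$ for the either/or clauses $(\mathit{iv})$ and $(\mathit{v})$ is precisely the detail the paper compresses into ``the rest of the equivalences then easily follow,'' so your write-up is a faithful, if slightly more complete, rendering of the paper's own argument.
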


\begin{proof}
  The equivalence of $(\mathit{i}\kern0.5pt)$ and
  $(\mathit{ii}\kern0.5pt)$ in Theorem~\ref{PropI} gives the
  equivalence of the first two items here.  Assuming $T \in \LL$ and
  $T$ quasi-similar to a positive operator, one has $T^*$ quasi-affine
  to a positive operator, and from this
  $\ol{\ran T^*\dotplus \ker T^*} = \HH$.  Taking orthogonal
  complements gives $\clran T \cap \ker T = \{0\}$ and so
  $\ol{\clran T\dotplus \ker T} = \HH$.  On the other hand, if
  $\ol{\clran T\dotplus \ker T} = \HH$, then
  $\clran T \cap \ker T = \{0\}$, and so taking orthogonal
  complements, $\ol{\clran T^*\dotplus \ker T^*} = \HH$.

  Consequently, Theorem~\ref{PropI} applies to both $T$ and $T^*$.
  Since $\sigma(T^*) = \{\overline{\lambda} : \lambda \in
  \sigma(T)\}$, $\sigma(T^*) \cap (-\infty,0) = \emptyset$ as well.
  The rest of the equivalences then easily follow.
\end{proof}

\begin{cor}
  \label{cor:L+2-w-A-or-B-closed-range}
  If $T \in \LL$ and $T = AB$ where $(A,B)$ is an optimal pair and
  either $\ran B$, respectively $\ran A$, is closed, then $T$,
  respectively $T^*$ is quasi-affine to a positive operator.  If there
  is such a pair with both $\ran A$ and $\ran B$ closed, then $T$ is
  quasi-similar to a positive operator.
\end{cor}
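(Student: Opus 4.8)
The plan is to reduce everything to the characterizations already established in Theorem~\ref{PropI} and Corollary~\ref{propqs2}, handling the two ``respectively'' assertions by the duality between $T$ and $T^*$. Recall that $(A,B)$ is optimal for $T$ if and only if $(B,A)$ is optimal for $T^*$, so once the case ``$\ran B$ closed $\Rightarrow T$ quasi-affine to a positive operator'' is settled, applying it to $T^*$ with the optimal pair $(B,A)$ immediately yields ``$\ran A$ closed $\Rightarrow T^*$ quasi-affine to a positive operator''. For the first implication I would verify condition~$(\mathit{ii})$ of Theorem~\ref{PropI}, namely $\ol{\ran T \dotplus \ker T} = \HH$, which is equivalent to $\ker T^* \cap \clran T^* = \{0\}$.

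The heart of the argument is a short computation with the optimal pair. Since $A, B \geq 0$ and $(A,B)$ is optimal, $\ker T^* = (\clran T)^\perp = (\clran A)^\perp = \ker A$ and $\clran T^* = (\ker T)^\perp = (\ker B)^\perp = \clran B$. Here the hypothesis enters decisively: with $\ran B$ closed, $\clran B = \ran B$, so that $\clran T^* = \ran B$. Consequently $\ker T^* \cap \clran T^* = \ker A \cap \ran B$, and this is $\{0\}$ by the optimal-pair property of Proposition~\ref{lemaT2} (indeed, if $x = By \in \ran B$ with $Ax = 0$, then $Ty = ABy = Ax = 0$, so $y \in \ker T = \ker B$ and $x = By = 0$). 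Thus $\ol{\ran T \dotplus \ker T} = \HH$, and Theorem~\ref{PropI} gives that $T$ is quasi-affine to a positive operator.

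For the last statement, suppose both $\ran A$ and $\ran B$ are closed. The first two parts then give $\ol{\ran T \dotplus \ker T} = \HH$ (from $\ran B$ closed) and, by duality, $\ol{\ran T^* \dotplus \ker T^*} = \HH$ (from $\ran A$ closed). Taking orthogonal complements, the latter yields $\clran T \cap \ker T = \{0\}$, while the former gives $(\clran T + \ker T)^\perp = \ker T^* \cap \clran T^* = \{0\}$; together these say exactly that $\ol{\clran T \dotplus \ker T} = \HH$, which is condition~$(\mathit{ii})$ of Corollary~\ref{propqs2}. Hence $T$ is quasi-similar to a positive operator. Alternatively, one may feed the two quasi-affinities obtained above directly into Lemma~\ref{lem:Propqs}. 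The one place that needs care --- and the only real content --- is recognizing that closedness of $\ran B$ is precisely what promotes $\clran B$ to $\ran B$: without it, $\ker A \cap \clran T^*$ may well be nonzero, so the triviality of $\ker A \cap \ran B$ coming from optimality would not suffice.
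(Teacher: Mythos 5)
Your argument is correct and essentially identical to the paper's proof: both rest on Proposition~\ref{lemaT2} giving $\ran B \cap \ker A = \{0\}$, use closedness of $\ran B$ to identify this intersection with $\clran T^* \cap \ker T^*$, and take orthogonal complements to verify the density condition $(\mathit{ii}\kern0.5pt)$ of Theorem~\ref{PropI}, handling the other case by duality. For the final claim the paper invokes Lemma~\ref{lem:Propqs} directly on the two quasi-affinities, exactly the alternative you mention, while your primary route through condition $(\mathit{ii}\kern0.5pt)$ of Corollary~\ref{propqs2} is equivalent and equally valid.
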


\begin{proof}
  Suppose $T \in \LL$ and $T = AB$ where $(A,B)$ is an optimal pair
  and $\ran B$ is closed.  From Proposition~\ref{lemaT2},
  $\ran B \cap \ker A = \{0\}$, and taking orthogonal complements
  gives that $\ker T \dotplus \ran T$ is dense in $\HH$.  Therefore,
  by Theorem~\ref{PropI}, $T$ is quasi-affine to a positive operator.
  The other case is handled identically.  If both $A$ and $B$ have
  closed range, $T$ and $T^*$ are both quasi-affine to positive
  operators.  By Lemma~\ref{lem:Propqs}, $T$ is quasi-similar to a
  positive operator.
\end{proof}

\begin{obs}
  \label{obs:LL-but-no-quasi-aff-to-pos}
  As was noted in Example~\ref{example-1} in
  Section~\ref{sec:the-set-l+2}, there exists an operator $T\in \LL$
  for which neither $\ran T \dotplus \ker T$ nor
  $\ran T^* \dotplus \ker T^*$ are dense.  Hence by the results of
  this section, in this particular example neither $T$ nor $T^*$ is
  quasi-affine to a positive operator, and in particular, $T$ will not
  be quasi-similar to a positive operator.

  On the other hand,
  Example~\ref{exmpl:qs-to-pos-but-no-A-or-B-w-clsd-rng} gives an
  operator $T \in \LL$ which is quasi-similar to a positive where
  there is no optimal pair $(A,B)$ with $\ran A$ or $\ran B$ closed,
  so there is no converse to
  Corollary~\ref{cor:L+2-w-A-or-B-closed-range}.
\end{obs}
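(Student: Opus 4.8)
The statement bundles two independent verifications, each of which turns an earlier example into a conclusion by way of the equivalences just established. The plan is therefore to read off the relevant structural data from Example~\ref{example-1} and Example~\ref{exmpl:qs-to-pos-but-no-A-or-B-w-clsd-rng} and feed it into Theorem~\ref{PropI}, Corollary~\ref{propqs2}, and Theorem~\ref{thmLL}; no new operator need be constructed, and the internals of the examples need not be recomputed.

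For the first half I take $T \in \LL(\HH \oplus \HH)$ to be the operator of Example~\ref{example-1} for which it is already recorded that neither $\ol{\ran T \dotplus \ker T}$ nor $\ol{\ran T^* \dotplus \ker T^*}$ equals $\HH \oplus \HH$; I would quote these rather than reprove them. The decisive step is then contraposition in Theorem~\ref{PropI}: its equivalence $(\mathit{i}) \Leftrightarrow (\mathit{ii})$ says that an operator in $\LL$ is quasi-affine to a positive operator exactly when the closure of $\ran \dotplus \ker$ fills the space, so $T$ is not quasi-affine to any positive operator. Since $T^* \in \LL$ as well, the same equivalence applied to $T^*$ shows that $T^*$ is not quasi-affine to a positive operator either. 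Finally, quasi-similarity to a positive operator $C$ yields in particular a quasi-affinity $X$ with $TX = XC$, that is, quasi-affinity to $C$; as that has just been excluded, $T$ is not quasi-similar to a positive operator. Equivalently, one may invoke $(\mathit{i}) \Leftrightarrow (\mathit{ii})$ of Corollary~\ref{propqs2}.

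For the second half I take $T$ to be the injective operator with dense range of Example~\ref{exmpl:qs-to-pos-but-no-A-or-B-w-clsd-rng}; there $\ker T = \{0\}$ and $\clran T = \HH$, so $\ol{\clran T \dotplus \ker T} = \HH$, and Corollary~\ref{propqs2} confirms that $T$ is quasi-similar to a positive operator, while the example also shows $T$ is not similar to one. I would then argue by contradiction that no optimal pair for $T$ can have a closed-range factor. If $(A,B)$ is optimal with $\ran A$ closed, optimality gives $\clran A = \clran T = \HH$, so $A$ is injective; a range that is both closed and dense is all of $\HH$, whence $A$ is bijective and hence invertible by the open mapping theorem. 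Then $T = AB$ exhibits an invertible positive factor, so $T$ is similar to a positive operator by Theorem~\ref{thmLL}$(\mathit{iii})$, contradicting non-similarity. The case where $\ran B$ is closed is identical, using $\ker B = \ker T = \{0\}$ to force injectivity of $B$. This produces a member of $\LL$ that is quasi-similar to a positive operator yet admits no optimal pair with a closed-range factor, so the implication in Corollary~\ref{cor:L+2-w-A-or-B-closed-range} has no converse.

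The only point demanding care is the upgrade, in the second half, from a one-sided closed-range hypothesis to invertibility of that factor: one must combine the optimality condition ($\clran A = \clran T$, respectively $\ker B = \ker T$) with the injectivity and dense range of the ambient $T$ in order to conclude that the factor is both injective and surjective before the open mapping theorem may be applied. Everything else is a direct reading of the section's equivalences, and once invertibility is in hand the contradiction with non-similarity via Theorem~\ref{thmLL} is routine.
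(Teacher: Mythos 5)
Your proposal is correct and takes essentially the same route as the paper: the first half is exactly the application of the section's equivalence (Theorem~\ref{PropI}, or Proposition~\ref{PropII}) to the direct-sum operator recorded in Example~\ref{example-1}, and the second half reproduces the closing observation of Example~\ref{exmpl:qs-to-pos-but-no-A-or-B-w-clsd-rng}, where injectivity and dense range of $T$ upgrade a closed-range positive factor to an invertible one so that Theorem~\ref{thmLL} contradicts non-similarity. Your explicit verification of that invertibility upgrade simply fills in a step the paper leaves implicit.
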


While the operators which are similar to a positive operator are in
$\LL$, this is no longer necessarily true for those which are
quasi-similar to a positive operator.

\begin{prop}
  \label{prop:qs-pos-not-nec-in+L+2}
  For $T\in \LL$, $T$ is quasi-affine, respectively quasi-similar, to
  a positive operator if and only if $T$ has a square root which is
  quasi-affine, respectively, quasi-similar to a positive operator.
  Consequently, there exists an operator which is quasi-similar to a
  positive operator which is not in $\LL$.
\end{prop}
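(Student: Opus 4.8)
The plan is to treat the biconditional with $T\in\LL$ as a standing hypothesis, prove the two ``square root $\Rightarrow T$'' implications first (they are immediate), then build a bounded square root in each of the two forward directions, and finally extract the consequence from Example~\ref{ex:T-in-L+2-but-no-sq-rt-in-L+2}. For the backward implications, suppose $R^2 = T$ and $R$ is quasi-affine to a positive operator $D$, say $RG = GD$ with $G$ a quasi-affinity. Then $TG = R^2G = R(GD) = (RG)D = GD^2$ with $D^2\in\mc{L}^+$, so $T$ is quasi-affine to $D^2$; and if in addition $HR = DH$ for a quasi-affinity $H$, then $HT = HR^2 = D^2H$, so $T$ is quasi-similar to $D^2$. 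Thus $T$ inherits the relevant relation to a positive operator in both cases.

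For the quasi-affine forward direction, I would use Theorem~\ref{PropI}(iv) to write $T = AB$ with $A,B\in\mc{L}^+$ and $A$ injective, and set $C_A := A^{1/2}BA^{1/2}\in\mc{L}^+$. A direct check gives $TA^{1/2} = A^{1/2}C_A$, so $T$ is quasi-affine to $C_A$ via the quasi-affinity $A^{1/2}$. The idea is to halve this intertwining by finding a bounded $R$ with $RA^{1/2} = A^{1/2}C_A^{1/2}$. By Douglas' lemma such an $R$ exists exactly when $C_A^{1/2}AC_A^{1/2}\le\lambda A$ for some $\lambda\ge 0$, and this holds because $C_A = A^{1/2}BA^{1/2}\le\|B\|A$ yields $C_A^{1/2}AC_A^{1/2}\le\|A\|C_A\le\|A\|\|B\|A$. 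This $R$ is quasi-affine to $C_A^{1/2}\ge 0$, and from $R^2A^{1/2} = RA^{1/2}C_A^{1/2} = A^{1/2}C_A = TA^{1/2}$ together with the density of $\ran A^{1/2}$ one gets $R^2 = T$. So $R$ is a bounded square root of $T$ quasi-affine to a positive operator.

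The quasi-similar forward direction is where a two-sided intertwining must be engineered for the \emph{same} $R$. Keeping $R$, $A$, $C_A$ as above, quasi-similarity of $T$ to a positive operator provides a left intertwining $F_0T = C_0F_0$ with $F_0$ a quasi-affinity and $C_0\in\mc{L}^+$; feeding $TA^{1/2} = A^{1/2}C_A$ and $F_0T = C_0F_0$ into Lemma~\ref{lem:Propqs} shows $T$ is quasi-similar to $C_A$, so there is a quasi-affinity $\tilde F$ with $\tilde FT = C_A\tilde F$. The key step is that $\tilde FA^{1/2}$ commutes with $C_A$, since $\tilde FTA^{1/2}$ equals both $(\tilde FA^{1/2})C_A$ and $C_A(\tilde FA^{1/2})$; hence $\tilde FA^{1/2}$ commutes with $C_A^{1/2}$, and evaluating on the dense set $\ran A^{1/2}$ gives $\tilde FR = C_A^{1/2}\tilde F$. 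Thus $R$ satisfies $RA^{1/2} = A^{1/2}C_A^{1/2}$ and $\tilde FR = C_A^{1/2}\tilde F$, so Lemma~\ref{lem:Propqs} makes $R$ quasi-similar to $C_A^{1/2}\ge 0$. I expect the genuine obstacle to be boundedness of the square root (square roots of products of positive operators need not be bounded, as already flagged in Section~\ref{sec:sim-quasi-sim-to-pos}); the estimate $C_A\le\|B\|A$ is precisely what clears it, while the commutation of $\tilde FA^{1/2}$ with $C_A$ is what promotes a one-sided intertwining to full quasi-similarity.

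For the consequence, I would take the operator $T\in\mc{P}^2\subseteq\LL$ of Example~\ref{ex:T-in-L+2-but-no-sq-rt-in-L+2}, which is quasi-similar to a positive operator (by Corollary~\ref{propqs2}, as $\ol{\clran T\dotplus\ker T} = \HH$ there) but has no square root in $\LL$. By the forward direction just established, $T$ has a square root $R$ quasi-similar to a positive operator. If $R$ were in $\LL$, it would be a square root of $T$ lying in $\LL$, contradicting the choice of $T$; hence $R\notin\LL$. Therefore $R$ is quasi-similar to a positive operator yet not in $\LL$, which is the asserted operator.
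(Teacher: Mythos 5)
Your proof is correct, and for the most part it runs parallel to the paper's: in the quasi-affine forward direction you use Theorem~\ref{PropI} to write $T = AB$ with $A$ injective, intertwine $T$ with $C_A = A^{1/2}BA^{1/2}$ via $A^{1/2}$, and halve the intertwining by Douglas' lemma, exactly as the paper does -- the only difference being which face of Douglas' lemma you invoke (you verify the majorization $C_A^{1/2}AC_A^{1/2} \le \|A\|\,\|B\|\,A$ from $C_A \le \|B\|A$, where the paper derives the equivalent range inclusion $\ran (C_A^{1/2}A^{1/2}) \subseteq \ran A^{1/2}$ from the factorization $C_A^{1/2} = A^{1/2}B^{1/2}G$); your converse implications and your extraction of the consequence from Example~\ref{ex:T-in-L+2-but-no-sq-rt-in-L+2} coincide with the paper's. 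Where you genuinely diverge is the quasi-similar upgrade. The paper takes a left intertwiner $Y$ with $YT = CY$ for the \emph{same} $C$ (implicitly justified by Lemma~\ref{lem:Propqs}, which you instead invoke explicitly to produce $\tilde F$ with $\tilde FT = C_A\tilde F$), shows $YX$ commutes with $C$ and hence with $C^{1/2}$, obtains $YR = C^{1/2}Y$ on $\ran(XC^{1/2})$, and then must identify $\clran(XC^{1/2}) = \clran T$ and check the kernels so that the density of $\clran T \dotplus \ker T$, supplied by Corollary~\ref{propqs2}, extends the identity to all of $\HH$ -- including the somewhat delicate verification that the two sides agree on $\ker T$. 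You instead observe that $\tilde FA^{1/2}$ commutes with $C_A$ (hence with $C_A^{1/2}$, the same commutant-of-a-positive-operator fact the paper routes through Lemma~\ref{lem:sq-root-of-pos-op-unique-in-L+2}) and then verify $\tilde FR = C_A^{1/2}\tilde F$ directly on $\ran A^{1/2}$, which is dense simply because $A$ is injective; continuity finishes it. This sidesteps Corollary~\ref{propqs2} and all the range/kernel bookkeeping at no loss of generality, and is arguably cleaner; both arguments ultimately rest on the same one-sided relation $RA^{1/2} = A^{1/2}C_A^{1/2}$, and your diagnosis that boundedness of the square root is the real obstacle -- cleared in both proofs by Douglas' lemma -- is exactly right.
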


\begin{proof}
  Suppose that $T$ is quasi-affine to a positive operator.  By
  Theorem~\ref{PropI}, $T = AB$ with $A,B\in \mc{L}^+$ and $A$
  injective.  Set $C = A^{1/2}BA^{1/2}$ and $X = A^{1/2}$.  Then
  $TX = XC$.  By Douglas' lemma, $C^{1/2} = A^{1/2}B^{1/2}G$, so in
  particular, $\ran C^{1/2}X^* \subseteq \ran X^*$, and so another
  application of Douglas' lemma gives $R \in L(\HH)$ such that
  $RX = XC^{1/2}$.  Thus $R$ is quasi-affine to $C^{1/2}$.
  Since $R^2X = XC = TX$ and $\ran X$ is dense, $R^2 = T$.
  Conversely, if $R^2 = T$ and $RX = XD$, $D \geq 0 $ and $X$ a
  quasi-affinity, then $R^2 X = X D^2$.

  If in addition, $T$ is quasi-similar to $C$, $YR^2 = CY$.  Hence
  $C(YX) = YR^2X = (YX)C$, and by
  Lemma~\ref{lem:sq-root-of-pos-op-unique-in-L+2},
  $C^{1/2}(YX) = (YX)C^{1/2}$.  Therefore,
  \begin{equation*}
    YR(XC^{1/2}) = YR^2X = C(YX) = C^{1/2}Y (XC^{1/2}).
  \end{equation*}
  It is straightforward to see that $\clran (XC^{1/2}) = \clran T$,
  $\clran (R^*Y^*) = \clran T^*$, and
  $\clran (Y^*C^{1/2}) = \clran T^*$.  Also,
  $\ker (R^*Y^*) = \ker (Y^*C^{1/2}) = \ker T$.  By
  Corollary~\ref{propqs2},
  $\overline{\clran T \dotplus \ker T} = \HH$, and so $YR = C^{1/2}Y$
  on a dense subset.  By continuity, $YR = C^{1/2}Y$ on all of $\HH$,
  and thus $R$ is quasi-similar to $C^{1/2}$.  The converse is as in
  the quasi-affine case.

  By Example~\ref{ex:T-in-L+2-but-no-sq-rt-in-L+2}, there is a
  $T \in \LL$ which is quasi-similar to a positive operator, yet does
  not have a square root in $\LL$.  Hence for this operator, the
  square root constructed above is quasi-similar to a positive
  operator but is not in $\LL$.
\end{proof}

\begin{obs}
  \label{obs:sp-qa-pos-equals-sp-sq-rt-of-pos-in-ex}
  In the last proposition, the operator $R$ constructed there has
  $\sigma(R) \subseteq \sigma(C^{1/2}) \cup \sigma(-C^{1/2})$, which
  is a slight strengthening of
  Lemma~\ref{lem:T-qa-pos-C-then-sp_T-contains-sp_C} in this setting.

  There is no obvious way to rule out negative values in $\sigma(R)$
  if $R$ is not in $\LL$.  Nevertheless, for the operator in
  Example~\ref{ex:T-in-L+2-but-no-sq-rt-in-L+2}, the square root is
  $R = \begin{pmatrix} S^{1/2} & 0 \\ (1-S)^{1/2} & 0 \end{pmatrix}$,
  which happens to be a partial isometry with
  $\sigma (R) = \sigma(S^{1/2}) \cup \{0\} = \sigma(S^{1/2})$.  For
  \begin{equation*}
    C = \begin{pmatrix} S & 0 \\ 0 & 0 \end{pmatrix}, \quad
    X = \begin{pmatrix} S^{1/2} & 0 \\ (1-S)^{1/2} & 1 \end{pmatrix},
    \quad
    Y = \begin{pmatrix} 1 & 0 \\ -(1-S)^{1/2} & S^{1/2} \end{pmatrix},
  \end{equation*}
  $C \geq 0$, $X$ and $Y$ are quasi-affinities, $RX = XC^{1/2}$ and
  $YR = C^{1/2}Y$.  So even though $R \notin \LL$,
  $\sigma(R) = \sigma(C^{1/2})$.
\end{obs}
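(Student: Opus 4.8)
The statement splits into a general spectral inclusion for the square root $R$ produced in Proposition~\ref{prop:qs-pos-not-nec-in+L+2} and an explicit spectral computation for the operator of Example~\ref{ex:T-in-L+2-but-no-sq-rt-in-L+2}. For the inclusion, the plan is to invoke the spectral mapping theorem twice. Since $R^2 = T$, applying spectral mapping to the polynomial $z \mapsto z^2$ gives $\{\mu^2 : \mu \in \sigma(R)\} = \sigma(R^2) = \sigma(T)$, and Lemma~\ref{lemaT1a} identifies $\sigma(T) = \sigma(C)$, where $C = A^{1/2}BA^{1/2}$ is the positive operator of that proof. As $C \geq 0$, spectral mapping applied to $\sqrt{\,\cdot\,}$ yields $\sigma(C) = \{\lambda^2 : \lambda \in \sigma(C^{1/2})\}$ with $\sigma(C^{1/2}) \subseteq [0,\infty)$. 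Thus each $\mu \in \sigma(R)$ satisfies $\mu^2 = \lambda^2$ for some $\lambda \in \sigma(C^{1/2})$, forcing $\mu = \pm\lambda$, i.e.\ $\sigma(R) \subseteq \sigma(C^{1/2}) \cup \sigma(-C^{1/2})$. Since $R$ is quasi-affine to $C^{1/2}$ (via $X = A^{1/2}$), Lemma~\ref{lem:T-qa-pos-C-then-sp_T-contains-sp_C} gives the reverse containment $\sigma(R) \supseteq \sigma(C^{1/2})$, so $\sigma(R)$ is squeezed between $\sigma(C^{1/2})$ and $\sigma(C^{1/2}) \cup \sigma(-C^{1/2})$; this is the promised strengthening.

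For the concrete example I would first confirm by block multiplication that $R^2 = T$ and that $R^*R$ equals the orthogonal projection onto $\ran B$, so $R$ is a partial isometry with $\ker R = \{0\} \oplus (\ran B)^\bot \neq \{0\}$; in particular $0 \in \sigma(R)$. The spectrum of $R$ is then pinned down by its block lower-triangular form: $R - \mu$ has diagonal blocks $S^{1/2} - \mu$ and $-\mu$, and a lower-triangular operator matrix is invertible precisely when both diagonal blocks are, so $R - \mu$ is invertible exactly when $\mu \notin \sigma(S^{1/2})$ and $\mu \neq 0$. Hence $\sigma(R) \subseteq \sigma(S^{1/2}) \cup \{0\}$, and because $S$ is injective but not invertible, $0 \in \sigma(S^{1/2})$, so $\sigma(S^{1/2}) \cup \{0\} = \sigma(S^{1/2})$.

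Finally I would verify the intertwining data. A short kernel computation on $X$, $X^*$, $Y$, $Y^*$ shows $X$ and $Y$ are injective with dense range, hence quasi-affinities, and $C = S \oplus 0 \geq 0$. Block multiplication gives $RX = XC^{1/2}$ directly, and $YR = C^{1/2}Y$ after using that $S^{1/2}$ and $(1-S)^{1/2}$ commute (both being functions of $S$), which cancels the off-diagonal entry of $YR$. Thus $R$ is quasi-affine to $C^{1/2}$, so Lemma~\ref{lem:T-qa-pos-C-then-sp_T-contains-sp_C} yields $\sigma(R) \supseteq \sigma(C^{1/2}) = \sigma(S^{1/2})$; combined with the upper bound of the previous paragraph, $\sigma(R) = \sigma(S^{1/2}) = \sigma(C^{1/2})$. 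That $R \notin \LL$ is immediate, since $R^2 = T$ and $T$ has no square root in $\LL$ by Example~\ref{ex:T-in-L+2-but-no-sq-rt-in-L+2}. The only real subtlety, and the point of the remark, is that this equality of spectra is forced by the explicit model rather than by quasi-similarity itself, since quasi-similar operators need not share spectra; everything else is routine verification, with the block-triangular invertibility observation being the one computation worth stating carefully.
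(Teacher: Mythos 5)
Your proposal is correct and follows what is essentially the paper's intended (largely implicit) verification: spectral mapping applied to $R^2 = T$ together with Lemma~\ref{lemaT1a} and the fact that the square roots of a nonnegative real are $\pm\lambda$ gives $\sigma(R) \subseteq \sigma(C^{1/2}) \cup \sigma(-C^{1/2})$, while for the example the partial isometry computation, the triangular bound $\sigma(R) \subseteq \sigma(S^{1/2}) \cup \{0\}$, the direct verification of the intertwinings $RX = XC^{1/2}$ and $YR = C^{1/2}Y$ (using that $S^{1/2}$ and $(1-S)^{1/2}$ commute), and Lemma~\ref{lem:T-qa-pos-C-then-sp_T-contains-sp_C} for the reverse containment assemble into the stated equality $\sigma(R) = \sigma(C^{1/2})$.

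One caveat: your blanket claim that a lower-triangular operator matrix is invertible precisely when both diagonal blocks are invertible is false for operator matrices in general; for instance, with $S$ the unilateral shift, $\bigl(\begin{smallmatrix} S^* & 0 \\ 1-SS^* & S \end{smallmatrix}\bigr)$ is unitary although neither diagonal block is invertible. This does not damage your argument, for two reasons: the direction you actually need for the upper bound $\sigma(R) \subseteq \sigma(S^{1/2}) \cup \{0\}$ --- both diagonal blocks invertible implies the matrix is invertible --- always holds, and in your ``exactly when'' statement the $(2,2)$ block of $R - \mu 1$ is the scalar $-\mu$, invertible for $\mu \neq 0$, in which case invertibility of the matrix is genuinely equivalent to invertibility of the $(1,1)$ block (and $0 \in \sigma(R)$ follows directly from $\ker R \neq \{0\}$, with the lower bound supplied independently by the quasi-affinity lemma). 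You should state the special case rather than the false general principle.
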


\section{$\LL$ -- the general case}
\label{sec:l+2--general-case}

The sole remaining case to consider are those operators $T \in \LL$
for which neither $\M := \ol{\ran T \dotplus \ker T}$ nor
$\N := \ol{\ran T^* \dotplus \ker T^*}$ equals $\HH$.  Decompose
\begin{equation*}
  \HH= \M \oplus (\clran T^* \cap \ker T^*) = \N \oplus (\clran T \cap
  \ker T).
\end{equation*}
The spaces $\M$ and $\N^\bot$ are invariant for $T$, while $\N$ and
$\M^\bot$ are invariant for $T^*$.  In what follows, statements
involving only the spaces $\M$ and $\M^\bot$ are given, since it is
obvious what the equivalent statements for $\N$ and $\N^\bot$ should
be.

\begin{lema}
  \label{lem:T_M-qa-to-pos-op}
  Let $T\in \LL$.  Then $T_\M := T P_\M \in \LL$,
  $\clran T_\M = \clran T$, $\ker T_\M = \ker T \oplus \M^\bot$, and
  $T_\M$ is quasi-affine to a positive operator.  Also, if $(A,B)$ is
  an optimal pair for $T$, then
  $\ran T \subseteq \ran (A(P_\M B P_\M)^{1/2})$ and $(A,P_\M B P_\M)$
  is an optimal pair for $T_\M$.
\end{lema}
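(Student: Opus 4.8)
The plan is to fix an optimal pair $(A,B)$ for $T$ (one exists by Proposition~\ref{lemaT2}; for the last assertion it is the given pair) and to extract a single structural fact from which everything cascades. Since $(A,B)$ is optimal, $\clran A = \clran T \subseteq \M$, because $\ran T \subseteq \M$ by the very definition of $\M$. Hence $\M^\bot \subseteq (\clran T)^\bot = \ker A$, so $AP_{\M^\bot} = 0$, i.e.\ $A = AP_\M = P_\M A$. This immediately gives the factorization
\begin{equation*}
  T_\M = TP_\M = ABP_\M = AP_\M BP_\M = A(P_\M BP_\M),
\end{equation*}
exhibiting $T_\M$ as the product of the positive operators $A$ and $B' := P_\M BP_\M$. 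In particular $T_\M \in \LL$, consistent with Proposition~\ref{prop:L+2-restricted-to-inv-subsp-in-L+2} ($\M$ being invariant for $T$).

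Next I would record the closure and kernel identities. The inclusion $\ran T_\M \subseteq \ran T$ gives $\clran T_\M \subseteq \clran T$; conversely, for $x \in \clran T \subseteq \M$ one has $Tx = TP_\M x = T_\M x$, so $T(\clran T) \subseteq \ran T_\M$, and taking closures and invoking Corollary~\ref{cor:range-maps-densely} (which says $\overline{T(\clran T)} = \clran T$) yields $\clran T_\M = \clran T$. For the kernel, $\ker T_\M = \ker(TP_\M) = \{x : P_\M x \in \ker T\}$, and since $\ker T \subseteq \M$ this set is exactly $\ker T \oplus \M^\bot$. Writing $B' = (B^{1/2}P_\M)^*(B^{1/2}P_\M)$, the same computation gives $\ker B' = \{x : P_\M x \in \ker B\} = \{x : P_\M x \in \ker T\} = \ker T_\M$, using $\ker B = \ker T$. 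Combining $\clran A = \clran T = \clran T_\M$ with $\ker B' = \ker T_\M$ shows that $(A,B')$ is an optimal pair for $T_\M$.

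For the remaining two claims I would invoke Theorem~\ref{PropI} and Douglas' lemma. Since $T_\M \in \LL$, the sum $\ran T_\M \dotplus \ker T_\M$ is direct by Proposition~\ref{lemaT2}, so quasi-affinity to a positive operator will follow from the equivalence of (i) and (ii) in Theorem~\ref{PropI} once $\overline{\ran T_\M \dotplus \ker T_\M} = \HH$ is established. That closed subspace contains $\clran T_\M = \clran T$ and $\ker T$, hence contains $\overline{\clran T + \ker T} = \M$, and it also contains $\M^\bot \subseteq \ker T_\M$; as $\M \oplus \M^\bot = \HH$, density follows. For the range inclusion, put $D = A(P_\M BP_\M)^{1/2}$ and compute $DD^* = AB'^{1/2}B'^{1/2}A = AB'A = ABA$ (again using $AP_\M = P_\M A = A$), while $TT^* = AB^2A$. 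Since $B^2 \leq \|B\|\,B$, one gets $TT^* = AB^2A \leq \|B\|\,ABA = \|B\|\,DD^*$, and Douglas' lemma delivers $\ran T \subseteq \ran D$, as required.

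The only genuine obstacle is spotting the identity $A = AP_\M$: this is precisely where optimality ($\clran A = \clran T$) enters, and once it is in hand the factorization $T_\M = AB'$ and every subsequent computation are routine. I expect the quasi-affinity step to demand the most careful bookkeeping, since it rests on correctly identifying $\overline{\clran T + \ker T}$ with $\M$ and on the triviality of $\ran T_\M \cap \ker T_\M$.
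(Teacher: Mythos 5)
Your proof is correct, and its backbone is exactly the paper's: the single structural fact that optimality forces $A = AP_\M = P_\M A$, hence the factorization $T_\M = A(P_\M B P_\M)$, the kernel identity $\ker T_\M = \ker(P_\M B P_\M) = \ker T \oplus \M^\bot$, and the optimality of $(A, P_\M B P_\M)$, with $\clran T_\M = \clran T$ coming from Corollary~\ref{cor:range-maps-densely}. You diverge at the two remaining claims, in each case validly. For quasi-affinity, the paper writes $T_\M = (A + P_{\ker T} + P_{\M^\bot})(P_\M B P_\M)$ (which works because $P_{\ker T}B = 0$ and $\ran(P_\M B P_\M) \subseteq \M$), checks that the first factor is positive and injective, and invokes item $(\mathit{iv}\kern0.5pt)$ of Theorem~\ref{PropI}; you instead verify item $(\mathit{ii}\kern0.5pt)$, showing $\ol{\ran T_\M \dotplus \ker T_\M} = \HH$ because the closure contains $\clran T$, $\ker T$, and $\M^\bot$, hence $\M$ and $\M^\bot$. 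Your density check is shorter, while the paper's route has the advantage of explicitly exhibiting an injective positive left factor, a construction it reuses for the extension $T^\M$ in Lemma~\ref{lem:T_M-the-restriction-of-an-op-qa-to-a-pos-op}. For the range inclusion, the paper decomposes $T$ as the row $A\bigl(P_\M B P_\M \;\; P_\M B P_{\M^\bot}\bigr)$ and uses the block-matrix fact $\ran(P_\M B P_{\M^\bot}) \subseteq \ran (P_\M B P_\M)^{1/2}$ from \eqref{eq:1}; your computation $TT^* = AB^2A \leq \|B\|\,ABA = \|B\|\,DD^*$ with $D = A(P_\M B P_\M)^{1/2}$, followed by Douglas' lemma, is an equally correct and arguably cleaner substitute (the block-range fact is itself a Douglas-type consequence, so the two arguments are close cousins). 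All your intermediate identities, including $\ker(P_\M B P_\M) = \{x : P_\M x \in \ker B\}$ via the factorization $P_\M B P_\M = (B^{1/2}P_\M)^*(B^{1/2}P_\M)$ and the use of Proposition~\ref{lemaT2} to get the directness of $\ran T_\M \dotplus \ker T_\M$, check out.
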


\begin{proof}
  Applying Proposition~\ref{prop:L+2-restricted-to-inv-subsp-in-L+2}
  and Corollary~\ref{cor:range-maps-densely}, $T_\M \in \LL$ and
  $\clran T_\M = \clran T$.

  Write $T = AB$, where $(A,B)$ is optimal.  Since
  $\ker B = \ker T \subseteq \M$ and $P_\M A = A = A P_\M$,
  \begin{equation*}
    T_\M = (A + P_{\ker T} + P_{\M^\bot})(P_\M B P_\M),
  \end{equation*}
  where $A + P_{\ker T} + P_{\M^\bot}$ is positive and injective since
  by now standard calculations, $A + P_{\ker T}$ has this property on
  $\M$.  It then follows from Theorem~\ref{PropI} that $T_\M$ is
  quasi-affine to a positive operator.  Also
  $\ker T_\M = \ker(P_\M B P_\M) = \ker T \oplus \M^\bot$.

  Finally, since $B \geq 0$,
  $\ran(P_\M B P_{\M^\bot}) \subseteq \ran (P_\M B P_\M)^{1/2}$.  Then
  from
  \begin{equation*}
    T =
    \begin{pmatrix}
      T_\M & TP_{\M^\bot}
    \end{pmatrix}
    = A
    \begin{pmatrix}
      P_\M B P_\M & P_\M B P_{\M^\bot}
    \end{pmatrix},
  \end{equation*}
  the last claim follows.
\end{proof}

It is also true that $T$ is the restriction of an operator in $\LL$
which is quasi-affine to a positive operator in the following sense.

\begin{lema}
  \label{lem:T_M-the-restriction-of-an-op-qa-to-a-pos-op}
  Let $T\in \LL$.  Then there is an operator $T^\M \in \LL$ with the
  properties that $T^\M$ is quasi-affine to a positive operator,
  $T = P_\M T^\M$, $\clran T^\M = \clran T \oplus \M^\bot$ and
  $\ker T^\M = \ker T$.
\end{lema}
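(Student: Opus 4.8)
The plan is to construct $T^\M$ explicitly as a product of two positive operators, modifying an optimal pair for $T$ only on $\M^\bot$. First I would fix an optimal pair $(A,B)$ for $T$, which exists by Proposition~\ref{lemaT2}; this gives $\clran A = \clran T$, $\ker B = \ker T$ (so $\clran B = \clran T^*$), and crucially $\ran A \cap \ker T = \{0\}$. Since $\ran T \subseteq \M$ we have $\clran A = \clran T \subseteq \M$, while the displayed decomposition gives $\M^\bot = \clran T^* \cap \ker T^*$, which sits inside $\ker A = \ker T^*$. The candidate is $T^\M := \tilde{A} B$ with $\tilde{A} := A + P_{\M^\bot} \in \mc{L}^+$, so that $T^\M \in \LL$ at once.

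The identity $T = P_\M T^\M$ is then routine: $P_\M \tilde{A} = P_\M A = A$ because $\clran A \subseteq \M$, while $P_\M P_{\M^\bot} = 0$, so $P_\M T^\M = AB = T$. For the kernel I would first record that, as a sum of positive operators, $\ker \tilde{A} = \ker A \cap \M = \ker T^* \cap \M =: \mathcal{K}$, and that $\M = \clran T \oplus \mathcal{K}$, whence $\clran \tilde{A} = \mathcal{K}^\perp = \clran T \oplus \M^\bot$. Then $x \in \ker T^\M$ forces $Bx \in \mathcal{K}$; but $Bx \in \clran B = \clran T^*$ and $\mathcal{K} \subseteq \ker T^*$, so $Bx \in \clran T^* \cap \ker T^* = \M^\bot$, while also $Bx \in \M$, giving $Bx = 0$. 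Hence $\ker T^\M = \ker B = \ker T$.

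The main obstacle is the range computation $\clran T^\M = \clran T \oplus \M^\bot$, which I would obtain by identifying $\ker (T^\M)^* = \ker(B\tilde{A})$. Here $\tilde{A}x \in \ker B = \ker T \subseteq \M$ forces the $\M^\bot$-component $P_{\M^\bot}x$ to vanish, so $x \in \M$ and $\tilde{A}x = Ax$; then $Ax \in \ran A \cap \ker T = \{0\}$ by Proposition~\ref{lemaT2}, so $x \in \ker A \cap \M = \mathcal{K}$. This is precisely the step where optimality of $(A,B)$, through $\ran A \cap \ker T = \{0\}$, is indispensable. Together with the trivial reverse inclusion this yields $\ker(T^\M)^* = \mathcal{K}$, hence $\clran T^\M = \mathcal{K}^\perp = \clran T \oplus \M^\bot$.

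Finally, quasi-affinity to a positive operator follows from Theorem~\ref{PropI}. Since $T^\M \in \LL$, Proposition~\ref{lemaT2} makes the sum $\ran T^\M \dotplus \ker T^\M$ direct, and its closure is a closed subspace containing both $\ol{\clran T \dotplus \ker T} = \M$ and $\M^\bot$, hence equals $\HH$. The equivalence $(\mathit{i}) \Leftrightarrow (\mathit{ii})$ of Theorem~\ref{PropI} then shows that $T^\M$ is quasi-affine to a positive operator, completing the verification of all four asserted properties.
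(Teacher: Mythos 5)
Your proposal is correct and takes essentially the same route as the paper: the candidate is exactly the paper's $T^\M=(A+P_{\M^\bot})B$ for an optimal pair $(A,B)$, with the same appeal to Proposition~\ref{lemaT2} (via $\ran A\cap\ker B=\{0\}$) and the same $\M\oplus\M^\bot$ component analysis to identify $\ker(T^{\M})^{*}=\ker A\cap\M$ and hence $\clran T^\M=\clran T\oplus\M^\bot$. The only (cosmetic) divergence is at the end: the paper rewrites the first factor as $A+P_{\ker T}+P_{\M^\bot}$ --- the product is unchanged since $\ran B\subseteq\clran T^{*}\perp\ker T$ --- and this factor is injective, so $\ker T^\M=\ker B$ and quasi-affinity follow at once from Theorem~\ref{PropI}$(\mathit{iv}\kern0.5pt)$, whereas you verify the kernel directly and then invoke Theorem~\ref{PropI}$(\mathit{ii}\kern0.5pt)$ using the density of $\ran T^\M\dotplus\ker T^\M$; both verifications are sound.
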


\begin{proof}
  Write $T = AB$ with $(A,B)$ optimal.  Set
  \begin{equation*}
    T^\M = T + P_{\M^\bot}B = (A+P_{\M^\bot}) B = (A + P_{\ker T} +
    P_{\M^\bot})B.
  \end{equation*}
  Then $T^\M \in \LL$, $A + P_{\ker T} + P_{\M^\bot} \geq 0$ is
  injective, and $\ker T^\M = \ker B$.  By Theorem~\ref{PropI}, $T^\M$
  is quasi-affine to a positive operator.

  Since $T^{\M\,*} = B(A + P_{\M^\bot})$,
  $\ker T^{\M\,*} \supseteq \ker(A+P_{\M^\bot}) = \ker A \cap \M$, and
  if $T^{\M\,*}x = 0$, then
  $(A+P_{\M^\bot})x \in \ker B \cap \ran(A+P_{\M^\bot}) = \{0\}$.  The
  last equality follows since if $x\in \ker B \subseteq \mc{M}$ and
  $x = x_1+x_2$, $x_1 \in \ran A \subseteq \mc{M}$ and
  $x_2 \in \mc{ M}^\bot$, then $x_2 = 0$, and since
  $\ran A \cap \ker B = \{0\}$ by Proposition~\ref{lemaT2}, $x = 0$.
  Hence $\ker T^{\M\,*} = \ker(A+P_{\M^\bot})$, and so
  $\clran T^\M = \clran T \oplus \M^\bot$.
\end{proof}

\begin{thm}
  \label{thm:general-case}
  Let $T \in L(\HH)$ and $\M = \overline{\ran T + \ker T}$.  The
  following are equivalent:
  \begin{enumerate}
  \item $T \in \LL$;
  \item $T_\M := TP_\M \in \LL$ and there exists an optimal pair
    $(A,B)$ for $T_\M$ such that $\ran T \subseteq AB^{1/2}$;
  \item There exists $T^\M\in \LL$ satisfying $T = P_\M T^\M$ and an
    optimal pair $(A,B)$ for $T^\M$ such that $A\M^\bot = \M^\bot$.
  \end{enumerate}
  In this case, both $T_\M$ and $T^\M$ are quasi-affine to positive
  operators.
\end{thm}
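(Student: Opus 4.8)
The plan is to prove the two equivalences $(\mathit{i}) \Leftrightarrow (\mathit{ii})$ and $(\mathit{i}) \Leftrightarrow (\mathit{iii})$ separately, reading the forward implications off Lemmas~\ref{lem:T_M-qa-to-pos-op} and~\ref{lem:T_M-the-restriction-of-an-op-qa-to-a-pos-op} and obtaining the two reverse implications from Sebesty\'en's theorem (Theorem~\ref{Seb}) and a reducing-subspace argument, respectively. Throughout I would read the inclusion in $(\mathit{ii})$ as $\ran T \subseteq \ran(AB^{1/2})$.

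For $(\mathit{i}) \Rightarrow (\mathit{ii})$ and $(\mathit{i}) \Rightarrow (\mathit{iii})$ I would simply invoke the two lemmas. Fixing an optimal pair $(A,B)$ for $T$, Lemma~\ref{lem:T_M-qa-to-pos-op} already gives that $(A, P_\M B P_\M)$ is optimal for $T_\M$ and that $\ran T \subseteq \ran(A(P_\M B P_\M)^{1/2})$, which is exactly $(\mathit{ii})$. For $(\mathit{iii})$ I would take $T^\M = (A + P_{\M^\bot})B$ as in the proof of Lemma~\ref{lem:T_M-the-restriction-of-an-op-qa-to-a-pos-op}; since $\ran A \subseteq \clran T \subseteq \M$ forces $A = AP_\M$, the operator $A + P_{\M^\bot}$ acts as the identity on $\M^\bot$, so $(A+P_{\M^\bot})\M^\bot = \M^\bot$, and $(A+P_{\M^\bot}, B)$ is optimal for $T^\M$ because $\clran(A+P_{\M^\bot}) = \clran T \oplus \M^\bot = \clran T^\M$ and $\ker B = \ker T = \ker T^\M$.

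The substance is in the converses. For $(\mathit{ii}) \Rightarrow (\mathit{i})$, the first step is to note that optimality of $(A,B)$ for $T_\M$ gives $\ran A \subseteq \clran T_\M \subseteq \M$, whence $A = AP_\M$ and $AP_{\M^\bot} = 0$. Next I would rewrite the hypothesis using the standard identity $\ran(M) = \ran((MM^*)^{1/2})$ with $M = AB^{1/2}$: since $(AB^{1/2})(AB^{1/2})^* = ABA$, this gives $\ran(AB^{1/2}) = \ran((ABA)^{1/2})$, so the assumption $\ran T \subseteq \ran(AB^{1/2})$ becomes $\ran T \subseteq \ran((ABA)^{1/2})$, which by Douglas' lemma is equivalent to $TT^* \leq \lambda\, ABA$ for some $\lambda \geq 0$. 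The crucial identity is then $AT^* = AP_\M T^* = A T_\M^* = ABA$, where the middle equality uses $AP_{\M^\bot}=0$ and the last uses $T_\M = AB$. Combining these, $TT^* \leq \lambda\, AT^*$, and Theorem~\ref{Seb} yields a positive $X$ with $AX = T$, so $T = AX \in \LL$.

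For $(\mathit{iii}) \Rightarrow (\mathit{i})$ the argument is shorter: $A\M^\bot = \M^\bot$ makes $\M^\bot$ invariant for the selfadjoint operator $A$, so $A$ reduces $\M$ and $P_\M A = A P_\M = P_\M A P_\M \geq 0$; therefore $T = P_\M T^\M = P_\M A B = (P_\M A P_\M)B$ exhibits $T$ as a product of two positive operators. Finally, once $T \in \LL$ is established from any of the three conditions, the concluding assertion that $T_\M$ and $T^\M$ are quasi-affine to positive operators is precisely the content of Lemmas~\ref{lem:T_M-qa-to-pos-op} and~\ref{lem:T_M-the-restriction-of-an-op-qa-to-a-pos-op}. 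I expect the main obstacle to be the $(\mathit{ii}) \Rightarrow (\mathit{i})$ step, specifically recognizing that the range inclusion converts into the Sebesty\'en inequality through the two identities $\ran(AB^{1/2}) = \ran((ABA)^{1/2})$ and $AT^* = ABA$; the remaining implications are essentially bookkeeping with reducing subspaces and the preceding lemmas.
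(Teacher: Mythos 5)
Your proof is correct, and except for one implication it follows the paper's own argument: the forward directions are read off Lemmas~\ref{lem:T_M-qa-to-pos-op} and~\ref{lem:T_M-the-restriction-of-an-op-qa-to-a-pos-op} (you in fact do slightly more than the paper here, explicitly verifying that $(A+P_{\M^\bot},B)$ is an optimal pair for $T^\M$ with $(A+P_{\M^\bot})\M^\bot = \M^\bot$, a check the paper leaves implicit in its ``follows from the last two lemmas''), and your $(\mathit{iii}\kern0.5pt) \Rightarrow (\mathit{i}\kern0.5pt)$ is the same reducing-subspace computation $T = (P_\M A P_\M)B$ as in the paper. The genuine divergence is in $(\mathit{ii}\kern0.5pt) \Rightarrow (\mathit{i}\kern0.5pt)$. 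The paper argues constructively: from $\ran(TP_{\M^\bot}) \subseteq \ran T \subseteq \ran(AB^{1/2})$, Douglas' lemma produces $Z$ with $TP_{\M^\bot} = AB^{1/2}Z$ and $\clran Z^* \subseteq \M^\bot \subseteq \ker A$, whence
\begin{equation*}
  T = T_\M + TP_{\M^\bot} = A(B + B^{1/2}Z) = A(B^{1/2}+Z^*)(B^{1/2}+Z),
\end{equation*}
the cross terms vanishing because $AZ^* = 0$; this exhibits the positive second factor explicitly. You instead convert the range hypothesis into Sebesty\'en's inequality through the identities $\ran(AB^{1/2}) = \ran\bigl((ABA)^{1/2}\bigr)$ and $AT^* = AT_\M^* = ABA$ --- the latter resting on $A = AP_\M$, which you correctly derive from $\clran A = \clran T_\M \subseteq \M$ using only optimality and the definition of $\M$, so there is no circular appeal to Lemma~\ref{lem:T_M-qa-to-pos-op} (whose hypothesis $T \in \LL$ is not yet available) --- obtaining $TT^* \leq \lambda AT^*$ and then $T = AX$ with $X \geq 0$ from Theorem~\ref{Seb}. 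The two routes are close at bottom, since Theorem~\ref{Seb} is itself proved via Douglas' lemma, and both produce a factorization with the same first factor $A$; the paper's version buys an explicit formula for the positive second factor and shows concretely how the off-diagonal piece $TP_{\M^\bot}$ is absorbed, while yours is shorter, avoids the factor bookkeeping, and via Theorem~\ref{Seb} even permits choosing $X$ with $\ker X = \ker T$. Your reading of the misprint in $(\mathit{ii}\kern0.5pt)$ as $\ran T \subseteq \ran(AB^{1/2})$ agrees with what the paper's proof actually uses, and your treatment of the closing assertion about quasi-affinity matches the paper's, which likewise refers it back to the two lemmas.
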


\begin{proof}
  $(\mathit{i}\kern0.5pt) \Rightarrow (\mathit{ii}\kern0.5pt)$ and
  $(\mathit{i}\kern0.5pt) \Rightarrow (\mathit{iii}\kern0.5pt)$ follow
  from the last two lemmas.

  Assume $(\mathit{ii}\kern0.5pt)$ holds and that $T_\M = AB$ for an
  optimal pair $(A,B)$ such that
  $\ran (TP_{\M^\bot}) \subseteq \ran T \subseteq \ran(AB^{1/2})$.  By
  Douglas' lemma, there is an operator $Z \in L(\HH)$ with
  $\ker Z = \ker (TP_{\M^\bot}) = \M$ and such that
  $TP_{\M^\bot} = AB^{1/2}Z$.  Hence,
  \begin{equation*}
    T = T_\M + TP_{\M^\bot} = A(B + B^{1/2}Z) = AB^{1/2}(B^{1/2} + Z) =
    A(B^{1/2} + Z^*)(B^{1/2} + Z)
  \end{equation*}
  is in $\LL$, the last equality following since
  $\clran Z^* \subseteq \M^\bot \subseteq \ker A$.  Thus
  $(\mathit{i}\kern0.5pt)$ holds.

  Now assume $(\mathit{iii}\kern0.5pt)$ is true.  Then for the optimal
  pair $(A,B)$ there,
  $P_\M A = P_\M (A P_\M + AP_{\M^\bot}) = P_\M A P_\M$.  Hence
  $T = P_\M A P_\M B \in \LL$, which is $(\mathit{i}\kern0.5pt)$.
\end{proof}

\begin{obs}
  \label{obs:-spec-ext-restr}
  Since $T_{\mc{M}} = T P_{\mc{M}}$,
  $\sigma(T_{\mc{M}}) \cup \{0\} = \sigma(P_{\mc{M}}T) \cup \{0\} =
  \sigma(T) \cup \{0\}$.  If $0 \notin \sigma(T)$, $P_{\mc{M}} = 1$,
  and likewise, if $0 \notin \sigma(T_{\mc{M}})$,
  $\ran P_{\mc{M}} = \HH$, so again $P_{\mc{M}} = 1$.  Thus,
  $\sigma(T_{\mc{M}}) = \sigma(T)$.  Unfortunately, there does not
  seem to be any similar relation between $\sigma(T^{\mc{M}})$ and
  $\sigma(T)$.

  There is also the dilation result for the class $\LL$ in
  Proposition~\ref{prop:dilations-for-L+2}, though there does not
  appear to be such a close connection for the spectra of these with
  that of $T$.  These dilations are in a sense extremal for the family
  $\LL$, in that any further dilations are direct sums.

  Theorem~\ref{prop:T-in-L+2-gen-scalar} indicates that all operators
  in $\LL$ are generalized scalar, so it is natural to wonder if there
  is some characterization of the class $\LL$ in terms of this
  property and the spectrum of the operator being in $\mathbb R^+$.
\end{obs}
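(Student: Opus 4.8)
The only part of this remark that carries genuine provable content is the identity $\sigma(T_{\mc{M}}) = \sigma(T)$; the subsequent comments about $T^{\mc{M}}$, about the dilations of Proposition~\ref{prop:dilations-for-L+2}, and about a possible spectral characterization of $\LL$ are observations rather than assertions, so I would direct the argument entirely at that identity and leave the rest as the editorial remarks they are.

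The plan is to first establish the equality up to the point $0$ using the standard spectral identity $\sigma(CD) \cup \{0\} = \sigma(DC) \cup \{0\}$ (already invoked in Lemma~\ref{lemaT1a}), and then to treat the point $0$ separately. The crucial observation for the first step is that since $\mc{M} = \ol{\ran T + \ker T} \supseteq \ran T$, every vector $Tx$ lies in $\mc{M}$, so that $P_{\mc{M}} T = T$. Taking $C = T$ and $D = P_{\mc{M}}$ then gives
\begin{equation*}
  \sigma(T_{\mc{M}}) \cup \{0\} = \sigma(TP_{\mc{M}}) \cup \{0\} =
  \sigma(P_{\mc{M}}T) \cup \{0\} = \sigma(T) \cup \{0\},
\end{equation*}
which is precisely the displayed chain in the remark.

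It then remains to show that $0 \in \sigma(T_{\mc{M}})$ if and only if $0 \in \sigma(T)$, after which the two spectra coincide. I would argue that whenever $0$ is absent from either spectrum, one is forced into the trivial situation $P_{\mc{M}} = 1$, whence $T_{\mc{M}} = T$ outright. If $0 \notin \sigma(T)$, then $T$ is invertible, so $\ran T = \HH$ and hence $\mc{M} = \HH$, giving $P_{\mc{M}} = 1$. Conversely, if $0 \notin \sigma(T_{\mc{M}})$, then $T_{\mc{M}} = TP_{\mc{M}}$ is invertible, in particular surjective; since $\ran(TP_{\mc{M}}) \subseteq \ran T \subseteq \mc{M}$, this again forces $\mc{M} = \HH$ and $P_{\mc{M}} = 1$. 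In either branch $T_{\mc{M}} = T$, so $0$ belongs to one spectrum exactly when it belongs to the other, and together with the chain above this yields $\sigma(T_{\mc{M}}) = \sigma(T)$.

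I expect no serious obstacle here, as the whole argument hinges on the elementary identity $P_{\mc{M}}T = T$. The only mild subtlety is the handling of $0$: one cannot simply delete $\{0\}$ from the chain of equalities, since a priori $0$ might lie in $\sigma(T_{\mc{M}})$ but not in $\sigma(T)$, or conversely. Ruling this out is exactly what the invertibility/surjectivity dichotomy accomplishes, both cases collapsing to $\mc{M} = \HH$. I would make no attempt to relate $\sigma(T^{\mc{M}})$ to $\sigma(T)$, since the remark itself records that no comparable relation appears to be available.
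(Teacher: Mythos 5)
Your proposal is correct and matches the paper's own argument: the paper likewise uses $P_{\mc{M}}T = T$ (since $\ran T \subseteq \mc{M}$) together with $\sigma(CD)\cup\{0\} = \sigma(DC)\cup\{0\}$ to get the chain of equalities, and then disposes of the point $0$ exactly as you do, by noting that $0 \notin \sigma(T)$ or $0 \notin \sigma(T_{\mc{M}})$ each forces $\mc{M} = \HH$ and hence $P_{\mc{M}} = 1$. Your write-up merely makes the surjectivity step $\ran(TP_{\mc{M}}) \subseteq \ran T \subseteq \mc{M}$ explicit, which the paper leaves implicit; nothing further is needed.
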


\section{Examples}
\label{sec:examples}

Recall that an operator $T$ is \emph{algebraic} if there is a
polynomial $p$ such that $p(T) = 0$.  By the spectral mapping theorem,
$\sigma(T)$ is then contained in the set of roots of the polynomial.

\begin{prop}
  \label{prop:L+2-and-algebraic}
  Suppose that $T \in \LL$ is algebraic.  Then $T$ has the form
  \begin{equation*}
    T = \sum_j \lambda_j Q_j,
  \end{equation*}
  where each $\lambda_j \geq 0$ is an eigenvalue for $T$ and $Q_j$ is
  an oblique projection.  In this case, $\ran T$ is closed and $T$ is
  similar to $C = \sum_j \lambda_j P_j \geq 0$, where each $P_j$ is an
  orthogonal projection and $\bigoplus_j P_j = 1$.  Conversely, if $T$
  has this form, then $T \in \LL$ and is algebraic.
\end{prop}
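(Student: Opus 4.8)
The plan is to establish the stated equivalence, with essentially all of the content living in the forward implication. Suppose first that $T \in \LL$ is algebraic, annihilated by some polynomial. By the spectral mapping theorem $\sigma(T)$ is finite, say $\sigma(T) = \{\lambda_1, \dots, \lambda_n\}$, and these are exactly the distinct roots of the minimal polynomial $p_0(z) = \prod_j (z - \lambda_j)^{m_j}$. Since $T \in \LL$, Lemma~\ref{lemaT1a} gives $\sigma(T) \geq 0$, so each $\lambda_j \geq 0$. The crucial first step is to rule out nontrivial Jordan structure, i.e.\ to show $m_j = 1$ for every $j$. This is precisely where membership in $\LL$ is genuinely used, through the local spectral theory of Section~\ref{sec:l+2-spectral-properties}: combining the general inclusion $\ker(T - \lambda_j 1)^k \subseteq \HH_T(\{\lambda_j\})$ with Proposition~\ref{prop:X-sub-T-of-lambda-closed}, which identifies $\HH_T(\{\lambda_j\}) = \ker(T - \lambda_j 1)$, yields $\ker(T - \lambda_j 1)^k = \ker(T - \lambda_j 1)$ for all $k$. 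The primary decomposition attached to $p_0(T) = 0$ gives $\HH = \dotplus_j \ker(T - \lambda_j 1)^{m_j}$, which therefore collapses to $\HH = \dotplus_j \ker(T - \lambda_j 1)$. Consequently $q(T) = 0$ for $q(z) = \prod_j (z - \lambda_j)$, so $p_0 = q$ and every $m_j = 1$.

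Once the minimal polynomial is known to have simple roots, the rest of the forward direction is bookkeeping. Lagrange interpolation produces idempotents $Q_j := \prod_{k \neq j} (\lambda_j - \lambda_k)^{-1}(T - \lambda_k 1)$, which are polynomials in $T$ satisfying $Q_j Q_k = \delta_{jk} Q_j$, $\sum_j Q_j = 1$, and $\ran Q_j = \ker(T - \lambda_j 1)$. From $T Q_j = \lambda_j Q_j$ I obtain $T = \sum_j \lambda_j Q_j$, where each $Q_j$ is an oblique projection and each $\lambda_j$ is an eigenvalue since $\ran Q_j \neq \{0\}$. To see that $\ran T$ is closed, note that if $0 \notin \sigma(T)$ then $T$ is invertible (with inverse $\sum_j \lambda_j^{-1} Q_j$); otherwise, writing $Q_0$ for the idempotent attached to the eigenvalue $0$, one has $\ran T = \ran(1 - Q_0)$, the range of a bounded idempotent, hence closed. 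Thus $T \in \LLcr$, and Proposition~\ref{propCR1} gives that $T$ is similar to a positive operator $C$. Since $\sigma(C) = \sigma(T) = \{\lambda_j\}$ is finite and $C$ is positive, its spectral resolution reads $C = \sum_j \lambda_j P_j$, with $P_j$ the orthogonal spectral projections onto $\ker(C - \lambda_j 1)$, so $\bigoplus_j P_j = 1$.

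For the converse, suppose $T$ is similar to $C = \sum_j \lambda_j P_j \geq 0$ of the stated form. Similarity to a positive operator places $T$ in $\LL$ by the final assertion of Theorem~\ref{thmLL}. Moreover $\prod_j (C - \lambda_j 1) = 0$, since on each $\ran P_j$ the factor $C - \lambda_j 1$ vanishes; conjugating by the similarity shows $T$ is annihilated by the same polynomial, so $T$ is algebraic. The main obstacle throughout is the single step of collapsing the generalized eigenspaces to eigenspaces; this is the only place the structure of $\LL$ is essential, and it is exactly what distinguishes operators in $\LL$ from general algebraic operators, which may carry nilpotent Jordan blocks. Everything following it—the interpolation idempotents, the closedness of the range, and the appeals to Proposition~\ref{propCR1} and Theorem~\ref{thmLL}—is routine.
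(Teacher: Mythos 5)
Your proof is correct, but it routes around the paper's argument in a way worth noting. Both proofs rest on the same essential $\LL$-ingredient, Proposition~\ref{prop:X-sub-T-of-lambda-closed} ($\HH_T(\{\lambda\}) = \ker(T-\lambda 1)$), but they deploy it differently. The paper localizes: it takes the Riesz projections $Q_j$ for the isolated spectral points, uses Proposition~\ref{prop:L+2-restricted-to-inv-subsp-in-L+2} to keep each restriction $T|_{\ran Q_j}$ inside $\LL$, and then argues that an operator in $\LL$ with singleton spectrum $\{\lambda_j\}$ is similar to a positive operator and hence equals $\lambda_j$ times the identity on that subspace. You globalize instead: the inclusion $\ker(T-\lambda 1)^n \subseteq \HH_T(\{\lambda\})$ combined with Proposition~\ref{prop:X-sub-T-of-lambda-closed} collapses the generalized eigenspaces, the purely algebraic primary decomposition then forces the minimal polynomial to have simple roots, and the Lagrange interpolation idempotents (which are in fact the same operators as the paper's Riesz projections, both being the same polynomials in $T$) deliver the representation. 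Your route avoids Proposition~\ref{prop:L+2-restricted-to-inv-subsp-in-L+2} and the restriction-to-invariant-subspace machinery entirely, and your closed-range argument via $\ran T = \ran(1-Q_0)$ is cleaner than the paper's appeal to closedness of $\ran Q_i \dotplus \ran Q_j$; incidentally you also cite Proposition~\ref{propCR1} where the paper's own text mistakenly points to Corollary~\ref{corCR}. For the converse the paper goes through Corollary~\ref{corCR2} (scalar operator with spectrum in $\{0\}\cup[c,\infty)$) while you exhibit the annihilating polynomial directly and invoke Theorem~\ref{thmLL}; both are fine, though note that membership in $\LL$ comes from the equivalence $(\mathit{i}\kern0.5pt) \Leftrightarrow (\mathit{iii}\kern0.5pt)$ of Theorem~\ref{thmLL}, not from its ``final assertion,'' which is the statement $\clran T \dotplus \ker T = \HH$ --- a citation slip, not a gap.
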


\begin{proof}
  As noted above, if $p(T) = 0$ for a polynomial $p$, the spectrum of
  $T$ is a finite set of points taken from the non-negative roots of
  $p$.  For each $\lambda_j \in \sigma(T)$, let $Q_j$ be the Riesz
  projection for $\lambda_j \in \sigma(T)$.  Then $\HH_j = \ran Q_j$
  is invariant for $T$ and $\sigma(T|_{\HH_j}) = \lambda_j$.
  Furthermore, $Q_iQ_j = 0$ for $i\neq j$.  By
  Proposition~\ref{prop:L+2-restricted-to-inv-subsp-in-L+2} and
  Proposition~\ref{prop:X-sub-T-of-lambda-closed},
  $T|_{\HH_j} = GAG^{-1}$, with $A\geq 0$, $G$ invertible in $\HH_j$,
  and $\sigma(A) = \{\lambda_j\}$.  Thus $A = \lambda_j 1_{\HH_j}$,
  and so $T|_{\HH_j} = \lambda_j 1_{\HH_j}$.  Therefore,
  $TQ_j = \lambda_j Q_j$ for some oblique projection $Q_j$ and if
  $\lambda_j \neq 0$, $T|_{\HH_j}$ is invertible.  Since $Q_iQ_j = 0$
  when $i\neq j$, $\sum_j Q_j$ is a projection, and moreover
  $\sum_j Q_j = 1$.  So $T$ has the claimed form.

  Since $Q_iQ_j = 0$ when $i\neq j$, $\ran Q_i \dotplus \ran Q_j$ is
  closed, and consequently, $\ran T = \bigvee_{j\neq 0} \HH_j$ is
  closed.  So by Corollary~\ref{corCR}, $T$ is similar to a positive
  operator $C$.  In this case, $C$ must be as in the statement of the
  proposition.

  For the converse, the statement that $T$ is algebraic follows from
  the spectral mapping theorem, using a polynomial with roots equal to
  the set of eigenvalues.  Furthermore, $T$ is a scalar operator and
  its spectrum is in a set of the form $\{0\}\cup [c,\infty)$, $c >
  0$.  Therefore by Corollary~\ref{corCR2}, $T$ is in $\LL$.
\end{proof}

\begin{obs}
  \label{obs:optimal-pair-for-alg-op-in-L+2}
  Using Example~\ref{exmpl:oblique-proj-opt-pair}, it is possible to
  write down an optimal pair for any $T$ in $\LL$ which is algebraic.
  Let $(A_j,B_j)$ be the optimal pair for the oblique projection
  $Q_j$, as constructed in that example.  Claim that for
  $A = \sum_j A_j$, $B = \sum_j B_j$, $(A,B)$ is an optimal pair for
  $T$.  Since $Q_j Q_k = 0$ if $k \neq j$, $B_j A_k = 0$, or
  equivalently, $A_kB_j = 0$.  Hence $T = AB$.  It is immediate that
  $\ran A = \ran T$ and $\ran B = \ran T^*$, so the pair is optimal.
\end{obs}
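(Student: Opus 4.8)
The plan is to use the spectral decomposition already in hand. By Proposition~\ref{prop:L+2-and-algebraic}, since $T\in\LL$ is algebraic, $T = \sum_j \lambda_j Q_j$ where the $Q_j$ are the Riesz projections, $Q_jQ_k = 0$ for $j\neq k$, $\sum_j Q_j = 1$, and $\ran T = \bigvee_{\lambda_j\neq 0}\ran Q_j$ is closed. Writing $\M_j = \ran Q_j$, Example~\ref{exmpl:oblique-proj-opt-pair} supplies the optimal pair $(P_{\M_j},Q_j^*Q_j)$ for $Q_j$; absorbing the eigenvalue into the second factor, I would set $(A_j,B_j) := (P_{\M_j},\lambda_j Q_j^*Q_j)$, which is optimal for $\lambda_j Q_j$, and take both entries to be $0$ when $\lambda_j = 0$. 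The goal is then to show that $A = \sum_j A_j$ and $B = \sum_j B_j$ form an optimal pair for $T$: that they are positive, that $AB = T$, and that $\clran A = \clran T$ and $\ker B = \ker T$.

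First I would note that $A,B\in\mc{L}^+$, being finite sums of positive operators (finiteness coming from $T$ algebraic, so $\sigma(T)$ is finite). The heart of the argument is the product $AB = \sum_{j,k}A_jB_k$, where the key step is that every off-diagonal term vanishes. This follows from $Q_jQ_k = 0$: since $\ran A_k = \M_k = \ran Q_k \subseteq \ker Q_j$, one gets $Q_jP_{\M_k} = 0$, whence $B_jA_k = \lambda_j Q_j^*Q_jP_{\M_k} = 0$, and equivalently $A_kB_j = (B_jA_k)^* = 0$. The diagonal terms give $A_jB_j = \lambda_j P_{\M_j}Q_j^*Q_j = \lambda_j Q_j$, the identity $P_{\M_j}Q_j^*Q_j = Q_j$ being exactly the factorization recorded in Example~\ref{exmpl:oblique-proj-opt-pair}. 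Summing yields $AB = \sum_j\lambda_j Q_j = T$.

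For optimality, the inclusions $\clran T\subseteq\clran A$ and $\ker B\subseteq\ker T$ are automatic from $T = AB$. The reverse inclusions come from the projection structure: $\ran A \subseteq \sum_{\lambda_j\neq 0}\M_j = \ran T$, the sum being closed and equal to $\ran T$ by Proposition~\ref{prop:L+2-and-algebraic}, so in fact $\ran A = \ran T$ and $\clran A = \clran T$; and, each summand of $B$ being positive, $\ker B = \bigcap_{\lambda_j\neq 0}\ker Q_j$, which coincides with $\ker T$ by virtue of $\sum_j Q_j = 1$. Applying the same computation to $T^*$, whose Riesz projections are the $Q_j^*$, gives $\ran B = \ran T^*$, completing the verification that $(A,B)$ is optimal.

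The one place demanding care --- and the main obstacle --- is the bookkeeping around a zero eigenvalue. If the $\lambda_j = 0$ summand were kept with the unscaled pair $(P_{\M_0},Q_0^*Q_0)$, then either $\clran A$ would spuriously acquire $\M_0 = \ker T$ or $\ker B$ would collapse to $\{0\}$, in either case breaking optimality; folding $\lambda_j$ into $B_j$ and discarding the index with $\lambda_j = 0$ is precisely what keeps $\clran A = \clran T$ and $\ker B = \ker T$ in balance. Once this is arranged, every remaining claim reduces to the orthogonality relations $Q_jQ_k = 0$ and the closedness of $\ran T$, both already established.
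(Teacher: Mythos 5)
Your proof is correct and takes essentially the same route as the paper's remark: sum the optimal pairs from Example~\ref{exmpl:oblique-proj-opt-pair} over the spectral decomposition of Proposition~\ref{prop:L+2-and-algebraic}, use $Q_jQ_k=0$ to kill the cross terms $A_kB_j$, and read off $\ran A = \ran T$, $\ker B = \ker T$ from the projection structure. Your explicit absorption of $\lambda_j$ into $B_j$ and omission of the $\lambda_j=0$ index is a sound repair of bookkeeping the paper's terse statement leaves implicit (read literally, the paper's $A$, $B$ would give $AB = \sum_j Q_j = 1$ and $\clran A = \HH$), not a different argument.
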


Next consider the class of compact operators in $\LL$.

\begin{cor}
  \label{cor:compacts-in-L+2}
  Let $T$ be a compact operator in $\LL$ and let
  $\sigma(T) = \{\lambda_j\}$.  Then restricted to the range $\HH_j$
  of the Riesz projection corresponding to $\lambda_j \neq 0$,
  $T|_{\HH_j} = \lambda_j 1_{\HH_j}$.  Furthermore, $T$ has no
  quasi-nilpotent part other than $\ker T$.
\end{cor}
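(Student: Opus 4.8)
The plan is to combine the Riesz--Schauder structure theory for compact operators with the local spectral results already established for $\LL$, paralleling the proof of Proposition~\ref{prop:L+2-and-algebraic}. First I would recall that since $T$ is compact, its spectrum has the form $\sigma(T) = \{0\} \cup \{\lambda_j\}_j$, where each nonzero $\lambda_j$ is an isolated point of $\sigma(T)$ and an eigenvalue of finite algebraic multiplicity; by Lemma~\ref{lemaT1a} every $\lambda_j \geq 0$. For each $\lambda_j \neq 0$, isolation allows the Riesz projection $Q_j$ to be formed, whose range $\HH_j$ is finite-dimensional, invariant for $T$, and satisfies $\sigma(T|_{\HH_j}) = \{\lambda_j\}$.

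Next I would show $T|_{\HH_j} = \lambda_j 1_{\HH_j}$. Since $\HH_j$ is invariant for $T \in \LL$, Proposition~\ref{prop:L+2-restricted-to-inv-subsp-in-L+2} yields $T|_{\HH_j} \in \LL(\HH_j)$ (the positive witness $\lambda P_{\HH_j} B P_{\HH_j}$ produced in that proof, once compressed to $\HH_j$, serves as a solution via Theorem~\ref{thm:T-in-LL=pos-soln} applied to the adjoint). Now every $x \in \HH_j$ has $\sigma_{T|_{\HH_j}}(x) \subseteq \sigma(T|_{\HH_j}) = \{\lambda_j\}$, so the local spectral subspace $\HH_{T|_{\HH_j}}(\{\lambda_j\})$ is all of $\HH_j$. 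Applying Proposition~\ref{prop:X-sub-T-of-lambda-closed} to $T|_{\HH_j} \in \LL(\HH_j)$ identifies this subspace with $\ker(T|_{\HH_j} - \lambda_j 1)$, forcing $T|_{\HH_j} - \lambda_j 1 = 0$. Because $\HH_j$ is finite-dimensional, one may alternatively invoke Corollary~\ref{corCR}: the restriction is diagonalizable with single-point spectrum $\{\lambda_j\}$, hence equals $\lambda_j 1_{\HH_j}$.

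Finally, for the statement about the quasi-nilpotent part, I would apply Proposition~\ref{prop:X-sub-T-of-lambda-closed} with $\lambda = 0$: the quasi-nilpotent part $\{x : \lim_n \|T^n x\|^{1/n} = 0\} = \HH_T(\{0\})$ coincides with $\ker T$, so $T$ exhibits no quasi-nilpotent behaviour beyond its kernel. The main obstacle I anticipate is the reduction opening the second paragraph, namely passing cleanly from $T P_{\HH_j} \in \LL(\HH)$ (the literal content of Proposition~\ref{prop:L+2-restricted-to-inv-subsp-in-L+2}) to membership of the genuine restriction $T|_{\HH_j}$ in $\LL(\HH_j)$. This requires observing that $T P_{\HH_j} = T|_{\HH_j} \oplus 0$ relative to $\HH_j \oplus \HH_j^\bot$ and compressing the Sebesty\'en inequality to $\HH_j$; once this is in hand, either the local spectral identification or the finite-dimensional characterization finishes the argument.
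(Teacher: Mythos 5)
Your proposal is correct and follows essentially the same route as the paper: the paper's proof of Corollary~\ref{cor:compacts-in-L+2} simply repeats the argument of Proposition~\ref{prop:L+2-and-algebraic} (Riesz projections, Proposition~\ref{prop:L+2-restricted-to-inv-subsp-in-L+2} to keep the restriction in $\LL$, and Proposition~\ref{prop:X-sub-T-of-lambda-closed} to force $T|_{\HH_j} = \lambda_j 1_{\HH_j}$) and invokes Proposition~\ref{prop:X-sub-T-of-lambda-closed} with $\lambda = 0$ for the quasi-nilpotent part, exactly as you do. Your explicit handling of the passage from $TP_{\HH_j} \in \LL(\HH)$ to $T|_{\HH_j} \in \LL(\HH_j)$ by compressing the Sebesty\'en inequality is a detail the paper leaves implicit, and your finite-dimensional shortcut via Corollary~\ref{corCR} is a valid alternative, but neither changes the substance of the argument.
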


\begin{proof}
  The first part is obtained in the same way as in the proof of the
  last proposition,  The last part follows directly from
  Proposition~\ref{prop:X-sub-T-of-lambda-closed}.
\end{proof}

\begin{obs}
  \label{obs:bad-compacts-in-L+2}
  Despite the simple form of the eigenspaces for a compact operator in
  $\LL$, a compact operator is generally not as nice as an algebraic
  operator.  Indeed, it need not be quasi-affine to a positive
  operator, even if it is in a Schatten class.  It suffices to verify
  this with the trace class operators.

  For example, let $(e_n)_{n=1}^\infty$ be an orthonormal basis on
  $\HH$, and $\{\lambda_j\} \subset \mathbb R^+$ non-zero and
  absolutely summable.  Also let $P_n$ be the orthonormal projection
  onto the span of $e_n$.  Define $A = \sum_n \lambda_n P_n$, a
  positive trace class operator.  If $x = \sum_n \lambda_n e_n$, then
  $x\in \HH$, and there is obviously no vector $y\in \HH$ such that
  $Ay = x$.  As in Example~\ref{example-1}, define $B$ to be the
  orthogonal projection onto $(\bigvee x)^\bot$.  Then $T = AB$ is
  trace class, and is not quasi-similar to a positive operator.  With
  minor modifications, $T$ can be chosen to be trace class and not
  even quasi-affine to a positive operator.

  It follows from Apostol's theorem (Theorem~\ref{Apostol}) that the
  eigenspaces of $T$ do not form a basic system of subspaces.
  Moreover, it is not clear that a compact operator with eigenvalues
  and eigenspaces as in Corollary~\ref{cor:compacts-in-L+2} will
  necessarily be in $\LL$, even if the eigenspaces do form a basic
  system.
\end{obs}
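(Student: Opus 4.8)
The plan is to confirm that the stated $T = AB$ is trace class, lies in $\LL$, yet is not quasi-similar to a positive operator, and then to describe the ``minor modification'' yielding a trace-class operator in $\LL$ that is not even quasi-affine to a positive operator. Everything reduces to the range-and-kernel data already recorded in Example~\ref{example-1}, combined with Corollary~\ref{CorII} and the criterion of Theorem~\ref{PropI}.

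First I would dispense with the regularity and membership claims. Since each $\lambda_n > 0$ and $\sum_n \lambda_n < \infty$, the operator $A = \sum_n \lambda_n P_n$ is positive with $\operatorname{tr} A = \sum_n \lambda_n < \infty$, so $A$ is trace class; it is moreover injective with $\clran A = \HH$, and its range is dense but not closed. Because $\lambda_n \to 0$ one has $\sum_n \lambda_n^2 < \infty$, hence $x = \sum_n \lambda_n e_n \in \HH$; and $Ay = x$ would force $\langle y, e_n\rangle = 1$ for all $n$, i.e.\ $y = \sum_n e_n \notin \HH$, so $x \in \clran A \setminus \ran A$. Thus $A$ and $x$ meet the hypotheses of Example~\ref{example-1}. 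With $\St = \Span\{x\}^\perp$ and $B = P_\St$, the operator $T = AB$ is a product of two positive operators, hence in $\LL$, and is trace class since it is a bounded operator times the trace-class operator $A$.

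Next I would read off from Example~\ref{example-1} that $\ker T = \Span\{x\}$ and $\clran T = \HH$, so that $\clran T \cap \ker T = \Span\{x\} \neq \{0\}$. By Corollary~\ref{CorII}, any operator quasi-similar to a positive operator has trivial intersection of closed range with kernel; hence $T$ is not quasi-similar to a positive operator. It is, however, quasi-affine to one, since $\clran T = \HH$ together with Proposition~\ref{lemaT2} gives $\ol{\ran T \dotplus \ker T} = \HH$, so Theorem~\ref{PropI} applies; the failure of quasi-similarity is exactly the failure of the dual condition for $T^*$.

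For the stronger assertion I would work on $\HH \oplus \HH$ with $T' = (A \oplus B)(B \oplus A) = AB \oplus BA \in \LL$. Both $AB$ and $BA = P_\St A$ are trace class, so $T'$ is trace class. Applying Example~\ref{example-1} to each summand gives $\ker T' = \Span\{x\} \oplus \{0\}$ and $\clran T' = \HH \oplus \St$, and dually $\ker T'^* = \{0\} \oplus \Span\{x\}$ and $\clran T'^* = \St \oplus \HH$; here one uses that $BA$ is injective (if $Ay \in \Span\{x\}$ then, as $x \notin \ran A$ and $A$ is injective, $y = 0$) and that $\clran(BA) = (\ker T)^\perp = \St$. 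In each case the kernel is contained in the closed range, so $\ol{\ran T' \dotplus \ker T'} = \HH \oplus \St$ and $\ol{\ran T'^* \dotplus \ker T'^*} = \St \oplus \HH$, neither equal to $\HH \oplus \HH$; by Theorem~\ref{PropI} neither $T'$ nor $T'^*$ is quasi-affine to a positive operator. The one place demanding care is the block bookkeeping for $T'$---verifying $\ker(BA) = \{0\}$ and $\clran(BA) = \St$---since the remaining identities are inherited directly from Example~\ref{example-1}.
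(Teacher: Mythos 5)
Your proposal is correct and follows the paper's own route: it instantiates Example~\ref{example-1} with the trace-class diagonal $A$ and $x=\sum_n\lambda_n e_n$, rules out quasi-similarity via Corollary~\ref{CorII} (the paper cites Proposition~\ref{PropII}, an equivalent step), and realizes the ``minor modification'' as the direct sum $(A\oplus B)(B\oplus A)=AB\oplus BA$, exactly as in the second paragraph of Example~\ref{example-1}. Your block computations ($\ker(BA)=\{0\}$, $\clran(BA)=\St$, so $\ol{\ran\dotplus\ker}$ equals $\HH\oplus\St$ and $\St\oplus\HH$ for $T'$ and $T'^*$ respectively) correctly fill in details the paper leaves implicit, and the verification that the original $T$ \emph{is} quasi-affine to a positive operator matches the paper's parenthetical claim.
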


Suppose that $T$ is compact and of the form given in
Corollary~\ref{cor:compacts-in-L+2}.  Suppose furthermore that
$\overline{\bigvee_n} \HH_n = \HH$.  In this case the eigenspaces form
a basic system.  For
$\{\alpha_n\} \subset \mathbb R^+ \backslash \{0\}$ with
$\sum_n \alpha _n < \infty$, define $X:\bigoplus_n \HH_n \to \HH$ by
\begin{equation*}
  X(\oplus_n x_n) = \sum_n \alpha_n x_n, \qquad x_n \in \HH_n.
\end{equation*}
Notice that $\bigoplus_n \HH_n$ is a sort of ``straightened'' version
of $\HH$ and is isomorphic to $\HH$.  By the arguments in
\cite{MR402522}, $X$ is bounded.  Let $Q_n:\bigoplus_n \HH_n \to \HH$
be the oblique projection defined by
\begin{equation*}
  Q_n x =
  \begin{cases}
    x, & x\in \HH_n; \\
    0, & x\in \bigoplus_{k \neq n} \HH_n.
  \end{cases}
\end{equation*}

\begin{lema}
  \label{lem:q-aff-adjoint}
  For $X$ defined as above,
  \begin{equation*}
    X^*y = \sum_n \alpha_n Q_n^* y, \qquad y \in \HH.
  \end{equation*}
\end{lema}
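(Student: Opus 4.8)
The plan is to compute $X^*$ directly from the defining adjoint relation $\PI{X\xi}{y} = \PI{\xi}{X^*y}$, working throughout in the external orthogonal direct sum $\mc{D} := \bigoplus_n \HH_n$, which must be kept carefully distinct from $\HH$ (where the subspaces $\HH_n$ overlap non-orthogonally). Writing a typical element of $\mc{D}$ as $\xi = (x_n)_n$ with $x_n \in \HH_n$, the map $Q_n$ sends $\xi$ to its $n$-th component $x_n$, now regarded as an element of $\HH$, so that $X\xi = \sum_n \alpha_n x_n = \sum_n \alpha_n Q_n\xi$.

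First I would identify $Q_n^*$ and record the needed convergence. Since each $\HH_n$ is closed (being the range of a Riesz projection), for $y \in \HH$ and $\xi = (x_k)_k$ one has $\PI{Q_n\xi}{y} = \PI{x_n}{y} = \PI{x_n}{P_{\HH_n}y}$, the last step because $x_n \in \HH_n$. Reading off components, $Q_n^*y$ is the element of $\mc{D}$ whose $n$-th entry is $P_{\HH_n}y$ and whose remaining entries vanish; in particular $\|Q_n^*y\| = \|P_{\HH_n}y\| \le \|y\|$. Because $\sum_n \alpha_n < \infty$ forces $\alpha_n \to 0$ and hence $\sum_n \alpha_n^2 < \infty$, the series $\sum_n \alpha_n Q_n^*y$ converges in $\mc{D}$ (its terms lie in orthogonal summands and $\sum_n \alpha_n^2\|P_{\HH_n}y\|^2 \le \|y\|^2 \sum_n \alpha_n^2 < \infty$), while $\sum_n \alpha_n x_n$ converges absolutely in $\HH$ by Cauchy--Schwarz, consistent with the boundedness of $X$ already established via \cite{MR402522}.

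With these in hand, the computation assembles itself: for all $\xi \in \mc{D}$ and $y \in \HH$,
$$
\PI{X\xi}{y} = \sum_n \alpha_n \PI{x_n}{y} = \sum_n \alpha_n \PI{\xi}{Q_n^*y} = \PI{\xi}{\sum_n \alpha_n Q_n^*y},
$$
where the inner product may be passed through the sums by the continuity of $\PI{\cdot}{\cdot}$ together with the convergence established above. Since $\xi \in \mc{D}$ is arbitrary, uniqueness of the adjoint yields $X^*y = \sum_n \alpha_n Q_n^*y$, as claimed.

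I do not expect any genuine obstacle here: the argument is just the routine characterization of an adjoint. The only points demanding care are the bookkeeping that separates the straightened space $\mc{D}$ from $\HH$ and the explicit identification of $Q_n^*$, together with the verification that each series converges in its appropriate space.
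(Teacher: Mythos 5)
Your proof is correct and takes essentially the same route as the paper's: both verify the adjoint identity by pairing against an arbitrary $\xi = \oplus_n x_n \in \bigoplus_n \HH_n$ and using $\PI{Q_n\xi}{y} = \PI{\xi}{Q_n^*y}$ termwise. Your explicit identification of $Q_n^*y$ as the element with $n$-th entry $P_{\HH_n}y$ and zeros elsewhere, together with the convergence checks in both spaces, merely makes explicit what the paper leaves implicit.
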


\begin{proof}
  As defined, $Q_n$ has the properties that $\clran Q_n^* = (\ker
  Q_n)^\bot = \HH_n$ and $\ker Q_n^* = (\ran Q_n)^\bot = \HH_n^\bot$.
  Thus, for $y \in \HH$ and $x = \oplus_n x_n \in \bigoplus_n \HH_n$,
  \begin{equation*}
    \begin{split}
      \PI{\sum_n \alpha_n Q_n^* y}{x}  &= \sum_n \alpha_n \PI{y}{Q_n
        x} = \sum_n \alpha_n \PI{y}{x_n} \\
      &= \PI{y}{\sum_n \alpha_n x_n} = \PI{y}{Xx}.\qedhere
    \end{split}      
  \end{equation*}
\end{proof}

\begin{prop}
  \label{prop:conds-st-T-cpt-in-L+2}
  Let $T$ be a compact operator in $L(\HH)$ with
  $\sigma(T) = \{\lambda_j\} \geq 0$, and suppose that when restricted
  to the the range $\HH_j$ of the Riesz projection corresponding to
  $\lambda_j \neq 0$, $T|_{\HH_j} = \lambda_j 1_{\HH_j}$, and that the
  quasi-nilpotent part of $T$ is the kernel.  If
  $\sum_j \lambda_j^{1/2} < \infty$, then $T \in \LL$.
\end{prop}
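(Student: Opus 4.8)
The plan is to realize $T$ as a product of two positive operators by producing a suitable first factor $A$ and then invoking Sebesty\'en's criterion. First I would pass to the ``straightened'' space $\widetilde{\HH} := \bigoplus_j \HH_j$, the orthogonal direct sum of the nonzero eigenspaces, and take $\alpha_j = \lambda_j^{1/2}$ as in the construction preceding the statement. Because $\sum_j \lambda_j^{1/2} < \infty$, the operator $X(\oplus_j x_j) = \sum_j \lambda_j^{1/2} x_j$ is bounded with $\|X\| \le \sum_j \lambda_j^{1/2}$, using only $\|x_j\| \le \|\oplus_j x_j\|$ and the triangle inequality, while the fact that the eigenspaces form a basic system gives injectivity and $\clran X = \clran T$. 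Writing $D = \bigoplus_j \lambda_j 1_{\HH_j} \ge 0$ on $\widetilde{\HH}$, a computation on each summand yields $TX = XD$, so $T$ is quasi-affine to the positive operator $D$. By the adjoint formula in Lemma~\ref{lem:q-aff-adjoint}, $X^* = \bigoplus_j \lambda_j^{1/2} P_{\HH_j}$, where $P_{\HH_j}$ is the orthogonal projection onto $\HH_j$.

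The candidate first factor is $A := XX^* = \sum_j \lambda_j P_{\HH_j} \ge 0$, for which $\clran A = \clran T$. Since $X^*T^* = (TX)^* = (XD)^* = DX^*$, one computes $AT^* = XX^*T^* = XDX^* = \sum_j \lambda_j^2 P_{\HH_j} \ge 0$. By Theorem~\ref{thm:T-in-LL=pos-soln}, in the form of Corollary~\ref{cor:T=AB-iff-A=pos-soln}, it then suffices to find $\lambda \ge 0$ with $TT^* \le \lambda\, A T^* = \lambda\, XDX^*$: any such $\lambda A$ solves $TT^* \le XT^*$, and Theorem~\ref{Seb} supplies $B \ge 0$ with $T = AB$, whence $T \in \LL$. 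Since $\PI{XDX^* u}{u} = \sum_j \lambda_j^2 \|P_{\HH_j}u\|^2$ and $\PI{TT^* u}{u} = \|T^*u\|^2$, the entire proposition reduces to the scalar estimate $\|T^*u\|^2 \le \lambda \sum_j \lambda_j^2 \|P_{\HH_j}u\|^2$ for all $u \in \HH$.

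To establish this I would expand $T = \sum_j \lambda_j Q_j$ via the Riesz (oblique) projections $Q_j$ onto $\HH_j$, available by compactness and the triviality of the quasi-nilpotent part (as in Corollary~\ref{cor:compacts-in-L+2}), so that $T^* = \sum_j \lambda_j Q_j^*$. The relation $Q_j^* = Q_j^* P_{\HH_j}$, valid because $\ran Q_j = \HH_j$, gives $\|Q_j^* u\| \le \|Q_j\|\,\|P_{\HH_j}u\|$, and Cauchy--Schwarz then yields
\begin{equation*}
  \|T^*u\| \le \sum_j \lambda_j \|Q_j\|\,\|P_{\HH_j}u\|
  \le \Bigl(\sum_j \|Q_j\|^2\Bigr)^{1/2}
  \Bigl(\sum_j \lambda_j^2 \|P_{\HH_j}u\|^2\Bigr)^{1/2}.
\end{equation*}

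I expect this last inequality to be the main obstacle, since it is precisely the point where the non-orthogonality of the eigenspaces must be reconciled with the summability hypothesis. The estimate closes exactly when the weighted projection sum on the right is finite, and the purpose of the strengthened assumption $\sum_j \lambda_j^{1/2} < \infty$, rather than the weaker $\sum_j \lambda_j < \infty$, is to force this finiteness by controlling the norms $\|Q_j\|$ through the boundedness of $X$ and the basic-system constants. Once the projection sum is shown to converge, the scalar estimate holds with $\lambda = \sum_j \|Q_j\|^2$, giving $TT^* \le \lambda\, XDX^* = \lambda\,(XX^*)T^*$, and the conclusion $T \in \LL$ follows from Corollary~\ref{cor:T=AB-iff-A=pos-soln}.
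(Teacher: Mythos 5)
Your first half is sound and coincides with the paper's setup: the boundedness of $X$ from $\sum_j \lambda_j^{1/2}<\infty$, the intertwining $TX=XD$, the adjoint formula $X^*=\bigoplus_j \lambda_j^{1/2}P_{\HH_j}$ (a correct reading of Lemma~\ref{lem:q-aff-adjoint}, since $Q_j^*$ is the orthogonal projection onto $\HH_j$ followed by the embedding into the $j$th summand), and the computations $XX^*=\sum_j\lambda_j P_{\HH_j}$, $XX^*T^*=XDX^*=\sum_j\lambda_j^2 P_{\HH_j}\geq 0$. The reduction via Corollary~\ref{cor:T=AB-iff-A=pos-soln} to the scalar inequality $\|T^*u\|^2\leq\lambda\sum_j\lambda_j^2\|P_{\HH_j}u\|^2$ is also legitimate. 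The genuine gap is the closing step. Your Cauchy--Schwarz split requires $\sum_j\|Q_j\|^2<\infty$ (and already the expansion $T^*=\sum_j\lambda_j Q_j^*$ as a convergent series requires control of $\lambda_j\|Q_j\|$), and you assert that $\sum_j\lambda_j^{1/2}<\infty$ forces this ``through the boundedness of $X$ and the basic-system constants.'' It does not, and cannot: the norms of the Riesz projections $Q_j$ are determined solely by the mutual geometry of the eigenspaces, while the $\lambda_j$ are independent data. For \emph{every} basic system --- including ones with $\|Q_j\|\to\infty$ arbitrarily fast, e.g.\ pairs of one-dimensional eigenspaces at angles $\theta_k\to 0$ sitting inside mutually orthogonal two-dimensional blocks --- the operator $X$ is bounded as soon as $\sum_j\lambda_j^{1/2}<\infty$, so the boundedness of $X$ carries no information about $\|Q_j\|$. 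Nor does re-splitting the weights help: writing $\lambda_j\|Q_j\|\,\|P_{\HH_j}u\|=(\lambda_j^{1/2}\|Q_j\|)(\lambda_j^{1/2}\|P_{\HH_j}u\|)$ trades $\sum_j\|Q_j\|^2<\infty$ for $\sum_j\lambda_j\|Q_j\|^2<\infty$ while also producing the wrong weight $\lambda_j$ in place of $\lambda_j^2$ on the right-hand side. So the final implication is not merely unproved; as an implication from the stated hypotheses it is false.

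The paper avoids this pressure point by never expanding $T$ or $T^*$ in the oblique projections and never estimating $\|Q_j\|$. With $C$ the diagonal operator (your $D$), it asserts the inequality $C\leq X^*X$ on the straightened space $\bigoplus_n\HH_n$, factors $C^{1/2}=X^*Z$ with $Z$ bounded by Douglas' lemma, and then from $TX=XC=(XX^*)(ZZ^*)X$ and the density of $\ran X$ concludes $T=(XX^*)(ZZ^*)\in\LL$ directly --- that is, the second positive factor is produced abstractly rather than by verifying the Sebesty\'en inequality $TT^*\leq\lambda(XX^*)T^*$ by hand. Note that the inequality $C\leq X^*X$ is exactly where the interplay between the weights $\lambda_j^{1/2}$ and the non-orthogonality of the eigenspaces is concentrated (since $\PI{X^*Xx}{x}=\|\sum_j\lambda_j^{1/2}x_j\|_{\HH}^2$ contains cross terms), so if you wish to repair your argument along the paper's lines, that inequality, and not a bound on $\sum_j\|Q_j\|^2$, is the statement you should be trying to establish; alternatively, your scheme becomes a correct proof of a weaker proposition if one adds an explicit geometric hypothesis such as $\sum_j\lambda_j\|Q_j\|^2<\infty$ together with the appropriate weight bookkeeping.
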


\begin{proof}
  Take $X$ defined as above, but with $\alpha_n = \lambda_n^{1/2}$ if
  $\lambda_n > 0$ and $1$ otherwise.  Then $\sum_n \alpha_n < \infty$
  and by Lemma~\ref{lem:q-aff-adjoint}, for
  $x = \oplus_n x_n \in \bigoplus_n \HH_n$,
  \begin{equation*}
    \begin{split}
      \|Xx\|^2 & = \PI{X^*(\sum_n \alpha_n x_n)}{\oplus_n x_n}
      = \sum_n \alpha_n \PI{X^*x_n}{\oplus_n x_n} \\
      & = \sum_n \alpha_n^2 \PI{\oplus_n x_n}{\oplus_n x_n}
      = \sum_n \alpha_n^2 \|x_n\|^2.
    \end{split}
  \end{equation*}
  Define $C \geq 0$ on $ \bigoplus_n \HH_n$ by $\PI{Cx}{x} = \sum_n
  \lambda_n \|x_n\|^2$.  Then $X$ is a quasi-affinity and $X^*X \geq
  C$ (in fact, it will be equal if $\ker T = \{0\}$).  By Douglas'
  lemma, $C^{1/2} = X^*Z$ for bounded $Z$.  By Apostol's theorem (or
  rather, the proof of it),
  \begin{equation*}
    TX = XC = XX^*ZZ^*X,
  \end{equation*}
  and so since $\ran X$ is dense, $T \in \LL$.
\end{proof}

Next consider Fredholm operators in $\LL$.  Recall that $T$ is
\emph{left-semi-Fredholm} if there exists a bounded operator $R$ and a
compact operator $K$ such that $RT = 1+K$.  On the other hand, it is
\emph{right semi-Fredholm} if there exist such $R$ and $K$ such that
$TR = 1 + K$.  Finally, $T$ is \emph{Fredholm} if it is both left and
right semi-Fredholm.

\begin{prop}
  \label{prop:L+2-and-Fredholm}
  Let $T \in \LL$.  Then $T$ is left / right semi-Fredholm if and only
  if $T$ is Fredholm and similar to a positive operator with closed
  range and finite dimensional kernel.  In this case,
  \begin{equation*}
    \mathrm{ind}\, T := \dim \ker T - \dim \ker T^* = 0.
  \end{equation*}
\end{prop}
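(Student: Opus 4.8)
The plan is to reduce the statement to the classical geometric characterization of semi-Fredholm operators and then invoke Proposition~\ref{propCR1}. First I would recall that for $T \in L(\HH)$, the existence of $R$ bounded and $K$ compact with $RT = 1 + K$ is equivalent to $\ran T$ being closed together with $\dim \ker T < \infty$, and dually that $TR = 1 + K$ is equivalent to $\ran T$ closed together with $\dim \ker T^* < \infty$. For the direction that matters, given closed range and finite--dimensional kernel, $T$ is bounded below on $(\ker T)^\bot$, and composing the inverse of $T|_{(\ker T)^\bot}\colon (\ker T)^\bot \to \ran T$ with $P_{\ran T}$ produces a left inverse $R$ with $RT = 1 - P_{\ker T}$; conversely, $RT = 1+K$ forces $\ker T \subseteq \ker(1+K)$ to be finite dimensional, and a routine compactness argument shows $T$ is bounded below on $(\ker T)^\bot$, so $\ran T$ is closed. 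The $TR = 1+K$ case follows by applying this to $T^*$.

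The converse direction of the biconditional is immediate: if $T$ is Fredholm, then by definition it is both left and right semi-Fredholm, so in particular it is left/right semi-Fredholm, and no further argument is needed.

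For the forward direction, I would assume $T$ is left semi-Fredholm, the right semi-Fredholm case being entirely symmetric via $T^*$. By the characterization above, $\ran T$ is closed, so $T \in \LLcr$, and Proposition~\ref{propCR1} then yields that $T$ is similar to a positive operator $C$, say $T = GCG^{-1}$ with $G \in GL(\HH)$. Since similarity preserves closedness of the range, $C$ has closed range; and since $\ker T = G(\ker C)$, one has $\dim \ker C = \dim \ker T < \infty$. A positive operator with closed range and finite--dimensional kernel is Fredholm with $\ker C = \ker C^*$, hence of index $0$. Because similarity preserves Fredholmness and the Fredholm index, $T$ is Fredholm with $\mathrm{ind}\, T = 0$; in particular $\dim \ker T^* = \dim \ker T < \infty$, so $T$ is also right semi-Fredholm. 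This simultaneously establishes the right-hand side of the equivalence (namely that $T$ is Fredholm and similar to the positive operator $C$, which has closed range and finite--dimensional kernel) and the index formula.

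The main point requiring care is not any deep obstacle but rather the bookkeeping: one must invoke the standard semi-Fredholm characterization correctly and track kernel and cokernel dimensions through the similarity. Concretely, writing $T = GCG^{-1}$ gives $\ker T = G(\ker C)$ and, since $C = C^*$, $\ker T^* = (G^*)^{-1}(\ker C)$, so both kernels have dimension equal to $\dim \ker C$; this is what forces $\mathrm{ind}\, T = \dim \ker T - \dim \ker T^* = 0$.
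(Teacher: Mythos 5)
Your proposal is correct and takes essentially the same route as the paper's proof: Atkinson's theorem gives closed range and a finite-dimensional kernel (or cokernel), Proposition~\ref{propCR1} then yields similarity to a positive operator $C$, and since $C = C^*$ the similarity forces $\dim \ker T = \dim \ker C = \dim \ker T^*$, whence Fredholmness and $\mathrm{ind}\,T = 0$. The only differences are cosmetic: you spell out the standard proof of the semi-Fredholm characterization and the index bookkeeping that the paper leaves implicit.
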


\begin{proof}
  Suppose that $T \in \LL$ and that it is left semi-Fredholm (the
  other case is handled identically).  Then by Atkinson's theorem,
  $\ran T$ is closed and $\dim \ker T < \infty$.  Hence by
  Proposition~\ref{propCR1}, $T$ is similar to a positive operator.
  If $T = LCL^{-1}$ where $C \geq 0$ and $L$ is invertible, $\ran T$
  closed implies that $\ran C$ is closed, and $\dim \ker T < \infty$
  gives $\dim \ker C < \infty$.  Since $T^* = L^{*\,-1} C L^*$,
  $\dim \ker T^* = \dim \ker T$.
\end{proof}

\section*{Acknowledgments}

Maximiliano Contino was supported by the UBA's Strategic Research Fund
2018 and CONICET PIP 0168.  Alejandra Maestripieri was supported by
CONICET PIP 0168.  The work of Stefania Marcantognini was done during
her stay at the Instituto Argentino de Matem\'atica with an
appointment funded by the CONICET.  She is very grateful to the
institute for its hospitality and to the CONICET for financing her
post.

  \goodbreak

\section*{Data availability statement}

No data were produced for this paper.

\end{document}